\newenvironment{equ*}[1]{\begin{IEEEeqnarray*}{#1}}{\end{IEEEeqnarray*}}
\numberwithin{equation}{section}
\newtheorem*{rep@theorem}{\rep@title}
\newcommand{\newreptheorem}[2]{%
	\newenvironment{rep#1}[1]{%
		\def\rep@title{#2 \ref{##1}}%
		\begin{rep@theorem}}%
		{\end{rep@theorem}}}
\theoremstyle{theorem}
\newtheorem{theorem}{Theorem}
\newtheorem*{strong_gap_theorem}{Theorem~\ref{theorem: gaps graph of groups hyperbolic}$'$}
\newtheorem{thm}{Theorem}[section]
\newtheorem{lemma}[thm]{Lemma}
\crefname{lemma}{Lemma}{Lemmata}
\newtheorem{prop}[thm]{Proposition}
\newtheorem{cor}[thm]{Corollary}
\newtheorem{claim}[thm]{Claim}
\newtheorem*{thm*}{Theorem}
\newtheorem*{lemma*}{Lemma}
\newtheorem*{prop*}{Proposition}
\newtheorem*{corr*}{Corrolary}
\newtheorem*{claim*}{Claim}
\theoremstyle{remark}
\newtheorem{rmk}[thm]{Remark}
\newtheorem{quest}[thm]{Question}
\newtheorem*{rmk*}{Remark}
\newtheorem*{conj*}{Conjecture}
\newtheorem*{quest*}{Question}
\theoremstyle{definition}
\newtheorem{defn}[thm]{Definition}
\newtheorem{exmp}[thm]{Example}
\newtheorem*{defn*}{Definition}
\newtheorem*{exmp*}{Example}
\newcommand{\R}{\mathbb{R}}
\newcommand{\Q}{\mathbb{Q}}
\newcommand{\Z}{\mathbb{Z}}
\newcommand{\N}{\mathbb{N}}
\newcommand{\Hbb}{\mathbb{H}}
\newcommand{\Ebb}{\mathbb{E}}
\newcommand{\Qc}{\mathcal{Q}}
\newcommand{\Isom}{\mathrm{Isom}}
\newcommand{\PSLtwo}{\mathrm{PSL}_2}
\newcommand{\PSLtwotilde}{\widetilde{\mathrm{PSL}}_2}
\newcommand{\ug}{\underline{g}}
\newcommand{\uga}{\underline{\gamma}}
\newcommand{\att}{\texttt{a}}
\newcommand{\btt}{\texttt{b}}
\newcommand{\xtt}{\texttt{x}}
\newcommand{\sign}{\textrm{sign}}
\newcommand{\Acl}{\mathcal{A}}
\newcommand{\Trm}{\mathrm{T}}
\newcommand{\Image}{\mathrm{Im}}
\newcommand{\eurm}{\mathrm{eu}}
\newcommand{\area}{\mathrm{area}}
\newcommand{\len}{\mathrm{length}}
\newcommand{\Sthin}{S_\mathrm{thin}}
\newcommand{\Sthick}{S_\mathrm{thick}}
\newcommand{\BS}{\mathrm{BS}}
\newcommand{\Homeo}{\mathrm{Homeo}}
\newcommand{\rot}{\mathrm{rot}}
\newcommand{\cl}{\mathrm{cl}}
\newcommand{\scl}{\mathrm{scl}}
\newcommand{\conv}{\mathrm{conv}}
\newcommand{\inj}{\hookrightarrow}
\newcommand{\col}{\colon}
\newcommand{\defeq}{\vcentcolon=}
\title{Spectral gap of scl in graphs of groups and $3$-manifolds}
\author{Lvzhou Chen}
\address{Department of Mathematics\\ Purdue University\\ West Lafayette, Indiana, USA}
\email[L.~Chen]{lvzhou@purdue.edu}
\author{Nicolaus Heuer}
\address{Tudor Investment Corporation\\ Cambridge, England, UK}
\email[N.~Heuer]{nicolaus.heuer.maths@gmail.com}
\begin{document}

\begin{abstract}
Stable commutator length $\scl_G(g)$ of an element $g$ in a group $G$ is an invariant for group elements sensitive to the geometry and dynamics of $G$.

For any group $G$ acting on a tree, we prove a sharp bound $\scl_G(g)\ge 1/2$ for any $g$ acting without fixed points,
provided that the stabilizer of each edge is \emph{relatively torsion-free} in its vertex stabilizers. 
The sharp gap becomes $1/2-1/n$ if the edge stabilizers are \emph{$n$-relatively torsion-free} in vertex stabilizers.
We also compute $\scl_G$ for elements acting with a fixed point.

This implies many such groups have a \emph{spectral gap}, that is, there is a constant $C > 0$ such that either $\scl_G(g) \geq C$ or $\scl_G(g)=0$. New examples include the fundamental group of any $3$-manifold using the JSJ decomposition, though the gap must depend on the manifold. We also obtain the optimal spectral gap of graph products of group without $2$-torsion.


We prove these statements by characterizing maps of surfaces to a suitable $K(G,1)$. 
For groups acting on trees, we also construct explicit quasimorphisms and apply Bavard's duality to give a different proof of our spectral gap theorem under stronger assumptions.



\end{abstract}

	\maketitle
	
	\tableofcontents
	
	\section{Introduction}
Let $G$ be a group and let $G' = [G,G]$ be its commutator subgroup. For an element $g \in G'$ we define the \emph{commutator length} ($\cl_G(g)$) of $g$ in $G$ as 
$$
\cl_G(g) \defeq \min \{ n \mid \exists x_1, \ldots, x_n, y_1, \ldots, y_n \in G: g = [x_1,y_1] \cdots [x_n, y_n] \},
$$ 
and define the \emph{stable commutator length} ($\scl_G(g)$) of $g$ in $G$ 
 as 
 $$
 \scl_G(g) \defeq \lim_{n \to \infty} \frac{\cl_G(g^n)}{n}.
 $$
 We extend $\scl_G$ to an invariant on the whole group by setting $\scl_G(g) = \scl_G(g^N)/N$ if $g^N \in G'$ for some $N \in \Z_+$ and $\scl_G(g) = \infty$ otherwise. 
Stable commutator length ($\scl$) arises naturally in 
geometry, topology and dynamics. See \cite{Cal:sclbook} for an introduction to stable commutator length.

We say that a group $G$ has a \emph{spectral gap in scl} if there is a constant $C > 0$ such that for every $g \in G$ either $\scl_G(g) \geq C$ or $\scl_G(g)=0$.
We are mainly interested in the case where the gap is nontrivial, meaning that $\scl_G$ does not vanish on the commutator subgroup $[G,G]$.

Nontrivial lower bounds of scl can serve as an obstruction for homomorphisms due to monotonicity of scl. For instance a spectral gap theorem for mapping class groups by \cite{BBF} implies that any homomorphism from an irreducible lattice of a higher rank semisimple Lie group to a mapping class group has finite image, originally a theorem of Farb--Kaimanovich--Masur \cite{homrigidityKM,homrigidityFM}; See Theorem \ref{thm: FarbKaimanovichMasur} for more details.
Sharp spectral gap results have also been used \cite{gentorsion} to show nonexistence of the so-called generalized torsion in certain groups; See also Section \ref{subsec: uniform bounds}.

Many classes of groups have spectral gaps, including free groups, word-hyperbolic groups, mapping class groups of closed surfaces, and right-angled Artin groups. See Subsection \ref{subsec:scl basic} for a list of known results.

In this article we study \emph{sharp} spectral gaps of groups acting on trees without inversion. 
By the work of Bass--Serre, such groups may be algebraically decomposed  into \emph{graphs of groups} built from their edge and vertex stabilizers; See Subsection \ref{subsec:graph of groups}. 
Basic examples of groups acting on trees are amalgamated free products and HNN extensions.
Many classes of groups have a natural graph of groups structure associated to them. Examples include the JSJ decomposition of $3$-manifolds and HNN-hierarchy of one-relator groups,
as well as the decomposition of graph products into amalgamated free products.

An element acting on a tree is called \emph{elliptic} if it stabilizes some vertex and \emph{hyperbolic} otherwise. 
We will discuss the stable commutator length of both types of elements.

We say that a pair of a group $G$ and a subgroup $H \le G$ is \emph{$n$-relatively torsion-free} ($n$-RTF) if there is no $1 \leq k < n$, $g \in G \setminus H $ and $\{ h_i \}_{1 \leq i \leq k} \subset H$ such that
$$
g h_1 \cdots g h_k = 1_G,
$$
and simply \emph{relatively torsion-free} if we can take $n = \infty$; See Definition \ref{defn: n-rft}.
We say that $H$ is \emph{left relatively convex} if there is a $G$-invariant order on the cosets $G/H = \{ g H \mid g \in G \}$ where $G$ acts on the left.
Every left relatively convex subgroup is relatively torsion-free; See Lemma \ref{lemma: left relatively convex implies RTF}.
\begin{theorem}[Theorems \ref{thm: $n$-RTF gap, weak version} and \ref{thm:left relatively convex graph of groups}] \label{theorem: gaps graph of groups hyperbolic}
Let $G$ be a group acting on a tree such that the stabilizer of every edge is $n$-RTF in the stabilizers of the its vertices.
If $g \in G$ is hyperbolic, then
\begin{eqnarray*}
\scl_G(g) &\geq & \frac{1}{2} - \frac{1}{n} \mbox{, if $n \in \N$ and} \\
\scl_G(g) &\geq  & \frac{1}{2} \mbox{, if $n = \infty$.}
\end{eqnarray*}
If the stabilizer of every edge lies left relatively convex in the stabilizers of its vertices, then there is an explicit homogeneous quasimorphism $\phi$ (see Subsection \ref{subsubsec:scl via quasimorphisms} for the definition) on $G$ such that $\phi(g) \geq 1$ and $D(\phi) \leq 1$.
\end{theorem}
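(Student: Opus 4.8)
The plan is to construct $\phi$ directly, by letting $G$ act on the circle in a way manufactured from the given orders and then taking $\phi$ to be the pullback of the homogeneous rotation quasimorphism $\widetilde{\rot}$. Since $\widetilde{\rot}$ is homogeneous with $D(\widetilde{\rot})=1$ and $\|\delta\widetilde{\rot}\|_\infty=1$, and pullback along a homomorphism does not increase the sup-norm of a cochain, the bound $D(\phi)\le 1$ will be automatic; so the whole content is (i) producing the circle action as a genuine homomorphism into $\widetilde{\Homeo}^+(S^1)$, and (ii) showing $\widetilde{\rot}$ evaluates to at least $1$ on the image of $g$. (Combined with Bavard duality, such a $\phi$ of course also re-derives the $\scl_G(g)\ge 1/2$ bound of the $n=\infty$ case, now with an explicit extremal quasimorphism.)

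For (i): left relative convexity of $G_e$ in $G_v$ gives a $G_v$-invariant linear order on the edges of the Bass--Serre tree $T$ issuing from $v$ in the $G_v$-orbit of $e$; assembling these over the edge orbits at $v$ and passing to dynamical realizations yields, for each vertex $v$, an orientation-preserving action of $G_v$ on a circle $C_v$ carrying, for every incident edge $e$, a marked arc $I_{v,e}$ with setwise stabilizer exactly $G_e$. I would then glue the local circles along $T$: across an edge $e=[v,w]$ one inserts $C_w$ into $I_{v,e}\subset C_v$ and symmetrically $C_v$ into $I_{w,e}\subset C_w$, which is possible precisely because $G_e$ is left relatively convex in \emph{both} $G_v$ and $G_w$ --- the hypothesis is thus used on both sides of every edge. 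Carrying this out $G$-equivariantly over all of $T$ and passing to an order completion produces $\rho\colon G\to\Homeo^+(S^1)$; because $T$ has no cycles the gluing is coherent, and every vertex and edge stabilizer fixes a point of $S^1$, which via a Mayer--Vietoris argument for the graph of groups forces the Euler class of $\rho$ to vanish in $\Hrm^2(G;\Z)$, giving a lift $\tilde\rho\colon G\to\widetilde{\Homeo}^+(S^1)$. Set $\phi\defeq\widetilde{\rot}\circ\tilde\rho$; it is homogeneous, and $D(\phi)=\|\delta\phi\|_\infty\le\|\delta\widetilde{\rot}\|_\infty=1$.

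For (ii), which I expect to be the main obstacle: one must show $\widetilde{\rot}(\tilde\rho(g))\ge 1$ (after fixing orientations, and replacing $\phi$ by $-\phi$ if needed). Here I would use that $g$ is hyperbolic --- it has an axis $A\subset T$, translated by $\ell_T(g)\ge 1$, with endpoints $g^{\pm}\in\partial T$. Following the marked arcs along a single $\langle g\rangle$-period of $A$, the relevant return map of $\rho(g)$ is a composition of the local order-isomorphisms, and I would bound its lift's translation number below by $1$ by producing a point and a fundamental arc of $S^1$ that $\rho(g)$ pushes strictly past itself once per period. The delicate point --- and the place where full left relative convexity ($n=\infty$) is genuinely needed rather than merely $n$-RTF --- is excluding cancellation among these local contributions: a cancellation of this type would assemble into a relation $gh_1\cdots gh_k=1$ with the $h_i$ lying in an edge group, contradicting relative torsion-freeness. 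The weaker gap $1/2-1/n$ for finite $n$ is precisely the quantitative residue of this step when only $n$-RTF is available, and making the local-to-global gluing in (i) genuinely $G$-equivariant and well defined is the other place where real work is required.
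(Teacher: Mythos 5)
Your proposal only engages with the second half of Theorem~\ref{theorem: gaps graph of groups hyperbolic}; the first half is the genuine gap. The gap bounds $\tfrac12-\tfrac1n$ (and $\tfrac12$ for $n=\infty$) are asserted under the $n$-RTF hypothesis alone, which carries no order structure whatsoever: for finite $n$ there is no invariant order on $G_v/G_e$, and even $\infty$-RTF is strictly weaker than left relative convexity (Lemma~\ref{lemma: left relatively convex implies RTF} has no converse). A circle-action/rotation-number construction therefore cannot even get started under the stated hypothesis, and your remark that the $\tfrac12-\tfrac1n$ bound is ``the quantitative residue'' of your cancellation step is not an argument --- Bavard duality only guarantees an extremal quasimorphism \emph{after} the bound is known, it does not build one from relative torsion-freeness. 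The paper proves this part by an entirely different mechanism: admissible surfaces are put into a normal form cut along edge spaces, turns are assigned costs, the $n$-RTF condition forces every disk piece to have cost at least $1$, and the gluing and normalizing conditions are summed via a linear-programming duality (Theorems~\ref{thm: $n$-RTF gap, strong version} and~\ref{thm: $n$-RTF gap, weak version}). Nothing in your proposal replaces this.

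For the left relatively convex part your route is genuinely different from the paper's --- the paper never constructs a circle action of $G$ itself; it builds a letter-quasimorphism $\Phi\col G\to F(\{\att,\btt\})$ from the signs of the coset orders read along the Bass--Serre tree (Lemmas~\ref{lemma:amalgamated free products lq} and~\ref{lemma: HNN phi is letter quasimorphism}) and composes with circle quasimorphisms of the free group (Theorems~\ref{thm:letter quasimorphisms} and~\ref{thm:ab circle action}), so the only honest homomorphism to $\Homeo^+(S^1)$ lives on $F(\{\att,\btt\})$. Your direct construction could conceivably be made to work, but as written it has two unproved cores. First, the Euler-class step: the Mayer--Vietoris sequence for a graph of groups reads $\bigoplus_e H^1(G_e)\to H^2(G)\to\bigoplus_v H^2(G_v)$, so knowing the class dies on every vertex group does not force it to vanish in $H^2(G;\Z)$; you would need to build the lift to $\Homeo^+_\Z(\R)$ by hand (equivariantly over the tree), and likewise the claim that the local $G_v$-orders on the various coset spaces $G_v/G_e$ assemble into one circular order with equivariantly marked arcs is exactly the kind of statement that needs a construction, not an assertion. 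Second, and decisively, the estimate $\rot(\tilde\rho(g))\ge 1$ for a hyperbolic $g$ --- which you yourself flag as the main obstacle --- is where all the content of the theorem sits (in the paper it is the explicit computation in Proposition~\ref{prop: circle quasimorphisms realize 12 gap} and the Claim inside the proof of Theorem~\ref{thm:ab circle action}, fed through the letter-quasimorphism defect analysis); ``producing a point that $\rho(g)$ pushes past itself once per period'' is a restatement of the goal, not a proof. Until those two steps are supplied, and the first half of the theorem is addressed by some other method, the proposal does not establish the statement.
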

Our estimates are sharp, strengthening the estimates in \cite{CFL16} and generalizing all other spectral gap results for graphs of groups known to the authors \cite{Chen:sclfpgap,DH91,Heuer}. See Subsection \ref{subsubsec:scl via admissible surfaces} for a stronger version that gives the estimates for individual elements under weaker assumptions.

The stable commutator length generalizes to \emph{chains}, i.e. linear combinations of elements; See Subsection \ref{subsec:scl basic}. We show how to compute $\scl_G$ of chains of elliptic elements in terms of the stable commutator length of vertex groups.
\begin{theorem}[Theorem \ref{thm: vert scl compute}]
Let $G$ be a group acting on a tree with vertex stabilizers $\{G_v\}$ and let $c_v$ be a chain of elliptic elements in $G_v$.
Then 
$$\scl_G(\sum_v c_v)=\inf\sum_v \scl_{G_v}(c'_v),$$
where each $c'_v$ is a chain of elliptic elements in $G_v$, and the infimum is taken over all collections $\{c'_v\}$ of chains obtained from adjusting $\{c_v\}$ by chains of elements in edge stabilizers.
\end{theorem}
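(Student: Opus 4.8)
The plan is to prove the two inequalities separately, using the topological characterization of $\scl$ via admissible surfaces mapped to a $K(G,1)$ built as a graph of spaces modeling the Bass--Serre decomposition.

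\textbf{The upper bound $\scl_G(\sum_v c_v)\le \inf\sum_v\scl_{G_v}(c'_v)$.} This is the easy direction. Fix any collection $\{c'_v\}$ obtained from $\{c_v\}$ by adjusting by boundaries of edge-group chains; concretely, $\sum_v c'_v - \sum_v c_v$ is (up to adding null-homologous terms) represented by a sum $\sum_e (w_e - \bar w_e)$ coming from pushing a chain $w_e$ in an edge group $G_e$ into the two adjacent vertex groups via the two inclusions. For each $v$, take an efficient admissible surface $S_v$ for $c'_v$ in a $K(G_v,1)$; for each edge $e$, take an annulus (or more generally a cylinder $w_e\times[0,1]$) realizing the cobordism between the two copies of $w_e$ in the adjacent vertex spaces. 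Glue the $S_v$ to the edge cylinders along the matching boundary curves: because $\sum_v c'_v$ differs from $\sum_v c_v$ exactly by these edge corrections, after gluing we obtain an admissible surface in the graph of spaces $K(G,1)$ whose remaining boundary represents (a multiple of) $\sum_v c_v$. Tracking Euler characteristics (annuli contribute $0$) gives $\scl_G(\sum_v c_v)\le \sum_v\scl_{G_v}(c'_v)$, and taking the infimum yields the bound. The one point to be careful about is matching the \emph{multiplicities} $n$ of the admissible surfaces across all vertices and edges, which one handles by passing to a common multiple.

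\textbf{The lower bound.} Here one starts with an admissible surface $f\colon S\to K(G,1)$ of degree $n$ for $c=\sum_v c_v$ and must produce a collection $\{c'_v\}$ with $\sum_v\scl_{G_v}(c'_v)\le -\chi^-(S)/(2n)+\varepsilon$. Put $f$ in general position with respect to the edge spaces, so $f^{-1}(\text{edge spaces})$ is a disjoint union of properly embedded arcs and circles $\Gamma\subset S$ cutting $S$ into pieces $S_v$, each mapping into the vertex space $K(G_v,1)$. The boundary $\partial S_v$ consists of arcs of $\partial S$ (mapping to powers of the $c_v$) together with components of $\Gamma$. The components of $\Gamma$ map into edge spaces, so after homotopy they represent chains $c_v - c'_v$ in the vertex groups that are precisely adjustments by edge-group chains — this is where the definition of the $c'_v$ comes from. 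Each $S_v$ is then an admissible surface for $c'_v$ in $K(G_v,1)$, and $-\chi^-(S)\ge \sum_v -\chi^-(S_v)$ since cutting along $\Gamma$ does not increase complexity (circles and arcs removed only ever split off disks/annuli harmlessly, or one throws away components of non-negative Euler characteristic). Dividing by $2n$ and taking the infimum over all such surfaces gives the reverse inequality.

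\textbf{Main obstacle.} The subtle point — and the step I expect to require the most care — is the bookkeeping that identifies the chains cut out along $\Gamma$ with \emph{exactly} the allowed class of adjustments "by chains of elements in edge stabilizers," and dually, in the upper bound, realizing an arbitrary such adjustment geometrically by glued cylinders. One must check that the homological/conjugacy data carried by the curves of $\Gamma$ is no more and no less than what the edge groups permit, including orientation and basepoint (conjugacy) issues, and that elliptic chains stay elliptic under the adjustment. There is also the usual technical nuisance that $S$ need not be connected and its pieces $S_v$ may have components with $\chi\ge 0$ (disks, annuli) mapping into a vertex space; these are dealt with by the standard observation that they can be capped off or discarded without affecting $\chi^-$, exactly as in the proof of the Bavard-type duality and the scl-of-free-products computations. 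Once this correspondence between edge curves and admissible adjustments is set up cleanly, both inequalities follow by the Euler-characteristic count, and the infimum in the statement is forced to be an actual equality.
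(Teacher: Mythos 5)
Your proposal is correct and follows essentially the same route as the paper: for the lower bound one cuts an admissible surface along the edge spaces (after the normal-form/compression step that makes every intersection circle essential in its edge space, which is exactly what rules out the disk pieces you flag as the "technical nuisance"), and for the upper bound one assembles vertex-group admissible surfaces of a common degree. The only cosmetic difference is that the paper takes the disjoint union of the vertex surfaces---whose edge-group boundary components cancel in $C_1^H(G)$ because $c_e+c_{\bar{e}}=0$---instead of gluing along annuli, which sidesteps the multiplicity-matching and conjugacy-matching issues your gluing step would otherwise have to address.
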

See Theorem \ref{thm: vert scl compute} for a precise statement. For example if $G = A \star_{\Z} B$ where $\Z$ is generated by $t$ then we show that $\scl_G(t)$ is the minimum of $\scl_{A}(t)$ and $\scl_{B}(t)$; See Theorem \ref{thm:scl edge for amalgam}.

We apply our results to obtain spectral gaps of $3$-manifold groups using the JSJ decomposition and geometrization theorem.
\begin{theorem}[Theorem \ref{thm:3mfdgap}]\label{theorem: 3 manifold gap intro}
	For any closed oriented connected $3$-manifold $M$, there is a constant $C(M)>0$ such that for any $g\in \pi_1(M)$ we have either $\scl_{\pi_1 (M)}(g)\ge C(M)$ or $\scl_{\pi_1(M)}(g)=0$.
\end{theorem}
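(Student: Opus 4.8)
The plan is to reduce the closed $3$-manifold case to the prime case and then to the geometric and JSJ cases, applying Theorem~\ref{theorem: gaps graph of groups hyperbolic} and the chain computation (Theorem~\ref{thm: vert scl compute}) where the manifold decomposes along tori. First I would note that $\scl$ is insensitive to passing to finite-index subgroups in a controlled way and, more importantly, that for a connected sum $M = M_1 \# M_2$ we have $\pi_1(M) = \pi_1(M_1) \star \pi_1(M_2)$, a free product; a spectral gap for free products follows from the graph-of-groups machinery (trivial edge group, which is vacuously RTF, so hyperbolic elements have $\scl \geq 1/2$), combined with the fact that an elliptic element lies in a conjugate of some $\pi_1(M_i)$ and its $\scl$ is computed there by Theorem~\ref{thm: vert scl compute}. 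Hence $C(M) = \min_i C(M_i)$ works, and we are reduced to $M$ prime, indeed (discarding $S^1 \times S^2$, whose group $\Z$ has $\scl \equiv 0$) irreducible.

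Next I would invoke geometrization: an irreducible closed oriented $3$-manifold is either geometric or has a nontrivial JSJ decomposition along incompressible tori into Seifert-fibered and hyperbolic pieces. For the geometric cases one handles each Thurston geometry separately. Manifolds with $\widetilde{\mathrm{SL}_2}$, $\mathbb{H}^2\times\mathbb{E}$, $\mathrm{Nil}$, $\mathrm{Sol}$, $\mathbb{E}^3$, or $S^2\times\mathbb{E}$ geometry have $\pi_1$ that is virtually solvable or has a central/normal $\Z$ or $\Z^2$ with small quotient; in all these cases $\scl$ vanishes identically (amenable, or more precisely the stable commutator length is zero because $\Hrm^2_b$ vanishes appropriately), so any $C>0$ works. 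For $S^3$ geometry $\pi_1$ is finite and $\scl \equiv 0$. The substantive geometric case is $\mathbb{H}^3$: here $\pi_1(M)$ is word-hyperbolic, so it has a spectral gap by the known results quoted in Subsection~\ref{subsec:scl basic} (Calegari--Fujiwara), giving some $C>0$ depending on $M$.

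For the remaining case $M$ has nontrivial JSJ decomposition, so $\pi_1(M)$ acts on the Bass--Serre tree $T$ of the JSJ graph of groups, with edge groups $\cong \Z^2$ (the JSJ tori) and vertex groups the fundamental groups of the pieces. The key geometric input is that each JSJ torus is $\pi_1$-injective and, crucially, that $\Z^2 \hookrightarrow \pi_1(\text{piece})$ is left relatively convex: for a hyperbolic piece this holds because peripheral subgroups of cusped hyperbolic $3$-manifold groups are left relatively convex (they are, in fact, the maximal abelian subgroups, and one can order the cosets using the boundary horospheres / the action on $\partial\mathbb{H}^3$, or appeal to the fact that these groups are bi-orderable relative to cusps, cf.\ the results cited for RTF subgroups), and for a Seifert-fibered piece one uses that the subgroup corresponding to a boundary torus is left relatively convex in a Seifert $3$-manifold group with boundary (these are fundamental groups of circle bundles over surfaces with boundary, hence relatively convex by an explicit order coming from the base orbifold ordering). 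Granting left relative convexity of all edge groups in their vertex groups, Theorem~\ref{theorem: gaps graph of groups hyperbolic} gives $\scl_{\pi_1(M)}(g) \geq 1/2$ for every hyperbolic $g$. For elliptic $g$, $g$ lies in a conjugate of some vertex group $\pi_1(M_i)$, and by Theorem~\ref{thm: vert scl compute} its $\scl$ in $\pi_1(M)$ is bounded below by an infimum of $\scl$ in the vertex groups of chains obtained by modifying $g$ by edge ($\Z^2$) elements — so it suffices that each vertex group $\pi_1(M_i)$ has a spectral gap, which holds since each piece is either hyperbolic (word-hyperbolic relative to cusps, spectral gap by relative hyperbolicity results, or one reduces to the cusped case) or Seifert-fibered (whose $\scl$ is controlled: $\scl$ is essentially computed in the base orbifold group, a surface/orbifold group, which has the $1/2$-gap or a computable gap). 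Taking $C(M)$ to be the minimum of $1/2$ and all the vertex-group gaps finishes the proof.

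The main obstacle I anticipate is not the reduction structure but verifying left relative convexity (or at least the RTF condition with uniform $n$, giving a gap $1/2 - 1/n$) for the JSJ torus subgroups inside the hyperbolic pieces — one must be careful that a boundary torus of a finite-volume cusped hyperbolic $3$-manifold is genuinely left relatively convex in $\pi_1$, and cite or prove the right statement (e.g.\ via the action on the Bass--Serre tree being unnecessary here, but rather an explicit order on cosets coming from the geometry of the cusp and the irrationality of the holonomy); a secondary subtlety is the Seifert case, where the fiber $\Z$ is central and one must quotient it out carefully and track how $\scl$ of the base orbifold group controls $\scl$ upstairs, and handle small/exceptional base orbifolds (sphere with $\leq 3$ cone points, etc.). Also worth flagging: the constant $C(M)$ genuinely must depend on $M$ (as the theorem statement emphasizes), because a hyperbolic piece can have arbitrarily small $\scl$ values, so no uniform gap over all $3$-manifolds is available, and the proof should not accidentally claim one.
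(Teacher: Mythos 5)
Your reduction skeleton (prime decomposition, then geometric versus JSJ pieces) matches the paper, but several of the steps you rely on are wrong or unproved, and they are exactly where the real work lies. First, in the free product step the trivial edge group is \emph{not} vacuously $n$-RTF: the RTF condition for the trivial subgroup of a vertex group $G_v$ is precisely that $G_v$ has no $k$-torsion for $k<n$, and prime factors of closed $3$-manifolds can be spherical space forms. For $\mathbb{RP}^3\#\mathbb{RP}^3$ the hyperbolic element $ab$ has $\scl=0$, and in $(\Z/2)\star(\Z/3)$ one gets $\scl(ab)=1/12$, so your claimed $1/2$ gap for hyperbolic elements of the free product is false; the paper instead invokes the Clay--Forester--Louwsma free product theorem (Lemma \ref{lemma: freeprod preserves gap}), giving $1/12$ together with the characterization of when $\scl$ vanishes. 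Second, your treatment of the geometric case is wrong for $\geomHE$ and $\geomPSL$: these closed Seifert fibered manifolds have $\pi_1$ a central (or $\Z$-by-) extension of a hyperbolic orbifold group, which is far from amenable and on which $\scl$ does not vanish identically (e.g.\ $S^1\times\Sigma_g$). This is one of the substantive cases of the paper (Theorem \ref{thm: closed Seifert fibered prime factors}), handled via the Euler class of the fibration, Lemma \ref{lemma: torsion Euler class}, and the orbifold gap results, and it contributes terms like $\scl_M(z)/N$ and $1/(2kD(\phi))$ to $C(M)$.

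Third, in the JSJ case your key claim that every $\Z^2$ edge group is left relatively convex in its vertex groups fails: the twisted $I$-bundle over the Klein bottle contains its peripheral $\Z^2$ with index two, and in a Seifert piece whose base has a cone point of order $2$ the exceptional fiber squares into the peripheral subgroup, producing a relative $2$-torsion; both contradict even $3$-RTF, hence left relative convexity. For cusped hyperbolic pieces the paper only establishes $3$-RTF (Lemma \ref{lemma:boundary inclusion is 3-RTF}) and explicitly leaves anything stronger open, so your appeal to left relative convexity there is unsupported. The paper instead gets the hyperbolic-element gap $1/48$ from the $4$-acylindricity of the JSJ action (Lemma \ref{lemma: acyl JSJ}) and the Clay--Forester--Louwsma acylindricity theorem. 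Finally, for elliptic elements it does not suffice that each vertex group has a spectral gap: Theorem \ref{thm: vert scl compute} expresses $\scl_{\pi_1(M)}$ as an infimum over adjustments by edge-group chains, and a gap in each vertex group gives no positive lower bound on that infimum (the surgery examples $M_{p,q}$ show the ambient value can be made arbitrarily small, which is why $C(M)$ must depend on $M$). The paper closes this by proving \emph{relative} gaps for vertex groups (Lemmas \ref{lemma:vertex group hyp case}, \ref{lemma:vertex group Seifert case}), controlling integral chains in the edge tori via the rationality of the vanishing locus of the induced norm (Lemma \ref{lemma: auto gap of norm}, Theorem \ref{thm: edge scl for non-geometric prime factors}), and only then assembling the vertex-element estimate (Theorem \ref{thm: vertex scl for non-geometric prime factors}). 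These missing ingredients are the core of the proof, so the proposal as written does not go through.
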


The gap $C(M)$ must depend on $M$; See Example \ref{exmp: surgery example}. However, we classify elements with $\scl_{\pi_1(M)}(g)=0$ and describe those with $\scl_{\pi_1(M)}(g)<1/48$ in Theorem \ref{thm: small scl in 3mfds}. See Subsection \ref{subsec: intro, 3mfd} for more details.

As another application, we obtain spectral gaps for graph products of groups; See Subsection \ref{subsec: Graph product in intro} and Theorem \ref{theorem: gap for graph product} below.

\subsection{Method}
Let $G$ be a group and $g\in G$. There are two equivalent definitions of $\scl_G(g)$ that lead to two very different approaches to proving lower bounds of $\scl_G(g)$.

\subsubsection{Stable commutator length via quasimorphisms}\label{subsubsec:scl via quasimorphisms}
The first one uses Bavard's duality and \emph{homogeneous quasimorphisms}.
A \emph{quasimorphism} on a group $G$ is a map $\phi \col G \to \R$ such that the \emph{defect} $D(\phi)\defeq\sup_{g,h\in G} |\phi(g)+\phi(h)-\phi(gh)|$ is finite.
A quasimorphism is \emph{homogeneous} if it restricts to a homomorphism on each cyclic subgroup.
Bavard's Duality Theorem \ref{thm:Bavards duality} asserts that
$$
\scl_G(g) = \sup_{\phi} \frac{\phi(g)}{2 D(\phi)},
$$
where the supremum ranges over all homogeneous quasimorphisms. 

Using this point of view, to establish a lower bound of $\scl_G(g)$, it suffices to construct \emph{one} good homogeneous quasimorphism for the given element $g \in G$.
This has become a common approach for proving spectral gap theorems \cite{CF:sclhypgrp,BBF,CFL16,Heuer}. 
However, it is very difficult to construct homogeneous quasimorphisms to prove \emph{sharp} bounds.
One recent new idea is to construct certain nice maps (called \emph{letter quasimorphisms}) from $G$ to free groups that allow us to pull back nice quasimorphisms on free groups to ones on $G$. 
This was first used by the second author \cite{Heuer} for amalgamated free products with left relatively convex edge groups.

Here we generalize this approach to graphs of groups, in particular HNN extensions; See Subsection \ref{subsec:extremal qm for lrc subgroups}.
This allows us to construct the desired quasimorphisms (Theorem \ref{thm:left relatively convex graph of groups}) as stated in the second part of Theorem \ref{theorem: gaps graph of groups hyperbolic}.

\subsubsection{Stable commutator length via admissible surfaces} \label{subsubsec:scl via admissible surfaces}
A very different approach to proving lower bounds of $\scl_G(g)$ (or more generally computing it) uses the topological definition of scl via admissible surfaces.

Let $X$ be a topological space with fundamental group $G$ and let $\gamma \col S^1 \to X$ be a loop representing $g \in G$. An oriented surface map $f \col S \to X$ is called \emph{admissible} for $g$ of degree $n(f)>0$, if there is a covering map $\partial f: \partial S\to S^1$ of total degree $n(f)$ such that the diagram
	\begin{center}
		\begin{tikzcd}
			\partial S \arrow[r, "\partial f"] \arrow[d, hook] 	& S^1 \arrow[d, "\gamma"]\\
			S 			\arrow[r,"f"] 									& X
		\end{tikzcd}
	\end{center}
	commutes. 
It is known \cite[Section 2.1]{Cal:sclbook} that
$$
\scl_G(g)=  \inf_{(f,S)} \frac{-\chi^-(S)}{2n(f)},
$$
where the infimum ranges over all admissible surfaces and where $\chi^-$ is the Euler characteristic ignoring sphere and disk components. 

From this point of view, to establish a lower bound of $\scl_G(g)$, we need a uniform lower bound over \emph{all} admissible surfaces.
This seemingly more difficult approach is the main point of view we use in this article.
Actually, the graphs of groups structure of the underlying group $G$ allows us to simplify and decompose admissible surfaces into smaller pieces that are easy to understand.

A graph of groups $G$ has a standard realization $X$ that has fundamental group $G$ and contains \emph{vertex and edge spaces} corresponding to the vertex and edge groups. Each hyperbolic element $g$ is represented by some loop $\gamma$ that cyclically visits finitely many vertex spaces, each time entering the vertex space from one adjacent edge space and exiting from another. A \emph{backtrack} of $\gamma$ at a vertex space $X_v$ is a time when $\gamma$ enters and exits $X_v$ from the same edge space $X_e$. If $\gamma$ is pulled \emph{tight}, each such a backtrack gives rise to a winding number $g_{e,v}\in G_v\setminus G_e$, where $G_v$ and $G_e$ are the vertex and edge groups corresponding to $X_v$ and $X_e$. See Subsection \ref{subsec: surfs in graphs of groups} for more details.

For a subgroup $H\le G$ and $k\ge 2$, an element $g\in G\setminus H$ \emph{has order $\ge n$} rel $H$ if for all $k<n$ and $h_1,\ldots, h_k\in H$ we have
$$gh_1\ldots gh_k\neq 1_G.$$
Then $H$ is $n$-RTF if and only if each $g\in G\setminus H$  has order $\ge n$ rel $H$.

In Section \ref{sec:scl on hyperbolic elts} we prove the following stronger version of Theorem \ref{theorem: gaps graph of groups hyperbolic}.
\begin{strong_gap_theorem}[Theorem \ref{thm: $n$-RTF gap, strong version}]
	Let $G$ be a graph of groups and let $3\le n\le\infty$. Suppose $g$ is represented by a tight loop $\gamma$ so that the winding number $g_{e,v}$ associated to any backtrack at a vertex space $X_v$ through an edge space $X_e$ has order $\ge n$ rel $G_e$ in $G_v\setminus G_e$. Then 
	$$\scl_G(g)\ge \frac{1}{2}-\frac{1}{n}.$$
\end{strong_gap_theorem}

The proof is based on a \emph{linear programming duality method} that we develop to uniformly estimate the Euler characteristic of all admissible surfaces in $X$ in \emph{normal form}. 
The normal form is obtained by cutting the surface along edge spaces and simplifying the resulting surfaces, similar to the one in \cite{Chen:sclBS}. 
The linear programming duality method is a generalization of the argument for free products by the first author \cite{Chen:sclfpgap}.

\subsection{Graph products}\label{subsec: Graph product in intro}
Let $\Gamma$ be a simple and not necessarily connected graph with vertex set $V$ and let $\{G_v\}_{v \in V}$ be a collection of groups. The \emph{graph product} $G_\Gamma$ is the quotient of the free product $\star_{v \in V} G_v$ subject to the relations $[g_u,g_v]$ for any  $g_u\in G_u$ and $g_v\in G_v$ such that $u,v$ are adjacent vertices. 

Several classes of non-positively curved groups are graph products including right-angled Artin groups; See Example \ref{exmp:graph products}. Each vertex of the graph induces a splitting of the graph product as an amalgam; See Lemma \ref{lemma: graph products splitting}. 
Using such a structure, we show:
	\begin{theorem}[Theorem \ref{thm: gap for graph products, element-wise statement}]\label{theorem: gap for graph product}
		Let $G_\Gamma$ be a graph product. Suppose $g=g_{1}\cdots g_{m}\in G_\Gamma$ ($m\ge 1$) is in cyclically reduced form and there is some $3\le n\le \infty$ such that $g_{i}\in G_{v_i}$ has order at least $n$ for all $1\le i\le k$. Then either
		$$\scl_{G_\Gamma}(g)\ge \frac{1}{2}-\frac{1}{n},$$
		or $\Gamma$ contains a complete subgraph $\Lambda$ with vertex set $\{v_1,\ldots, v_m\}$. In the latter case, we have
		$$\scl_{G_\Gamma}(g)=\scl_{G_\Lambda}(g)=\max \scl_{G_i}(g_i).$$
	\end{theorem}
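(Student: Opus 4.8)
The plan is to exhibit a graph product $G_\Gamma$ as a graph of groups in a way that lets us invoke the strong gap theorem (Theorem~\ref{thm: $n$-RTF gap, strong version}) for the lower bound, and Theorem~\ref{thm: vert scl compute} together with Theorem~\ref{thm: gap for graph products, element-wise statement}'s companion results for the exact computation in the degenerate case. Concretely, pick a vertex $v\in V$ that occurs among $\{v_1,\dots,v_m\}$ (if no $v_i$ repeats this is any one of them); let $\mathrm{lk}(v)$ be its link and $\mathrm{st}(v)=\{v\}\cup\mathrm{lk}(v)$ its star. Then $G_\Gamma$ splits as an amalgamated product $G_{\Gamma\setminus v}\star_{G_{\mathrm{lk}(v)}} G_{\mathrm{st}(v)}$, where $G_{\mathrm{st}(v)}=G_v\times G_{\mathrm{lk}(v)}$. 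The key structural fact I would use is that $G_{\mathrm{lk}(v)}$ is \emph{left relatively convex} — hence relatively torsion-free — in both $G_{\Gamma\setminus v}$ and $G_{\mathrm{st}(v)}$; this is a known property of graph products (it follows from the retraction $G_\Gamma\to G_{\mathrm{lk}(v)}$ killing all non-link generators, combined with the order-theoretic criterion, cf.\ the discussion after Definition~\ref{defn: n-rft}). Iterating over all vertices builds a tree on which $G_\Gamma$ acts with all edge stabilizers relatively convex in their vertex stabilizers.

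The first step is to translate "$g=g_1\cdots g_m$ cyclically reduced with each $g_i$ of order $\ge n$" into a statement about backtracks of the corresponding tight loop $\gamma$ in the standard realization $X$. A cyclically reduced word that is not supported on a complete subgraph must, after choosing the Bass--Serre tree appropriately, be represented by a hyperbolic element: the syllables force $\gamma$ to traverse edge spaces nontrivially and cyclically, and a backtrack at a vertex space $X_v$ through edge space $X_e$ produces a winding number of the form $g_i^{\pm 1}$ (or a product of $g_i$'s lying in $G_v\setminus G_e$). The claim to verify is that such a winding number is not relative $k$-torsion in $(G_v,G_e)$ for $k<n$: since $G_e=G_{\mathrm{lk}(v)}$ retracts off $G_v=G_v\times G_{\mathrm{lk}(v)}$ — wait, here $G_v$ in the splitting is $G_{\mathrm{st}(v)}$ — the winding number projects to an honest power $g_i^{\pm 1}$ in the retract $G_{v}$ of order $\ge n$, so a relative $k$-torsion relation $w h_1\cdots w h_k=1$ with $h_j\in G_e$ would project to $g_i^{\pm k}=1$, forcing $k\ge n$. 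That is exactly the hypothesis of the strong gap theorem, yielding $\scl_{G_\Gamma}(g)\ge \tfrac12-\tfrac1n$.

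The remaining case is when $g$ \emph{is} supported on a complete subgraph $\Lambda$ with vertex set $\{v_1,\dots,v_m\}$, so $G_\Lambda=G_{v_1}\times\cdots\times G_{v_m}$ and $g=(g_1,\dots,g_m)$ under this direct product decomposition (after rewriting the cyclically reduced word, commuting syllables together). Here I would first use monotonicity of $\scl$ under the retraction $G_\Gamma\to G_\Lambda$ to get $\scl_{G_\Gamma}(g)\le\scl_{G_\Lambda}(g)$, and the inclusion $G_\Lambda\hookrightarrow G_\Gamma$ (which admits a retraction) to get $\scl_{G_\Gamma}(g)=\scl_{G_\Lambda}(g)$. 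Then $\scl$ of an element of a direct product equals the max of the $\scl$'s of its coordinates — this is the standard fact that $\scl_{A\times B}(a,b)=\max(\scl_A(a),\scl_B(b))$, provable via Bavard duality by pulling back the optimal homogeneous quasimorphism from the coordinate realizing the max, and conversely bounding any homogeneous quasimorphism on $A\times B$ by its restrictions. Applying this inductively over the $m$ direct factors gives $\scl_{G_\Lambda}(g)=\max_i\scl_{G_{v_i}}(g_i)$, completing the proof.

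The main obstacle I anticipate is the bookkeeping in the first nondegenerate case: namely showing that a cyclically reduced word not supported on a complete subgraph genuinely gives a \emph{hyperbolic}, tight loop whose backtrack winding numbers have the claimed order — one has to be careful that commuting relations in the graph product don't let the loop be homotoped to reduce the order of a winding number or to create an unexpected backtrack through a "larger" edge space. Handling this cleanly likely requires choosing the right vertex to split at (one whose generator actually appears in $g$ with a syllable not absorbed into the link), and possibly an induction on $|V|$ where the inductive hypothesis supplies the relative-convexity and the gap for the smaller graph product $G_{\Gamma\setminus v}$. The degenerate-case computation, by contrast, should be routine given the direct-product $\scl$ formula and retraction monotonicity.
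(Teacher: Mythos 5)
Your overall skeleton does match the paper's argument: split $G_\Gamma$ at a support vertex as $G_{\mathrm{st}(v)}\star_{G_{\mathrm{lk}(v)}}G_{\Gamma\setminus v}$, apply the strong gap theorem (Theorem~\ref{thm: $n$-RTF gap, strong version}) when some $v_i$ lies outside $\mathrm{st}(v)$, and handle the complete-subgraph case by the retraction to $G_\Lambda$ plus the direct-product formula (Lemma~\ref{lemma: basic prop of scl}); that last part of your proposal is essentially the paper's and is fine.

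The genuine gap is in how you certify the relative-torsion hypotheses, and the step you defer as ``bookkeeping'' is in fact the heart of the proof. First, your ``key structural fact'' that $G_{\mathrm{lk}(v)}$ is left relatively convex in $G_{\mathrm{st}(v)}$ and $G_{\Gamma\setminus v}$ is false once vertex groups have torsion (e.g.\ $G_v=\Z/n$: the coset space $G_{\mathrm{st}(v)}/G_{\mathrm{lk}(v)}\cong G_v$ admits no invariant order, since left-orderable groups are torsion-free), and even where it holds it would only feed the $n=\infty$ case; indeed the edge group $G_{\mathrm{lk}(v)}$ is in general not even $n$-RTF in $G_{\mathrm{st}(v)}=G_v\times G_{\mathrm{lk}(v)}$, because $G_v$ may contain small torsion not appearing in $g$, so the weak version (Theorem~\ref{thm: $n$-RTF gap, weak version}) is unavailable and one must verify the $n$-RTF condition syllable-by-syllable for the specific normal form of $g$. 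Your substitute projection argument handles only the $G_{\mathrm{st}(v)}$-side winding numbers, which are a single syllable $g_i$ times a link element; there killing the link factor does force $k\ge n$. But the winding numbers on the $G_{\Gamma\setminus v}$ side are products of several syllables from different vertex groups, interleaved with conjugators $h_j\in G_{\mathrm{lk}(v)}$, and no single retraction cleanly reduces a relation $wh_1\cdots wh_k=1$ to a power of one syllable: retractions can collapse reduced words (deleting syllables may create cancellation), and the $h_j$ need not die under the retraction onto the support of $w$. This is exactly what the paper's Lemma~\ref{lemma: graph products RTF} supplies, by induction on syllable length using the reduced-word analysis for amalgams in Lemma~\ref{lemma: RTF in graphs of groups, tech} (controlling how prefixes and suffixes of adjacent copies of $w$ can cancel across the edge group). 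Without this lemma, or an equivalent argument that multi-syllable cyclically reduced words with all syllables of order at least $n$ are $n$-RTF relative to the link subgroup, your first case does not close.
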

The estimate is sharp: For $g_v\in G_v$ of order $n\ge2$ and $g_u\in G_u$ of order $m\ge 2$ with $u$ not adjacent to $v$ in $\Gamma$, we have $\scl_{G_\Gamma}([g_u,g_v])=\frac{1}{2}-\frac{1}{\min(m,n)}$; See Remark \ref{rmk: graph product gap sharp}. In particular, for a collection of groups with a uniform spectral gap and without $2$-torsion, their graph product has a spectral gap.

In the special case of right-angled Artin groups, this provides a new proof of the sharp $1/2$ gap \cite{Heuer} that is topological in nature.

\subsection{$3$-manifold groups}\label{subsec: intro, 3mfd}

Let $G$ be the fundamental group of a closed oriented connected $3$-manifold $M$. The prime decomposition of $M$ canonically splits $G$ as a free product. For each \emph{non-geometric} prime factor, the corresponding free factor has the structure of a graph of groups by the JSJ decomposition, where each vertex group is the fundamental group of a geometric $3$-manifold by the geometrization theorem. 

Using this structure, we prove Theorem \ref{theorem: 3 manifold gap intro} for any $3$-manifold group in Section \ref{sec: spectral gap 3 mnfd}. This positively answers a question that Genevieve Walsh asked about the existence of spectral gaps of $3$-manifolds after a talk by Joel Louwsma at an AMS sectional meeting in 2017.

Although the spectral gap in Theorem \ref{theorem: 3 manifold gap intro} cannot be uniform, its proof implies that elements with scl less than $1/48$ must take certain special forms, and it allows us to classify elements with zero scl; See Theorem \ref{thm: small scl in 3mfds}. Besides, as is suggested by Michael Hull, prime $3$-manifolds only with hyperbolic pieces in the JSJ decomposition have finitely many conjugacy classes with scl strictly less than $1/48$ (Corollary \ref{cor: finitely many conj class with small scl}).

For hyperbolic elements in prime factors, we also have a uniform gap $1/48$ using the acylindricity of the action and a gap theorem of Clay--Forester--Louwsma \cite[Theorem 6.11]{CFL16}. This gap can be improved to $1/6$ unless the prime factor contains in its geometric decomposition either the twisted $I$-bundle over a Klein bottle or a Seifert fibered space over a hyperbolic orbifold that contains no cone points of even order. This is accomplished by using geometry to verify the $3$-RTF condition in Theorem \ref{theorem: gaps graph of groups hyperbolic}.

The proof of Theorem \ref{theorem: 3 manifold gap intro} relies heavily on estimates of scl in vertex groups (Theorem \ref{thm: vertex scl for non-geometric prime factors}). On the one hand this uses a simple estimate in terms of \emph{relative} stable commutator length (Lemma \ref{lemma: simple estimate of scl in vertex groups}) together with generalized versions of earlier gap results of hyperbolic groups \cite{Cal:sclhypmfd,CF:sclhypgrp}. On the other hand, this relies on our characterization of scl in edge groups (Corollary \ref{cor: scl edge compute}), where the simple estimate above is useless.

\subsection*{Organization of the paper}
This article is organized as follows. In Section \ref{sec:background} we recall basic or well known results on stable commutator length and its relative version,
including quasimorphisms and Bavard's duality.
In Section \ref{sec:scl on hyperbolic elts} we develop a linear programming duality method to estimate $\scl$ of hyperbolic elements in graphs of groups and prove Theorem \ref{theorem: gaps graph of groups hyperbolic}. 
Subsection \ref{subsec: n-RTF} includes a discussion on the crucial $n$-RTF conditions and key examples.
Subsection \ref{subsec:extremal qm for lrc subgroups} is devoted to the construction of explicit quasimorphisms that prove our spectral gap Theorem \ref{theorem: gaps graph of groups hyperbolic} under the stronger left relatively convex assumption.
In Section \ref{sec: scl vertex and edge group} we compute stable commutator length for edge group elements in graphs of groups.
Finally, we apply our results to obtain spectral gaps in graph products (Section \ref{sec:spectral gap graph products}) and $3$-manifolds (Section \ref{sec: spectral gap 3 mnfd}).

\subsection*{Acknowledgments}
	We would like to thank Martin Bridson, Danny Calegari and Jason Manning for many helpful suggestions and encouragement. We also thank Lei Chen, Max Forester, Michael Hull, Qixiao Ma, Ben McReynolds, Genevieve Walsh and Yiwen Zhou for useful conversations. We are grateful to the anonymous referee for careful reading and valuable suggestions that improve the exposition.

	\section{Background} \label{sec:background}

	\subsection{Stable commutator length} \label{subsec:scl basic}
	\begin{defn}\label{def: scl alg}
		Let $G$ be a group and $G'$ its commutator subgroup. Each element $g\in G'$ may be written as a product of commutators $g=[a_1,b_1]\cdots[a_k,b_k]$. The smallest such $k$ is called the \emph{commutator length of $g$} and denoted by $\cl_G(g)$. The \emph{stable commutator length} is the limit
		$$\scl_G(g)\defeq \lim_{n \to \infty} \frac{\cl_G(g^n)}{n}.
		$$
	\end{defn}
It is easy to see that $\cl_G$ is subadditive and thus that the limit above always exists.
	
Stable commutator length can be equivalently defined and generalized using \emph{admissible surfaces}. 

	An integral (rational, or real resp.) \emph{chain} $c=\sum c_ig_i$ is a finite formal sum of group elements $g_1,\ldots, g_m\in G$ with integral (rational, or real resp.) coefficients $c_i$. 
	
	Let $X$ be a space with fundamental group $G$ and let $c=\sum c_ig_i$ be a rational chain. Represent each $g_i$ by a loop $\gamma_i: S^1_i\to X$. An \emph{admissible surface} of degree $n(f)\ge1$ is a map $f:S\to X$ from a compact oriented surface $S$ with boundary $\partial S$ such that the following diagram commutes and $\partial f[\partial S]=n(f)\sum_i c_i[S^1_i]$,
	\begin{center}
		\begin{tikzcd}
			\partial S \arrow[r, "\partial f"] \arrow[d, hook] 	& \sqcup S^1_i \arrow[d, "\sqcup \gamma_i"]\\
			S 			\arrow[r,"f"] 									& X
		\end{tikzcd}
	\end{center}
	where $\partial S\inj S$ is the inclusion map. 

Such surfaces exist when the rational chain $c$ is null-homologous, i.e. $[c]=0\in H_1(G;\Q)$. 
An admissible surface $S$ is \emph{monotone} if $\partial f$ is a covering map of positive degree on every boundary component of $S$.
	
	
	For any \emph{connected} orientable compact surface $S$, let $\chi^-(S)$ be $\chi(S)$ unless $S$ is a disk or a sphere, in which case we set $\chi^-(S)=0$. If $S$ is disconnected, we define $\chi^-(S)$ as the sum of $\chi^-(\Sigma)$ over all components $\Sigma$ of $S$. Equivalently, $\chi^-(S)$ is the Euler characteristic of $S$ after removing disk and sphere components.
	
	\begin{defn}\label{def: scl top}
		For a null-homologous rational chain $c=\sum c_ig_i$ in $G$ as above, its stable commutator length is defined as
		$$\scl_G(c)=\inf_S \frac{-\chi^-(S)}{2n(f)},$$
		where the infimum is taken over all admissible surfaces $S$, where $n(f)$ is the corresponding degree. 
		If $c$ is nontrivial in the first (rational) homology, we make the convention that $\scl_G(c)=+\infty$.
	\end{defn}
	
	By \cite[Proposition 2.13]{Cal:sclbook}, the infimum remains the same if we restrict to monotone admissible surfaces,
	For the rest of this paper we mostly use monotone admissible surfaces.
	If a chain is a single element $c = g \in G'$, then	this topological definition agrees with Definition \ref{def: scl alg}.

	Let $C_1(G)$ be the space of real chains in $G$, i.e. the $\R$-vector space with basis $G$. 
	Let $H(G)$ be the subspace spanned by all elements of the form $g^n - ng$ for $g \in G$ and $n \in \Z$ and 
	$hgh^{-1}-g$ for $g, h \in G$.
	There is a well defined linear map $h_G:C_1^H(G)\to H_1(G;\R)$ sending each chain to its homology class, where $C_1^H(G):=C_1(G)/H(G)$.
	We denote the kernel by $B_1^H(G)$. Definition~\ref{def: scl top} extends uniquely to null-homologous real chains by continuity:
	\begin{defn}\label{def: scl of real chain}
		For any null-homologous real chain $c=\sum c_i g_i\in C_1(G)$, there are null-homologous rational chains $c'=\sum c'_ig_i$, $c'_i\in\Q$ with each $c'_i$ arbitrarily close to $c_i$.
		As $c'_i\in\Q$ tends to $c_i$ for all $i$, for such chains $\scl_G(c')$ converges to a unique number, which agrees with $\scl_G(c)$ if $c$ is itself rational and is defined to be $\scl_G(c)$ if $c$ is irrational.
	\end{defn}
	
	Scl vanishes on $H(G)$ and induces a semi-norm on $B_1^H(G)$. See \cite[Chapter 2]{Cal:sclbook}. 
	Sometimes, $\scl$ is a genuine norm, for example if $G$ is word-hyperbolic \cite{CF:sclhypgrp}.
	
	Here are some basic properties of scl that easily follow from the definitions.
	\begin{lemma}\label{lemma: basic prop of scl}
		\leavevmode
		\begin{enumerate}
			\item (Stability) $\scl_G(g^n)=n\cdot\scl_G(g)$;
			\item (Monotonicity) For any homomorphism $\phi:G\to H$ and any chain $c\in C_1(G)$, we have $\scl_G(c)\ge \scl_H(\phi(c))$;
			\item (Retract) If a subgroup $H\le G$ is a retract, i.e. there is a homomorphism $r:G\to H$ with $r|_H=id$, then $\scl_H(c)=\scl_G(c)$ for all chains $c\in C_1(H)$;
			\item (Direct product) For $a\in A'$ and $b\in B'$ in the direct product $G=A\times B$, we have $\cl_G(a,b)=\max\{\cl_A(a),\cl_B(b)\}$ and $\scl_G(a,b)=\max\{\scl_A(a),\scl_B(b)\}$.\label{item: basic prop direct prod}
		\end{enumerate}
	\end{lemma}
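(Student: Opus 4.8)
The plan is to derive all four items directly from the two equivalent definitions of scl (Definition \ref{def: scl alg} via commutators, Definition \ref{def: scl top} via admissible surfaces), choosing whichever is more convenient in each case. For (1), I would use the limit definition: for fixed $n$ the numbers $\cl_G(g^{nk})/k = n\cdot\cl_G(g^{nk})/(nk)$ form (up to the factor $n$) a subsequence of $m\mapsto\cl_G(g^m)/m$, which converges to $\scl_G(g)$; hence $\scl_G(g^n)=n\,\scl_G(g)$. (Equivalently, a monotone admissible surface of degree $d$ for $g^n$ is exactly a monotone admissible surface of degree $nd$ for $g$ with the same $\chi^-$, so the two infima in Definition \ref{def: scl top} agree after rescaling.) For (2) the surface definition is cleanest: a homomorphism $\phi\colon G\to H$ is realized by a based map $K(G,1)\to K(H,1)$, and post-composing any admissible surface $f\colon S\to K(G,1)$ for a chain $c$ with this map gives an admissible surface for $\phi(c)$ of the same degree and with the same $\chi^-(S)$; taking infima yields $\scl_H(\phi(c))\le\scl_G(c)$. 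One checks $\phi$ and $c$ descend compatibly to $C_1^H$, which is immediate. Then (3) follows by applying (2) to the inclusion $\iota\colon H\inj G$, giving $\scl_H(c)\ge\scl_G(c)$, and to the retraction $r\colon G\to H$, giving $\scl_G(c)\ge\scl_H(r(c))=\scl_H(c)$ since $r\circ\iota=\mathrm{id}_H$; combining gives equality.

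For (4) I would first establish the $\cl$ identity. Write $a=\prod_{i=1}^{p}[x_i,y_i]$ and $b=\prod_{j=1}^{q}[u_j,v_j]$ with $p=\cl_A(a)\ge q=\cl_B(b)$ (without loss of generality), and pad the second product with $p-q$ trivial commutators $[1_B,1_B]$ so that both have length $p$; then
$$(a,b)=\prod_{i=1}^{p}\big[(x_i,u_i),(y_i,v_i)\big]\quad\text{in }A\times B,$$
so $\cl_{A\times B}(a,b)\le\max\{\cl_A(a),\cl_B(b)\}$. Conversely the coordinate projections $A\times B\to A$ and $A\times B\to B$ send $(a,b)$ to $a$ and $b$, so the $\cl$-version of monotonicity (proved exactly as in (2), applying $\phi$ to a commutator expression) gives $\cl_{A\times B}(a,b)\ge\max\{\cl_A(a),\cl_B(b)\}$, hence equality. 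Applying this identity to $(a,b)^n=(a^n,b^n)$ and dividing by $n$ gives $\cl_{A\times B}((a,b)^n)/n=\max\{\cl_A(a^n)/n,\cl_B(b^n)/n\}$; since the maximum of two convergent sequences converges to the maximum of the limits, letting $n\to\infty$ yields $\scl_{A\times B}(a,b)=\max\{\scl_A(a),\scl_B(b)\}$.

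I do not expect a serious obstacle here: the lemma is essentially folklore, and each item is a one- or two-line deduction. The only mildly delicate points are the padding trick in (4) (equalizing the number of commutators before combining) together with the elementary fact that $\max$ commutes with limits, and ensuring that the chain-level assertions in (2) and (3) are phrased on the quotient $C_1^H$ so that monotonicity descends correctly.
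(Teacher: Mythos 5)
Your proposal is correct, and it supplies exactly the routine arguments the paper omits: the lemma is stated there without proof ("easily follows from the definitions"), and your limit/subsequence argument for stability, the push-forward of admissible surfaces (or of commutator expressions) for monotonicity, the two-sided application of monotonicity for retracts, and the padding trick plus coordinatewise splitting of commutators for direct products are the standard deductions being implicitly invoked. One small caveat: in your parenthetical alternative for (1), a monotone admissible surface of degree $nd$ for $g$ is \emph{not} always admissible of degree $d$ for $g^n$ (boundary components need winding degrees divisible by $n$), so the claimed "exactly" fails in one direction; this does not matter, since your primary argument via $\cl_G(g^{nk})/k$ already proves the equality.
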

	
	Note that finite-order elements can be removed from a chain without changing scl. Thus we will often assume elements in chains to have infinite order.
	
	\begin{defn} \label{defn: spectral gap}
		A group $G$ has a \emph{spectral gap} $C>0$ if for any $g\in G$ either $\scl_G(g)\ge C$ or $\scl_G(g)=0$. If in addition, the case $\scl_G(g)=0$ only occurs when $g$ is torsion, we say $G$ has a \emph{strong spectral gap} $C$.
	\end{defn}
	
	Many classes of groups are known to have a spectral gap.
	\begin{enumerate}
		\item $G$ trivially has a spectral gap $C$ for any $C>0$ if $\scl_G$ vanishes on $B_1^H(G)$, which is the case if $G$ is amenable \cite[Theorem 2.47]{Cal:sclbook} or an irreducible lattice in a semisimple Lie group of higher rank \cite{BurgerMonod,BurgerMonod02}. The gap is strong if $G$ is abelian;
		\item $G$ has a strong spectral gap $1/2$ if $G$ is residually free \cite{DH91}.
		\item $\delta$-hyperbolic groups have gaps depending on the number of generators and $\delta$ \cite{CF:sclhypgrp}. The gap is strong if the group is also torsion-free.
		\item Any finite index subgroup of the mapping class group of a (possibly punctured) closed surface has a spectral gap \cite{BBF}. Moreover, each such mapping class group contains a finite index (torsion-free) subgroup $G$ that has a strong spectral gap $\epsilon>0$.
		\item All right-angled Artin groups have a strong gap $1/2$ \cite{Heuer}; See \cite{RAAGgap1,RAAGgap2} for earlier weaker estimates.
		\item All Baumslag--Solitar groups have a gap $1/12$ \cite{CFL16}.
	\end{enumerate}
	Our Theorem \ref{theorem: 3 manifold gap intro} adds all $3$-manifold groups to the list.
	
	The gap property is essentially preserved under taking free products.
	\begin{lemma}[Clay--Forester--Louwsma]\label{lemma: freeprod preserves gap}
		Let $G=\star_\lambda G_\lambda$ be a free product. Then for any $g\in G$ not conjugate into any free factor, we have either $\scl_G(g)=0$ or $\scl_G(g)\ge 1/12$. Moreover, $\scl_G(g)=0$ if and only if $g$ is conjugate to $g^{-1}$. Thus if the groups $G_\lambda$ have a uniform spectral gap $C>0$, then $G$ has a gap $\min\{C,1/12\}$.
	\end{lemma}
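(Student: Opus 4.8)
The statement to prove is Lemma~\ref{lemma: freeprod preserves gap}, attributed to Clay--Forester--Louwsma \cite{CFL16}. Since the lemma is explicitly credited to their paper, the plan is to deduce it from their published results rather than re-derive it from scratch, and to supply only the short glue that assembles their statements into the form needed here.

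The plan is as follows. First I would invoke the main gap theorem of \cite{CFL16} for free products: for $G = \star_\lambda G_\lambda$ and $g \in G$ not conjugate into any free factor, either $\scl_G(g) = 0$ or $\scl_G(g) \ge 1/12$. This is precisely \cite[Theorem~6.11]{CFL16} (the ``$1/12$ gap''), so no argument is needed beyond the citation. Second, for the characterization of the vanishing case, I would appeal to the accompanying result in \cite{CFL16} that $\scl_G(g) = 0$ if and only if $g$ is conjugate to $g^{-1}$ in $G$; one direction is elementary (if $g$ is conjugate to $g^{-1}$ then $\scl_G(g) = \scl_G(g^{-1}) = \scl_G(g)$ forces nothing directly, so instead use that a chain of the form $g + g$ is null-homologous and bounds an obvious surface, giving $\scl_G(g+g^{-1})=0$ hence $\scl_G(g)=0$ when $g \sim g^{-1}$), and the reverse direction is the content of their classification of chains of vanishing scl in free products.

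Third, and this is the only genuinely new assembly step, I would prove the ``Moreover'' clause: if each $G_\lambda$ has spectral gap $C > 0$, then $G$ has gap $\min\{C, 1/12\}$. Let $g \in G$ with $\scl_G(g) < \min\{C,1/12\}$; I must show $\scl_G(g) = 0$. There are two cases. If $g$ is conjugate into some factor $G_\lambda$, then by conjugation-invariance and monotonicity under the retraction $G \to G_\lambda$ (Lemma~\ref{lemma: basic prop of scl}(3), since each free factor is a retract of the free product), $\scl_G(g) = \scl_{G_\lambda}(g')$ for a conjugate $g' \in G_\lambda$; since $\scl_{G_\lambda}(g') < C$, the gap hypothesis on $G_\lambda$ gives $\scl_{G_\lambda}(g') = 0$, hence $\scl_G(g) = 0$. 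If $g$ is not conjugate into any factor, then the $1/12$ gap from the first step applies: $\scl_G(g) < 1/12$ forces $\scl_G(g) = 0$. In either case $\scl_G(g) = 0$, which is what the spectral gap $\min\{C,1/12\}$ demands.

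I do not expect a serious obstacle here, since the heavy lifting is entirely in \cite{CFL16}; the only subtlety worth spelling out is that a free factor $G_\lambda \le \star_\mu G_\mu$ is indeed a retract (the retraction kills all the other factors), so that Lemma~\ref{lemma: basic prop of scl}(3) applies and $\scl$ is computed correctly inside the factor. One should also note that the conjugation-invariance of $\scl_G$ is used silently to replace $g$ by a conjugate lying in the factor; this is immediate from the definition (conjugation is an automorphism). If one wanted a fully self-contained treatment one could instead cite the original $1/12$ gap argument, which proceeds via analyzing admissible surfaces in a wedge of the $K(G_\lambda,1)$'s and bounding $-\chi^-$ against the degree using a Gauss--Bonnet/combinatorial count of the pieces into which the surface is cut by the edge space; but given that the lemma is presented as a known result of Clay--Forester--Louwsma, the citation-plus-glue approach above is the appropriate level of detail.
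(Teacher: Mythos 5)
Your proposal is correct and follows essentially the same route as the paper: cite Clay--Forester--Louwsma for elements not conjugate into any factor (the $1/12$ gap together with the characterization that $\scl_G(g)=0$ exactly when $g$ is conjugate to $g^{-1}$), and handle elements conjugate into a factor via the retraction of $G$ onto that factor combined with the uniform gap $C$. One small correction: the relevant result in \cite{CFL16} is Theorem~6.9 (the free-product gap theorem, which the paper cites), not Theorem~6.11, which is the acylindricity statement and, if applied to the Bass--Serre tree of a free product, would only yield the weaker constant $1/24$.
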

	\begin{proof}
		If $g$ is not conjugate into any free factor, then it either satisfies the so-called well-aligned condition in \cite{CFL16} or is conjugate to its inverse. For such $g$, it follows from \cite[Theorem 6.9]{CFL16} that either $\scl_G(g)\ge 1/12$ or $\scl_G(g)=0$, corresponding to the two situations. Assuming the free factors have a uniform gap $C$, if an element $g\in G$ is conjugate into some free factor $G_\lambda$, then $\scl_G(g)=\scl_{G_\lambda}(g)\ge C$ since $G_\lambda$ is a retract of $G$.
	\end{proof}
	The constant $1/12$ is optimal in general, but can be improved if there is no torsion of small order. See \cite{Chen:sclfpgap} or \cite{IK}.
	
	Many other groups have a uniform positive lower bound on \emph{most} elements. They often satisfy a spectral gap in a relative sense, which we introduce in Subsection \ref{subsec: rel scl}.
	
	One can obtain rigidity results using spectral gap theorems. The following proof is not original but we cannot find it in the literature.
	\begin{thm}[Farb--Kaimanovich--Masur \cite{homrigidityKM,homrigidityFM}]\label{thm: FarbKaimanovichMasur}
		For any irreducible higher rank lattice $\Gamma$ and a mapping class group $\mathrm{Mod}(S)$, any homomorphism $h:\Gamma\to \mathrm{Mod}(S)$ has finite image.
	\end{thm}
	\begin{proof}
		By \cite[Theorem B]{BBF}, there is $\epsilon>0$ and a finite index subgroup $G\le\mathrm{Mod}(S)$ such that $\scl_G(g)\ge\epsilon$ for all $g\neq id\in G$.
		We may replace $\Gamma$ by its finite index subgroup $\Lambda\defeq h^{-1} G$, for which the same assumption holds.
		So we may assume without loss of generality that $h(\Gamma)\subset G$.
		By theorems of Burger--Monod \cite{BurgerMonod,BurgerMonod02} and the fact that higher rank lattices have finite abelianization, we have $\scl_\Gamma(\gamma)= 0$ for all $\gamma\in \Gamma$.
		Hence by monotonicity, $\scl_G(h(\gamma))\le \scl_\Gamma(\gamma)=0$, so $\scl_G(h(\gamma))=0$ and $h(\gamma)=id$ for all $\gamma\in \Gamma$.
		Since we passed to a finite index subgroup at the beginning, we only conclude that $h$ has finite image in general.		
	\end{proof}

	\subsection{Quasimorphisms and Bavard's duality} \label{subsec:QM and Bavard}
	Stable commutator length is dual to the so-called homogeneous quasimorphisms.
	A \emph{quasimorphism} on a group $G$ is a map $\phi: G \to \R$ for which
	$$
	D(\phi)\defeq \sup_{g,h\in G} |\phi(g) + \phi(h) - \phi(gh) | <\infty,
	$$
	and $D(\phi)$ is called the \emph{defect} of $\phi$.
	A quasimorphism $\phi: G \to \R$ is \emph{homogeneous} if for all $g \in G$ and $n \in \Z$ we have $\phi(g^n) = n \cdot \phi(g)$.
	Homogeneous quasimorphisms form a vector space $\Qc(G)$ under pointwise addition and $\R$-scalar multiplication.
	Homomorphisms to $\R$ are trivial examples of homogeneous quasimorphisms, which form a vector subspace $H^1(G)\le \Qc(G)$.
	
	We collect some well known properties for homogeneous quasimorphisms:
	\begin{prop}[\cite{Cal:sclbook}]\label{prop: split hqm on comm elem}
		Let $\phi: G \to \R$ be a homogeneous quasimorphism. Then $\phi$ is constant on each conjugacy class and restricts to a homomorphism on each amenable subgroup $H \le G$.
		In particular, $\phi(gh)=\phi(g)+\phi(h)$ if $g$ and $h$ commute.
	\end{prop}

	We can pullback a homogeneous quasimorphism $\phi\in \Qc(G)$ by any homomorphism $f:H\to G$ via $f^*\phi\defeq \phi\circ f\in \Qc(H)$. 
	It follows from the definition that $D(f^*\phi)\le D(\phi)$.
 
%
%

	Bavard's duality theorem \cite{bavard} provides the connection to stable commutator length. 
	Here we state the generalized version for chains \cite[Theorem 2.79]{Cal:sclbook}.
	\begin{thm}[\protect{Bavard's duality}] \label{thm:Bavards duality}
		Let $c$ be a null homologous chain in a group $G$. Then
		\[
		\scl(c) = \sup_{\phi} \frac{|\phi(c)|}{2 D(\phi)},
		\]
		where the supremum is taken over all homogeneous quasimorphisms $\phi\in\Qc(G)$, 
		and $\phi(c)$ is defined by linearity.
	\end{thm}
	Equivalently, this shows that the quotient space of $B_1(G)$ by the subspace of chains with zero scl, equipped the induced scl norm, has dual space isomorphic to $\Qc(G)/H^1(G)$ with norm $2D(\cdot)$; See \cite[Sections 2.4 and 2.5]{Cal:sclbook}.
	
	For a fixed $g \in G$ the supremum is achieved by an \emph{extremal} quasimorphism $\phi$ \cite[Proposition 2.88]{Cal:sclbook}. However, extremal quasimorphisms are notoriously hard to construct.
	For free groups, explicit constructions of extremal quasimorphisms are known for words with scl value $1/2$ but not in general; See \cite{Heuer} and also \cite{calegari:extremal, CFL16}.
	
	Note that, by Bavard's duality, we obtain a lower bound $\scl(c)\ge C$ once we construct a homogeneous quasimorphism $\phi\in \Qc(G)$ with $D(\phi)\le 1$ and $\phi(c)\ge 2C$. In this case, we say the quasimorphism $\phi$ \emph{detects} the lower bound $C$. Similarly, one may construct a family of quasimorphisms to obtain a spectral gap $C$, in which case we say such quasimorphisms detect the spectral gap $C$.
	
	\subsection{Relative stable commutator length}\label{subsec: rel scl}
	
	We will use relative stable commutator length to state our results in the most natural and the strongest form. It was informally mentioned or implicitly used in \cite{CF:sclhypgrp,CFL16,IK}, and it was formalized and shown to be useful in scl computations in \cite{Chen:sclBS}.
	
	\begin{defn} \label{defn:relative scl}
		Let $\{G_\lambda\}_{\lambda\in \Lambda}$ be a collection of subgroups of $G$. Let $C_1(\{G_\lambda\})$ be the subspace of $C_1(G)$ consisting of chains of the form $\sum_\lambda c_\lambda$ with $c_\lambda\in C_1(G_\lambda)$, where all but finitely many $c_\lambda$ vanish in each summation. 
		
		For any chain $c\in C_1(G)$, define its \emph{relative stable commutator length} to be
		$$\scl_{(G,\{G_\lambda\})}(c)\defeq \inf \{\scl_{G}(c+c') : c'\in C_1(\{G_\lambda\})\}.$$
	\end{defn}
	
	Let $H_1(\{G_\lambda\})\le H_1(G;\R)$ be the subspace of homology classes represented by chains in $C_1(\{G_\lambda\})$. Recall that we have a linear map $h_G:C_1^H(G)\to H_1(G)$ taking chains to their homology classes. Denote $B_1^H(G,\{G_\lambda\})\defeq h_G^{-1}H_1(\{G_\lambda\})$, which contains $B_1^H(G)$ as a subspace. Then $\scl_{(G,\{G_\lambda\})}$ is finite on $B_1^H(G,\{G_\lambda\})$ and is a semi-norm.
	
	The following basic properties of relative scl will be used later.
	\begin{lemma}\label{lemma: basic prop of rel scl}
		Let $G$ be a group and $\{G_\lambda\}$ be a collection of subgroups.
		\begin{enumerate}
			\item $\scl_G(c)\ge \scl_{(G,\{G_\lambda\})}(c)$ for any $c\in C_1^H(G)$.
			\item If $g^n$ is conjugate into some $G_\lambda$ for some integer $n\neq0$, then $\scl_{(G,\{G_\lambda\})}(g)=0$.
			\item (Stability) For any $g\in G$, we have $\scl_{(G,\{G_\lambda\})}(g^n)=n\cdot\scl_{(G,\{G_\lambda\})}(g)$.
			\item (Monotonicity) Let $\phi:G\to H$ be a homomorphism such that $\phi(G_\lambda)\subset H_\lambda$ for a collection of subgroups $H_\lambda$ of $H$, then for any $c\in C_1(G)$ we have $$\scl_{(G,\{G_\lambda\})}(c)\ge\scl_{(H,\{H_\lambda\})}(\phi(c)).$$
		\end{enumerate}
	\end{lemma}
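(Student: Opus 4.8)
The plan is to deduce all four items directly from Definition~\ref{defn:relative scl}, which expresses $\scl_{(G,\{G_\lambda\})}(c)$ as an infimum of ordinary $\scl_G$ over the affine subspace $c+C_1(\{G_\lambda\})$, together with the corresponding properties of ordinary $\scl$ from Lemma~\ref{lemma: basic prop of scl} and the preceding discussion of $C_1^H(G)$, recalling that $\scl_G$ is insensitive to adding elements of $H(G)$ and is homogeneous. Item~(1) is immediate: taking $c'=0$ in the defining infimum gives $\scl_{(G,\{G_\lambda\})}(c)\le\scl_G(c)$. For the monotonicity statement~(4), I would first note that linearity of $\phi$ on chains and the hypothesis $\phi(G_\lambda)\subset H_\lambda$ give $\phi\big(C_1(\{G_\lambda\})\big)\subset C_1(\{H_\lambda\})$; then for each $c'\in C_1(\{G_\lambda\})$ ordinary monotonicity yields $\scl_G(c+c')\ge\scl_H\big(\phi(c)+\phi(c')\big)\ge\scl_{(H,\{H_\lambda\})}(\phi(c))$, and taking the infimum over $c'$ completes the proof.

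For the stability statement~(3), I would separate it into two observations. First, $g^n$ and $ng$ differ by $g^n-ng\in H(G)$, so $\scl_G(g^n+c')=\scl_G(ng+c')$ for every chain $c'$, and hence $\scl_{(G,\{G_\lambda\})}(g^n)=\scl_{(G,\{G_\lambda\})}(ng)$. Second, positive homogeneity of relative $\scl$ follows from the substitution $c'=tc''$, which is a bijection of the linear subspace $C_1(\{G_\lambda\})$ for $t\ne0$, combined with homogeneity of ordinary $\scl$: $\scl_{(G,\{G_\lambda\})}(tc)=\inf_{c''}\scl_G\big(t(c+c'')\big)=|t|\,\scl_{(G,\{G_\lambda\})}(c)$. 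Specializing $t=n$ and $c=g$ gives~(3) for $n\ge1$, the case $n=0$ being trivial.

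Finally, for~(2): by~(3) it suffices to show $\scl_{(G,\{G_\lambda\})}(g^n)=0$. Pick $h\in G$ with $k\defeq hg^nh^{-1}\in G_\lambda$ and take $c'=-k\in C_1(G_\lambda)\subset C_1(\{G_\lambda\})$. Then $g^n+c'=g^n-hg^nh^{-1}\in H(G)$, so $\scl_G(g^n+c')=0$, forcing $\scl_{(G,\{G_\lambda\})}(g^n)=0$. I do not expect a genuine obstacle anywhere; the only places calling for a little care are the two appeals to the quotient $C_1^H(G)$ in~(2) and~(3) — i.e. that one may add or remove elements of $H(G)$ without changing $\scl_G$ — and the reparametrization used for homogeneity in~(3).
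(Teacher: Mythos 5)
Your proof is correct, and it is essentially the argument the paper has in mind: the lemma is stated without proof as a routine consequence of Definition \ref{defn:relative scl} together with the invariance of $\scl_G$ under adding elements of $H(G)$, its homogeneity, and ordinary monotonicity, which is exactly what you verify. The only cosmetic point is in (2): since the hypothesis allows $n<0$, note that $(hg^nh^{-1})^{-1}=hg^{-n}h^{-1}\in G_\lambda$, so one may assume $n>0$ before invoking (3).
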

	
	For rational chains, relative scl can be computed using \emph{relative admissible surfaces}, which are admissible surfaces possibly with extra boundary components in $\{G_\lambda\}$. This is \cite[Proposition 2.9]{Chen:sclBS} stated as Lemma \ref{lemma: rel admissible} below.
	
	\begin{defn}
	Let  $c\in B_1^H(G,\{G_\lambda\})$ be a rational chain. A surface $S$ together with a specified collection of boundary components $\partial_0\subset \partial S$ is called \emph{relative admissible} for $c$ of degree $n>0$ if $\partial_0$ represents $[nc]\in C_1^H(G)$ and every other boundary component of $S$ represents an element conjugate into $G_\lambda$.
	\end{defn}
	\begin{lemma}[\cite{Chen:sclBS}]\label{lemma: rel admissible}
		For any rational chain $c\in B_1^H(G,\{G_\lambda\})$, we have
		$$\scl_{(G,\{G_\lambda\})}(c)=\inf \frac{-\chi^-(S)}{2n},$$
		where the infimum is taken over all relative admissible surfaces for $c$.
	\end{lemma}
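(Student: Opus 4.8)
The plan is to prove the two inequalities between $\scl_{(G,\{G_\lambda\})}(c)$ and $\inf_S\, -\chi^-(S)/2n$ separately, using throughout the dictionary that \emph{a boundary component of a surface representing an element conjugate into some $G_\lambda$} corresponds to \emph{a summand of a correction chain $c'\in C_1(\{G_\lambda\})$}. Several routine reductions are available: since both quantities are pseudo-norms (hence scale under $c\mapsto Nc$ and are continuous), one may assume $c$ is integral and restrict the correction chains appearing in infima to rational ones; and by \cite[Chapter 2]{Cal:sclbook} one may restrict to monotone admissible surfaces, so that every boundary component covers a single loop and therefore represents a well-defined conjugacy class in $G$.

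\emph{Step 1: $\scl_{(G,\{G_\lambda\})}(c)\le\inf_S\, -\chi^-(S)/2n$.} Let $S$ be any relative admissible surface for $c$ of degree $n$, with designated boundary $\partial_0$ representing $[nc]\in C_1^H(G)$ and the remaining boundary components representing elements $\bar\gamma_1,\dots,\bar\gamma_k$, each conjugate to some $\gamma_i\in G_{\lambda_i}$. Then $S$ is literally an admissible surface of degree $n$ for the chain $c+\tfrac1n\sum_i\bar\gamma_i$, which is null-homologous because $S$ is an admissible surface for it, and which has the same image in $C_1^H(G)$ as $c+c'$, where $c':=\tfrac1n\sum_i\gamma_i\in C_1(\{G_\lambda\})$ (the difference lies in $H(G)$). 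Since $\scl_G$ factors through $C_1^H(G)$, we obtain $\scl_{(G,\{G_\lambda\})}(c)\le\scl_G(c+c')=\scl_G\big(c+\tfrac1n\sum_i\bar\gamma_i\big)\le -\chi^-(S)/2n$; infimizing over $S$ gives Step 1.

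\emph{Step 2: the reverse inequality.} Fix a rational $c'\in C_1(\{G_\lambda\})$ such that $c+c'$ is null-homologous (otherwise $\scl_G(c+c')=\infty$ and there is nothing to prove) and choose an admissible surface $S$ for $c+c'$ of degree $n$ with $-\chi^-(S)/2n<\scl_G(c+c')+\varepsilon$. The task is to convert $S$ into a relative admissible surface for $c$ with the same degree and the same $\chi^-$. Put into $\partial_0$ the boundary components covering the loops of those group elements that occur in $c$ but not in $c'$; every other boundary component covers the loop of an element lying in some $G_\lambda$, hence is conjugate into some $G_\lambda$, and goes into the complement of $\partial_0$. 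The one defect is that for a group element occurring in \emph{both} $c$ and $c'$ the boundary of $S$ over its loop carries the combined degree rather than the part coming from $c$; this is repaired by attaching boundary-parallel annuli over that loop — one boundary of each annulus supplying the missing $\partial_0$-degree and the other, which is again conjugate into $G_\lambda$, joining the complement — which alters neither $\chi^-$ nor the degree. The resulting surface $S^+$ is relative admissible for $c$ with $-\chi^-(S^+)/2n<\scl_G(c+c')+\varepsilon$; letting $\varepsilon\to0$ and infimizing over $c'$ completes the proof.

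I expect no genuine obstacle here — this is precisely \cite[Proposition 2.9]{Chen:sclBS}, and its substance is the correspondence described in the first paragraph. The only delicate points are bookkeeping: tracking how the coefficients of a chain correspond to the degrees of boundary components, and, in Step 2, dealing with a group element common to $c$ and its correction $c'$ (handled by the harmless annulus attachment). The preliminary reduction to monotone admissible surfaces also deserves care, since it is what makes ``every other boundary component represents an element conjugate into $G_\lambda$'' a meaningful, checkable condition in the first place.
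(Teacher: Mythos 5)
Your proof is correct. The paper does not prove this lemma itself --- it quotes it as \cite[Proposition 2.9]{Chen:sclBS} --- and your argument is exactly the standard correspondence behind that result: in one direction the extra boundary components of a relative admissible surface supply a rational correction chain $c'\in C_1(\{G_\lambda\})$ making $S$ an honest admissible surface for $c+c'$, and in the other direction one splits the boundary of a monotone admissible surface for $c+c'$ according to whether a boundary component covers a loop of $c$ or of $c'$, with your zero-Euler-characteristic annulus insertion correctly handling elements common to $c$ and $c'$ (note the definition of relative admissible surfaces imposes no monotonicity on the non-$\partial_0$ components, so the negatively oriented annulus boundary is harmless) and the reduction to rational $c'$ being the routine density argument you indicate.
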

	
	The Bavard duality Theorem \ref{thm:Bavards duality} naturally generalizes to relative stable commutator length.
	\begin{lemma}[Relative Bavard's Duality]\label{lemma: Bavard duality for rel scl}
		For any chain $c\in B_1^H(G,\{G_\lambda\})$, we have
		$$\scl_{(G,\{G_\lambda\})}(c)=\sup \frac{f(c)}{2D(f)},$$
		where the supremum is taken over all homogeneous quasimorphisms $f$ on $G$ that vanish on $C_1(\{G_\lambda\})$.
	\end{lemma}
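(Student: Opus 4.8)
The plan is to adapt the Hahn--Banach proof of ordinary Bavard duality (Theorem~\ref{thm:Bavards duality}), carrying the constraint ``$f$ vanishes on $C_1(\{G_\lambda\})$'' through the argument by building it into the sublinear functional that dominates the extension. The inequality $\scl_{(G,\{G_\lambda\})}(c)\ge\sup$ is immediate: if $f$ is a homogeneous quasimorphism on $G$ vanishing on $C_1(\{G_\lambda\})$, then $f(c+c')=f(c)$ for every $c'\in C_1(\{G_\lambda\})$ with $c+c'$ null-homologous, so ordinary Bavard duality gives $\scl_G(c+c')\ge f(c+c')/2D(f)=f(c)/2D(f)$; taking the infimum over such $c'$ and then the supremum over $f$ gives the claim.

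For the reverse inequality I would proceed as follows. Write $L\defeq\scl_{(G,\{G_\lambda\})}(c)$, let $\bar W\subseteq C_1^H(G)$ be the image of $C_1(\{G_\lambda\})$, and set $Z\defeq B_1^H(G)+\bar W$. On $Z$ define $p(x)\defeq\inf\{\scl_G(x+w):w\in\bar W,\ x+w\in B_1^H(G)\}$. Since $H_1(\{G_\lambda\})$ is the span of the images of the $H_1(G_\lambda)$ and $c\in B_1^H(G,\{G_\lambda\})$, we have $c\in Z$ and $p(c)=L$; subadditivity of $\scl_G$ makes $p$ a symmetric seminorm on $Z$; taking $w=0$ gives $p\le\scl_G$ on $B_1^H(G)$; and taking $w=-x$ gives $p|_{\bar W}=0$. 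I would then apply Hahn--Banach to the functional $tc\mapsto tL$ on $\R c$, which is dominated by $p$, and extend the result arbitrarily to a linear functional on all of $C_1^H(G)$, producing $\ell\colon C_1^H(G)\to\R$ with $\ell\le p$ on $Z$ and $\ell(c)=L$.

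Next I would define $f\colon G\to\R$ by $f(g)\defeq\ell(\bar g)$, where $\bar g\in C_1^H(G)$ is the class of the basis vector $g$. Because $g^n-ng\in H(G)$ one has $f(g^n)=nf(g)$, so $f$ is homogeneous; because $\bar g+\bar h-\overline{gh}\in B_1^H(G)$ and $\scl_G(g+h-gh)\le 1/2$ --- realized by the pair-of-pants ``relation surface'' whose three boundary circles map to loops for $g$, $h$ and $(gh)^{-1}$ --- one has $|f(g)+f(h)-f(gh)|=|\ell(\bar g+\bar h-\overline{gh})|\le\scl_G(g+h-gh)\le 1/2$, so $f$ is a quasimorphism with $D(f)\le 1/2$. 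Since $\ell$ vanishes on $\bar W$, $f$ vanishes on each $G_\lambda$, hence on $C_1(\{G_\lambda\})$. Finally $f(c)=\ell(c)=L$, and if $L>0$ then $D(f)\ne 0$ (a homomorphism vanishing on all the $G_\lambda$ would also kill $c$, as $h_G(c)\in H_1(\{G_\lambda\})$), so $D(f)\in(0,1/2]$ and $f(c)/2D(f)\ge L$; the case $L=0$ is covered by the first paragraph together with the usual convention for the supremum.

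The one genuine obstacle is arranging that the Hahn--Banach step outputs a functional vanishing on \emph{all} of $C_1(\{G_\lambda\})$ and not merely on its null-homologous part; this is exactly why one dominates the extension by the seminorm $p$, which already vanishes on $\bar W$, rather than running the naive quotient-seminorm duality for $\scl_G$ modulo $B_1^H(G)\cap\bar W$ (whose dual would only see the null-homologous part). The remaining points --- that $p$ is honestly subadditive, that the pair-of-pants map really is admissible of degree one for the chain $g+h-gh$, and the degenerate case $L=0$ --- are routine.
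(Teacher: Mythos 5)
Your proof is correct, but it takes a genuinely different route from the paper's. The paper deduces the lemma from the sharper normed-space form of Bavard duality recorded in \cite{Cal:sclbook}: the dual of $B_1(G)/N(G)$ with the scl norm is $\mathcal{Q}(G)/H^1(G)$ with norm $2D(\cdot)$. It then passes to the quotient $V$ by the closure of the image of $C_1(\{G_\lambda\})\cap B_1(G)$, identifies the dual of $V$ with the annihilator subspace, and finally chooses for each annihilator class a representative quasimorphism vanishing on \emph{all} of $C_1(\{G_\lambda\})$ --- that representative-choosing step is exactly the counterpart of what you call the one genuine obstacle. You instead rerun the Hahn--Banach construction from scratch, with the relative constraint built into the dominating seminorm $p$, recover the defect bound $D(f)\le 1/2$ directly from the pair-of-pants inequality $\scl_G(g+h-gh)\le 1/2$, and get vanishing on all of $C_1(\{G_\lambda\})$ for free because $p$ already vanishes on the image of $C_1(\{G_\lambda\})$ in $C_1^H(G)$. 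What each approach buys: the paper's proof is shorter given the citation to the dual-space statement, while yours is essentially self-contained (modulo the easy inequality $|f(x)|\le 2D(f)\scl_G(x)$ for null-homologous chains $x$, equivalently the easy direction of the chain version of Bavard duality --- note that Theorem \ref{thm:Bavards duality} as stated in the paper is only for single elements, so cite or prove the chain form) and it produces the extremal-type quasimorphism explicitly. Your treatment of the degenerate cases is also right: the argument that $D(f)>0$ whenever $L>0$ (a homomorphism vanishing on the $G_\lambda$ factors through $H_1$ and kills $c$ since $h_G(c)\in H_1(\{G_\lambda\})$) is what makes the quotient $f(c)/2D(f)$ meaningful, and the case $L=0$ needs only the same $0/0$ convention already implicit in the absolute statement.
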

	\begin{proof}
		This essentially follows from Bavard's duality and the following standard fact: For a normed vector space $(W,|\cdot|)$ and its dual $(W^*,|\cdot|_*)$, the quotient $V=W/U$ by a closed subspace $U$ with the induced norm had dual space isometrically isomorphic to the subspace of $W^*$ consisting of functionals that vanish on $U$.
		
		Recall that $\mathcal{Q}(G)$ is the space of homogeneous quasimorphisms on $G$. Let $N(G)$ be the subspace of $B_1(G)$ where $\scl$ vanishes. Then the quotient $B_1(G)/N(G)$ with induced $\scl$ becomes a normed vector space. Denote the quotient map by $\pi: B_1(G)\to B_1(G)/N(G)$. As we mentioned earlier, Bavard's duality Theorem \ref{thm:Bavards duality} shows that the dual space of $B_1(G)/N(G)$ is exactly $\mathcal{Q}(G)/H^1(G)$ equipped with the norm $2D(\cdot)$. Then scl further induces a norm $\|\cdot\|$ on the quotient space $V$ of $B_1(G)/N(G)$ by the closure of $\pi(C_1(\{G_\lambda\})\cap B_1(G))$. By definition we have $\|\bar{c}\|=\scl_{(G,\{G_\lambda\})}(c)$ for any $c\in B_1(G)$, where $\bar{c}$ is the image in $V$. By the standard fact above, the dual space of $V$ is naturally isomorphic to the subspace of $\mathcal{Q}(G)/H^1(G)$ consisting of linear functionals that vanish on $C_1(\{G_\lambda\})\cap B_1(G)$. Any $\bar{f}\in \mathcal{Q}(G)/H^1(G)$ with this vanishing property can be represented by some $f\in \mathcal{Q}(G)$ that vanishes on $C_1(\{G_\lambda\})$. This proves the assertion assuming $c\in B_1^H(G)$. The general case easily follows since any $c\in B_1^H(G,\{G_\lambda\})$ can be replaced by $c+c'\in B_1^H(G)$ for some $c'\in C_1(\{G_\lambda\})$ without changing both sides of the equation.
	\end{proof}
	
	\begin{defn}
		For a collection of subgroups $\{G_\lambda\}$ of $G$ and a positive number $C$, we say $(G, \{G_\lambda\})$ has a \emph{strong relative spectral gap} $C$ if either $\scl_{(G,\{G_\lambda\})}(g)\ge C$ or $\scl_{(G,\{G_\lambda\})}(g)=0$ for all $g\in G$, where the latter case occurs if and only if $g^n$ is conjugate into some $G_\lambda$ for some $n\neq 0$.
	\end{defn}
	
	Some previous work on spectral gap properties of scl can be stated in terms of or strengthened to strong relative spectral gap. Here are two results that we will use in Section \ref{sec: spectral gap 3 mnfd}.
	
	\begin{thm}\label{thm: relative gap restate}
		\leavevmode
		\begin{enumerate}
			\item \cite[Theorem A$'$]{CF:sclhypgrp} Let $G$ be $\delta$-hyperbolic with symmetric generating set $S$. Let $a$ be an element with $a^n\neq ba^{-n}b^{-1}$ for all $n\neq 0$ and all $b\in G$. Let $\{a_i\}$ be a collection of elements with translation lengths bounded by $T$. Suppose $a^n$ is not conjugate into any $G_i\defeq\langle a_i\rangle$ for any $n\neq 0$, then there is $C=C(\delta,|S|,T)>0$ such that $\scl_{(G,\{G_i\})}(a)\ge C$.\label{item: rel gap restate 1}
			\item \cite[Theorem C]{Cal:sclhypmfd} Let $M$ be a compact $3$-manifold with tori boundary (possibly empty). Suppose the interior of $M$ is hyperbolic with finite volume. Then $(\pi_1 M,\pi_1 \partial M)$ has a strong relative spectral gap $C(M)>0$, where $\pi_1 \partial M$ is the collection of peripheral subgroups.\label{item: rel gap restate 2}
		\end{enumerate}
	\end{thm}
	\begin{proof}
		Part (\ref{item: rel gap restate 1}) is an equivalent statement of the original theorem \cite[Theorem A$'$]{CF:sclhypgrp} in view of Lemma \ref{lemma: Bavard duality for rel scl}.		
		Part (\ref{item: rel gap restate 2}) is stated stronger than the original form \cite[Theorem C]{Cal:sclhypmfd} but can be proved in the same way. See Theorem \ref{thm: hyp rel gap} for the detailed proof.
	\end{proof}
	
	In addition, our Theorem \ref{thm: $n$-RTF gap, weak version} immediately implies the following strong relative spectral gap results, which are stronger than the original statements cited.
	\begin{thm}
		\leavevmode
		\begin{enumerate}
			\item \cite[Theorem 3.1]{Chen:sclfpgap} Let $n\ge 3$ and let $G=\star_{\lambda} G_\lambda$ be a free product where $G_\lambda$ has no $k$-torsion for all $k<n$. Then $(G,\{G_\lambda\})$ has a strong relative spectral gap $\frac{1}{2}-\frac{1}{n}$.
			\item \cite[Theorem 6.3]{Heuer} Suppose we have inclusions of groups $C\inj A$ and $C\inj B$ such that both images are left relatively convex subgroups (see Definition \ref{defn: left relatively convex}). Let $G=A\star_{C} B$ be the associated amalgam. Then $(G,\{A,B\})$ has a strong relative spectral gap $\frac{1}{2}$.
		\end{enumerate}
		
	\end{thm}
	
	\subsection{Bounded cohomology and circle actions} \label{subsec:circle actions and euler class}
	
	We recall the close connection between quasimorphisms and the second bounded cohomology,
	with an emphasis on the rotation quasimorphism and the (bounded) Euler class associated to a group acting on the circle.
	See \cite{Frigerio}, \cite{ghys} and also \cite{cicle_quasimorph_modern} for a thorough treatment of such topics.
	
	Let $G$ be a group and let $V$ be either $\Z$ or $\R$. The \emph{bounded cohomology with coefficients in $V$}, denoted as $H^*_b(G;V)$, is the homology of the (co)chain complex $(C^n_b(G,V),\delta^n)$, where we think of $C^n_b(G,V)$ as the set of $V$-valued bounded functions on $G^n$ in the inhomogeneous resolution setting.
	There is a natural comparison map $c: H^n_b(G;V)\to H^n(G;V)$ by treating a bounded cocycle just as a usual cocycle.
	
	The coboundary $\delta\phi$ of any homogeneous quasimorphism $\phi$ on $G$ represents a bounded cohomology class in $H^2_b(G;\R)$, which is trivial if $\phi\in H^1(G;\R)$.
	This defines a linear map $\delta: \Qc(G)/H^1(G;\R)\to H^2_b(G;\R)$, which fits into the following exact sequence \cite[Theorem 2.50]{Cal:sclbook}:
	$$0\to \Qc(G)/H^1(G;\R) \stackrel{\delta}{\to} H^2_b(G;\R) \stackrel{c}{\to} H^2(G;\R).$$
	
	For an exact sequence of group homomorphisms $K\to G\to H\to 1$, we also have the following exact sequence \cite{Bouarich}:
	$$0\to H^2_b(H,\R) \to H^2_b(G,\R)\to H^2_b(K,\R).$$
	The same holds replacing $H^2_b(\cdot)$ by $\Qc(\cdot)$; See also \cite[Theorem 2.49 and Remark 2.90]{Cal:sclbook}.
	
	Now we turn to orientation-preserving homeomorphisms of the circle.
	There is a \emph{bounded Euler class} $\eurm_b^{\Z}\in H^2_b(\Homeo^+(S^1);\Z)$, 
	whose image under the comparison map is the ordinary Euler class $\eurm^{\Z}\in H^2(\Homeo^+(S^1);\Z)$.
	Identify $S^1$ with $\R/\Z$ and let $\tau(x)=x+1$ be a generator of the deck transformation. 
	Then all lifts of homeomorphisms in $\Homeo^+(S^1)$ to the universal cover $\R$ form a group
	$$
	\Homeo^+_\Z(\R) \defeq \{ h\in \Homeo^+(\R) \mid h\circ \tau =\tau\circ h \}.
	$$
	This gives a central extension
	$$1\to \Z \to \Homeo^+_\Z(\R) \stackrel{\pi}{\to} \Homeo^+(S^1)\to 1,$$
	where the central subgroup $\Z$ is generated by $\tau$. The Euler class associated to this central extension is exactly $\eurm^{\Z}$.
	For our purposes, we only care about the image $\eurm^{\R}\in H^2(\Homeo^+(S^1);\R)$ (resp. $\eurm_b^{\R}\in H^2_b(\Homeo^+(S^1);\R)$) of $\eurm^{\Z}$ (resp. $\eurm_b^{\Z}$) by a change of coefficient $\Z\to \R$.
	
	There is a \emph{rotation quasimorphism} $\rot\in\Qc(\Homeo^+_\Z(\R))$ defined as
	$$
	\rot(h)\defeq \lim_{n \to \infty} \frac{h^n(x)}{n}.
	$$
	The limit exists and it is independent of the reference point $x \in \R$; See \cite{Ghys:circle}.
	\begin{thm}[\protect{\cite[Theorem 2.43]{Cal:sclbook}, \cite{Ghys:circle}}] \label{thm:rot homogeneous qm classical}
		The map $\rot$ is a homogeneous quasimorphism of defect $1$, and $\Qc(\Homeo^+_\Z(\R))$ is one-dimensional, spanned by $\rot$.
		Moreover, for any $h\in \Homeo^+_\Z(\R)$, we have $\rot(h)\in \Z$ if and only if the image $\pi(h)\in \Homeo^+(S^1)$ acts with a fixed point on $S^1$.
	\end{thm}
	
	The rotation quasimorphism is related to the bounded Euler class by the equation $\delta \rot = \pi^*\eurm_b^{\R} \in H^2_b(\Homeo^+_\Z(\R);\R)$.

	Given a group action $\rho: G\to \Homeo^+(S^1)$, the associated bounded (resp. ordinary) Euler class with real coefficient is $\eurm_b(\rho)\defeq\rho^*\eurm_b^{\R}$ (resp. $\eurm(\rho)\defeq\rho^*\eurm^{\R}$). 
	If we have a central extension $1\to \Z\to \hat{G}\stackrel{p}{\to} G\to 1$ and an action $\hat{\rho}:\hat{G}\to \Homeo^+_{\Z}(\R)$ with $\pi\circ \hat{\rho}= \rho \circ p$,	
	then by naturality and the aforementioned equation, we have 
	\begin{equation}\label{eqn: rot and eu}
		\delta \hat{\rho}^*\rot = \hat{\rho}^* \delta \rot =\hat{\rho}^* \pi^* \eurm_b^\R = p^* \rho^* \eurm_b^\R =p^* \eurm_b(\rho).
	\end{equation}
	In addition, the pullback $\hat{\rho}^*\rot\in \Qc(\hat{G})$ of the rotation quasimorphism has $D(\hat{\rho}^*\rot)\le D(\rot)=1$.
	
	\subsection{Graphs of groups} \label{subsec:graph of groups}
	Let $\Gamma$ be a connected graph with vertex set $V$ and edge set $E$. Each edge $e\in E$ is oriented with origin $o(e)$ and terminus $t(e)$. Denote the same edge with opposite orientation by $\bar{e}$, which provides an involution on $E$ satisfying $t(\bar{e})=o(e)$ and $o(\bar{e})=t(e)$. 
	
	A \emph{graph of groups} with underlying graph $\Gamma$ is a collection of \emph{vertex groups} $\{G_v\}_{v\in V}$ and \emph{edge groups} $\{G_e\}_{e\in E}$ with $G_e=G_{\bar{e}}$, as well as injections $t_e: G_e\inj G_{t(e)}$ and $o_e: G_e\inj G_{o(e)}$ satisfying $t_{\bar{e}}=o_e$. Let $(X_v,b_v)$ and $(X_e,b_e)$ be pointed $K(G_v,1)$ and $K(G_e,1)$ spaces respectively, and denote again by $t_e,o_e$ the maps between spaces inducing the given homomorphisms $t_e,o_e$ on fundamental groups. Let $X$ be the space obtained from the disjoint union of $\sqcup_{e\in E} X_e\times[-1,1]$ and $\sqcup_{v\in V} X_v$ by gluing $X_e\times\{1\}$ to $X_{t(e)}$ via $t_e$ and identifying $X_e\times\{s\}$ with $X_{\bar{e}}\times\{-s\}$ for all $s\in[-1,1]$ and $e\in E$. We refer to $X$ as the \emph{standard realization} of the graphs of groups and denote its fundamental group by $G=\mathcal{G}(\Gamma,\{G_v\},\{G_e\})$. This is called the \emph{fundamental group of the graph of groups}. When there is no danger of ambiguity, we will simply refer to $G$ as the graph of groups.
	
	In practice, we will choose a preferred orientation for each unoriented edge $\{e,\bar{e}\}$ by working with $e$ and ignoring $\bar{e}$.
	
	\begin{exmp}\label{exmp: realization}
		\leavevmode
		\begin{enumerate}
		
			\item Let $\Gamma$ be the graph with a single vertex $v$ and an edge $\{e,\bar{e}\}$ connecting $v$ to itself. Let $G_e\cong G_v\cong \Z$. Fix nonzero integers $m,\ell$, and let the edge inclusions $o_e,t_e: G_e\inj G_v$ be given by $o_e(1)=m$ and $t_e(1)=\ell$. Let the edge space $X_e$ and vertex space $X_v$ be circles $S^1_e$ and $S^1_v$ respectively. Then the standard realization $X$ is obtained by gluing the two boundary components of a cylinder $S^1_e\times[-1,1]$ to the circle $S^1_v$ wrapping around $m$ and $\ell$ times respectively. See the left of Figure \ref{fig: BS}. The fundamental group is the \emph{Baumslag--Solitar group} $\BS(m,\ell)$, which has presentation
			$$\BS(m,\ell)=\langle a,t\ |\ a^{m}=ta^{\ell}t^{-1}\rangle.$$
			In general, with the same graph $\Gamma$, for any groups $G_e=C$ and $G_v=A$ together with two inclusions $t_e,o_e:C\inj A$, the corresponding graph of groups is the \emph{HNN extension} $G=A\star_{C}$.
			\begin{figure}
				\labellist
				\small 
				\pinlabel $m$ at -10 70
				\pinlabel $\ell$ at 150 70
				\pinlabel $\Gamma=$ at -60 210
				\pinlabel $X=$ at -60 70
				\pinlabel $v$ at 73 245
				\pinlabel $e$ at 73 177
				\pinlabel $X_v=S^1_v$ at 72 150
				\pinlabel $X_e\times[-1,1]=S^1_e\times[-1,1]$ at 72 -15
				
				\pinlabel $m$ at 235 50
				\pinlabel $\ell$ at 388 50
				\pinlabel $v_1$ at 278 225
				\pinlabel $v_2$ at 353 225
				\pinlabel $e$ at 315 202
				\pinlabel $X_{v_1}$ at 255 150
				\pinlabel $X_{v_2}$ at 377 150
				\pinlabel $X_e\times[-1,1]$ at 315 -15
				\endlabellist
				\centering
				\includegraphics[scale=0.6]{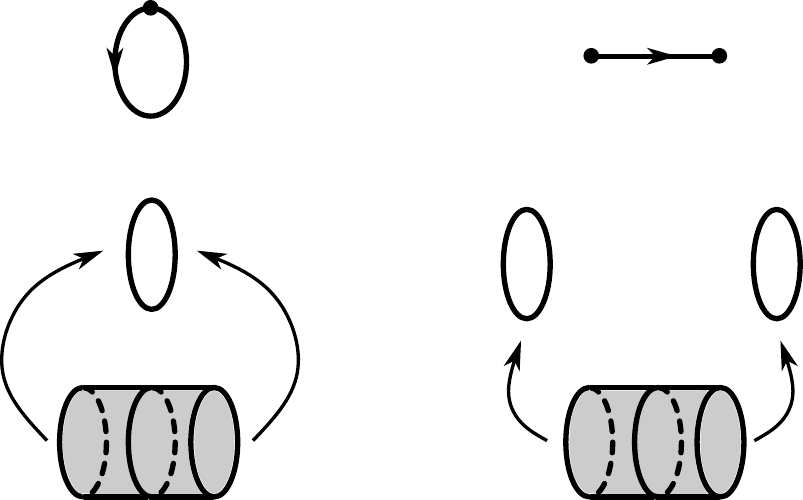}
				\vspace{10pt}
				\caption{On the left we have the underlying graph $\Gamma$ and the standard realization $X$ of $\BS(m,\ell)$; on the right we have the graph $\Gamma$ and the realization $X$ for the amalgam $\Z\star_{\Z}\Z$ associated to $\Z\overset{\times m}{\longrightarrow} \Z$ and $\Z\overset{\times \ell}{\longrightarrow} \Z$.}\label{fig: BS}
			\end{figure}
			\item Similarly, if we let $\Gamma$ be the graph with a single edge $\{e,\bar{e}\}$ connecting two vertices $v_1=o(e)$ and $v_2=t(e)$, the graph of groups associated to two inclusions $t_e: G_e\to G_{v_1}$ and $o_e:G_e\to G_{v_2}$ is the \emph{amalgam} $G_{v_1}\star_{G_e} G_{v_2}$. See the right of Figure \ref{fig: BS} for an example where all edge and vertex groups are $\Z$.
	\end{enumerate}
	\end{exmp}
	
	In general, each connected subgraph of $\Gamma$ gives a graph of groups, whose fundamental group \emph{injects} into $G$, from which we see that each separating edge of $\Gamma$ splits $G$ as an amalgam and each non-separating edge splits $G$ as an HNN extension. Hence $G$ arises as a sequence of amalgamations and HNN extensions.
	
	It is a fundamental result of the Bass--Serre theory that there is a correspondence between groups acting on trees (without inversions) and graphs of groups, where vertex and edge stabilizers correspond to vertex and edge groups respectively. See \cite{Serre} for more details about graphs of groups and their relation to groups acting on trees. 
	
	In the standard realization $X$, the homeomorphic images of $X_e\times\{0\}\cong X_e$ and $X_v$ are called an \emph{edge space} and a \emph{vertex space} respectively. The image of $X_v\sqcup (\sqcup_{e: t(e)=v}X_{e}\times [0,1) )$ deformation retracts to $X_v$. We refer to its completion $N(X_v)$ as the \emph{thickened vertex space}; See Figure \ref{fig: thickenedvertsp}.
	
	\begin{figure}
		\labellist
		\small 
		\pinlabel $m$ at 5 270
		\pinlabel $\ell$ at 165 270
		\pinlabel $X_v$ at 110 325
		\pinlabel $X=$ at -50 270
		
		\pinlabel $N(X_v)=$ at -50 70
		\endlabellist
		\centering
		\includegraphics[scale=0.6]{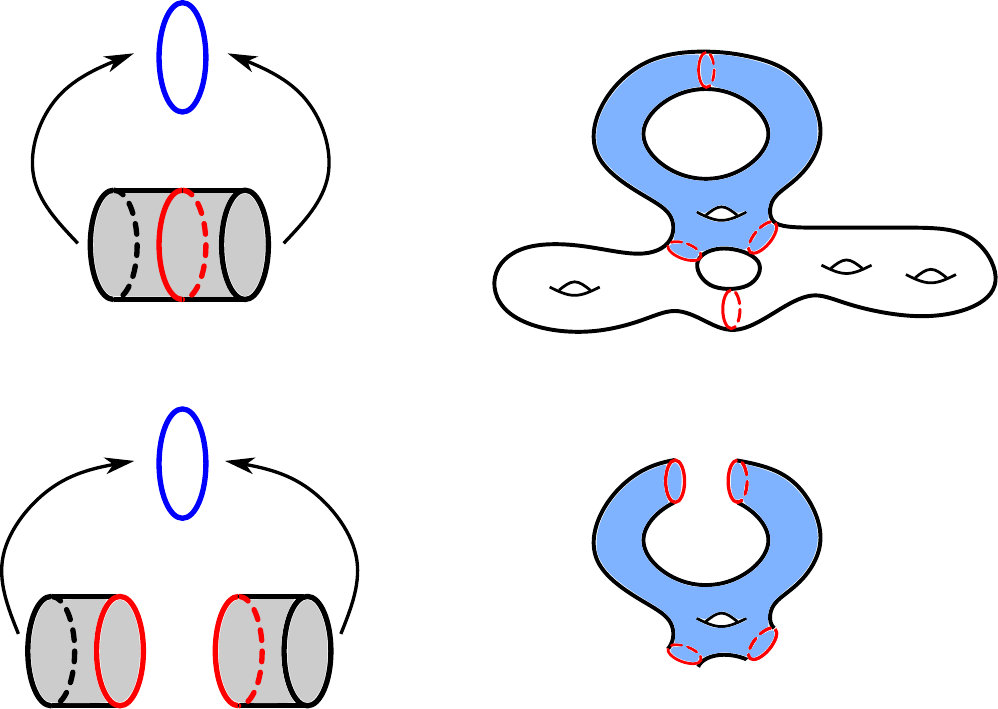}
		\vspace{10pt}
		\caption{On the bottom left we depict the thickened vertex space $N(X_v)$ for the unique vertex space $X_v$ in the realization $X$ (upper left) of $\BS(m,\ell)$; on the right depict a thickened vertex space in a more general situation, where the red circles are the edge spaces that we cut along.}\label{fig: thickenedvertsp}
	\end{figure}
	
	Free homotopy classes of loops in $X$ fall into two types. \emph{Elliptic} loops are those admitting a representative supported in some vertex space, and such a representative is called a \emph{tight} elliptic loop. Loops of the other type are called \emph{hyperbolic}. We can deform any hyperbolic loop $\gamma$ so that, for each $s\in (-1,1)$ and $e\in X_e$, $\gamma$ is either disjoint from $X_e\times\{s\}$ or intersects it only at $\{b_e\}\times\{s\}$ transversely. For such a representative, the edge spaces cut $\gamma$ into finitely many arcs, each supported in some thickened vertex space $N(X_v)$. The image of each arc $\alpha$ in $N(X_v)$ under the deformation retraction $N(X_v)\to X_v$ becomes a based loop in $X_v$ and thus represents an element $w(\alpha)\in G_v$. If some arc $\alpha$ enters and leaves $X_v$ via the same end of an edge $e$ with $t(e)=v$ and $w(\alpha)\in t_e(G_e)$, then we say $\gamma$ trivially backtracks at $\alpha$ and can push $\alpha$ off $X_v$ to further simplify $\gamma$; See Figure \ref{fig: backtrack}. After finitely many such simplifications, we may assume that $\gamma$ does not trivially backtrack. Refer to such a representative as a \emph{tight} hyperbolic loop, which exists in each hyperbolic homotopy class by the procedure above.
	
	\begin{figure} 
		\centering
		
		\labellist
		\small \hair 2pt
		\pinlabel $X_v$ at 40 190
		\pinlabel \textcolor{red}{$a\subset\gamma$} at 108 95
		\pinlabel $X_e\times[0,1]$ at 108 15
		\pinlabel $X_v$ at 320 190
		\pinlabel \textcolor{red}{$a\subset\gamma$} at 395 120
		\pinlabel $X_e\times[0,1]$ at 388 15
		\endlabellist
		
		\includegraphics[scale=0.6]{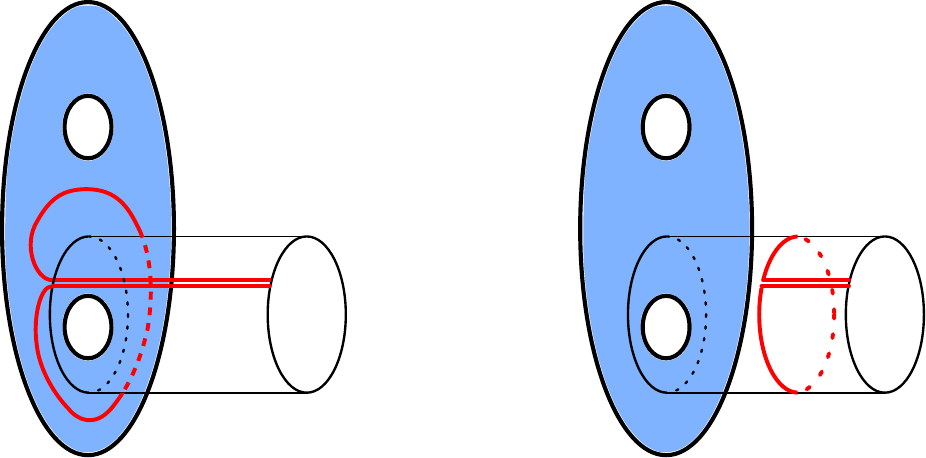}
		\caption{A loop $\gamma$ trivially backtracks at an arc $a$ supported in the thickened vertex space $N(X_v)$ as shown on the left. It can be pushed off the vertex space $X_v$ by a homotopy as shown on the right.}
		\label{fig: backtrack}
	\end{figure}
	
	On the group theoretic side, an element $g\in G$ is elliptic (resp. hyperbolic) if it is represented by an elliptic (resp. hyperbolic) loop in $X$, which we usually choose to be tight. Then an element is elliptic if and only if it is conjugate into some vertex group. 
	
	

	\section{Scl of hyperbolic elements} \label{sec:scl on hyperbolic elts}

	\subsection{Surfaces in graphs of groups}\label{subsec: surfs in graphs of groups}
	In this subsection, we investigate surfaces in graphs of groups and their normal forms following \cite{Chen:sclBS}, which will be used to estimate (relative) scl.
	
	Let $G$ be a graph of groups. With the setup in Subsection \ref{subsec:graph of groups}, let $X$ be the standard realization of $G$. Let $\ug=\{g_i,i\in I\}$ be a finite collection of \emph{infinite-order} elements indexed by $I$ and let $c=\sum r_i g_i$ be a rational chain with $r_i\in \Q_{>0}$.  Let $\uga=\{\gamma_i,i\in I\}$ be tight loops representing elements in $\ug$. Recall that the edge spaces cut the hyperbolic tight loops into arcs, and denote by $A_v$ the collection of arcs supported in the thickened vertex space $N(X_v)$.
	
	Let $S$ be any aspherical (monotone) admissible surface for $c$. Put $S$ in general position so that it is transverse to all edge spaces. Then the preimage $F$ of the union of edge spaces is a collection of disjoint embedded proper arcs and loops. Up to homotopy and compression that simplifies $S$, every loop in $F$ represents a nontrivial conjugacy class in some edge group.
	
	Now cut $S$ along $F$ into subsurfaces. Then each component $\Sigma$ is a surface (possibly with corner) supported in some thickened vertex space $N(X_v)$. Boundary components of $\Sigma$ fall into two types (see Figure \ref{fig: polygonal boundary}): 
	\begin{enumerate}
		\item \emph{Loop boundary}: these are boundary components containing no corners. Each such boundary is either a loop in $F$ or a loop in $\partial S$ that winds around an elliptic tight loop in $\uga$;
		\item \emph{Polygonal boundary}: these are boundary components containing corners. Each such boundary is necessarily divided into segments by the corners, such that the segments alternate between arcs in $A_v$ and proper arcs in $F$ (called \emph{turns}).
	\end{enumerate}
	\begin{figure}
		\labellist
		\small 
		\pinlabel $\beta_1$ at -10 70
		\pinlabel $\Sigma_1$ at 120 60
		\pinlabel $\beta_2$ at 210 25
		\pinlabel $\beta_3$ at 210 105
		\pinlabel $\beta_4$ at 400 35
		\pinlabel $\Sigma_2$ at 330 65
		
		\pinlabel $a_{i_1}$ at 330 15
		\pinlabel $a_{i_2}$ at 365 80
		\pinlabel $a_{i_3}$ at 285 80
		\endlabellist

		\centering
		\includegraphics[scale=0.6]{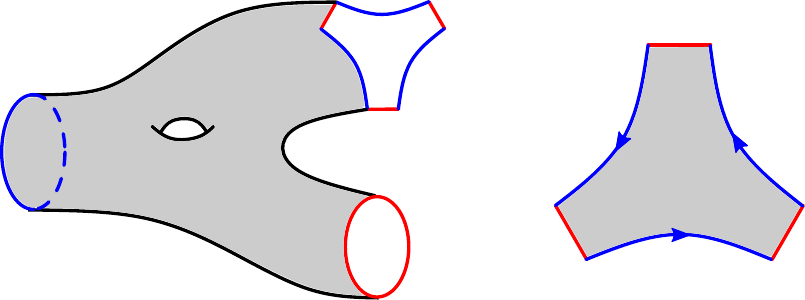}
		\caption{Here are two possible components $\Sigma_1$ and $\Sigma_2$ of $S$ supported in some $N(X_v)$, where the blue parts are supported in $\uga$ and the red is part of $F$. The component $\Sigma_1$ has three boundary components: $\beta_1$ winds around an elliptic loop in $\uga$, the loop $\beta_2$ lies in $F$, and $\beta_3$ is a polygonal boundary. The component $\Sigma_2$ is a disk with polygonal boundary $\beta_4$, on which we have arcs $a_{i_1},a_{i_2},a_{i_3}$ in cyclic order.}\label{fig: polygonal boundary}
	\end{figure}
	
	Note that any component $\Sigma$ with positive Euler characteristic must be a disk with polygonal boundary since elements in $\ug$ have infinite-order and loops in $F$ are nontrivial in edge spaces.
	
	For $\Sigma$ as above, we say a turn has \emph{type} $(a_1,w,a_2)$ for some $a_1,a_2\in A_v$ and $w\in G_e$ for some $e$ adjacent to $v$ if it travels from $a_1$ to $a_2$ as a based loop supported on $X_e$ representing $w$, referred to as the \emph{winding number} of the turn. We say two turn types $(a_1,w,a_2)$ and $(a'_1,w',a'_2)$ are \emph{paired} if they can be glued together, i.e. if there is some edge $e$ with $w,w'\in G_e$, $a_1,a_2\in A_{o(e)}$ and $a'_1,a'_2\in A_{t(e)}$ such that $w^{-1}=w'$ and $a_1$ (resp. $a'_1$) is followed by $a'_2$ (resp. $a_2$) on $\uga$. See Figure \ref{fig: pair}.
	
	\begin{figure}
		\labellist
		\small \hair 2pt
		
		\pinlabel $a_1$ at 38 40
		\pinlabel $a_2$ at 38 110
		\pinlabel $a'_1$ at 93 110
		\pinlabel $a'_2$ at 93 40
		\pinlabel $w$ at 48 75
		\pinlabel $w^{-1}$ at 93 75
		
		\pinlabel $a_1$ at 238 20
		\pinlabel $a_2$ at 238 130
		\pinlabel $a'_1$ at 333 130
		\pinlabel $a'_2$ at 333 20
		\pinlabel \textcolor{blue}{$\gamma_j$} at 328 95
		\pinlabel \textcolor{blue}{$\gamma_i$} at 328 55
		
		\pinlabel $X_e$ at 288 10
		
		\endlabellist
		\centering
		\includegraphics[scale=0.7]{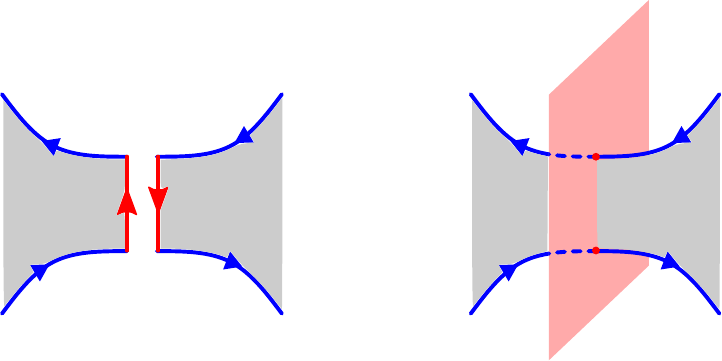}
		\caption{Two paired turns on an edge space $X_e$, with arcs $a_1,a'_2\subset\gamma_i$ and $a'_1,a_2\subset\gamma_j$}\label{fig: pair}
	\end{figure}
	
	For each vertex $v$, let $S_v$ be the union of components $\Sigma$ supported in $N(X_v)$ obtained from cutting $S$ along $F$. Note that there is an obvious pairing of turns in $\sqcup S_v$: two turns are paired if they are glued together in $S$. Let $t_{(a_1,w,a_2)}$ be the number of turns of type $(a_1,w,a_2)$ that appear in $\sqcup S_v$ \emph{divided by} $k$ if $S$ is a degree $k$ admissible surface. Then we obviously have the following \newline
	\textbf{Gluing condition: }
	\begin{equation}\label{eqn: gluding condition}
	 t_{(a_1,w,a_2)}=t_{(a'_1,w',a'_2)} \text{ for any paired turn types } (a_1,w,a_2) \text{ and } (a'_1,w',a'_2).
	\end{equation}
	
	Let $i(\alpha)\in I$ be the index such that the arc $\alpha$ lies on $\gamma_{i(\alpha)}$. Recall that $r_i$ is the coefficient of $\gamma_i$ in the chain $c$. Then we also have the following\newline
	\textbf{Normalizing condition: } 
	\begin{equation}\label{eqn: normalizing condition}
	\sum_{a_2,w} t_{(a_1,w,a_2)}=r_{i(a_1)} \text{ for any } a_1 \text{ and } \sum_{a_1,w} t_{(a_1,w,a_2)}=r_{i(a_2)} \text{ for any } a_2.
	\end{equation}
	
	\begin{defn}\label{def: normal form}
		We say an admissible surface $S$ decomposed in the above way satisfying the gluing and normalizing conditions is in its \emph{normal form}.
	\end{defn}
	
	The discussion above shows that any (monotone) admissible surface for the chain $c$ can be put in normal form after ignoring sphere components, applying homotopy and compression, during which $-\chi^-(S)$ does not increase. 
	
	Note that if $S$ is already in normal form, then the sum of $\chi(\Sigma)$ over all components differs from $\chi(S)$ by the number of proper arcs in $F$, which is half of the total number of turns. Thus in general, for any $S$ we have 
	
	\begin{equation}\label{eqn: Euler char computation}
	\frac{-\chi^-(S)}{k}\ge \frac{1}{2}\sum_{(a_1,w,a_2)} t_{(a_1,w,a_2)}+\sum\frac{ -\chi (\Sigma)}{k}=\frac{1}{2}\sum_i r_i|\gamma_i|+\sum\frac{-\chi (\Sigma)}{k}
	\end{equation}
	where the last equality follows from the normalizing condition (\ref{eqn: normalizing condition}) and $|\gamma_i|$ denotes the number of arcs on the hyperbolic tight loop $\gamma_i$ and is $0$ for elliptic $\gamma_i$. Since the quantity $\frac{1}{2}\sum_i r_i|\gamma_i|$ is purely determined by the chain $c$, the key is to estimate $\sum\frac{-\chi (\Sigma)}{k}$.
	
	This for example yields a simple estimate of $scl_G$ of chains supported in vertex groups in terms of relative scl.
	\begin{lemma}\label{lemma: simple estimate of scl in vertex groups}
		For any vertex $v$ and any chain $c\in C_1(G_v)$, we have $$\scl_G(c)\ge\scl_{(G_v,\{G_e\}_{t(e)=v})}(c).$$
	\end{lemma}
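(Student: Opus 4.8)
The plan is to feed an admissible surface for $c$ into the cutting-along-edge-spaces machinery of Subsection \ref{subsec: surfs in graphs of groups} and to exploit that, since $c$ is supported in the single vertex group $G_v$, the resulting decomposition degenerates: there are no arcs, hence no turns. We may assume $\scl_G(c)<\infty$, since otherwise the inequality is vacuous, and (after discarding finite-order elements, which changes neither side) that every element occurring in $c$ has infinite order. Represent each such element by a tight elliptic loop $\gamma_i$ supported in $X_v$, and let $f\colon S\to X$ be a monotone, aspherical admissible surface of degree $k$ for $c$. First I would observe that since each $\gamma_i$ lies in $X_v$, which is disjoint from every edge space, $\partial S$ is disjoint from all edge spaces; so after putting $S$ in general position the preimage $F$ of the union of edge spaces consists solely of loops—each essential in some edge group after the usual compressions—and $S$ has no turns at all.

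Next I would cut $S$ along $F$ and do the Euler-characteristic bookkeeping. Write $S_v$ for the union of the pieces lying in the thickened vertex space $N(X_v)$ and $S'$ for the union of the remaining pieces, each lying in some $N(X_w)$ with $w\neq v$. Because there are no turns, inequality (\ref{eqn: Euler char computation}) gives $-\chi^-(S)/k\ge -\chi(S_v)/k-\chi(S')/k$. Every component of $S'$ has a boundary loop lying in $F$, hence a boundary loop representing a nontrivial conjugacy class in an edge group; since the edge inclusions are injective this loop cannot bound a disk in the target, and asphericity rules out sphere components, so each component of $S'$ has nonpositive Euler characteristic and $-\chi(S')\ge 0$. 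The same reasoning shows $S_v$ has neither disk nor sphere components, so $-\chi(S_v)=-\chi^-(S_v)$.

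Finally I would apply the deformation retraction $N(X_v)\to X_v$ to $S_v$ and recognize the result as a relative admissible surface for $c$ in $(G_v,\{G_e\}_{t(e)=v})$: the components of $\partial S$, all contained in $\partial S_v$, together represent $[kc]$, while every other boundary component of $S_v$ is a loop of $F$ that retracts into the image $t_e(G_e)\le G_v$ of some edge group with $t(e)=v$. Lemma \ref{lemma: rel admissible} then yields $-\chi^-(S_v)/(2k)\ge\scl_{(G_v,\{G_e\}_{t(e)=v})}(c)$, and chaining this with the estimates above gives $-\chi^-(S)/(2k)\ge\scl_{(G_v,\{G_e\}_{t(e)=v})}(c)$; taking the infimum over all admissible $S$ completes the proof. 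I expect the only delicate point—and hence the main obstacle—to be the bookkeeping of $\chi$ versus $\chi^-$, i.e.\ verifying that no disk or sphere components sneak in on either side of the cut; one should also check that $c$ is null-homologous relative to $\{G_e\}_{t(e)=v}$ inside $G_v$ so that Lemma \ref{lemma: rel admissible} applies, but this is automatic from the existence of $S_v$.
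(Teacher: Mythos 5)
Your argument is correct and takes essentially the same route as the paper's proof: put an admissible surface in the normal form of Subsection \ref{subsec: surfs in graphs of groups}, note that a chain of elliptic loops produces no arcs or turns so that every cut component has nonpositive Euler characteristic, apply the estimate (\ref{eqn: Euler char computation}), and recognize $S_v$ as a relative admissible surface so that Lemma \ref{lemma: rel admissible} gives the bound. The only point worth adding is the paper's one-line reduction to \emph{rational} chains by continuity (and to null-homologous $c$, else the claim is vacuous), since admissible surfaces of a fixed degree and Lemma \ref{lemma: rel admissible} are stated for rational chains.
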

	\begin{proof}
		We assume $c$ to be null-homologous in $G$, since otherwise the result is trivially true. By continuity, we may further assume $c$ to be a rational chain after an arbitrarily small change. Represent $c$ by elliptic tight loops supported in $X_v$. Suppose $f:S\to X$ is an admissible surface for $c$ of degree $k$ in normal form. Let $S_v$ be the disjoint union of components supported in $N(X_v)$. Since there are no hyperbolic loops in our chain, each component of $S_v$ has non-positive Euler characteristic. So equation (\ref{eqn: Euler char computation}) implies
		$$\frac{-\chi^-(S)}{k}\ge\frac{-\chi(S_v)}{k}=\frac{-\chi^-(S_v)}{k}.$$
		Note that $S_v$ is admissible of degree $k$ for $c$ in $G_v$ relative to the nearby edge groups. It follows that $-\chi^-(S_v)/k\ge 2\cdot \scl_{(G_v,\{G_e\}_{t(e)=v})}(c)$. Combining this with the inequality above, the conclusion follows from Lemma \ref{lemma: rel admissible} since $S$ is arbitrary.
	\end{proof}
	
	\subsection{Lower bounds from linear programming duality}\label{subsec: bounds from LP duality}
	We introduce a general strategy to obtain lower bounds of scl in graphs of groups relative to vertex groups using the idea of linear programming duality. This has been used by the first author in the special case of free products to obtain uniform sharp lower bounds of scl \cite{Chen:sclfpgap} and in the case of Baumslag--Solitar groups to compute scl of certain families of chains \cite{Chen:sclBS}.
	
	Consider a rational chain $c=\sum r_i g_i$ with $r_i\in \Q_{>0}$ and $\ug=\{g_i,i\in I\}$ consisting of finitely many \emph{hyperbolic} elements represented by tight loops $\uga=\{\gamma_i,i\in I\}$. With notation as in the previous subsection, for each turn type $(a_1,w,a_2)$, we assign a \emph{non-negative cost} $\$(a_1,w,a_2)\ge0$. 
	
	Suppose $S$ is an admissible surface of degree $k$ for $c$ relative to vertex groups. Then $S$ is by definition admissible (in the absolute sense) of degree $k$ for some rational chain $c'=c+c_{ell}$ where $c_{ell}$ is a rational chain of elliptic elements, each of which can be assumed to have infinite order. Hence the normal form discussed in the previous subsection applies to $S$.
	
	The assignment above induces by linearity a \emph{non-negative} cost for each component of $\Sigma$ in the decomposition of $S$ in normal form. To be more specific, recall that there are two kinds of boundary components of $\Sigma$, loop boundaries and polygonal boundaries. The induced cost of each polygonal boundary is the sum of costs of its turns, and the cost of $\Sigma$ is the sum of costs of all polygonal boundaries.
	
	
	\begin{lemma}\label{lemma: duality estimate}
		Let $S$ be any relative admissible surface for $c$ of degree $k$ in normal form. With the $t_{(a_1,w,a_2)}$ notation (normalized number of turns) in the estimate (\ref{eqn: Euler char computation}), if every disk component $\Sigma$ in the normal form of $S$ has cost at least $1$, then the normalized total cost
		$$\sum_{(a_1,w,a_2)} t_{(a_1,w,a_2)}\$(a_1,w,a_2)\ge \frac{1}{2}\sum r_i|\gamma_i|+\frac{\chi^-(S)}{k}.$$
	\end{lemma}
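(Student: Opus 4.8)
The plan is to translate the inequality into a statement about the components of the normal form: rewrite the left-hand side as a normalized sum of costs of the components $\Sigma$, rewrite the right-hand side in terms of $\sum_\Sigma\chi(\Sigma)$ using the Euler-characteristic bookkeeping already set up for normal forms, and then compare the two sums componentwise.

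First I would record the cost identity. By construction the number of turns of type $(a_1,w,a_2)$ occurring among the components of $\sqcup_v S_v$ is $k\,t_{(a_1,w,a_2)}$, and the cost of a component $\Sigma$ is the sum of the costs $c_{(a_1,w,a_2)}$ of the turns lying on its polygonal boundaries; summing over all $\Sigma$ and regrouping by turn type gives
$$\sum_\Sigma \mathrm{cost}(\Sigma)=k\sum_{(a_1,w,a_2)}t_{(a_1,w,a_2)}\,c_{(a_1,w,a_2)},$$
so the left-hand side of the lemma is $\tfrac1k\sum_\Sigma\mathrm{cost}(\Sigma)$. Next I would invoke the Euler-characteristic accounting from the construction of the normal form: cutting $S$ along the proper arcs and loops of $F$ raises $\chi$ by the number of proper arcs and is neutral on loop cuts, and each proper arc accounts for two turns, so
$$\sum_\Sigma\chi(\Sigma)=\chi(S)+\#\{\text{proper arcs of }F\}=\chi(S)+\frac{k}{2}\sum_{(a_1,w,a_2)}t_{(a_1,w,a_2)}=\chi(S)+\frac{k}{2}\sum_i r_i|\gamma_i|,$$
where the last equality is the normalizing condition \eqref{eqn: normalizing condition} (this is precisely the computation underlying \eqref{eqn: Euler char computation}). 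Since $\chi(S)\ge\chi^-(S)$, rearranging gives
$$\frac12\sum_i r_i|\gamma_i|-\frac{-\chi^-(S)}{k}\le\frac1k\sum_\Sigma\chi(\Sigma).$$

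The substantive step is the componentwise estimate $\mathrm{cost}(\Sigma)\ge\chi(\Sigma)$ for every component $\Sigma$ of the normal form. All turn costs are non-negative, so $\mathrm{cost}(\Sigma)\ge0$; if $\Sigma$ is not a disk it has $\chi(\Sigma)\le0$, so $\mathrm{cost}(\Sigma)\ge0\ge\chi(\Sigma)$ for free. The only case that uses a hypothesis is $\Sigma$ a disk: then $\chi(\Sigma)=1$, and any component of positive Euler characteristic in the normal form must be a disk with polygonal boundary (disks with only loop boundary are ruled out because loops of $F$ are nontrivial in edge groups and the relevant elliptic elements are assumed to have infinite order), hence by assumption $\mathrm{cost}(\Sigma)\ge1=\chi(\Sigma)$. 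Summing $\mathrm{cost}(\Sigma)\ge\chi(\Sigma)$ over all $\Sigma$ and chaining with the previous display yields the asserted inequality.

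I expect no serious obstacle: the Euler-characteristic part is essentially a restatement of \eqref{eqn: Euler char computation}, and the only genuine input is the componentwise bound together with the disk hypothesis. The points needing care are the bookkeeping conventions ($\chi^-$ versus $\chi$, and the fact that $S$ need not be assumed aspherical here because we only use $\chi(S)\ge\chi^-(S)$) and verifying that every disk component of the normal form really carries a polygonal boundary, so that its cost is defined and the hypothesis applies to it.
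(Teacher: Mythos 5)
Your argument is correct and is essentially the paper's proof: you reduce, via the Euler-characteristic bookkeeping behind (\ref{eqn: Euler char computation}) (which you re-derive rather than cite), to the componentwise bound $\mathrm{cost}(\Sigma)\ge\chi(\Sigma)$, treating non-disk and disk components exactly as the paper does. One small caveat: that componentwise bound does require sphere components to be absent (a sphere piece would have $\chi=2$ and cost $0$), which is guaranteed by the normal-form convention of ignoring spheres rather than by the inequality $\chi(S)\ge\chi^-(S)$, so your aside that asphericity is not needed is slightly off, though it does not affect the proof as stated.
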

	\begin{proof}
		By (\ref{eqn: Euler char computation}), it suffices to prove
		$$\sum\frac{\chi (\Sigma)}{k}\le \sum_{(a_1,w,a_2)} t_{(a_1,w,a_2)}\$(a_1,w,a_2).$$
		Note that, for each component $\Sigma$ in the normal form of $S$, either $\chi(\Sigma)\le 0$ which does not exceed its cost, or $\Sigma$ is a disk component and $\chi(\Sigma)= 1$ which is also no more than its cost by our assumption. The desired estimate follows by summing up these inequalities and dividing by $k$.
	\end{proof}
	
	In light of Lemma \ref{lemma: duality estimate}, to get lower bounds of scl relative to vertex groups, the strategy is to come up with suitable cost assignments $\$(a_1,w,a_2)$ such that
	\begin{enumerate}
		\item every possible disk component has cost at least $1$; and
		\item one can use the gluing condition (\ref{eqn: gluding condition}) and normalizing condition (\ref{eqn: normalizing condition}) to bound the quantity $\sum_{(a_1,w,a_2)} t_{(a_1,w,a_2)}\$(a_1,w,a_2)$ from above by a constant.
	\end{enumerate}
	
	
	\subsection{Uniform lower bounds}\label{subsec: uniform bounds}
	Now we use the duality method above to prove sharp uniform lower bounds of scl in graphs of groups relative to vertex groups. The results are subject to some local conditions introduced as follows.
	
	\begin{defn} \label{defn: n-rft}
		Let $H$ be a subgroup of $G$. For $2\le k< \infty$, an element $g\in G\setminus H$ \emph{has order $k$ rel $H$} if for some $h_i\in H$ we have
		\begin{equation}\label{eqn: relative torsion}
		gh_1 \ldots gh_k=id.
		\end{equation}
		For $3\le n\le \infty$, we say $g$ is has order $\ge n$ rel $H$ if $g\in G\setminus H$ does not have order $k$ rel $H$ for all $2\le k<n$. Similarly, we say the subgroup $H$ is \emph{$n$-relatively torsion-free} (\emph{$n$-RTF}) in $G$ if each element $g\in G\setminus H$ has order $\ge n$ rel $H$.
		For clarity, sometimes we say the pair $(G,H)$ is $n$-RTF.
	\end{defn}
	By definition, $m$-RTF implies $n$-RTF whenever $m\ge n$.
	
	\begin{exmp}
		If $H$ is normal in $G$, then $g$ has order $k$ rel $H$ if and only if its image in $G/H$ is a $k$-torsion. In this case, $H$ is $n$-RTF if and only if $G/H$ contains no $k$-torsion for all $k< n$, and in particular, $H$ is $\infty$-RTF if and only if $G/H$ is torsion-free. Concretely, the subgroup $H=6\Z$ in $G=\Z$ is not $n$-RTF for all $n\ge3$ since $z^3$ has order $2$ rel $H$, where $z$ is a generator of $\Z$. More generally, the subgroup $H=m\Z$ is $p_m$-RTF if $m$ is odd and $p_m$ is the smallest prime factor of $m\in\Z_+$.
		
		For a less trivial example, let $S$ be an orientable closed surface of positive genus and let $g\in \pi_1(S)$ be an element represented by a simple closed curve. Then the cyclic subgroup $\langle g\rangle$ is $\infty$-RTF in $\pi_1(S)$. One can see this from either Lemma \ref{lemma: left relatively convex implies RTF} or Lemma \ref{lemma: inherits RTF in graphs of groups} below.
	\end{exmp}
	
	The equation (\ref{eqn: relative torsion}) can be rewritten as
	\begin{equation}\label{eqn: relative torsion form 2}
	g\cdot \tilde{h}_1g\tilde{h}_1^{-1}\cdot\ldots \cdot\tilde{h}_{k-1}g\tilde{h}_{k-1}^{-1}=\tilde{h}_k^{-1},
	\end{equation}
	where $\tilde{h}_i\defeq h_1\ldots h_i$. This is closely related to the notion of generalized $k$-torsion.
	\begin{defn}
		For $k\ge 2$, an element $g\neq id\in G$ is a \emph{generalized $k$-torsion} if
		$$g_1gg_1^{-1}\ldots g_kgg_k^{-1}=id$$
		for some $g_i\in G$.
	\end{defn}
	If equation (\ref{eqn: relative torsion}) holds with $\tilde{h}_k=h_1h_2\ldots h_k=id$, then $g$ is a generalized $k$-torsion.
	
	It is observed in \cite[Theorem 2.4]{gentorsion} that a generalized $k$-torsion cannot have scl exceeding $1/2-1/k$ for a reason similar to Proposition \ref{prop: rel gap implies $n$-RTF} below. On the other hand, it is well known and easy to note that the existence of any generalized torsion is an obstruction for a group $G$ to be bi-orderable, i.e. to admit a total order on $G$ that is invariant under left and right multiplications.
	
	More properties of the $n$-RTF condition can be found in Subsection \ref{subsec: n-RTF}. Now we turn to the relation between the $n$-RTF condition and scl estimates.
	
	The $n$-RTF condition is closely related to lower bounds of relative scl.
	
	\begin{prop}\label{prop: rel gap implies $n$-RTF}
		Let $H$ be a subgroup of $G$. If $$\scl_{(G,H)}(g)\ge \frac{1}{2}-\frac{1}{2n},$$ then $g\in G$ has order $\ge n$ rel $H$.
	\end{prop}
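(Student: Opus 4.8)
The plan is to prove the contrapositive: if $g$ fails to be $n$-RTF in $(G,H)$, then $\scl_{(G,H)}(g)<\frac{1}{2}-\frac{1}{2n}$. First I would dispose of the degenerate possibility $g\in H$: there $\scl_{(G,H)}(g)=0$ by part~(2) of Lemma~\ref{lemma: basic prop of rel scl}, and since $3\le n\le\infty$ forces $\frac{1}{2}-\frac{1}{2n}>0$, the hypothesis already fails. So it remains to handle the case where $g\in G\setminus H$ is a relative $k$-torsion for some $2\le k<n$, say $gh_1\cdots gh_k=1_G$ with all $h_i\in H$, and the goal becomes: exhibit a relative admissible surface for the single-element chain $c=g$ (relative to the one-element collection $\{H\}$, using Lemma~\ref{lemma: rel admissible}) that is cheap enough to force $\scl_{(G,H)}(g)\le\frac{1}{2}-\frac{1}{2k}$.

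The construction rests on the rewriting already recorded in~(\ref{eqn: relative torsion form 2}): setting $\tilde{h}_i\defeq h_1\cdots h_i\in H$, the relation becomes
$$
g\cdot \tilde{h}_1 g\tilde{h}_1^{-1}\cdots \tilde{h}_{k-1}g\tilde{h}_{k-1}^{-1}\cdot \tilde{h}_k=1_G,
$$
an ordered product of $k+1$ group elements equal to the identity, the first $k$ of which are conjugates of $g$ (hence freely homotopic to the chosen loop $\gamma$ representing $g$) and the last of which lies in $H$. Working in a $K(G,1)$ space $X$, I would invoke the standard fact that such a trivial product of $k+1$ elements is realized by a map $f\colon S\to X$ from a connected orientable genus-$0$ surface $S$ with $k+1$ boundary circles (so $\chi(S)=1-k$), under which $k$ of the boundary circles each map by a degree-one covering onto a loop freely homotopic to $\gamma$, and the remaining boundary circle maps to a loop representing $\tilde{h}_k$. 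After a homotopy supported in a collar of $\partial S$ one may assume $f$ restricted to those $k$ circles factors through $\gamma$; calling their union $\partial_0$, the pair $(f,S)$ is then a relative admissible surface for $c=g$ of degree $n(f)=k$, the extra boundary component being permitted because $\tilde{h}_k\in H$. (Note also that the displayed identity gives $[g]\in H_1(\{H\})$, so $g\in B_1^H(G,\{H\})$ and Lemma~\ref{lemma: rel admissible} applies.)

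It then remains to read off the estimate: since $k\ge2$, the surface $S$ is neither a disk nor a sphere, so $-\chi^-(S)=-\chi(S)=k-1$, and Lemma~\ref{lemma: rel admissible} gives
$$
\scl_{(G,H)}(g)\le\frac{-\chi^-(S)}{2n(f)}=\frac{k-1}{2k}=\frac{1}{2}-\frac{1}{2k}<\frac{1}{2}-\frac{1}{2n},
$$
the final strict inequality because $k<n$. This contradicts the hypothesis, so $g$ is $n$-RTF in $(G,H)$. I do not expect a serious obstacle: the content is entirely the algebraic identity~(\ref{eqn: relative torsion form 2}) together with the planar-surface construction, and the only points needing care are (i) combining the $h_i$ into the single element $\tilde{h}_k$ \emph{before} building the surface --- using the naive $2k$-gon coming directly from $gh_1\cdots gh_k=1_G$ would only yield the weaker bound $1-\frac{1}{k}$ --- and (ii) the boundary bookkeeping, namely that the $k$ copies of $\gamma$ genuinely assemble into a degree-$k$ boundary $\partial_0$ with consistent orientations and that the leftover boundary component is of the type allowed by the definition of a relative admissible surface.
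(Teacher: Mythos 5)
Your proof is correct and follows essentially the same route as the paper: from the rewritten relation (\ref{eqn: relative torsion form 2}) one builds a $(k+1)$-punctured sphere of degree $k$ relative to $H$ ($k$ boundary circles wrapping $\gamma$ once, one mapping to $\tilde{h}_k$), giving $\scl_{(G,H)}(g)\le\frac{1}{2}-\frac{1}{2k}$ and hence $k\ge n$. The only differences are presentational: you argue by contrapositive and dispose of the trivial case $g\in H$ separately, while the paper states the inequality chain directly.
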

	\begin{proof}
		Suppose equation (\ref{eqn: relative torsion form 2}) holds for some $k\ge2$. Then this gives rise to an admissible surface $S$ in $G$ for $g$ of degree $k$ relative to $H$, where $S$ is a sphere with $k+1$ punctures: $k$ of them each wraps around $g$ once, and the other maps to $\tilde{h}_k$. This implies 
		$$\frac{1}{2}-\frac{1}{2n}\le \scl_{(G,H)}(g)\le \frac{-\chi(S)}{2k}=\frac{1}{2}-\frac{1}{2k},$$ and thus $k\ge n$.
	\end{proof}
	
	Conversely, the $n$-RTF condition implies a lower bound for relative scl in the case of graphs of groups.
	\begin{thm}\label{thm: $n$-RTF gap, strong version}
		Let $G=\mathcal{G}(\Gamma,\{G_v\},\{G_e\})$ be a graph of groups. Let $\gamma$ be a tight loop cut into arcs $a_1,\ldots, a_L$ by the edge spaces, where $a_i$ is supported in a thickened vertex space $N(X_{v_i})$ and $v_1,e_1,\ldots, v_L,e_L$ form a loop in $\Gamma$ with $o(e_i)=v_i$ and $t(e_i)=v_{i+1}$, indices taken mod $L$. Suppose for some $n\ge3$, whenever $e_{i-1}=\bar{e}_i$, the winding number $w(a_i)\in G_{v_i}$ of $a_i$ has order $\ge n$ rel $o_{e_i}(G_{e_i})$. Then 
		$$\scl_{(G,\{G_v\})}(g)\ge \frac{1}{2}-\frac{1}{n}.$$
	\end{thm}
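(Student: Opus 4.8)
The plan is to run the linear--programming duality method developed in Subsection~\ref{subsec: bounds from LP duality}. Let $S$ be an arbitrary relative admissible surface for $g$ of degree $k$ put in normal form (Definition~\ref{def: normal form}). Since the chain is the single element $g$ with $|\gamma|=L$, the Euler characteristic estimate~(\ref{eqn: Euler char computation}) reads $-\chi^-(S)/k\ge L/2+\sum_\Sigma(-\chi(\Sigma))/k$. We may assume $\scl_{(G,\{G_v\})}(g)<\infty$, and this forces $L\ge 2$: a single--arc tight loop crosses exactly one edge orbit, so $[\gamma]$ has nonzero image in $H_1(\Gamma)$ and $g$ is not null--homologous relative to the vertex groups (in the excluded cases $\scl_{(G,\{G_v\})}(g)=\infty$ and there is nothing to prove). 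By Lemma~\ref{lemma: duality estimate} it now suffices to produce a non--negative cost assignment $c_{(a_1,w,a_2)}$ on turn types such that (i) every disk component occurring in the normal form of any admissible surface has total cost at least $1$, and (ii) $\sum_{(a_1,w,a_2)}t_{(a_1,w,a_2)}\,c_{(a_1,w,a_2)}\le L/2-1+2/n$. Indeed, combining (i), (ii) and the displayed inequality gives $-\chi^-(S)/k\ge 1-2/n$, hence $\scl_{(G,\{G_v\})}(g)\ge 1/2-1/n$.

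The first step is to identify the turn types that can actually occur: a turn $(a_i,w,a_j)$ can be paired, hence appear in a normal form surface, only when $e_{j-1}=\bar e_i$ (so that the ends of $a_i$ and $a_j$ lie on a common edge space) and $w\in G_{e_i}$; in particular a self--turn $(a_i,w,a_i)$ occurs only when $e_{i-1}=\bar e_i$, i.e.\ at a backtrack arc, and then tightness of $\gamma$ gives $w(a_i)\notin o_{e_i}(G_{e_i})$. One then takes a cost function charging only $1/n$ to each turn routed through a backtrack arc and arranging the complementary ``direction--reversing'' turns to carry the remaining weight; the precise values are calibrated exactly as for free products in \cite{Chen:sclfpgap}, now keeping track of the winding numbers in the edge groups (in the simplest situation, where one can show every disk component has at least $n$ turns, the uniform weight $1/n$ already does the job, and (ii) becomes the bound $\sum t_\theta c_\theta=L/n\le L/2-1+2/n$ using $L\ge 2$ and the normalizing condition~(\ref{eqn: normalizing condition})). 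The point where the $n$--RTF hypothesis enters is (i): reading the boundary of a disk component $\Sigma\subset N(X_{v_i})$ all of whose arcs are a single backtrack arc $a_i$ produces a relation $w(a_i)h_1\cdots w(a_i)h_m=1$ in $G_{v_i}$ with $h_\ell\in o_{e_i}(G_{e_i})$ and $m$ the number of turns, i.e.\ $w(a_i)$ is a relative $m$--torsion for $(G_{v_i},o_{e_i}(G_{e_i}))$; by hypothesis $w(a_i)$ is $n$--RTF, so $m\ge n$ and such a disk has cost $\ge m/n\ge 1$.

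The main obstacle is verifying (i) for the remaining ``mixed'' disk components, whose boundary visits several distinct arcs so that the boundary word is a general product $\prod_\ell w(a_{j_\ell})\,o_{e_{j_\ell}}(w_\ell)=1$ and need not be a relative torsion of a single element. Here one argues combinatorially rather than through $n$--RTF: monogon and bigon disk components are eliminated when passing to normal form (a relative $1$--torsion contradicts tightness of $\gamma$, and a relative $2$--torsion is excluded since $n\ge3$), the boundary of a disk is a closed cycle of arcs and turns subject to the edge--compatibility constraint $e_{j_{\ell+1}-1}=\bar e_{j_\ell}$, and from this structure one shows that a disk not supported at a single backtrack arc must contain at least two direction--reversing turns, whose costs already sum to $\ge1$; this is the step that must be carried out with care and that generalizes the corresponding normal--form analysis of \cite{Chen:sclfpgap,Chen:sclBS}. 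Once (i) is established, (ii) is a finite linear--programming computation: the normalizing condition says each of the $L$ arcs contributes outgoing turn--mass exactly $1$, and summing the per--arc contributions -- charging mass routed through backtrack arcs only $1/n$ -- yields the stated inequality. This completes the proof modulo the normal--form bookkeeping described above.
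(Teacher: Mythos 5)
Your reduction is the right one and coincides with the paper's: put a relative admissible surface in normal form, use Lemma \ref{lemma: duality estimate} together with (\ref{eqn: Euler char computation}), and aim for (i) every disk component has cost at least $1$ and (ii) the normalized total cost is at most $L/2-1+2/n$; your identification of where the $n$-RTF hypothesis enters (disk components all of whose arcs are a single backtrack arc $a_i$, whose polygonal boundary yields a relation $w(a_i)h_1\cdots w(a_i)h_m=1$ with $h_\ell\in o_{e_i}(G_{e_i})$, forcing $m\ge n$) is exactly the paper's Case (1). However, the two quantitative steps that constitute the actual content of the theorem are missing. First, you never specify the cost function: you describe "charging $1/n$ to backtrack turns and letting direction-reversing turns carry the remaining weight" and defer the calibration to the free-product case, and your claim that every mixed disk contains \emph{two} direction-reversing turns of total cost $\ge 1$ is neither made precise nor proved (and with a weight of $1/2$ per such turn you would then still have to redo (ii), and also handle disks with one such turn plus backtrack turns). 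The paper's choice is different and simpler: $c_{(a_i,w,a_j)}=1-\frac1n$ if $i<j$ and $\frac1n$ if $i\ge j$, depending only on the cyclic positions of the arcs along $\gamma$; a mixed disk then has cost $\ge 1$ because its cyclic sequence of arc indices must ascend somewhere and descend somewhere, contributing one turn of cost $1-\frac1n$ and one of cost $\frac1n$.

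Second, and more seriously, your argument for (ii) is wrong as stated: you say it is "a finite linear-programming computation" from the normalizing condition (\ref{eqn: normalizing condition}) by summing per-arc contributions. The normalizing condition alone only fixes the total turn mass emanating from each arc; it permits all of that mass to sit on the expensive turns, giving a total cost as large as $L(1-\frac1n)$, which exceeds $L/2-1+2/n$ for $n\ge 3$ and $L$ large. The indispensable input is the gluing condition (\ref{eqn: gluding condition}), in the form $t_{ij}=t_{j-1,i+1}$, which together with $t_{i,i+1}=0$ (tightness) yields the staircase identity $2\sum_{i<j}t_{ij}=L-2$, and only then does one get $\sum c_{ij}t_{ij}=\frac{L}{n}+\bigl(1-\frac2n\bigr)\frac{L-2}{2}=\frac{L}{2}-\bigl(1-\frac2n\bigr)$ and hence $\frac{-\chi^-(S)}{2k}\ge \frac12-\frac1n$. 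Without pinning down the costs and carrying out this gluing-condition computation, the proof is incomplete; these are precisely the steps that generalize the free-product argument and cannot simply be cited away. (Your side remark that $L\ge 2$, since otherwise the relative scl is infinite, is fine but tangential.)
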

	\begin{proof}
		Let $S$ be a (monotone) relative admissible surface for $\gamma$ of degree $k$. Put $S$ in its normal form with components $\Sigma$. We follow the strategy and notation in Subsection \ref{subsec: bounds from LP duality} and assign costs in a way that does not depend on the winding numbers of turns. That is, for any $1\le i,j\le L$, the cost $\$(a_i,w,a_j)=\$_{ij}$ where
		$$\$_{ij}\defeq\left\{ \begin{array}{ll}
		1-\frac{1}{n}, & \text{if } i<j\\
		\frac{1}{n}, & \text{if } i\ge j.
		\end{array} \right. $$
		Let $t_{ij}=\sum_w t_{(a_i,w,a_j)}$ be the (normalized) total number of turns of the form $(a_i,w,a_j)$.
		
		For any disk component $\Sigma$, let $\sigma_1=a_{i_1},\ldots, \sigma_s=a_{i_s}$ be the arcs of $\gamma$ on the polygonal boundary of $\Sigma$ in cyclic order; See the disk component in Figure \ref{fig: polygonal boundary}. There are two cases:
		\begin{enumerate}
			\item $\sigma_1=\cdots=\sigma_s=a_i$ for some $i$. Then we necessarily have $e_{i-1}=\bar{e}_i$, and $a_i\in G_{v_i}\backslash o_{e_i}(G_{e_i})$ since $\gamma$ is tight. For each $1\le j\le s$, let $w_j\in o_{e_i}(G_{e_i})$ be the winding number of the turn from $\sigma_j$ to $\sigma_{j+1}$, where the index is taken mod $s$. Since $\Sigma$ is a disk, we have $w(a_i)w_1\cdots w(a_i)w_s=id\in G_{v_i}$. Then we must have $s\ge n$ since $w(a_i)\in G_{v_i}$ has order $\ge n$ rel $o_{e_i}(G_{e_i})$ by assumption, and thus the cost
			$$\$(\Sigma)=s\cdot \$_{ii}=\frac{s}{n}\ge 1.$$

			\item Some $\sigma_j\neq \sigma_{j'}$. Then there are $1\le m\neq M\le s$ such that $i_{m}<i_{m+1}$ and $i_{M}>i_{M+1}$, where subscripts are interpreted mod $s$. Hence the cost
			$$\$(\Sigma)\ge \$_{m,m+1}+\$_{M,M+1}=\left(1-\frac{1}{n}\right)+\left(\frac{1}{n}\right)= 1.$$
		\end{enumerate}
		
		In summary, we always have $\$(\Sigma)\ge1$ for any disk component $\Sigma$. Hence by Lemma \ref{lemma: duality estimate}, we have
		
		$$\frac{-\chi^-(S)}{2k}\ge \frac{L}{4}-\frac{1}{2}\sum_{ij} \$_{ij}t_{ij}$$ since $|\gamma|=L$.
		
		On the other hand, for any $1\le i,j\le L$, we have $t_{ij}=t_{j-1,i+1}$ by the gluing condition (\ref{eqn: gluding condition}) and $\sum_i t_{ij}=\sum_{j} t_{ij}=1$ by the normalizing condition (\ref{eqn: normalizing condition}), indices taken mod $L$. We also have $t_{i,i+1}=0$ since $\gamma$ is tight and intersects edge spaces transversely. Thus
		
		$$\sum_{i,j} \$_{ij}t_{ij}=\sum_{i,j} \frac{1}{n}t_{ij}+\left(1-\frac{2}{n}\right)\sum_{i<j} t_{ij}=\frac{L}{n}+\left(1-\frac{2}{n}\right)\sum_{i<j} t_{ij}.$$
		and
		\begin{eqnarray*}
			2\sum_{i<j} t_{ij}&=&\sum_{1\le i<j\le L} t_{ij}+\sum_{1\le i<j\le L} t_{j-1,i+1}\\
			&=& \sum_{\substack{1\le i\le L-1 \\ 2\le j\le L}} t_{ij}+\sum_{1\le i\le L-1} t_{i,i+1}\\
			&=&\left[\sum_{1\le i,j\le L}t_{ij}-\sum_i t_{i1}-\sum_j t_{Lj} \right]+\left[0\right]\\
			&=&L-2,\\
		\end{eqnarray*}
		where the first two equalities can be visualized in Figure \ref{fig: staircasesum}.
		
		\begin{figure}
			\labellist
			\small \hair 2pt
			
			\pinlabel $\times 2$ at 65 185
			\pinlabel $=$ at 105 165
			\pinlabel $+$ at 225 165

			\pinlabel $=$ at 105 45
			\pinlabel $+$ at 225 45
			
			\endlabellist
			\centering
			\includegraphics[scale=0.6]{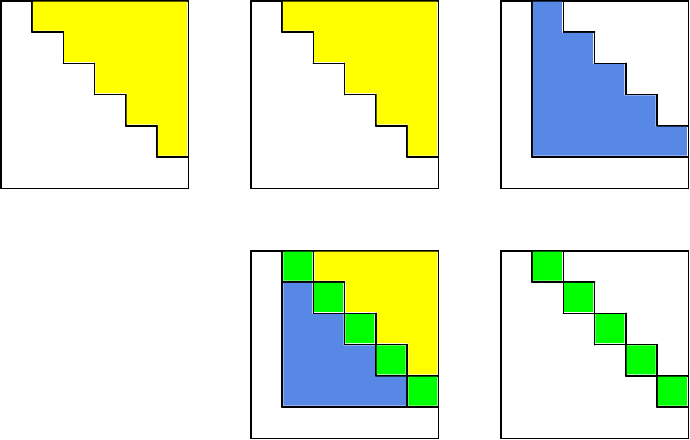}
			\caption{Visualization of the summation in the case $L=6$, where the first equality uses the gluing condition $t_{ij}=t_{j-1,i+1}$}\label{fig: staircasesum}
		\end{figure}
		
		Putting the equations above together, we have 
		$$\sum_{ij} \$_{ij}t_{ij}=\frac{L}{n}+\left(1-\frac{2}{n}\right)\frac{L-2}{2}=\frac{L}{2}-\left(1-\frac{2}{n}\right),$$
		and
		$$\frac{-\chi^-(S)}{2k}\ge \frac{L}{4}-\frac{1}{2}\sum_{ij} \$_{ij}t_{ij}=\frac{1}{2}-\frac{1}{n}$$
		for any relative admissible surface $S$. Thus the conclusion follows from Lemma \ref{lemma: rel admissible}.
	\end{proof}
	
	\begin{thm}\label{thm: $n$-RTF gap, weak version}
		Let $G=\mathcal{G}(\Gamma,\{G_v\},\{G_e\})$ be a graph of groups. If for some $3\le n\le \infty$ the inclusion of each edge group into vertex group is $n$-RTF, then 
		$$\scl_G (g)\ge \scl_{(G,\{G_v\})}(g)\ge \frac{1}{2}-\frac{1}{n}$$
		for any hyperbolic element $g\in G$.
		
		More generally, under the same assumptions, the same inequality also holds for scl relative to some collections of subgroups in $G$ below other than $\{G_v\}$. Let $\{\Gamma_\lambda\}_{\lambda\in\Lambda}$ be a collection of pairwise disjoint connected subgraphs of $\Gamma$, each of which defines (up to conjugation) a subgroup $G_\lambda$ of $G$ as the graph of groups associated to $\Gamma_\lambda$. Then under the same assumptions above on edge groups, unless $g\in G$ is conjugate into some $G_\lambda$, we have
		$$\scl_G (g)\ge \scl_{(G,\{G_\lambda\})}(g)\ge \frac{1}{2}-\frac{1}{n}.$$
		Hence $(G,\{G_\lambda\}_{\lambda\in\Lambda})$ has a strong relative spectral gap $\frac{1}{2}-\frac{1}{n}$.		
		
%
	\end{thm}
	\begin{proof}
		Each hyperbolic element $g$ is represented by a tight loop $\gamma$ satisfying the assumptions of Theorem \ref{thm: $n$-RTF gap, strong version} since the inclusions of edge groups are $n$-RTF. This implies the special case where $\{\Gamma_\lambda\}_{\lambda\in\Lambda}$ is the set of vertices of $\Gamma$. 
		
		To see the general case, by possibly adding some subgraphs each consisting of a single vertex, we assume each vertex is contained in some $\Gamma_\lambda$. By collapsing each $\Gamma_\lambda$ to a single vertex, we obtain a new splitting of $G$ as a graph of groups where $\{G_\lambda\}$ is the new collection of vertex groups. Each new edge group is some $G_e$ for some edge $e$ of $\Gamma$ connecting two $\Gamma_\lambda$'s. Let $\Gamma_\lambda$ be the subgraph containing the vertex $v=t(e)$, then $(G_\lambda,G_v)$ is $n$-RTF by Lemma \ref{lemma: inherits RTF in graphs of groups} in the next subsection and $(G_v,G_e)$ is $n$-RTF by assumption. Thus $(G_\lambda,G_e)$ is also $n$-RTF by Lemma \ref{lemma: subgroup inherits RTF}. Therefore the edge group inclusions in this new splitting also satisfy the $n$-RTF condition, so the general case follows from the special case proved above.
	\end{proof}
	
	As we will see below in Lemma \ref{lemma: left relatively convex implies RTF}, left relatively convexity implies $\infty$-RTF. Thus in the case of an amalgam $G=A\star_C B$, Theorem \ref{thm: $n$-RTF gap, weak version} implies \cite[Theorem 6.3]{Heuer}, which is the main input to obtain gap $1/2$ in all right-angled Artin groups in \cite{Heuer}. We will discuss similar applications in Section \ref{sec:spectral gap graph products}.
	
	For the moment, let us consider the case of Baumslag--Solitar groups.
	\begin{cor}
		Let $\BS(m,\ell)\defeq \langle a,t\ |\ a^m=ta^\ell t^{-1}\rangle$ be the Baumslag--Solitar group, where $|m|,|\ell|\ge 2$. Let $p_m$ and $p_\ell$ be the smallest prime factors of $|m|$ and $|\ell|$ respectively. Then $\BS(m,\ell)$ has strong spectral gap $1/2-1/\min(p_m,p_\ell)$ relative to the subgroup $\langle a\rangle$ if $m,l$ are both odd. This estimate is sharp since for $g=a^{m/p_m}ta^{\ell/p_\ell}t^{-1}a^{-m/p_m}ta^{-\ell/p_\ell}t^{-1}$ we have
		$$\scl_{\BS(m,\ell)}(g)=\scl_{(\BS(m,\ell),\langle a \rangle)}(g)=1/2-1/\min(p_m,p_\ell).$$ 
	\end{cor}
	\begin{proof}
		Let $n=\min(p_m,p_\ell)$. The Baumslag--Solitar group $\BS(m,\ell)$ is the HNN extension associated to the inclusions $\Z\overset{\times m}{\to} \Z$ and $\Z\overset{\times \ell}{\to}\Z$, which are both $n$-RTF. Thus the strong relative spectral gap follows from Theorem \ref{thm: $n$-RTF gap, weak version}. The example achieving the lower bound follows from \cite[Corollary 3.12]{Chen:sclBS}, \cite[Proposition 5.6]{Chen:sclfp} and \cite[Proposition 2.11]{Chen:sclBS}.
	\end{proof}
	
	If at least one of $m$ and $\ell$ is even, the word $g$ above has scl value $0$, and thus one cannot have a strong relative spectral gap. However, we do have a (relative) spectral gap $1/12$ by \cite[Theorem 7.8]{CFL16}, which is sharp for example when $p_m=2$ and $p_\ell=3$. When $p_m=2$ and $p_\ell\ge 3$, the smallest known positive scl in $\BS(m,\ell)$ is $1/4-1/2p_\ell$ achieved by the word $a^{m/p_m}ta^{\ell/p_\ell}t^{-1}$.
	\begin{quest}
		If $p_m=2$ and $p_\ell\ge 3$, does $\BS(m,\ell)$ have (relative) spectral gap $1/4-1/2p_\ell$?
	\end{quest}
	
	\subsection{The $n$-RTF condition}\label{subsec: n-RTF}
	
	The goal of this subsection is to investigate the $n$-RTF condition that plays an important role in Theorem \ref{thm: $n$-RTF gap, strong version} and Theorem \ref{thm: $n$-RTF gap, weak version}.
	
	Let us start with some basic properties.
	\begin{lemma}\label{lemma: subgroup inherits RTF}
		Suppose we have groups $K\le H\le G$. 
		\begin{enumerate}
			\item If $(G,K)$ is $n$-RTF, then so is $(H,K)$;\label{item: subgroup inherits RTF (1)}
			\item If $g\in G\setminus H$ has order $\ge n$ rel $H$, then $g$ is also has order $\ge n$ rel $K$; \label{item: subgroup inherits RTF (1.5)}
			\item If both $(G,H)$ and $(H,K)$ are $n$-RTF, then so is $(G,K)$.\label{item: subgroup inherits RTF (2)}
		\end{enumerate}
	\end{lemma}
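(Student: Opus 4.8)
The plan is to argue each of the three items directly from Definition \ref{defn: n-rft}, using two elementary observations about the containment $K \le H$: first, any element of $G$ lying outside $H$ automatically lies outside $K$; second, a relation $g h_1 \cdots g h_k = 1$ with all $h_i \in K$ also has all its letters in $H$. With these in hand each statement becomes a short contradiction argument.

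For item \ref{item: subgroup inherits RTF (1)}, suppose for contradiction that some $g \in H \setminus K$ is a relative $k$-torsion for $(H,K)$ with $2 \le k < n$, so that $g h_1 \cdots g h_k = 1$ with all $h_i \in K$. Since $H \le G$ we have $g \in G \setminus K$, and this same equation exhibits $g$ as a relative $k$-torsion for $(G,K)$, contradicting that $(G,K)$ is $n$-RTF. For item \ref{item: subgroup inherits RTF (1.5)}, note that $g \in G \setminus H$ forces $g \in G \setminus K$ because $K \le H$; if $g$ were a relative $k$-torsion for $(G,K)$ for some $2 \le k < n$, say $g h_1 \cdots g h_k = 1$ with $h_i \in K \subseteq H$, then $g$ would be a relative $k$-torsion for $(G,H)$, contradicting the hypothesis that $g$ is $n$-RTF in $(G,H)$.

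For item \ref{item: subgroup inherits RTF (2)}, suppose $g \in G \setminus K$ is a relative $k$-torsion for $(G,K)$ with $2 \le k < n$, witnessed by $h_1, \ldots, h_k \in K \subseteq H$. I would then split on whether $g$ lies in $H$. If $g \notin H$, then $g \in G \setminus H$ and, since the $h_i$ lie in $H$, this contradicts that $(G,H)$ is $n$-RTF. If $g \in H$, then $g \in H \setminus K$ and the relation $g h_1 \cdots g h_k = 1$ takes place entirely in $H$ with the $h_i \in K$, contradicting that $(H,K)$ is $n$-RTF. Either way we reach a contradiction, so $(G,K)$ is $n$-RTF. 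The case $n = \infty$ is the same argument, reading ``$2 \le k < n$'' as ``all $k \ge 2$''. The only place needing any care is this last case distinction; there is otherwise no real obstacle.
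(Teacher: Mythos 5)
Your proposal is correct and follows essentially the same route as the paper: items (1) and (2) are immediate from the definition (the paper dismisses them as clear), and for item (3) your case split on whether $g\in H$ is just the contrapositive phrasing of the paper's argument that any relation $gk_1\cdots gk_i=1$ with $k_j\in K$ forces $g\in H$ by the $n$-RTF property of $(G,H)$ and then $g\in K$ by that of $(H,K)$.
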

	\begin{proof}
		(\ref{item: subgroup inherits RTF (1)}) and (\ref{item: subgroup inherits RTF (1.5)}) are clear from the definition. As for (\ref{item: subgroup inherits RTF (2)}), suppose $gk_1\ldots g k_i=id$ for some $1\le i\le n$ where each $k_j\in K$. Then $g\in H$ since $(G,H)$ is $n$-RTF, from which we get $g\in K$ since $(H,K)$ is $n$-RTF.
	\end{proof}
	
	The $n$-RTF condition is closely related to orders on groups.
	\begin{defn}\label{defn: left relatively convex}
		A subgroup $H$ is \emph{left relatively convex} in $G$ if there is a total order on the left cosets $G/H$ that is $G$-invariant, i.e. $gg_1H\prec gg_2H$ for all $g$ if $g_1H\prec g_2H$.
	\end{defn}
	The definition does not require $G$ to be left-orderable, i.e. $G$ may not have a total order invariant under the left $G$-action. Actually, if $H$ is left-orderable, then $H$ is left relatively convex in $G$ if and only if $G$ has a left $G$-invariant order $\prec$ such that $H$ is convex, i.e. $h\prec g\prec h'$ for some $h,h'\in H$ implies $g\in H$. Many examples and properties of left relatively convex subgroups are discussed in \cite{LRC}. 
	
	\begin{exmp}[\cite{LRC}]
		Let $G$ be a surface group, a pure braid group or a subgroup of some right-angled Artin group. Let $H$ be any maximal cyclic subgroup of $G$, that is, there is no cyclic subgroup of $G$ strictly containing $H$. Then $H$ is left relatively convex in $G$.
	\end{exmp}
	
	The $n$-RTF conditions share similar properties with the left relatively convex condition, and they are weaker.
	\begin{lemma}\label{lemma: left relatively convex implies RTF}
		If $H$ is left relatively convex in $G$, then $(G,H)$ is $\infty$-RTF.
	\end{lemma}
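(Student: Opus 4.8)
The plan is to show the contrapositive in disguise: assume $(G,H)$ admits a relative $k$-torsion for some $2 \le k < \infty$, i.e.\ there are $g \in G \setminus H$ and $h_1,\dots,h_k \in H$ with $g h_1 \cdots g h_k = 1_G$, and derive a contradiction with the existence of a $G$-invariant total order $\prec$ on the coset space $G/H$. The key observation is that the left $G$-action on $(G/H,\prec)$ gives, for each fixed $g$, an order-preserving bijection $g_* \colon G/H \to G/H$, $g_*(xH) = gxH$. Composing with the $h_i$'s, which all fix the coset $H$ itself (since $h_i H = H$), should let me compare the basepoint coset $H$ with its images under partial products.

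First I would set $x_0 = H$ and, reading the relation from the right, define $x_j = g h_{k-j+1} \cdots g h_k \cdot H$ for $j = 1,\dots,k$; note $x_k = H$ by the torsion relation. Since each $h_i H = H$, we have $x_j = (g h_{k-j+1}) x_{j-1} = g \cdot (h_{k-j+1} x_{j-1})$, and because $h_{k-j+1} x_{j-1} $ and $x_{j-1}$ need not be equal in general I will instead set up the chain more carefully: it is cleanest to consider the cosets $y_0 = H$, $y_1 = gH$, $y_2 = g h_1 \cdots$ wait — the standard trick is to compare $H$ with $gH$. Concretely, since $g \notin H$ we have $gH \neq H$, so either $gH \prec H$ or $H \prec gH$; say WLOG $H \prec gH$ (the other case is symmetric, replacing $g$ by $g^{-1}$ and rearranging the relation). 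Then applying the order-preserving map $w_* $ for $w = g h_1 \cdots g h_j$ to the inequality $H \prec gH$, and using $h_i H = H$ repeatedly to simplify, I expect to get a strictly increasing chain $H \prec g h_k^{-1} \cdots$ — the bookkeeping needs the relation rewritten, e.g.\ as $g = h_k^{-1} g^{-1} h_{k-1}^{-1} \cdots g^{-1} h_1^{-1}$ or as in equation~\eqref{eqn: relative torsion form 2}.

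So here is the clean version I would actually write. From $g h_1 \cdots g h_k = 1$ and $h_i H = H$ we get a finite sequence of cosets $C_0 = H$, $C_i = (g h_1 \cdots g h_i) \cdot H = (g h_1 \cdots g) H$ for $1 \le i \le k$, with $C_k = H = C_0$. Each step is $C_i = (g h_1 \cdots g h_{i-1} g) H$; applying the order-preserving bijection $(g h_1 \cdots g h_{i-1})_*$ to the inequality $H \prec gH$ (or $gH \prec H$) gives $C_{i-1} \prec C_i$ for all $i$ (respectively $C_{i-1} \succ C_i$ for all $i$), since $(g h_1 \cdots g h_{i-1}) H = C_{i-1}$ and $(g h_1 \cdots g h_{i-1} g) H = C_i$. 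Hence $C_0 \prec C_1 \prec \cdots \prec C_k$ (or the reverse), contradicting $C_0 = C_k$. Therefore no relative $k$-torsion exists for any finite $k \ge 2$, i.e.\ $(G,H)$ is $\infty$-RTF.

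The main obstacle is purely notational: getting the indices and the direction of the inequalities to line up, and verifying that each $(g h_1 \cdots g h_{i-1})_*$ genuinely sends $C_{i-1}$ to $C_i$ — this is where one repeatedly uses $h_j H = H$ to absorb the trailing $h_j$'s, so that $(g h_1 \cdots g h_{i-1}) \cdot H$ and $(g h_1 \cdots g h_{i-1} g)\cdot H$ are the images of $H$ and $gH$ under the same map. There is no analytic or combinatorial difficulty; it is a one-paragraph argument once the chain is set up correctly.
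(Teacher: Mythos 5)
Your proof is correct and is essentially the paper's own argument: both use the $G$-invariant order to turn the relation $gh_1\cdots gh_k=1$ into a strictly monotone chain of cosets (you write it as $C_0\prec C_1\prec\cdots\prec C_k$ with $C_i=(gh_1\cdots gh_i)H$, the paper runs the same induction from the other end), contradicting $C_0=C_k$. The index bookkeeping you flag works out exactly as you describe, so nothing further is needed.
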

	\begin{proof}
		Suppose for some $g\in G$ we have $gh_1\ldots g h_n=id$ for some $n\ge2$ and $h_i\in H$ for all $1\le i\le n$. Suppose $gH\succ H$. Then $gh_{n-1}gH\succ gh_{n-1}H=gH\succ H$ by left-invariance. By induction, we have $gh_1\ldots gh_{n-1}g H\succ H$, but $gh_1\ldots gh_{n-1} g H=gh_1\ldots gh_n H=H$, contradicting our assumption. A similar argument shows that we cannot have $gH\prec H$. Thus we must have $g\in H$.
	\end{proof}
	
	The $n$-RTF condition has nice inheritance in graphs of groups (Lemma \ref{lemma: inherits RTF in graphs of groups}). To prove it together with a more precise statement (Lemma \ref{lemma: RTF in graphs of groups, tech}), we first briefly introduce reduced words of elements in graphs of groups. See \cite{Serre} for more details. For a graph of groups $G(\Gamma)=\mathcal{G}(\Gamma,\{G_v\},\{G_e\})$, let $F(\Gamma)$ be the quotient group of $(\star G_v) \star F_E$ by relations $\bar{e}=e^{-1}$ and $e t_e(g) e^{-1}=o_e(g)$ for any edge $e\in E$ and $g\in G_e$, where $F_E$ is the free group generated by the edge set $E$. Let $P=(v_0,e_1,v_1,\ldots,e_k,v_k)$ be any oriented path (so $o(e_i)=v_{i-1}, t(e_i)=v_i$), and let $\mu=(g_0,\ldots,g_k)$ be a sequence of elements with $g_i\in G_{v_i}$. We say any word of the form $g_0 e_1 g_1\cdots e_k g_k$ is \emph{of type $(P,\mu)$}, and it is \emph{reduced} if 
	\begin{enumerate}
		\item $k\ge 1$ and $g_i\notin \Image t_{e_i}$ whenever $\bar{e}_{i}=e_{i+1}$; or
		\item $k=0$ and $g_0\neq id$.
	\end{enumerate}
	It is known that every reduced word represents a nontrivial element in $F(\Gamma)$. Fix any base vertex $v_0$, then $G(\Gamma)$ is isomorphic to the subgroup of $F(\Gamma)$ consisting of words of type $(L,\mu)$ for any oriented loop $L$ based at $v_0$ and any $\mu$. Moreover, any nontrivial element is represented by some reduced word of type $(L,\mu)$ as above.
	
	\begin{lemma}\label{lemma: RTF in graphs of groups, tech}
		With notation as above, let $g\in G=G(\Gamma)$ be an element represented by a reduced word of type $(L,\mu)$ with an oriented loop $L=(v_0,e_1,v_1,\ldots,e_j,v_j=v_0)$ and $\mu=(g_0,\ldots,g_j)$, $j\ge 1$. If $j$ is odd, then $g$ is $\infty$-RTF in $(G,G_{v_0})$; if $j$ is even and $g_{j/2}\in G_{v_{j/2}}$ has order $\ge n$
		rel $\Image t_{e_{j/2}}$ for some $n\ge3$, then $g\in G$ has order $\ge n$ rel $G_{v_0}$.
	\end{lemma}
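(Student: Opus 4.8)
The plan is to argue by contradiction via the normal-form theory for graphs of groups. Suppose $g$ is a relative $k$-torsion in $(G, G_{v_0})$, i.e. there are $h_1,\ldots,h_k \in G_{v_0}$ with $gh_1 g h_2 \cdots g h_k = \mathrm{id}$, and $g \notin G_{v_0}$ (which is automatic since $g$ is represented by a reduced word of type $(L,\mu)$ with $L$ a loop of length $j \ge 1$ that is not concentrated at $v_0$ — if $g \in G_{v_0}$ then the statement is vacuous). First I would write out $g h_i$ as a word in $F(\Gamma)$: starting from the reduced word $g = g_0 e_1 g_1 \cdots e_j g_j$ (with $g_j \in G_{v_0}$ since $v_j = v_0$), each block $g h_i = g_0 e_1 g_1 \cdots e_j (g_j h_i)$ is again a word of type $(L, \mu_i)$ where only the last coordinate has changed, and it is still reduced because reducedness only involves the conditions at indices $1,\ldots,j-1$ where $\bar e_\ell = e_{\ell+1}$, and those blocks are untouched (the element $g_j h_i$ at $v_0$ is never at such an index since $L$ is a \emph{loop}, so there is no cancellation forced at the junction unless $\bar e_j = e_1$, which I must handle — see below).

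The key step is then a length/cancellation count. The concatenation $(gh_1)(gh_2)\cdots(gh_k)$ is a word of type $(L^k, \nu)$ where $L^k$ is the $k$-fold concatenation of $L$, a loop of length $jk$. I would appeal to the reduction process in Bass--Serre theory: the only places where this long word can fail to be reduced are at the $k-1$ junctions between consecutive $gh_i$ blocks (and possibly the wrap-around junction, but since the total word is trivial we consider it cyclically). At the junction between $gh_i$ and $gh_{i+1}$ the relevant edge pair is $(\bar e_j, e_1)$, and the vertex element sitting between them is $g_j h_i g_0 \in G_{v_0}$ (after using $v_j = o(e_1) = v_0$). If $\bar e_j \neq e_1$ no cancellation is possible there and the whole word is reduced, hence nontrivial — contradiction. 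So we may assume $\bar e_j = e_1$, and cancellation propagates inward from each junction. Because $g$'s reduced word has $g_\ell \notin \Image t_{e_\ell}$ at every internal back-tracking index, cancellation at the $i$-th junction can collapse at most the symmetric ``outer shells'' $e_1 g_1 \cdots$ from the right end of $gh_i$ against $\cdots g_{j-1} e_j$ from the left of $gh_{i+1}$, and it must \emph{stop} as soon as it reaches an index where the reducedness of $g$ gives a non-back-tracking block or a nontrivial coset representative. Counting how far it can go: the surviving loop has length $jk - 2(k-1)\cdot(\text{depth of cancellation})$, and I want to show this is positive, or — in the even case — that if it collapses all the way to a word supported at $v_{j/2}$, then the residual vertex element is a relative $k'$-torsion of $g_{j/2}$ in $(G_{v_{j/2}}, \Image t_{e_{j/2}})$ with $k' \le k$, forcing $k \ge n$ by hypothesis.

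I expect the main obstacle to be making the ``how far does cancellation propagate'' argument precise and uniform across all $k-1$ junctions simultaneously. The clean way is: after maximal cancellation the trivial word must reduce, by the normal form theorem, to the empty word, so \emph{all} $jk$ edge letters must cancel in pairs; tracing which edges cancel against which, each $gh_i$ contributes a ``palindromic collapse'' around its central position $j/2$ (in the even case) or cannot fully collapse (in the odd case, since a loop of odd length read as $e_1\cdots e_j e_1 \cdots$ can never be fully paired — the central edge $e_{(j+1)/2}$ of each block has no partner within the block and its partners across junctions force a parity obstruction), which is exactly why the odd case gives $\infty$-RTF unconditionally. In the even case, full collapse of all edges forces the residual element to live in $G_{v_{j/2}}$ and to equal $(\text{conjugate of } g_{j/2}) \cdot h_1' \cdots (\text{conjugate of } g_{j/2}) \cdot h_k'$ for suitable $h_i' \in \Image t_{e_{j/2}}$, giving the relative torsion of $g_{j/2}$; then $g_{j/2}$ being $n$-RTF yields $k \ge n$, i.e. $g$ is $n$-RTF. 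I would organize the even case by conjugating so that $v_0 = v_{j/2}$ is not needed — instead push all $h_i$ across the first half $e_1 \cdots e_{j/2}$ of $L$ using the relations $e\, t_e(x)\, e^{-1} = o_e(x)$, reducing to a relative-torsion equation entirely inside $G_{v_{j/2}}$ relative to $\Image t_{e_{j/2}}$, which is cleaner than bookkeeping cancellations letter by letter.
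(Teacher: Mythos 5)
Your overall plan (normal form, cancellation bookkeeping at the junctions, and in the even case a reduction to a relative torsion equation for $g_{j/2}$) is the same general route the paper takes, but as written it has two genuine gaps. The main one is in the even case: you invoke the $n$-RTF hypothesis only in the extreme situation where \emph{every} junction collapses all the way to the midpoint, but the heart of the matter is the mixed case. After one global conjugation the word reads $g_m w_1 g_m w_2\cdots g_m w_k$ with $m=j/2$ and $w_i=e_{m+1}\cdots e_j(g_jh_ig_0)e_1\cdots e_m$; each $w_i$ either reduces to a word still containing edges (beginning with $e_{m+1}$ and ending with $e_m$) or collapses into an element of $\Image t_{e_m}$ (full collapse forces $\bar e_m=e_{m+1}$, which is why the residue lands in $\Image t_{e_m}$). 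When some junctions collapse and others do not, consecutive collapsed junctions merge into vertex elements of the form $g_m u_1 g_m\cdots u_r g_m$ with $u_i\in\Image t_{e_m}$ and at most $k<n$ copies of $g_m$, and to conclude that the surviving word is reduced --- i.e.\ that cancellation does not propagate past the midpoints, which is what your ``surviving loop length'' count silently assumes --- you must show these merged elements are \emph{not} in $\Image t_{e_m}$. That is a second, essential application of the $n$-RTF hypothesis (for all lengths $<n$), which your proposal never makes. Relatedly, your ``cleaner'' reorganization, pushing all $h_i$ across the first half of $L$ via $e\,t_e(x)\,e^{-1}=o_e(x)$, is not available: those relations move only edge-group elements across an edge, while the $h_i$ are arbitrary elements of $G_{v_0}$; whether anything can be pushed across is exactly what the junction-by-junction collapse analysis has to decide. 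Also note that a product of differently conjugated copies of $g_{j/2}$ is not a relative torsion equation; you do get the same element throughout, but only after the single global conjugation above.

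The odd case is also not a proof as stated. The claim that the central edge of a block ``has no partner within the block'' is unjustified: reducedness of a graph-of-groups word does not forbid $\bar e_i=e_{i+1}$, so edges of one block can cancel against each other; and when $k$ is even the total number $jk$ of edge letters is even, so pure parity counting gives nothing. The clean fix (and the paper's argument) is to apply the homomorphism $F(\Gamma)\to F_E$ killing the vertex groups: the image of $g$ is a word of odd length, hence reduces to a nontrivial element of the free group $F_E$, and the image of $gh_1\cdots gh_k$ is its $k$-th power, which is nontrivial since free groups are torsion-free. With that substitution for the odd case and the missing ``merged elements are not in $\Image t_{e_m}$'' step added in the even case, your outline becomes essentially the paper's proof.
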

	\begin{proof}
		If $j$ is odd, then the projection $\bar{g}$ of $g$ in the free group $F_E$ is represented by a word of odd length, and thus must be of infinite order. It follows that the projection of $gh_1\cdots g h_k$ is $\bar{g}^k$ for any $h_i\in G_{v_0}$ and $k>0$, which must be nontrivial.
		
		Now suppose $j=2m$ is even and consider $w\defeq gh_1\cdots g h_k$ for some $1\le k<n$ and $h_i\in G_{v_0}$. We claim that there cannot be too much cancellation between the suffix and prefix of two nearby copies of $g$, more precisely, $g_{m}e_{m+1}\cdots e_j g_j h g_0 e_1\cdots e_{m} g_{m}$ for any $h\in G_{v_0}$ can be represented by 
		\begin{enumerate}
			\item $g_{m}g_{m}'g_{m}$ for some $g_{m}'\in \Image t_{e_{m}}$; or \label{item: reduced word case 1}
			\item a reduced word $g_{m}e_{m+1}\cdots g_{j-s-1}e_{j-s}g_{j-s}'e_{s+1}g_{s+1}\cdots e_{m}g_{m}$ with $0\le s< m$.\label{item: reduced word case 2}
		\end{enumerate}
		In fact, if either $e_j\neq \bar{e}_1$, or $e_j=\bar{e}_1$ and $g_j h g_0\notin \Image t_{e_j}$, then we have case (\ref{item: reduced word case 2}) with $s=0$ and $g_{j}'=g_j h g_0$. If $e_j=\bar{e}_1$ and $g_j h g_0\in \Image t_{e_j}$, then $v_{j-1}=v_1=o(e_j)$ and we can replace $g_{j-1}e_j g_j h g_0 e_1 g_1$ by $g_{j-1}o_{e_j}t_{e_j}^{-1}(g_j h g_0) g_1\in G_{v_{j-1}}=G_{v_1}$ to simplify $w$ to a word of shorter length. This simplification procedure either stops in $s$ steps with $s<m$ and we end up with case (\ref{item: reduced word case 2}) or it continues until we arrive at $g_{m}o_{e_{m+1}}(g_{m}^*)g_{m}$ for some $g_{m}^*\in G_{e_{m+1}}$. Note that in the latter case, we must have $\bar{e}_{m}=e_{m+1}$ since the simplification continues all the way. Thus $g_{m}'\defeq o_{e_{m+1}}(g_{m}^*)=t_{e_{m}}(g_{m}^*)\in \Image t_{e_{m}}$.
		
		For each $1\le i\le k$, write $w_i\defeq e_{m+1}\cdots e_j g_j h_i g_0 e_1\cdots e_{m}$ in a reduced form so that $g_{m}w_i g_{m}$ is of the form as in the claim above. Then a conjugate of $w$ in $F(\Gamma)$ is represented by $g_{m}w_1\cdots g_{m}w_k$. 
		If $g_m w_i g_m$ is of the form (\ref{item: reduced word case 1}) above for all $i$, then $w_i\in \Image t_{e_m}$ and $g_m w_1 \cdots g_m w_k\neq id$ since $g_m$ has order $\ge n$ rel $\Image t_{e_{m}}$ by assumption.
		Now suppose $i_1<i_2<\cdots<i_{k'}$ are the indices $i$ such that $g_{m}w_{i}g_{m}$ is of the form (\ref{item: reduced word case 2}) above, where $k'\ge 1$. Up to a cyclic conjugation, assume $i_{k'}=k$ and let $i_0=0$. We write
		$$g_{m}w_1\cdots g_{m}w_k=\tilde{g}_1 w_{i_1}\cdots \tilde{g}_k w_{i_{k'}},$$
		where $\tilde{g}_s \defeq g_{m}w_{i_{s-1}+1}\cdots g_{m}w_{i_s -1}g_{m}$. Note by the definition of the $i_j$, each $w_{i}$ that appears in $\tilde{g}_s$ (i.e. $i_{s-1}+1\le i\le i_s -1$) lies in $\Image t_{e_m}$. It follows that each $\tilde{g}_s \in G_{v_{m}}\setminus\Image t_{e_{m}}$ since $g_m$ has order $\ge n$ rel $\Image t_{e_{m}}$ by assumption and $i_s-i_{s-1}-1<n$. Thus the expression above puts a conjugate of $w$ in reduced form, and hence $w\neq id$.
	\end{proof}
	\begin{cor}\label{cor: RTF in graphs of groups, condition on words}
		With notation as above, let $g\in G=G(\Gamma)$ be an element represented by a reduced word of type $(L,\mu)$ with an oriented loop $L=(v_0,e_1,v_1,\ldots,e_j,v_j=v_0)$ and $\mu=(g_0,\ldots,g_j)$, $j\ge 1$. Suppose for some $n\ge3$ each $g_i\in G_{v_i}$ has order $\ge n$ rel $G_e$ for any edge $e$ adjacent to $v_i$. Then $g\in G$ has order $\ge n$ rel $G_{v_0}$.
	\end{cor}
	\begin{proof}
		This immediately follows from Lemma \ref{lemma: RTF in graphs of groups, tech}.
	\end{proof}
	
	\begin{lemma}\label{lemma: inherits RTF in graphs of groups}
		Let $G(\Gamma)=\mathcal{G}(\Gamma,\{G_v\},\{G_e\})$ be a graph of groups. If the inclusion of each edge group into an adjacent vertex group is $n$-RTF, then for any connected subgraph $\Lambda\subset\Gamma$, the inclusion of $G(\Lambda)\defeq \mathcal{G}(\Lambda,\{G_v\},\{G_e\})\inj G(\Gamma)$ is also $n$-RTF.
	\end{lemma}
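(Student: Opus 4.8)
The plan is to reduce the statement to the case in which $\Lambda$ is a single vertex, and then to prove that case directly from the description of reduced words in graphs of groups together with Lemma~\ref{lemma: RTF in graphs of groups, tech}. Call a graph of groups \emph{good} if the inclusion of each edge group into each adjacent vertex group is $n$-RTF, and let $(\ast)$ denote the assertion that whenever $G(\Gamma)$ is good, each vertex group $G_v$ is $n$-RTF in $G(\Gamma)$; this is exactly the lemma in the case $\Lambda=\{v\}$. First I would show that $(\ast)$ implies the general case. Collapsing the connected subgraph $\Lambda$ to a single vertex $v_\ast$ produces a graph of groups $\Gamma^\ast$ with $G(\Gamma^\ast)\cong G(\Gamma)$ whose vertex group at $v_\ast$ is $G(\Lambda)$, the remaining vertex and edge groups and their inclusions being inherited from $\Gamma$, and in which the inclusion of any edge group $G_e$ adjacent to $v_\ast$ into $G(\Lambda)$ factors as $G_e\hookrightarrow G_v\hookrightarrow G(\Lambda)$ for a vertex $v$ of $\Lambda$ (see \cite{Serre}). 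Since the edges of $\Lambda$ are a subset of those of $\Gamma$, $\Lambda$ is good, so $(\ast)$ applied to $G(\Lambda)$ makes $G_v\hookrightarrow G(\Lambda)$ an $n$-RTF inclusion; as $G_e\hookrightarrow G_v$ is $n$-RTF by hypothesis, Lemma~\ref{lemma: subgroup inherits RTF}(3) then shows $G_e\hookrightarrow G(\Lambda)$ is $n$-RTF. Hence $\Gamma^\ast$ is good, and $(\ast)$ applied to $\Gamma^\ast$ and the vertex $v_\ast$ yields precisely that $G(\Lambda)$ is $n$-RTF in $G(\Gamma)$.

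To prove $(\ast)$, fix a vertex $v$ and take $g\in G(\Gamma)\setminus G_v$. Using $v$ as base vertex, $g$ is represented by a reduced word of type $(L,\mu)$ with $L=(v,e_1,v_1,\ldots,e_j,v_j=v)$ and $\mu=(g_0,\ldots,g_j)$, where $j\ge 1$ because $g\notin G_v$. If $j$ is odd, Lemma~\ref{lemma: RTF in graphs of groups, tech} immediately gives that $g$ is $\infty$-RTF, hence $n$-RTF, in $(G(\Gamma),G_v)$. If $j=2m$ is even, set $u:=v_m$. When $\bar e_m=e_{m+1}$, reducedness of the word forces $g_m\notin\Image t_{e_m}$; since $\Image t_{e_m}$ is the image in $G_u$ of the edge group $G_{e_m}$ and $G(\Gamma)$ is good, the pair $(G_u,\Image t_{e_m})$ is $n$-RTF, so $g_m$ is $n$-RTF in $(G_u,\Image t_{e_m})$ and Lemma~\ref{lemma: RTF in graphs of groups, tech} again gives that $g$ is $n$-RTF in $(G(\Gamma),G_v)$. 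When $\bar e_m\ne e_{m+1}$, I would argue directly: for any $h_1,\ldots,h_k\in G_v$, the cancellation analysis in the proof of Lemma~\ref{lemma: RTF in graphs of groups, tech} shows that cancellation propagating inward from each junction $\cdots e_j\,g_j h_i g_0\,e_1\cdots$ cannot reach the center of the word, so up to a cyclic conjugation in $F(\Gamma)$ the product $g h_1\cdots g h_k$ is represented by $g_m w_1 g_m w_2\cdots g_m w_k$ with each $g_m w_i g_m$ reduced, and the junctions $\cdots e_m\,g_m\,e_{m+1}\cdots$ between consecutive factors are not backtracks since $\bar e_m\ne e_{m+1}$; thus $g h_1\cdots g h_k\ne 1$ for all $k\ge 1$, and $g$ is $\infty$-RTF. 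In every case $g$ is $n$-RTF in $(G(\Gamma),G_v)$, proving $(\ast)$.

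The main obstacle is the even-length non-backtrack case: there Lemma~\ref{lemma: RTF in graphs of groups, tech} does not literally apply, since its even-case hypothesis presupposes $g_m\notin\Image t_{e_m}$ and this can fail (for instance $g$ could be a product of two distinct edge letters, whose reduced word has trivial vertex elements), so one has to reopen the cancellation bookkeeping from the proof of that lemma and check that no cancellation chain starting at a junction can reach the two middle letters $e_m,e_{m+1}$ --- this is exactly the point where the hypothesis $\bar e_m\ne e_{m+1}$ enters. The remaining ingredients, namely the standard collapsing construction for connected subgraphs and the transitivity statement Lemma~\ref{lemma: subgroup inherits RTF}(3) for $n$-RTF inclusions, are routine.
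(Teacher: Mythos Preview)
Your proof is correct and follows the same overall strategy as the paper---reduce to the single-vertex case and then use transitivity of the $n$-RTF condition (Lemma~\ref{lemma: subgroup inherits RTF})---but the execution differs in two respects worth noting.

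For the reduction step, the paper removes edges of $\Gamma\setminus\Lambda$ one at a time, inducting on their number and invoking the single-vertex case at each stage for the resulting amalgam or HNN splitting; a separate final remark handles the case of infinitely many edges outside $\Lambda$. Your collapse of $\Lambda$ to a single vertex $v_\ast$ accomplishes the same thing in one stroke and needs no finiteness bookkeeping, since reduced words are finite regardless of the size of $\Gamma^\ast$. This is genuinely cleaner.

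For the single-vertex case, the paper simply cites Corollary~\ref{cor: RTF in graphs of groups, condition on words}, whose hypothesis asks that \emph{every} $g_i$ be $n$-RTF in $(G_{v_i},G_e)$ for every adjacent edge $e$---a condition not guaranteed by mere reducedness of the word for $g$. You are right to flag the even-length case with $\bar e_m\ne e_{m+1}$ as the place where the statement of Lemma~\ref{lemma: RTF in graphs of groups, tech} does not literally apply, and your fix---observing that in this situation every $w_i$ in that proof is of type~(2), so the concatenated word $g_m w_1\cdots g_m w_k$ has no backtracks at the junctions $e_m\,g_m\,e_{m+1}$ and is therefore reduced---is exactly what the proof of Lemma~\ref{lemma: RTF in graphs of groups, tech} actually establishes. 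So you have made explicit a step the paper leaves implicit.
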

	\begin{proof}
		The case where $\Lambda$ is a single vertex $v$ immediately follows from Corollary \ref{cor: RTF in graphs of groups, condition on words} by choosing $v$ to be the base point in the definition of $G(\Gamma)$ as a subgroup of $F(\Gamma)$.
		
		Now we prove the general case with the additional assumption that $\Gamma\setminus\Lambda$ contains only finitely many edges. We proceed by induction on the number of such edges. The assertion is trivially true for the base case $\Lambda=\Gamma$. For the inductive step, let $e$ be some edge outside of $\Lambda$. If $e$ is non-separating, then $G(\Gamma)$ splits as an HNN extension with vertex group $G(\Gamma-\{e\})$. In this case, the inclusion of the edge group $G_e$ is $n$-RTF in $G_{o(e)}$, which is in turn $n$-RTF in $G(\Gamma-\{e\})$ by the single vertex case above. Thus by Lemma \ref{lemma: subgroup inherits RTF}, the inclusion $G_e\inj G(\Gamma-\{e\})$ is also $n$-RTF. The same holds for the inclusion of $G_e$ into $G(\Gamma-\{e\})$ through $G_{t(e)}$. Therefore, using the single vertex case again for the HNN extension, we see that $(G(\Gamma),G(\Gamma-\{e\}))$ is $n$-RTF. Together with the induction hypothesis that $(G(\Gamma-\{e\}),G(\Lambda))$ is $n$-RTF, this implies that $(G(\Gamma),G(\Lambda))$ is $n$-RTF by Lemma \ref{lemma: subgroup inherits RTF}. If $e$ is separating, then $G(\Gamma)$ splits as an amalgam with vertex groups $G(\Gamma_1)$ and $G(\Gamma_2)$ such that $\Gamma=\Gamma_1\sqcup\{e\}\sqcup\Gamma_2$ and $\Lambda\subset \Gamma_1$. The rest of the argument is similar to the previous case.
		
		Finally the general case easily follows from what we have shown, as any $g\in G(\Gamma)\setminus G(\Lambda)$ can be viewed as an element in $G(\Gamma')\setminus G(\Lambda)$ for some connected subgraph $\Gamma'$ of $\Gamma$ with only finitely many edges in $\Gamma'\setminus\Lambda$.
	\end{proof}
	
	See Lemma \ref{lemma: graph products RTF} for a discussion on the $n$-RTF conditions in graph products. One can also use geometry to show that the peripheral subgroups of the fundamental group of certain compact $3$-manifolds are $3$-RTF; See Lemma \ref{lemma:boundary inclusion is 3-RTF}.

\subsection{Quasimorphisms detecting the spectral gap for left relatively convex edge groups}\label{subsec:extremal qm for lrc subgroups}
	
Recall from Definition \ref{defn: left relatively convex} that a subgroup $H\le G$ of a group $H$ is called \emph{left relatively convex}, if there is a left $G$-invariant order $\prec$ on the cosets $G/H = \{ g H \mid g \in G \}$. 
This property has been studied in \cite{LRC}. 
Since left relatively convex subgroups are $\infty$-RTF by Lemma \ref{lemma: left relatively convex implies RTF}, Theorem \ref{thm: $n$-RTF gap, weak version} implies a sharp gap of $1/2$ for hyperbolic elements in graphs of groups where the edge groups are left relatively convex in the vertex groups. The aim of this subsection is to show the following result, which constructs explicit quasimorphisms detecting the gap $1/2$ and gives a completely different proof of Theorem \ref{thm: $n$-RTF gap, weak version} under this stronger assumption.

	\begin{thm} \label{thm:left relatively convex graph of groups}
	Let $G$ be a graph of groups and let $g \in G$ be a hyperbolic element. If every edge group lies left relatively convex in its corresponding vertex groups then there is an explicit homogeneous quasimorphism $\phi \col G \to \R$ such that $\phi(g) \geq 1$ and $D(\phi) \leq 1$.
	\end{thm}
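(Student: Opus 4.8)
The plan is to realize $\phi$ as the pullback of a rotation number, extending the constructions of Section~\ref{sec:Circle Quasimorphisms} from free groups to graphs of groups; equivalently one may encode the same data as a (generalized) letter-quasimorphism $\Phi\col G\to F(\{\att,\btt\})$ in the sense of Definition~\ref{defn:letter quasimorphism} and quote Theorem~\ref{thm:letter quasimorphisms}. Fix the hyperbolic element $g$, a tight loop $\gamma$ representing it, and let $M\ge 1$ be the number of edge spaces that $\gamma$ crosses in one period, equivalently the translation length of $g$ on the Bass--Serre tree. I would construct an explicit homomorphism $\rho\col G\to \Homeo^+_{M\Z}(\tCl_M)$ (or, in the geometric incarnation of Subsection~\ref{subsec:geometric realization of action}, into $\Homeo^+_{M\Z}(\R)$) for which $\rho(g)$ translates by exactly $M$. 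Then $\rho^*\rot_M$ is a homogeneous quasimorphism with $\rho^*\rot_M(g)=M$ and, by Theorem~\ref{thm:rot M is homogeneous qm}, $D(\rho^*\rot_M)\le M$; so $\phi\defeq \tfrac{1}{M}\,\rho^*\rot_M$ satisfies $\phi(g)=1\ge 1$ and $D(\phi)\le 1$, as required.

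To build $\rho$, I would fix, for every edge $e$ and vertex $v=t(e)$, the prescribed $G_v$-invariant total order on $G_v/o_e(G_e)$; refining an arbitrary ordering of $E(\Gamma)$ by these coset orders yields a $G_v$-invariant total order on the set of edges of the Bass--Serre tree incident to each vertex, hence a well-defined notion of ``pushing past position $i$ in the positive or negative direction'' for each of the $M$ cyclic positions of $\gamma$. The action of a vertex element $h\in G_v$ on $\tCl_M$ is then assembled from the order-preserving ``push'' generators $\psi_{\xtt_i}$ of Subsection~\ref{subsec:circle words and circle-word actions}, one factor for each position $i$ at which $\gamma$ meets $N(X_v)$, with orientation dictated by how $h$ moves the relevant coset, and the identity away from those positions. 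The essential verification is that these assignments respect the defining relations of the graph of groups: the relations internal to each $G_v$ hold because the chosen orders are honest total orders acted on by all of $G_v$, and the edge relations $e\,t_e(c)\,e^{-1}=o_e(c)$ hold precisely because the two left relatively convex subgroups $t_e(G_e)\le G_{t(e)}$ and $o_e(G_e)\le G_{o(e)}$ are identified compatibly with their invariant orders. Granting this, a computation in the spirit of Subsection~\ref{subsec:geometric proof strong scl gap} (and Proposition~\ref{prop:geometric interpretation of psi}) shows that evaluating $\rho(g)$ and reading off $\gamma$ one position at a time advances by one unit per step and returns shifted by $M$; powers of the individual $\psi_{\xtt_i}$ and the non-backtracking stretches of $\gamma$ cannot disturb this because each $\psi_{\xtt_i}^{\pm n}$ stays trapped near the adjacent position, exactly as in the free-group case. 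Hence $\rot_M(\rho(g))=M$.

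The main obstacle is the uniform well-definedness of $\rho$ together with the accompanying defect bound: one must propagate a single coherent system of ``push directions'' across \emph{all} reduced words, not just along the axis of $g$, and show that when the reduced forms of $x$ and $y$ are concatenated, cancellation can only alter the direction recorded at the one vertex where it bottoms out, so the rotation-number defect is not inflated beyond $M$. This is the graph-of-groups analogue of the delicate step in \cite{Heuer}, and it is here that left relative convexity, rather than the weaker $n$-RTF condition of Subsection~\ref{subsec: n-RTF}, is genuinely needed: only a $G_v$-invariant \emph{total} order on $G_v/o_e(G_e)$, compatible across each edge, lets one order the axis consistently and localize all discrepancies. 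A more cosmetic point: phrasing the output through $\Homeo^+_{M\Z}(\tCl_M)$ or $\Homeo^+_{M\Z}(\R)$ handles short axes (including $M=1$) automatically, whereas routing through a two-letter letter-quasimorphism $\Phi\col G\to F(\{\att,\btt\})$ would first require a two-coloring of the positions of $\gamma$ in $\Gamma$ — available after an innocuous subdivision — so that $\Phi(g^n)$ acquires the periodic shape $b_l\,b_0^{\,n-K}\,b_r$ with $b_0$ neither trivial nor a power of a single generator, which Theorem~\ref{thm:letter quasimorphisms} requires.
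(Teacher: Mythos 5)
Your central construction does not go through as stated: the assignment $\rho$ you describe is not a homomorphism. For a vertex element $h\in G_v$ you set $\rho(h)$ to be a product of push maps $\psi_{\xtt_i}^{\epsilon}$ with $\epsilon\in\{-1,0,+1\}$ determined by how $h$ moves the relevant coset in $G_v/o_e(G_e)$; but this three-valued sign is not additive, so already $\rho(h^2)=\psi_{\xtt_i}^{\pm1}\neq\psi_{\xtt_i}^{\pm2}=\rho(h)^2$ for any $h$ of nonzero sign, and more generally the internal relations of an arbitrary vertex group are not respected (an invariant total order on $G_v/o_e(G_e)$ gives an order-preserving action of $G_v$ on that coset space, not a homomorphism to the cyclic group generated by a single push map). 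The edge relations fare no better: $t_e(c)$ lies in $t_e(G_e)$ and so has sign $0$ at the positions of $\gamma$ passing through $e$, but it can have nonzero sign at positions where $\gamma$ meets $X_{t(e)}$ through a \emph{different} adjacent edge, and there is no compatibility between the orders on $G_{t(e)}/t_e(G_e)$ and $G_{o(e)}/o_e(G_e)$ forcing $\rho(e)\rho(t_e(c))\rho(e)^{-1}=\rho(o_e(c))$. The free-group construction of Subsection \ref{subsec:geometric proof strong scl gap} that you are modelling works precisely because $\rho$ is defined only on free generators and extended freely, so there are no relations to check; this is exactly why the paper does \emph{not} build an honest action of $G$, but instead builds a map $\Phi\col G\to F(\{\att,\btt\})$ that is only a letter-quasimorphism, and pushes the honest circle action onto the two-generator free group, where the defect control of Theorem \ref{thm:ab circle action} is special to that setting.

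Your fallback -- ``encode the data as a letter-quasimorphism and quote Theorem \ref{thm:letter quasimorphisms}'' -- is indeed the paper's route, but you leave precisely its substance unproven and flag it yourself as ``the main obstacle.'' The actual work consists of: reducing to a single amalgam or HNN splitting of $G$ in which $g$ is hyperbolic and invoking Antol\'in--Dicks--\v{S}uni\'c to keep the edge group left relatively convex in the new vertex groups; defining $\Phi$ via sign functions on normal forms and checking well-definedness against the non-uniqueness of Britton/reduced forms; proving the letter-quasimorphism alternative of Definition \ref{defn:letter quasimorphism} by the tripod argument on the Bass--Serre tree, where transitivity and invariance of the order show the three signs cannot all agree; and verifying the periodic shape $\Phi(g^n)=b_l b^{n-1} b_r$ with $b$ alternating of even length. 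These are the contents of Lemmas \ref{lemma:amalgamated free products lq} and \ref{lemma: HNN phi is letter quasimorphism}, none of which your proposal supplies, so as written there is a genuine gap rather than an alternative proof.
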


We will reduce it to proving the result for amalgamated free products and HNN extensions. 
The first case has been done in \cite[Theorem 6.3]{Heuer}.
In both cases we construct a so-called \emph{letter-quasimorphism} (see Definition \ref{defn:letter quasimorphism} below) 
and apply Theorem \ref{thm:letter quasimorphisms} below to produce the desired quasimorphism. 

Roughly speaking, a \emph{letter-quasimorphisms} is a map $\Phi \col G \to \Acl$ which has at most one letter ``as a defect'',
where $\Acl$ is the set of alternating (possibly empty) words in $F(\{ \att, \btt \})$.

\begin{defn}[Letter Quasimorphism] \label{defn:letter quasimorphism}
	A \emph{letter-quasimorphism} is a map $\Phi \col G \to \Acl$
	which is alternating, i.e.\ $\Phi(g^{-1}) = \Phi(g)^{-1}$ for all $g \in G$, and such that for every $g,h \in G$, one of the following cases hold:
	\begin{enumerate}
		\item $\Phi(g) \cdot \Phi(h) = \Phi(gh)$, or
		\item there are elements $c_1, c_2, c_3 \in \Acl$ and a letter $\xtt \in \{ \att, \att^{-1}, \btt, \btt^{-1} \}$ such that \emph{up to a cyclic permutation} of $\Phi(g), \Phi(h), \Phi(gh)^{-1}$  we have that 
		\begin{eqnarray*}
			\Phi(g) &=& c_1^{-1} \xtt c_2 \\
			\Phi(h) &=& c_2^{-1} \xtt c_3 \\
			\Phi(gh)^{-1} &=& c_3^{-1} \xtt^{-1} c_1
		\end{eqnarray*}
		where all expressions are supposed to be reduced.
	\end{enumerate}
\end{defn}

The following Theorem \ref{thm:letter quasimorphisms}, producing a quasimorphism out of a letter-quasimorphism, is a key result of \cite{Heuer} stated in slightly greater generality. 
In the original statement $w_l$ and $w_r$ are required to be the identity element, but the same proof works for the more general statement.
A more direct and completely different argument can be found in the old version of the current paper \cite{CH:sclgapgog_old}, which will now appear in a separate companion paper. 
\begin{thm}[\protect{\cite[Theorem 4.7]{Heuer}}] \label{thm:letter quasimorphisms}
	Let $\Phi \col G \to \Acl$ be a letter-quasimorphism and let $g_0 \in G$ be an element such that 
	there are $K>0$ and $w_l,w,w_r \in \Acl$ with
	$\Phi(g_0^n) = w_l w^{n-K} w_r$ for all $n \geq K$, where $w\notin\{ id,\att,\btt\}$. 
	Then there is a homogeneous quasimorphism $\phi \col G \to \R$ such that $\phi(g) \geq 1$ and $D(\phi) \leq 1$. In particular, $\scl_G(g_0) \geq 1/2$.
\end{thm}

We first sketch the construction of the desired letter-quasimorphism in the case of amalgamated free products from \cite{Heuer}.
Let $G = A \star_C B$ be an amalgamated free product of $A$ and $B$ over the subgroup $C$ and suppose that $C\le A$ (resp. $C \le B$) is left relatively convex with order $\prec_A$ (resp. $\prec_B$).
Define a function $\sign_A$ on $A$ by setting
$$
\sign_A(a) =
\begin{cases}
-1 & \mbox{if } aC \prec_A C \\
0 & \mbox{if } aC = C \\
1 & \mbox{if } aC \succ_A C,
\end{cases}
$$
and define $\sign_B$ analogously.
Every element $g \in G$ either lies in $C$ or may be written as a product
$$
g = a_0 b_0 \cdots a_n b_n
$$
where possibly $a_0$ and $b_n$ are the identity and all other $a_i \in A \setminus C$ and $b_i \in B \setminus C$.

Then we define a map $\Phi \col G \to \Acl$ as follows:
If $g \in C$ set $\Phi(g)=e$.
If $g = a_0 b_0 \cdots a_n b_n$ set
$$
\Phi(g) = \att^{\sign_A(a_0)} \btt^{\sign_B(b_0)} \cdots \att^{\sign_A(a_n)} \btt^{\sign_B(b_n)}.
$$
This is well defined although the expression of $g$ is not unique: Each double coset $Ca_iC$ (resp. $Cb_i C$) is uniquely determined and $\sign_A$ (resp. $\sign_B$) is bi-invariant under multiplication by $C$. The lemma below shows that $\Phi$ is a desired letter-quasimorphism for all hyperbolic elements in $G$.

\begin{lemma}[\protect{\cite[Theorem 6.3 and Lemma 6.1]{Heuer}}] \label{lemma:amalgamated free products lq}
Let $G = A \star_C B$ be an amalgamated free product where $C$ lies left relatively convex in both $A$ and $B$. Then the map $\Phi \col G \to \Acl$ defined as above is a 
letter-quasimorphism. 
If $g \in G$ is a hyperbolic element then there is a conjugate $g_0\in G$ of $g$ and $w\in \Acl$ such that $\Phi(g_0^n) = w^n$ for all $n\ge1$. 
\end{lemma}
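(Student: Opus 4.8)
The plan is to follow the strategy of \cite{Heuer}: first check that $\Phi$ is well defined and alternating, then run a case analysis at the junction of two normal forms to verify the defining inequality of a letter-quasimorphism, and finally read off the behaviour of $\Phi$ on powers of a hyperbolic element directly from syllable normal forms. Two elementary facts about the sign functions do almost all the work. Since $\sign_A(a)$ depends only on the coset $aC$ and $cC = C$ for $c \in C$, the value $\Phi(g)$ is insensitive to how factors of $C$ are distributed among the syllables of a normal form $g = a_0 b_0 \cdots a_n b_n$; with the normal form theorem for amalgamated products (see \cite{Serre}) this makes $\Phi$ well defined, and it also shows (using that $g\in C$ implies $gC=C$, together with $A$-invariance of $\prec_A$) that $\Phi(gh)=\Phi(h)$ whenever $g\in C$. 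Left invariance of $\prec_A$ gives both $\sign_A(a^{-1}) = -\sign_A(a)$ (apply $a^{-1}$ on the left to $aC \succ_A C$) and the \emph{near-multiplicativity} $\sign_A(aa') = \sign_A(a)$ whenever $\sign_A(a) = \sign_A(a') \neq 0$, since then $a'C \succ_A C$ forces $aa'C \succ_A aC \succ_A C$ and symmetrically in the other direction; the same holds for $B$. Reversing the syllable normal form of $g^{-1}$ and inverting each syllable then gives $\Phi(g^{-1}) = \Phi(g)^{-1}$, so $\Phi$ is alternating.

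Next, the core step: for arbitrary $g, h \in G$, show that case (1) or case (2) of Definition~\ref{defn:letter quasimorphism} holds. Writing $g, h$ in syllable normal form and reducing $gh$, all cancellation occurs at the junction and propagates inward only a bounded amount, so the syllables of $g$ strictly to the left of the cancelled region and those of $h$ strictly to its right are untouched and contribute a common prefix of $\Phi(g)$ and $\Phi(gh)$ and a common suffix of $\Phi(h)$ and $\Phi(gh)$. If the last syllable of $g$ and the first of $h$ lie in different factors there is no cancellation and $\Phi(gh) = \Phi(g)\Phi(h)$ (case (1)). If they lie in the same factor, say $A$, their product lies in $A$: if it is not in $C$ it becomes a single new syllable, so $\Phi(g) = L\att^{\epsilon_g}$, $\Phi(h) = \att^{\epsilon_h}R$ and $\Phi(gh) = L\att^{\epsilon_{gh}}R$ for suitable (possibly empty) alternating words $L,R$, with $\epsilon_g,\epsilon_h \in \{\pm1\}$ the signs of the old syllables and $\epsilon_{gh}$ the sign of the merged one; when $\epsilon_g = \epsilon_h$, near-multiplicativity forces $\epsilon_{gh} = \epsilon_g$ and the triple $\Phi(g),\Phi(h),\Phi(gh)^{-1}$ has exactly the shape of case (2) with $\xtt = \att^{\epsilon_g}$, $c_1 = L^{-1}$, $c_2 = e$, $c_3 = R$, while if $\epsilon_g \neq \epsilon_h$ one checks case (2) the same way after cyclically permuting the triple and adjusting $\xtt$. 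If instead the product of the two junction syllables lies in $C$, that $C$-element is absorbed into the neighbouring syllables (which then lie in a common factor) and one iterates; this terminates since syllable lengths are finite, and if all of $gh$ collapses into $C$ then $h = g^{-1}(gh)$ with $gh \in C$, so $\Phi(h) = \Phi(g)^{-1}$ and we are again in case (1). I expect this junction analysis to be the one genuinely fiddly point: one must check that however far the $C$-absorption propagates, $\Phi(g)$, $\Phi(h)$ and $\Phi(gh)$ still agree away from a single shared syllable, which is exactly what keeps the defect down to one letter.

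Finally, for a hyperbolic $g$ I would write $g = x g_0 x^{-1}$ with $g_0$ cyclically reduced of syllable length $\geq 2$. Then every syllable of $g_0$ lies outside $C$, so $w \defeq \Phi(g_0)$ is a nontrivial reduced alternating word of length $\geq 2$, in particular not a power of $\att$ or of $\btt$. Cyclic reducedness makes the syllable normal form of $g_0^n$ the $n$-fold concatenation of that of $g_0$, so $\Phi(g_0^n) = w^n$; and in $x g_0^n x^{-1}$ the cancellation of $x$ against $g_0^n$ and of $g_0^n$ against $x^{-1}$ is bounded by the syllable length of $x$ and independent of $n$ once $n$ is large, so applying the junction analysis above a bounded number of times yields $\Phi(g^n) = b_l\, b^{\,n}\, b_r$ for fixed $b_l, b_r \in F(\{\att,\btt\})$ and a nontrivial alternating $b$ that is a cyclic conjugate of $w$ — the eventually-periodic form needed to invoke Theorem~\ref{thm:letter quasimorphisms}, which then supplies the homogeneous quasimorphism realizing the $1/2$ gap for $g$.
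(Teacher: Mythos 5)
Your route is genuinely different from the paper's. The paper does not reprove this lemma at all: it is quoted from \cite{Heuer}, and the argument the paper actually writes out (for the HNN analogue, Lemma \ref{lemma: HNN phi is letter quasimorphism}) is run on the Bass--Serre tree, by re-expressing $\Phi$ as a function $\Xi$ of reduced edge-paths and reducing the letter-quasimorphism property to the statement that at the centre of a tripod the three $\sign$-values cannot all agree, by transitivity and invariance of the relative order. You instead argue directly on syllable normal forms and cancellation at the junction, and your preliminary observations (the sign depends only on the coset, is unchanged by multiplying by $C$ on either side, is odd under inversion, and is ``nearly multiplicative'') are exactly the right toolkit for that.

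However, as written there is a gap, and it sits precisely at the point you flag and then skip: your explicit case-$(2)$ verification only treats a one-step merge, i.e.\ $c_2=e$, and the assertion that after iterated $C$-absorption ``$\Phi(g)$, $\Phi(h)$ and $\Phi(gh)$ still agree away from a single shared syllable'' is the actual content of the lemma, not a routine check to be deferred. To close it, write the junction as $g=u\,x^{(k)}\cdots x^{(0)}$ and $h=y^{(0)}\cdots y^{(k)}v$, where $x^{(0)}y^{(0)}=c_1\in C$, $x^{(j)}c_jy^{(j)}=c_{j+1}\in C$ for $j<k$, and $z\defeq x^{(k)}c_ky^{(k)}\notin C$ (total collapse being your case $(1)$). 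You need two facts. First, for every absorbed pair $\sign(y^{(j)})=-\sign(x^{(j)})$: indeed $y^{(j)}=c_j^{-1}(x^{(j)})^{-1}c_{j+1}$, and the sign is unchanged by multiplication by $C$ on either side; this is what makes the letters of the absorbed syllables of $g$ and of $h$ mutually inverse, i.e.\ produces the matching pair $c_2$, $c_2^{-1}$ in Definition \ref{defn:letter quasimorphism}$(2)$ (reducedness of the $c_i$ is automatic, since consecutive letters alternate between $\att^{\pm1}$ and $\btt^{\pm1}$). Second, the three distinguished letters attached to $x^{(k)}$, $y^{(k)}$ and $z$ never realize the one forbidden pattern: unwinding the definition, the only configuration not matched by some cyclic permutation is $\sign(x^{(k)})=\sign(y^{(k)})=-\sign(z)$, and this is excluded by your near-multiplicativity once you extend it across the intervening $C$-element, using $\sign(x^{(k)}c_k)=\sign(x^{(k)})$. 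With these two points in place your bookkeeping goes through, and the rest of the argument is fine: for hyperbolic $g=xg_0x^{-1}$ with $g_0$ cyclically reduced the end-effects are bounded independently of $n$, giving $\Phi(g^n)=b_l\,b^{\,n-K}b_r$ with $b$ a cyclic conjugate of $\Phi(g_0)$, alternating of length at least $2$, which is exactly what Theorem \ref{thm:letter quasimorphisms} requires.
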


We now describe a similar construction for HNN extensions. Let $V$ be a group and let $\phi \col A \to B$ be an isomorphism between two subgroups $A, B$ of $V$.
Let 
$$
G = V \star_{\phi} \defeq \langle V, t \mid \phi(a) = t a t^{-1}, a \in A \rangle
$$
be the associated HNN extension with stable letter $t$.

Our construction makes use of a function $\sign_A \col G \to \{ -1, 0, 1 \}$ as follows. If $A,B$ are left relatively convex in $V$, then a result of 
{Antol{\'\i}n}--Dicks--{\v{S}uni{\'c}} \cite[Theorem 14]{LRC} shows that $A$ is also left relatively convex in $G$. Define
$\sign_A \col G \to \{-1,0,1 \}$ as
$$
\sign_A(g) =
\begin{cases}
-1 & \mbox{if } g A \prec A \\
0 & \mbox{if } g A = A \\
1 & \mbox{if } g A \succ A.
\end{cases}
$$
Observe that for every $g \in G$ and $a \in A$, $\sign_A(g) = \sign_A(ga) = \sign_A(ag)$, i.e.\ $\sign_A$ is invariant under both left and right multiplications by $A$.

Britton's lemma \cite{LyndonSchupp} implies that each $g \in G$ in the HNN extension may be written as 
$$
g = v_0 t^{\epsilon_0} \cdots t^{\epsilon_{n-1}} v_n
$$
where $\epsilon_i \in \{ +1,-1 \}$, $v_i \in V$ and there is no subword $t a t^{-1}$ with $a \in A$ or $t^{-1} b t$ with $b \in B$.

Moreover, such an expression is unique in the following sense.
If $v'_0 t^{\epsilon'_0} \cdots t^{\epsilon'_{n'-1}} v'_{n'}$ is another such expression then $n = n'$, $\epsilon_i = \epsilon'_i$ for all $i \in \{0, \cdots, n-1 \}$, and
$$
v_i = h^l_i v'_i h^r_i,
$$
 where 
$$
 \begin{cases} 
 h^l_i = e & \mbox{if } i=0, \\
h^l_i \in A & \mbox{if } \epsilon_{i-1} = 1, i>0 \mbox{ and} \\
 h^l_i \in B & \mbox{if } \epsilon_{i-1} = -1, i >0
 \end{cases}
$$ 
and
$$
 \begin{cases} 
 h^r_i = e & \mbox{if } i=n, \\
h^r_i \in A & \mbox{if } \epsilon_i = -1, i < n \mbox{ and} \\
 h^r_i \in B & \mbox{if } \epsilon_i = 1, i < n. \\
 \end{cases}
$$ 

We define a letter-quasimorphism $\Phi \col G \to F(\{ \att, \btt \})$ as follows.
If $g \in V$, set $\Phi(g)=e$. Otherwise, express $g = v_0 t^{\epsilon_0} \cdots t^{\epsilon_{n-1}} v_n$ in the above form.
Then set
$$
\Phi(g) = \att \btt^{\epsilon_0} \att^{\sign_A(\tilde{v}_1)} \btt^{\epsilon_1} \cdots \att^{\sign_A(\tilde{v}_{n-1})}
 \btt^{\epsilon_{n-1}} \att^{-1}
$$
where $\tilde{v}_i = t_i^l v_i t_i^r$ with 
$$
t^l_i = 
\begin{cases}
t^{-1} & \mbox{if } \epsilon_{i-1} = {-1} \\
e & \mbox{else,}
\end{cases}
$$
and
$$
t_i^r = 
\begin{cases}
t & \mbox{if } \epsilon_{i} = +1 \\
e & \mbox{else}.
\end{cases}
$$

We verify below that the map $\Phi$ is well defined. 
If $v'_0 t^{\epsilon'_0} \cdots t^{\epsilon'_{n-1}} v'_n$ is another such expression, then we know that $\epsilon'_i = \epsilon_i$, and thus the $\btt$-terms agree.

We are left to verify that the signs of $t^l_i v_i t^r_i$ and $t^l_i v'_i t^r_i$ agree, where we know that $v_i = h^l_i v'_i h^r_i$ for some $h^l_i, h^r_i$ as above.

If $\epsilon_{i-1}=1$, then
\begin{eqnarray*}
\sign_A(t^l_i v_i t^r_i) = \sign_A(h^l_i v'_i h^r_i t^r_i) = \sign_A(t^l_i v'_i h^r_i t^r_i)
\end{eqnarray*}
using that $t^l_i$ is trivial, $h^l_i \in A$ and that $\sign_A$ is invariant under left multiplication.
If  $\epsilon_{i-1}=-1$, then
\begin{eqnarray*}
\sign_A(t^l_i v_i t^r_i) = \sign_A(t^{-1} h^l_i v'_i h^r_i t^r_i) = \sign_A(  (t^{-1} h^l_i  t )\cdot t^{-1} v'_i h^r_i t^r_i) = \sign_A( t^l_i v'_i h^r_i t^r_i)
\end{eqnarray*}
using that $t^l_i= t^{-1}$, $h^l_i \in B$, $t^{-1} h^l_i  t \in A$ and that $\sign_A$ is invariant under left multiplication.
In each case we see that
$$
\sign_A(t^l_i v_i t^r_i) = \sign_A(t^l_i v'_i h^r_i t^r_i).
$$
By an analogous argument for the right hand side of $v_i$ and $v'_i$ we see that
$$
\sign_A(t^l_i v_i t^r_i) = \sign_A(t^l_i v'_i t^r_i).
$$
Thus $\Phi$ is indeed well defined.

\begin{lemma} \label{lemma: HNN phi is letter quasimorphism}
Let $V$ be a group with left relatively convex subgroups $A,B \le V$. Let $\phi \col A \to B$ be an isomorphism between $A$ and $B$ and let $G = V \star_\phi$ be the associated HNN extension. Then the map $\Phi \col G \to \Acl$ defined as above is a letter-quasimorphism.
If $g$ is a hyperbolic element, then there is a conjugate $g_0\in G$ and $w_l, w,  w_r,\in \Acl$ such that $\Phi(g_0^m) = w_l w^{m-1} w_r$ for all $m\ge1$ where $w \in \Acl$ is nontrivial and has even length.
\end{lemma}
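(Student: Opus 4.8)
The plan is to verify directly that the map $\Phi$ satisfies the two alternatives in Definition \ref{defn:letter quasimorphism} for every pair $g,h\in G$, and then to analyze the normal form of powers of a hyperbolic element. First I would record that $\Phi$ is alternating: the expression $v_0 t^{\epsilon_0}\cdots t^{\epsilon_{n-1}}v_n$ for $g^{-1}$ is the reverse of that for $g$ with signs $\epsilon_i$ negated, and since $\sign_A$ reverses under inversion (because $\prec$ is a left order, $gA\prec A \iff g^{-1}A\succ A$ — here one must be slightly careful and use that $\sign_A$ on $G$ is the sign coming from left relative convexity of $A$ in $G$, which is $\iota$-antisymmetric), the $\att$- and $\btt$-exponents all flip sign, so $\Phi(g^{-1})=\Phi(g)^{-1}$. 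The leading $\att$ and trailing $\att^{-1}$ of $\Phi(g)$ (when $g\notin V$) get swapped correctly under this. One also needs that $\Phi(g)$ as written is reduced, i.e.\ no two consecutive exponents among the $\att,\btt$ letters are zero in a way that creates a non-alternating word after cancellation — this requires knowing $\sign_A(\tilde v_i)\ne 0$ forces nothing, but rather that the word $\att\btt^{\epsilon_0}\att^{s_1}\btt^{\epsilon_1}\cdots$ is always alternating because the $\btt^{\epsilon_i}$ are never zero (as $\epsilon_i=\pm1$) and consecutive $\att^{s_i}$ are separated by a $\btt$; the only subtlety is cancellation when an interior $\att^{s_i}=\att^0$, but then we simply get $\btt^{\epsilon_{i-1}}\btt^{\epsilon_i}$, which after reduction is still alternating (or trivial, handled by Lemma \ref{lemma:circle quasimorphisms dont see powers}-type reasoning). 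Actually it is cleanest to allow $\Phi$ to land in $F(\{\att,\btt\})$ and only insist on the letter-quasimorphism identities.

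The heart is the cocycle-type identity. Given $g,h\in G$ with Britton normal forms, I would compute the Britton normal form of $gh$ by the standard reduction: the expressions concatenate, and reduction happens at the junction $v_n^{(g)} t^{\epsilon}\cdots$ meets $\cdots t^{\epsilon'} v_0^{(h)}$ only via successive eliminations of subwords $t a t^{-1}$ or $t^{-1}bt$. The key point, exactly as in the amalgamated-product case of \cite[Theorem 6.3]{Heuer}, is that the amount of cancellation is governed by at most one ``ambiguous'' middle syllable, and $\sign_A$ evaluated on the relevant $\tilde v$ either matches on both sides of the split or changes by passing through $\sign_A=0$ — the latter being precisely the situation producing the single-letter defect $\xtt$ in case (2) of Definition \ref{defn:letter quasimorphism}. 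Concretely: write $g=u\cdot m$ and $h=m^{-1}\cdot w$ where $m$ is the maximal common cancelling part; then $\Phi(g)=c_1^{-1}\xtt c_2$, $\Phi(h)=c_2^{-1}\xtt' c_3$, and $\Phi(gh)^{-1}=c_3^{-1}\xtt'' c_1$ where $c_2$ records the contribution of $m$, and the letters $\xtt,\xtt',\xtt''$ are the $\att^{\sign_A}$ or $\btt^{\pm 1}$ syllables at the meeting point. When the cancellation is ``clean'' (the residual middle syllable lies in $A$ resp.\ $B$, contributing $\sign_A=0$ to $\Phi(gh)$ but not necessarily to $\Phi(g)$ or $\Phi(h)$), one gets case (2); otherwise case (1) holds. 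I would do this by cases on the parity/type of the cancellation, mirroring Lemma \ref{lemma:amalgamated free products lq}; the invariance of $\sign_A$ under left and right multiplication by $A$ (and the conjugation relation $t^{-1}At=\phi^{-1}$... i.e.\ $tAt^{-1}$ relating $A$ and $B$) is used repeatedly to see that the $c_i$ on the two sides genuinely coincide.

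For the final assertion about powers: if $g$ is hyperbolic, it is conjugate to a cyclically reduced element, i.e.\ one whose Britton normal form $v_0 t^{\epsilon_0}\cdots t^{\epsilon_{n-1}}v_n$ has $n\ge 1$ and no cancellation when the expression is cyclically closed up (this is the graph-of-groups notion of ``tight'' hyperbolic loop from Subsection \ref{subsec: surfs in graphs of groups}). For such $g$, the Britton normal form of $g^m$ is obtained by concatenating $m$ copies with at most a bounded amount of reduction at each junction, uniformly in $m$; hence $\Phi(g^m)$ stabilizes to the form $b_l b^{m-1} b_r$ where $b$ is the ``periodic core'' coming from one period of $g$ together with the standard junction syllables, and $b_l,b_r$ absorb the two end corrections. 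Since the period of $g$ involves at least one $t^{\pm1}$, the core $b$ contains at least one $\btt^{\pm1}$; and $b$ has even length because, reading around one period of a cyclically reduced hyperbolic word, the $\att$- and $\btt$-syllables alternate and close up — the number of $\btt^{\epsilon_i}$ equals $n$ and each is flanked by an $\att$-syllable in the cyclic word, giving length $2n$, even. I should double-check the degenerate possibility that $b$ is trivial: this is excluded precisely because $g$ being hyperbolic means $n\ge 1$, so at least one $\btt$ survives.

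\textbf{Main obstacle.} The delicate step is the casework establishing the letter-quasimorphism identity at the junction, specifically keeping track of which side the residual cancelling syllable attaches to and hence on which of $\Phi(g),\Phi(h),\Phi(gh)$ a $\sign_A=0$ ``disappears'', producing the extra letter $\xtt$. The HNN case is genuinely more involved than the amalgam case because the stable letter $t$ conjugates $A$ to $B$, so tracking the $\tilde v_i = t^l_i v_i t^r_i$ through a long cancellation requires repeatedly moving $t^{\pm1}$'s across the $v_i$'s and invoking $tAt^{-1}=B$ (equivalently, the well-definedness computation already carried out above) to see that the signs recorded by $\Phi$ are unchanged. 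I expect this to be a somewhat lengthy but ultimately mechanical verification, structured exactly parallel to \cite[Lemma 6.1]{Heuer}.
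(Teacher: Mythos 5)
Your plan for the second assertion (powers of a hyperbolic element) is essentially the paper's argument: conjugate to a cyclically reduced Britton form, observe there is no cancellation at the junctions, and read off the periodic core $b$ of even length $2n$. For the first assertion, however, you take a different route from the paper — a direct Britton-normal-form cancellation analysis at the junction of $g$ and $h$ — whereas the paper realizes $\Phi(g)$ as $\Xi(\wp(g))$ for the geodesic $\wp(g)$ from $V$ to $gV$ in the Bass--Serre tree, with edge invariants $\sign_t(e)$ and $\mu(e)\in G/A$, so that the case analysis reduces to whether $V, gV, ghV$ are collinear or form a tripod. Your route could in principle be made to work (the tree formalism is exactly the bookkeeping that makes ``the maximal common cancelling part $m$'' well defined, since in an HNN extension it is only defined up to edge-group elements, i.e.\ the invariant is the coset $\mu$), but as written it has a genuine gap at the heart of the lemma.

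The gap is this: Definition \ref{defn:letter quasimorphism}(2) does not merely ask for three distinguished syllables $\xtt,\xtt',\xtt''$ at the meeting point; it requires the precise pattern $\xtt,\xtt,\xtt^{-1}$ (up to cyclic permutation), and your proposal never explains why this pattern holds. Your stated mechanism — that case (2) arises when ``the residual middle syllable lies in $A$ resp.\ $B$, contributing $\sign_A=0$ to $\Phi(gh)$'' — is not the actual mechanism: in the tripod configuration the three junction exponents are all nonzero (indeed, by Britton's lemma distinct edges at a common vertex have distinct $\mu$-cosets, which also shows your worry about interior $\att^0$ letters is unfounded), and the whole content of case (2) is that these three signs $\sign_A(\mu(e_i)^{-1}\mu(e_j))$ cannot all be $+1$ nor all $-1$. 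That step is where left relative convexity is really used: one needs the $G$-invariant order on $G/A$ (so left relative convexity of $A$ in $G$, via Antol\'in--Dicks--\v{S}uni\'c \cite[Theorem 14]{LRC}, not just in $V$) and a transitivity argument around the branch point to rule out all three signs agreeing. Without identifying and proving this point, the ``mechanical verification'' you defer is not mechanical — it is the lemma. Relatedly, the split into cases is not ``clean versus non-clean cancellation'': the collinear configuration yields case (1) or a degenerate instance of case (2) (e.g.\ with one $c_i$ empty and $\xtt=\att^{\mp1}$ when the junction sign is $\pm1$), while the tripod configuration always yields case (2) via the sign argument above; your proposal would need this corrected case structure to go through.
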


We will show that for every $g, h \in G$ either $(1)$ or $(2)$ of Definition \ref{defn:letter quasimorphism} holds by realizing $\Phi$ as a map defined on the Bass--Serre tree associated to the HNN extension, which is similar to the proof for the case of amalgamated free products \cite[Lemma 6.1]{Heuer}.

Let $\Trm$ be a tree with vertex set $V(\Trm) = \{ g V \mid g \in G \}$ and edge set 
$$
E(\Trm) = \{ (g V, g t V) \mid g \in G \} \cup \{ (g V, g t^{-1} V) \mid g \in G \}.
$$
Define $o, \tau \col E(\Trm) \to V(\Trm)$ by setting
$o(g V, g t V) = g V$, $\tau(g V, g t V) = g t V$, 
$o(g V, g t^{-1} V)= g V$, $\tau(g V, g t^{-1} V) = g t^{-1} V$.
Moreover, set $\overline{(g V, gtV)} = (gtV, gV)$ and $\overline{(g V, gt^{-1}V)} = (g t^{-1} V, gV)$.
It is well known \cite{Serre} that $\Trm$ is a tree and that $G$ acts on $\Trm$ with vertex stabilizers conjugate to $V$ and edge stabilizers conjugate to $A$. 
For what follows, we will define two maps $\sign_t \col E(\Trm) \to \{1, -1 \}$ and $\mu \col E(\Trm) \to G/A$ on the set of edges.
We set
\begin{itemize}
\item $\sign_t(e) = 1$ if $e=(g V, g t V)$ and
$\sign_t(e) = -1$ if $e=(g V, g t^{-1} V)$. In the first case we call $e$ \emph{positive} and in the second case \emph{negative}.
\item $\mu(e) = gtA$ if $e = (gV, gt V)$, and $\mu(e)=gA$ if $e = (g V, gt^{-1} V)$.
\end{itemize}

\begin{claim}
Both $\sign_t$ and $\mu$ are well defined.
\end{claim}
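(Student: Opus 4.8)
The plan is to reduce the entire claim to the normal form for HNN extensions (Britton's lemma), which is already recalled just above. By definition an oriented edge of $\Trm$ comes presented in one of the two shapes $(gV, gtV)$ or $(gV, gt^{-1}V)$, and such a presentation is exactly what the definitions of $\sign_t$ and $\mu$ read off. So the content of the claim is precisely that (i) no edge admits presentations of both shapes, and (ii) within one shape the value of $\mu$ does not depend on the chosen representative $g$. I would carry this out in two short steps.

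First I would check (i), which simultaneously makes $\sign_t$ unambiguous. Suppose $(g_1V, g_1tV) = (g_2V, g_2t^{-1}V)$. Comparing first coordinates gives $g_2 = g_1 v$ for some $v \in V$; substituting into the second coordinates and cancelling $g_1$ yields $tV = vt^{-1}V$, hence $t v^{-1} t \in V$. This is impossible: the assignment $V \mapsto 0$, $t \mapsto 1$ extends to a homomorphism $\nu\col G \to \Z$ (every defining relator $\phi(a) t a^{-1} t^{-1}$ has total $t$-exponent $0$ since $\phi(a) \in B \subseteq V$), and $\nu(tv^{-1}t) = 2 \neq 0 = \nu(\text{anything in } V)$. (Equivalently, $tv^{-1}t$ is already a Britton-reduced word with two syllables, hence not in $V$.) Thus every edge has a well-defined type, i.e.\ $\sign_t$ is well defined.

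Next, with the type fixed, I would check (ii). If $(g_1V, g_1tV) = (g_2V, g_2tV)$, then again $g_2 = g_1 v$ with $v \in V$, and comparing second coordinates gives $tV = vtV$, hence $t^{-1} v t \in V$. Reading $t^{-1} v t$ as a word with stable-letter syllables $t^{-1}(\cdots)t$, Britton's lemma forces it to be non-reduced in order to lie in $V$, which means $v \in B$, and then $t^{-1} v t = \phi^{-1}(v) \in A$. Therefore $g_2 t A = g_1 v t A = g_1 t (t^{-1} v t) A = g_1 t A$, so $\mu$ is unchanged. The negative case is symmetric: if $(g_1V, g_1t^{-1}V) = (g_2V, g_2t^{-1}V)$ with $g_2 = g_1 v$, then $t v t^{-1} \in V$, which by Britton's lemma forces $v \in A$ directly, whence $g_2 A = g_1 v A = g_1 A$. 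This proves both maps are well defined; the reversal map $e \mapsto \overline{e}$ is then automatically well defined and one reads off $\sign_t(\overline{e}) = -\sign_t(e)$ and $\mu(\overline{e}) = \mu(e)$.

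I do not expect any conceptual obstacle here: well-definedness of these combinatorial labels is exactly an instance of the uniqueness clause of Britton's lemma. The only point demanding care is the bookkeeping of which of the two subgroups sits on which side of $t$ — namely that conjugation by $t$ carries $A$ into $V$ (landing in $B$) while conjugation by $t^{-1}$ carries $B$ into $V$ (landing in $A$) — so that in each of the two cases above the correct subgroup ($A$, respectively $B$) is produced and the coset in $G/A$ comes out unchanged.
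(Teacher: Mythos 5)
Your proof is correct and follows essentially the same route as the paper: the well-definedness of $\mu$ is reduced, via Britton's lemma and the relation $tAt^{-1}=B$, to the coset manipulation $g'tA = gvtA = gt(t^{-1}vt)A = gtA$ in the positive case and $g'A = gvA = gA$ in the negative case. The only difference is that you also verify the positive and negative edge shapes cannot coincide (via the $t$-exponent homomorphism), a point the paper dismisses with ``for $\sign_t$ there is nothing to prove''; this is a harmless extra check, not a different approach.
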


\begin{proof}
For $\sign_t$ there is nothing to prove. For $\mu$, observe that if $(g V, g t V) = (g' V, g' t V )$, then there is an element $b \in B$ such that $g = g' b$. Thus $g t A = g' b t A = g' t A$, using that $t A t^{-1} = B$ in $G$. Similarly, we see that $\mu$ is also well defined for negative edges.
\end{proof}

\begin{claim}
	Let $e \in E(\Trm)$ be an edge. Then
	$\sign_t(\bar{e}) = - \sign_t(e)$ and $\mu(e) = \mu(\bar{e})$.
\end{claim}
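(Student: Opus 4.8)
The plan is to prove both identities by a direct case check, distinguishing whether $e$ is positive or negative, and using nothing beyond the definitions of $o,\tau,\sign_t,\mu$ together with the well-definedness of $\sign_t$ and $\mu$ just established. The point of $\mu(e)=\mu(\bar e)$ is that the label in $G/A$ should be an invariant of the \emph{undirected} edge, and this will come out of the relation $tAt^{-1}=B$ already built into the definition of $G$; the point of $\sign_t(\bar e)=-\sign_t(e)$ is simply that reversing an edge swaps the roles of $t$ and $t^{-1}$.

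First I would treat the case where $e$ is positive, say $e=(gV,gtV)$ for some $g\in G$, so that $\sign_t(e)=1$ and $\mu(e)=gtA$ by definition. The reversed edge is $\bar e=(gtV,gV)$, and to read off its $\sign_t$- and $\mu$-values I would rewrite it in one of the two standard forms: setting $g'\defeq gt$ we have $gV=g't^{-1}V$, so $\bar e=(g'V,g't^{-1}V)$ is a negative edge. Hence $\sign_t(\bar e)=-1=-\sign_t(e)$, and $\mu(\bar e)=g'A=gtA=\mu(e)$, as desired. The case where $e$ is negative is symmetric: if $e=(gV,gt^{-1}V)$ then $\sign_t(e)=-1$ and $\mu(e)=gA$; putting $g'\defeq gt^{-1}$ gives $gV=g'tV$, so $\bar e=(g'V,g'tV)$ is positive, whence $\sign_t(\bar e)=1=-\sign_t(e)$ and $\mu(\bar e)=g'tA=gA=\mu(e)$.

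The only subtlety I anticipate — and it is minor rather than a genuine obstacle — is that the standard forms of $e$ and of $\bar e$ are written with different base elements ($g$ versus $g'=gt^{\pm 1}$), so one cannot simply compare base elements; instead one must appeal to the previous claim, which guarantees that $\mu$ and $\sign_t$ are independent of the chosen representative, and hence that the values computed from the rewritten expressions are the correct ones. With that observation in place the argument is a couple of lines of bookkeeping.
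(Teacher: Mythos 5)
Your argument is correct and is essentially the same as the paper's: rewrite $\bar{e}$ in standard form with base element $gt$ (resp.\ $gt^{-1}$) and read off $\sign_t$ and $\mu$, the paper doing the positive case explicitly and leaving the negative case as analogous. Your extra remark about invoking well-definedness from the previous claim is a reasonable (if not strictly necessary) bookkeeping point and does not change the substance.
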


\begin{proof}
	Suppose that $e = (g V, g t V)$. Hence $\sign_t(e)=1$ and $\mu(e)=gtA$.
	We see that $\bar{e} = (g t V, g V) = ((g t) V, (g t) t^{-1} V)$. Thus $\sign_t(\bar{e}) = -1$ and $\mu(\bar{e})=gtA$. The case where $e$ is an edge of the form $(g V, g t^{-1} V)$ is analogous.
\end{proof}

A \emph{reduced path in $\Trm$} is a sequence $\wp = (e_1, \ldots e_n)$, $e_i \in E(\Trm)$ of edges such that $\tau(e_i)=o(e_{i+1})$ for every $i \in \{ 1, \ldots, n-1 \}$, without backtracking.
For what follows, $\mathcal{P}$ will be the set of all reduced paths. We also allow the empty path.

We define the following map $\Xi \col \mathcal{P} \to \Acl$ assigning an alternating word to each path of edges.

If $\wp$ is empty, set $\Xi(\wp) = e$.
Otherwise, suppose that $\wp = (e_1, \ldots, e_n)$. Then define
$\Xi(\wp) \in F(\{ \att, \btt \})$ as
$$
\att
\btt^{\sign_t(e_1)} \att^{\sign_A(\mu(e_1)^{-1} \mu(e_2))} \btt^{\sign_t(e_2)} \cdots 
\btt^{\sign_t(e_{n-1})} \att^{\sign_A(\mu(e_{n-1})^{-1} \mu(e_n))} \btt^{\sign_t(e_n)} \att^{-1}.
$$

For any $g \in G$, let $\wp(g)$ be the unique geodesic in $\Trm$ from $V$ to $g V$. 

\begin{claim} \label{claim:properties of xi}
$\Xi \col \mathcal{P} \to \Acl$ has the following properties:
\begin{enumerate}[(i)]
\item For any $\wp \in \mathcal{P}$ and $g \in G$ we have $\Xi(^g \wp) = \Xi(\wp)$, where $^g \wp$ is the translate of $\wp$ by $g\in G$.\label{item: i in letter qm claim}
\item Let $\wp_1, \wp_2$ be two reduced paths such that the last edge in $\wp_1$ is $e_1$, the first edge in $\wp_2$ is $e_2$, so that $\tau(e_1)=o(e_2)$ and $e_1 \not = \bar{e}_2$.
Then 
$$
\Xi(\wp_1 \cdot \wp_2) = \Xi(\wp_1) \att^{\sign_A(\mu(e_1)^{-1} \mu(e_2))} \Xi(\wp_2)
$$
as reduced words, where $\wp_1 \cdot \wp_2$ denotes the concatenation of paths.\label{item: ii in letter qm claim}
\item For any $g\in G$ we have 
$\Phi(g) = \Xi(\wp(g))$, where $\Phi$ is the map of interest in Lemma \ref{lemma: HNN phi is letter quasimorphism}.\label{item: iii in letter qm claim}
\end{enumerate}
\end{claim}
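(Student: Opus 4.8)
The plan is to verify the three properties of $\Xi$ listed in Claim \ref{claim:properties of xi} by unwinding the definitions, and then to use them to check that the map $\Phi$ of Lemma \ref{lemma: HNN phi is letter quasimorphism} is a letter-quasimorphism via the Bass--Serre tree $\Trm$.

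First I would prove (\ref{item: i in letter qm claim}): the translate $^g\wp$ has edges $ge_i$, and since $\sign_t(ge_i)=\sign_t(e_i)$ (the $t$-direction of an edge is unchanged by left translation, as the edge $(hV, htV)$ maps to $(ghV, ghtV)$) and $\mu(ge_i)=g\mu(e_i)$ (directly from the formula $\mu((hV,htV))=htA$), the product $\mu(ge_{i})^{-1}\mu(ge_{i+1}) = \mu(e_i)^{-1}g^{-1}g\mu(e_{i+1}) = \mu(e_i)^{-1}\mu(e_{i+1})$ is also unchanged; hence every exponent in the defining word for $\Xi(^g\wp)$ equals the corresponding exponent for $\Xi(\wp)$. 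Property (\ref{item: ii in letter qm claim}) is just bookkeeping: when concatenating $\wp_1$ ending in $e_1$ with $\wp_2$ beginning in $e_2$ (with $e_1\neq\bar e_2$, so no backtracking and the concatenation is a genuine reduced path), the word $\Xi(\wp_1\cdot\wp_2)$ is obtained by writing out the full product; the two "inner" $\att^{\pm1}$ caps coming from the ends of $\wp_1$ and $\wp_2$ are $\att^{-1}$ and $\att$ respectively and they get replaced by the single bridging term $\att^{\sign_A(\mu(e_1)^{-1}\mu(e_2))}$ — one checks the resulting word is reduced because $\sign_t(e_1)\neq\sign_t(e_2)$ is \emph{not} forced, but the $\btt^{\sign_t}$ letters surrounding the bridge keep the word alternating and any cancellation would only occur if the bridging exponent were $0$, which simply shortens the word in the allowed way (this is exactly the phenomenon of Definition \ref{defn:letter quasimorphism} and Lemma \ref{lemma:circle quasimorphisms dont see powers}).

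The crux is property (\ref{item: iii in letter qm claim}), matching $\Xi(\wp(g))$ with the explicitly-defined $\Phi(g)$. Given $g = v_0 t^{\epsilon_0}\cdots t^{\epsilon_{n-1}} v_n$ in Britton normal form, the geodesic $\wp(g)$ from $V$ to $gV$ in $\Trm$ passes through the vertices $V, v_0 t^{\epsilon_0}V, v_0 t^{\epsilon_0}v_1 t^{\epsilon_1}V,\ldots$, so its $i$-th edge $e_i$ is $(v_0t^{\epsilon_0}\cdots v_{i-1}V,\ v_0 t^{\epsilon_0}\cdots v_{i-1}t^{\epsilon_{i-1}}V)$, giving $\sign_t(e_i)=\epsilon_{i-1}$ and $\mu(e_i)= v_0t^{\epsilon_0}\cdots v_{i-1}t^{\epsilon_{i-1}}A$ if $\epsilon_{i-1}=1$, resp. $v_0t^{\epsilon_0}\cdots v_{i-1}A$ if $\epsilon_{i-1}=-1$. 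Then $\mu(e_i)^{-1}\mu(e_{i+1})$, after cancellation, equals $\tilde v_i A$ with $\tilde v_i = t^l_i v_i t^r_i$ exactly as in the definition of $\Phi$ (the $t^l_i, t^r_i$ prefixes/suffixes are precisely what is needed to absorb the leftover $t^{\pm1}$ factors from $\mu(e_i)$ and $\mu(e_{i+1})$), so $\sign_A(\mu(e_i)^{-1}\mu(e_{i+1})) = \sign_A(\tilde v_i)$; substituting into the formula for $\Xi$ recovers $\Phi(g)$ letter by letter, including the $\att,\att^{-1}$ caps. Here the already-established well-definedness of $\Phi$ (the invariance of $\sign_A$ under left and right multiplication by $A$) is what makes the identification independent of the representative.

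With Claim \ref{claim:properties of xi} in hand, the rest of Lemma \ref{lemma: HNN phi is letter quasimorphism} follows quickly. For $g_1g_2g_3=1_G$, consider the tripod in $\Trm$ with vertices $V$, $g_1V$, $g_1g_2V$ and its center $w$; writing the three sides as $\wp(g_1), {}^{g_1}\wp(g_2), {}^{g_1g_2}\wp(g_3)$ and splitting each at $w$, properties (\ref{item: i in letter qm claim}) and (\ref{item: ii in letter qm claim}) express $\Phi(g_1),\Phi(g_2),\Phi(g_3)^{-1}$ (up to cyclic permutation) in the shape $c_1^{-1}\xtt c_2$, $c_2^{-1}\xtt c_3$, $c_3^{-1}\xtt^{-1}c_1$ required by case (2) of Definition \ref{defn:letter quasimorphism}, where the common letter $\xtt$ is an $\att^{\pm1}$ coming from a bridging term (or case (1) holds when the center is an endpoint). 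Finally, for a hyperbolic $g$ the axis in $\Trm$ is a line on which $g$ acts by translation, and the combinatorial period of the sequence $(\sign_t(e_i),\mu(e_i)^{-1}\mu(e_{i+1}))$ along the axis shows $\Phi(g^m)=b_l b^{m-1} b_r$ with $b\in\Acl$ nontrivial of even length; then Theorem \ref{thm:letter quasimorphisms} applied to $\Phi$ yields the homogeneous quasimorphism $\phi$ with $\phi(g)\ge1$ and $D(\phi)\le1$. The general graph-of-groups case of Theorem \ref{thm:left relatively convex graph of groups} reduces to these two by the fact (noted after Example \ref{exmp: realization}) that $G$ is built from its vertex groups by iterated amalgams and HNN extensions along edges, together with the fact that left relative convexity is inherited, so that one can compose/assemble the corresponding letter-quasimorphisms; I expect this assembly — tracking how the orders and sign functions behave under iterated splittings — to be the main obstacle, paralleling the $n$-RTF inheritance argument of Lemma \ref{lemma: inherits RTF in graphs of groups}.
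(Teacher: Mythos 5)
Your proof of the claim is correct and follows the paper's own route: (i) via $\mu({}^{g}e)=g\mu(e)$ and $\sign_t({}^{g}e)=\sign_t(e)$, (ii) by direct bookkeeping from the definition, and (iii) by writing out the geodesic from $V$ to $gV$ through the vertices $v_0t^{\epsilon_0}\cdots v_{i-1}t^{\epsilon_{i-1}}V$ and checking $\mu(e_i)^{-1}\mu(e_{i+1})=\tilde{v}_iA$, which is exactly the paper's ``apply (ii) repeatedly'' made explicit. The additional remarks about the tripod argument and the assembly for graphs of groups go beyond the claim but are consistent with how the paper proceeds.
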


\begin{proof}
To see (\ref{item: i in letter qm claim}), denote by $^g e$ the translate of an edge $e$ by the element $g \in G$. Then observe that $\mu(^g e) = g \mu(e)$ and that $\sign_t(^g e) = \sign_t(e)$. Thus for every sequence $(^g e_1, ^g e_2)$ of edges we see that $\mu(^g e_1)^{-1} \mu(^g e_2) = \mu(e_1)^{-1} \mu(e_2)$. Property (\ref{item: ii in letter qm claim}) follows immediately from the definition.
For (\ref{item: iii in letter qm claim}), let $g = v_0 t^{\epsilon_0} \cdots t^{\epsilon_{n-1}} v_n$ as above.
Then the unique geodesic between $V$ and $g V$ may be described as
$$
((v_0 V, v_0 t^{\epsilon_0} V) , (v_0 t^{\epsilon_0} v_1 V, v_0 t^{\epsilon_0} v_1 t^{\epsilon_1} V) ,\cdots, 
(v_0 t^{\epsilon_0} v_1 \cdots t^{\epsilon_{n-2}} v_{n-1} V, g V)).
$$
We conclude by multiple applications of (\ref{item: ii in letter qm claim}).
\end{proof}

We can now prove Lemma \ref{lemma: HNN phi is letter quasimorphism}.
\begin{proof}[Proof of Lemma \ref{lemma: HNN phi is letter quasimorphism}]
Since $\sign_A(g^{-1})=-\sign_A(g)$, it is easy to see that $\Phi$ is alternating, i.e.\ $\Phi(g^{-1}) = \Phi(g)^{-1}$.
Let $g \in G$ be a hyperbolic element.
A conjugate $g_0$ of $g$ may be written as a cyclically reduced word $g_0 = v_0 t^{\epsilon_0} \cdots v_n t^{\epsilon_n}$. So the subword $t^{\epsilon_n} v_0 t^{\epsilon_0}$ is reduced.
We observe that 
$$
\Phi(g_0^m) = \att \left( \btt^{\epsilon_0} \att^{\sign_A(\tilde{v}_1)} \cdots \att^{\sign_A(\tilde{v}_n)} \btt^{\epsilon_n} \att^{\sign_A(\tilde{v}_{0})} \right)^{m-1} \btt^{\epsilon_0} \att^{\sign_A(\tilde{v}_1)} \cdots  \att^{\sign_A(\tilde{v}_n)} \btt^{\epsilon_n} \att^{-1}
$$
where $\tilde{v}_i$ is defined as before, interpreting indices mod $n$ in the case of $\tilde{v}_0$.
This produces desired $w_l, w, w_r \in \Acl$ such that $\Phi(g_0^m) = w_l w^{m-1} w_r$ for all $m\ge1$.

It remains to show that $\Phi$ is a letter-quasimorphism. Let $g,h \in G$.
First, suppose that $V$, $gV$ and $gh V$ lie on a geodesic segment of $\Trm$.
If $g V$ lies in the middle of this segment, then there are paths $\wp_1$ and $\wp_2$ such that
$\wp(g) = \wp_1$, $^{g}\wp(h) = \wp_2$ and $\wp(gh) = \wp_1 \cdot \wp_2$.
Using Claim \ref{claim:properties of xi} points (\ref{item: i in letter qm claim}) and (\ref{item: ii in letter qm claim}) 
we see that $\Phi$ behaves as a letter-quasimorphism on such $g,h$. 
The cases where $V$ or $gh V$ lie in the middle of the segment are similar.

Now suppose that $V$, $gV$ and $gh V$ do not lie on a common geodesic segment, so they bound a tripod.
Then there are nontrivial paths $\wp_1, \wp_2, \wp_3 \in \mathcal{P}$ with initial edges $e_1, e_2, e_3$ satisfying
$o(e_1)=o(e_2)=o(e_3)$ and $e_i \not = e_j$ for $i \not = j$ such that
\[
\wp(g) = \wp_1^{-1} \cdot \wp_2 \text{ ,  }  ^g \wp(h) = \wp_2^{-1} \cdot \wp_3 \text{ , and } ^{gh} \wp((gh)^{-1}) = \wp_3^{-1} \cdot \wp_1.
\]

By Claim \ref{claim:properties of xi}, for $c_i=\Xi(\wp_i)$, we have
\begin{align*}
\Phi(g) &= c_1^{-1} \att^{\sign_A(\mu(e_1)^{-1} \mu(e_2))} c_2 \\
\Phi(h) &= c_2^{-1} \att^{\sign_A(\mu(e_2)^{-1} \mu(e_3))}  c_3 \\
\Phi(gh)^{-1} &= c_3^{-1} \att^{\sign_A(\mu(e_3)^{-1} \mu(e_1))}  c_1
\end{align*}
The same holds if we remove the starting letter $\att$ of each $c_i$, and then expressions above are reduced.

It remains to verify that not all of the above signs can be the same.
Indeed suppose that
$$
\sign_A(\mu(e_1)^{-1} \mu(e_2)) = \sign_A(\mu(e_2)^{-1} \mu(e_3)) = \sign_A(\mu(e_3)^{-1} \mu(e_1))=1.
$$
Then $\mu(e_1)^{-1} \mu(e_2) A \succ A$ and $\mu(e_2)^{-1} \mu(e_3) A \succ A$, thus by transitivity and invariance of the order
$$
\mu(e_1)^{-1} \mu(e_3)  A=\mu(e_1)^{-1} \mu(e_2) \mu(e_2)^{-1} \mu(e_3)  A  \succ \mu(e_1)^{-1} \mu(e_2)  A \succ A.
$$
But then $\mu(e_3)^{-1} \mu(e_1)  A \prec A$, contradicting $\sign_A(\mu(e_3)^{-1} \mu(e_1))=1$.
Similarly the signs cannot all be $-1$.
Thus $\Phi$ is a letter-quasimorphism.
\end{proof}

We may now prove Theorem \ref{thm:left relatively convex graph of groups}.

\begin{proof}
Let $G$ be  the fundamental group of a graph of groups such that the edge groups are left relatively convex.
Let $g \in G$ be a hyperbolic element.
$G$ arises as a succession of amalgamated free products and HNN extensions. In particular, similarly to the proof of Lemma \ref{lemma: inherits RTF in graphs of groups}, we may write $G$ either as an amalgamated free product or an HNN extension such that $g$ is hyperbolic.
By a result of 
{Antol{\'\i}n}--{Dicks}--{\v{S}uni{\'c}} \cite[Theorem 14]{LRC}, 
the edge groups of this HNN extension or amalgamated free product are left relatively convex.
Lemmas \ref{lemma:amalgamated free products lq} and \ref{lemma: HNN phi is letter quasimorphism} assert that there is a letter-quasimorphism $\Phi \col G \to \Acl$ such that for a conjugate $g_0\in G$ of $g$ we have $\Phi(g_0^n) = w_l w^{n-1} w_r$ for all $n \geq 1$ and where $w \in \Acl \setminus\{id, \att,\btt\}$. 
Now we conclude using Theorem \ref{thm:letter quasimorphisms} as it provides the desired homogeneous quasimorphism with $\phi(g)=\phi(g_0)\ge1$.
\end{proof}

\section{Scl of vertex and edge groups elements} \label{sec: scl vertex and edge group}
To promote relative spectral gap of graphs of groups to spectral gap in the absolute sense, one needs to further control scl in vertex groups. For a graph of groups $G=\mathcal{G}(\Gamma,\{G_v\},\{G_e\})$, the goal of this section is to characterize $\scl_G$ of chains in vertex groups in terms of $\{\scl_{G_v}\}_{v\in V}$.

Let us start with simple examples. For a free product $G=A\star B$, we know that $G$ has a retract to each free factor, and thus $\scl_G(a)=\scl_A(a)$ for any $a\in A$. This is no longer true in general for amalgams.

\begin{exmp}\label{exmp: vertex scl in amalgam, surface examples}
	Let $S$ be a closed surface of genus $g> 4$. Let $\gamma$ be a separating simple closed curve that cuts $S$ into $S_A$ and $S_B$, where $S_A$ has genus $g-1$ and $S_B$ has genus $1$. Let $a$ be an element represented by a simple closed loop $\alpha$ in $S_A$ that co-bounds a twice-punctured torus $S_m$ with $\gamma$; See Figure \ref{fig: surfscl}. Then $G=\pi_1(S)$ splits as $G=A\star_{C} B$, where $A=\pi_1(S_A)$, $B=\pi_1(S_B)$ and $C$ is the cyclic group generated by an element represented by $\gamma$. In this case, the element $a$ is supported in $A$ and the corresponding loop $\alpha$ does bound a subsurface $S_\ell$ in $S_A$ of genus $g-2$, which is actually a retract of $S_A$ and we have $\scl_A(a)=g-5/2$. However, $\alpha$ also bounds a genus two surface from the $B$ side, which is the union of $S_B$ and $S_m$, showing that $\scl_G(a)\le 3/2$, which is smaller than $\scl_A(a)$ since $g>4$.
\end{exmp}

\begin{figure}
	\labellist
	\small \hair 2pt
	
	\pinlabel $S_A$ at 105 88
	\pinlabel $S_B$ at 230 88
	\pinlabel $S_\ell$ at 80 -8
	\pinlabel $S_m$ at 185 -8
	\pinlabel $\gamma$ at 190 50
	\pinlabel $\alpha$ at 145 50
	
	\endlabellist
	\centering
	\includegraphics[scale=0.8]{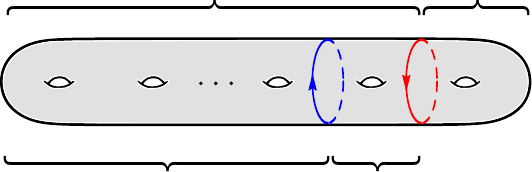}
	\vspace{5pt}
	\caption{Illustration of Example \ref{exmp: vertex scl in amalgam, surface examples}}\label{fig: surfscl}
\end{figure}

The example above shows that one can use chains in edge groups to adjust the given chain in vertex groups to a better one before evaluating it in individual vertex groups. Actually, $\scl_G$ of such a chain is obtained by making the best adjustment of this kind.

\begin{thm}\label{thm: vert scl compute}
	Let $G=\mathcal{G}(\Gamma,\{G_v\},\{G_e\})$ be a graph of groups with $\Gamma=(V,E)$. For any finite collection of chains $c_v\in C_1^H(G_v)$, $v\in V$, we have
	\begin{equation}\label{eqn: vert scl compute}
	\scl_G(\sum_v c_v)=\inf \sum_v \scl_{G_v}(c_v+\sum_{t(e)=v} c_e),
	\end{equation}
	where the infimum is taken over all finite collections of chains $c_e \in C_1^H(G_e)$ satisfying $c_e+c_{\bar{e}}=0$ for each $e\in E$.
\end{thm}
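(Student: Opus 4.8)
The plan is to prove the equality in \eqref{eqn: vert scl compute} by two inequalities, both obtained by manipulating admissible surfaces in the standard realization $X$.

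\textbf{The inequality $\le$.} First I would show that the right-hand side dominates $\scl_G(\sum_v c_v)$. Fix any finite collection of chains $c_e\in C_1^H(G_e)$ with $c_e+c_{\bar e}=0$. For each $v$, take a near-optimal admissible surface $f_v\colon S_v\to X_v$ for the chain $c_v+\sum_{t(e)=v}c_e$ of some common degree $k$ (after scaling, by continuity we may assume all chains are rational and all degrees equal). The boundary components of $S_v$ falling into the $c_e$-part come in pairs with opposite winding numbers, one on the $v=t(e)$ side and one on the $v=o(e)=t(\bar e)$ side, precisely because $c_e+c_{\bar e}=0$; after attaching to each such boundary circle an annulus $X_e\times[-1,1]$ and gluing the two sides along these annuli, the surfaces $\{S_v\}$ assemble into a single surface $S\to X$ which is admissible of degree $k$ for $\sum_v c_v$. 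Gluing along annuli (more precisely, compressing the non-$\pi_1$-injective ones first) does not increase $-\chi^-$, so $-\chi^-(S)\le \sum_v -\chi^-(S_v)$, giving $\scl_G(\sum_v c_v)\le \sum_v \scl_{G_v}(c_v+\sum_{t(e)=v}c_e)$. Taking the infimum over all such $\{c_e\}$ yields $\le$.

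\textbf{The inequality $\ge$.} For the reverse, take any admissible surface $f\colon S\to X$ of degree $k$ for $\sum_v c_v$, put it in general position and cut along the edge spaces, producing pieces $S_v$ supported in thickened vertex spaces $N(X_v)$, exactly as in Subsection~\ref{subsec: surfs in graphs of groups}. Since the chain $\sum_v c_v$ consists only of elliptic elements, there are no polygonal boundary components arising from the original boundary $\partial S$; all polygonal turns come from the cutting arcs in $F$, and each $S_v$ is a \emph{relative} admissible surface in $X_v$ for $c_v$ together with some chain of loops living in the edge groups adjacent to $v$. Let $c_e$ be the chain (normalized by $1/k$) of winding numbers read off along the edge space $X_e$ from the $t(e)$ side; the gluing condition \eqref{eqn: gluding condition} says exactly that the contribution read from the $o(e)$ side is $c_{\bar e}=-c_e$ up to the orientation convention, so $\{c_e\}$ is an admissible collection. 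Each $c_e$ is automatically null-homologous in $G_e$ (it bounds in $S_v$), so $c_e\in C_1^H(G_e)$. Then $\sum_v -\chi^-(S_v)\le -\chi^-(S)$ because cutting along the arcs in $F$ only decreases $-\chi^-$ (and disk/sphere pieces produced contribute $0$), and each $S_v$ being admissible of degree $k$ for $c_v+\sum_{t(e)=v}c_e$ gives $-\chi^-(S_v)/k\ge 2\scl_{G_v}(c_v+\sum_{t(e)=v}c_e)\ge 2\inf(\ldots)$. Summing and dividing by $2k$ yields $-\chi^-(S)/(2k)\ge \inf\sum_v\scl_{G_v}(c_v+\sum_{t(e)=v}c_e)$; taking the infimum over $S$ gives $\ge$.

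\textbf{Main obstacle.} The delicate point is bookkeeping in the $\ge$ direction: verifying that after cutting, each $S_v$ really is an honest admissible (not merely ``relative'') surface for $c_v+\sum_{t(e)=v}c_e$ \emph{of the same degree} $k$ as $S$, with all its extra boundary components correctly paired up across edges so that the $c_e$ glue to zero — this is where the gluing and normalizing conditions must be invoked carefully, and where one must handle the fact that loop boundary components of $S$ in $F$ (not touching $\partial S$) give additional edge-group loops that should be absorbed into the $c_e$'s. A secondary technical point, handled as usual via continuity of $\scl$ on $B_1^H$ and by subdividing/scaling, is reducing from real chains of varying degrees to a common rational setup, and checking the infimum on the right is actually attained in the relevant normed space so the two bounds match; I would also note that finitely many nonzero $c_e$ suffice since $S$ is compact.
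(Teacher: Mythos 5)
Your overall cut-and-paste strategy is the same as the paper's, but both directions contain gaps as written.

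In the $\le$ direction, the claim that the boundary components of the independently chosen surfaces $S_v$ ``come in pairs with opposite winding numbers \dots precisely because $c_e+c_{\bar e}=0$'' is unjustified and false in general: $c_e+c_{\bar e}=0$ is a condition on \emph{chains}, while the surfaces on the two sides of an edge may cover the same edge loop by different numbers of boundary circles with different covering degrees, so no circle-to-circle gluing along annuli is available without additional work (e.g.\ first passing to covers in which all boundary degrees over each edge loop agree, which you do not supply). The paper avoids the issue entirely: no gluing is performed; the disjoint union $\sqcup_v S_v$ is already admissible for a chain equal to $\sum_v c_v$ in $C_1^H(G)$, because the extra boundary components represent $c_e+c_{\bar e}=0$ and scl depends only on the class in $C_1^H(G)$. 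Your argument needs either that observation or an honest matching construction.

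In the $\ge$ direction, the inequality $\sum_v -\chi^-(S_v)\le -\chi^-(S)$ is asserted with a justification that runs backwards. Since the elliptic loops representing $\sum_v c_v$ can be kept inside the vertex spaces, the preimage $F$ of the edge spaces consists only of loops (no arcs, no polygonal turns), and cutting along loops preserves $\chi$; the danger is exactly that a piece could be a disk or sphere, for then $-\chi^-$ of that piece is $0>-\chi$, so disk pieces would make $\sum_v-\chi^-(S_v)$ \emph{larger} than $-\chi(S)$ and the estimate would fail — they cannot be shrugged off as ``contributing $0$''. This is why the paper first removes finite-order elements from the chain and puts $S$ in normal form so that every loop of $F$ is essential in its edge space: a disk piece would then force either a null-homotopic loop of $F$ or a torsion elliptic element, hence cannot occur, giving $-\chi^-(S_v)=-\chi(S_v)$ and $\sum_v-\chi(S_v)=-\chi(S)\le-\chi^-(S_{\mathrm{orig}})$. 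Two smaller points: the identity $c_{\bar e}=-c_e$ does not come from the gluing condition \eqref{eqn: gluding condition} (which concerns turns of polygonal boundaries) but simply from the fact that each loop of $F$ appears on the two sides of the cut with opposite orientations; and no null-homology of $c_e$ in $G_e$ is needed (nor does your ``it bounds in $S_v$'' establish it), since the theorem only requires $c_e\in C_1^H(G_e)$.
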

\begin{proof}
	By Mayer--Vietoris (or rational abelianization), $H_1(G)$ is the direct sum $ H_1(\Gamma)\oplus(\oplus_v H_1(G_v))$ mod relations of the form $t_{e*}([c])-t_{\bar{e}*}([c])$ for chains $c\in C_1(G_e)$ and $e\in E$. It follows that $\scl_G(\sum_v c_v)$ is finite if and only if the other side of the equation is. Thus we assume both to be finite in the sequel. 
	We will prove the equality for an arbitrary collection of rational chains $c_v$. Then the general case follows by continuity: Scl of a null-homologous real chain is defined as the limit of scl of null-homologous rational chains with perturbed coefficients; See Definition~\ref{def: scl of real chain}.
	For a similar reason, when the chains $c_v$'s are rational, we can in addition require each $c_e$ to be rational, which does not affect the infimum in equation (\ref{eqn: vert scl compute}). 
	
	Let $X$ be the standard realization of $G$ as in Section \ref{subsec:graph of groups}. Represent each chain $c_v$ by a rational formal sum of elliptic tight loops in the corresponding vertex space $X_v$. Let $f:S\to X$ be any admissible surface in normal form (see Definition \ref{def: normal form}) of degree $n$ for the rational chain $\sum c_v$ in $C_1^H(G)$. For each $v\in V$, let $S_v$ be the union of components in the decomposition of $S$ that are supported in the thickened vertex space $N(X_v)$. Note that $S_v$ only has loop boundary since our chain is represented by elliptic loops. Moreover, any loop boundary supported in some edge space is obtained from cutting $S$ along edge spaces. For each edge $e$ with $t(e)=v$, let $c_e\in C_1^H(G_e)$ be the integral chain that represents the union of loop boundary components of $S_v$ supported in $X_e$. Then $S_v$ is admissible of degree $n$ for the chain $c_v+\frac{1}{n}\sum_{t(e)=v} c_e$ in $C_1^H(G_v)$ for all $v\in V$. See Figure \ref{fig: cutsurf}. Note that we must have $c_e+c_{\bar{e}}=0$ since loops in $c_e$ and $c_{\bar{e}}$ are paired and have opposite orientations. Since $S$ is in normal form and $\sqcup S_v$ has no polygonal boundary, there are no disk components and hence we have 
	$$\frac{-\chi(S_v)}{2n}=\frac{-\chi^-(S_v)}{2n}\ge \scl_{G_v}(c_v+\sum_{t(e)=v} c_e), \quad \text{and}$$
	$$\frac{-\chi(S)}{2n}=\sum_v \frac{-\chi(S_v)}{2n}\ge\sum_v \scl_{G_v}(c_v+\sum_{t(e)=v} c_e).$$
	Since $S$ is arbitrary, this proves the ``$\ge$'' direction in (\ref{eqn: vert scl compute}).
	
	\begin{figure}
		\labellist
		\small \hair 2pt
		
		\pinlabel $S$ at 90 118
		\pinlabel $X_{e_1}$ at 43 165
		\pinlabel $X_{e_2}$ at 123 165
		\pinlabel $S_{v_1}$ at 300 125
		\pinlabel $c_{e_1}$ at 325 73
		\pinlabel $S_{v_2}$ at 375 135
		\pinlabel $c_{\bar{e}_1}$ at 350 73
		\pinlabel $c_{e_2}$ at 445 97
		\pinlabel $c_{v_2}$ at 425 60
		\pinlabel $S_{v_3}$ at 500 155
		\pinlabel $c_{\bar{e}_2}$ at 455 153
		
		\endlabellist
		\centering
		\includegraphics[scale=0.8]{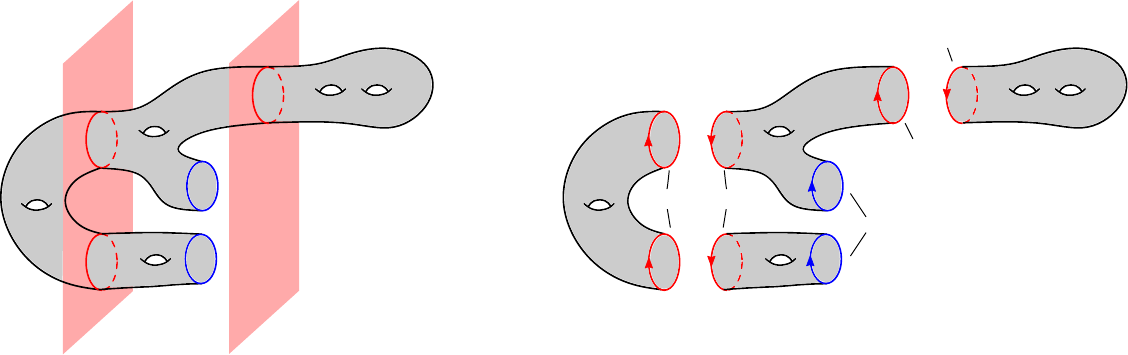}
		\caption{This is an example where $\sum c_v=c_{v_2}$ is supported in a single vertex group $G_{v_2}$, represented as the formal sum of the blue loops in $X_{v_2}$. On the left we have an admissible surface $S$ for $c_{v_2}$ of degree $1$. The edge spaces $X_{e_1},X_{e_2}$ cut $S$ in to $S_{v_1},S_{v_2},S_{v_3}$ shown on the right. The edges are oriented so that $t(e_1)=v_1$ and  $t(e_2)=v_2$. Then $S_{v_1}$ is admissible for its boundary, which is the chain $c_{e_1}$, and similarly $S_{v_3}$ is admissible for $c_{\bar{e}_2,}$. The surface $S_{v_2}$ is admissible for $c_{v_2}+c_{\bar{e}_1}+c_{e_2}$, where $c_{\bar{e}_1}=-c_{e_1}$ due to opposite orientations induced from $S_{v_2}$ and $S_{v_1}$, and similarly $c_{e_2}=-c_{\bar{e}_2}$. Thus the sum of all boundary components of $\sqcup_i S_{v_i}$ is equal to $c_{v_2}$ in $C_1^H(G)$.}\label{fig: cutsurf}
	\end{figure}
	
	Conversely, consider any collection of chains $c_e\in C_1^H(G_e)$ satisfying $c_e+c_{\bar{e}}=0$. Recall that each $c_v$ is rational and we in addition require each $c_e$ to be a rational chain. 
	For each $v\in V$, let $S_v$ be any admissible surface for the rational chain $c_v+\sum_{e:\ t(e)=v} c_e$ of a certain degree $n_v$. Let $n$ be a common multiple of all $n_v$'s. Replacing $S_v$ by $n/n_v$ disjoint copies of itself, we may assume all $S_v$'s have the same degree $n$. Since $c_e+c_{\bar{e}}=0$ for all $e\in E$, the union $\sqcup_v S_v$ is an admissible surface of degree $n$ for a chain equivalent to $\sum_v c_v$ in $C_1^H(G)$. Since the surface $S_v$ and the collection $c_e$ are arbitrary, this proves the other direction of (\ref{eqn: vert scl compute}).
\end{proof}
\begin{rmk}\label{rmk: adjust finite dimensional}
	If some $e\in E$ has $\scl_{G_e}\equiv0$ (e.g. when $G_e$ is amenable), then $\scl_{G_{t(e)}}$ and $\scl_G$ both vanish on $B_1^H(G_e)$ by monotonicity. Given this, the typically infinite-dimensional space $C_1^H(G_e)$ in Theorem \ref{thm: vert scl compute} can be replaced by the quotient $C_1^H(G_e)/B_1^H(G_e)\cong H_1(G_e,\R)$, which is often (e.g. when $G_e$ is finitely generated) finite-dimensional, for which Theorem \ref{thm: vert scl compute} is still valid. Thus if all edge groups have vanishing scl and $\scl_{G_v}$ is understood in each finite-dimensional subspace of $C_1^H(G_v)$ for all $v$, then $\scl_G$ in vertex groups can be practically understood by equation (\ref{eqn: vert scl compute}), which is a convex programming problem.
\end{rmk}

\begin{cor}\label{cor: vertex scl in amalgam}
	Let $G=A\star_{C} B$ be an amalgam. If $C$ has vanishing scl and $H_1(C;\R)=0$, then $\scl_G(a)=\scl_A(a)$ for any $a\in C_1^H(A)$.
\end{cor}
\begin{proof}
	For any chain $c\in C_1^H(C)$, we have $\scl_C(c)=0$ by our assumption, and thus $\scl_A(c)=\scl_B(c)=0$ by monotonicity of scl. The conclusion follows readily from Theorem \ref{thm: vert scl compute}.
\end{proof}

It is clear from Example \ref{exmp: vertex scl in amalgam, surface examples} that the assumption $H_1(C;\R)=0$ is essential in Corollary \ref{cor: vertex scl in amalgam}.

By inclusion of edge groups into vertex groups, apparently Theorem \ref{thm: vert scl compute} also applies to chains supported in edge groups. For later applications, we would like to carry out the characterization of scl in edge groups carefully, where we view scl as a \emph{degenerate norm}.

\begin{defn}\label{defn:degenerate norm}
	A \emph{degenerate norm} $\|\cdot\|$ on a vector space $V$ is a semi-norm on a linear subspace $V^f$, called the \emph{domain} of $\|\cdot\|$, and is $+\infty$ outside $V^f$. The unit norm ball $B$ of $\|\cdot\|$ is the (convex) set of vectors $v$ with $\|v\|\le 1$. The \emph{vanishing locus} $V^z$ is the subspace consisting of vectors $v$ with $\|v\|=0$. Note that $V^z\subset B\subset V^f$.
\end{defn}

In the sequel, norms refer to degenerate norms unless emphasized as genuine norms. A norm in a finite-dimensional space with rational vanishing locus automatically has a ``spectral gap''.

\begin{lemma}\label{lemma: auto gap of norm}
	Let $\|\cdot\|$ be a norm on $\R^n$. If the vanishing locus $V^z$ is a rational subspace, then $\|\cdot\|$ satisfies a gap property on $\Z^n$: there exists $C>0$ such that either $\|P\|=0$ or $\|P\|\ge C$ for all $P\in \Z^n$.
\end{lemma}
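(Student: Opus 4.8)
\textbf{Proof plan for Lemma \ref{lemma: auto gap of norm}.}

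The plan is to reduce the statement to the genuine-norm case by passing to the quotient $\R^n/V^z$ and there invoke compactness of the unit sphere. First I would set $W \defeq \R^n/V^z$ with quotient map $q \col \R^n \to W$, and observe that $\|\cdot\|$ descends to a function $\|\cdot\|_W$ on $W$ which is a genuine norm: it is well-defined since $\|\cdot\|$ is constant ($=0$) on cosets of $V^z$ by the triangle inequality, it is finite everywhere because the original norm is a genuine (finite-valued) norm on $\R^n$ here — note the hypothesis is that $\|\cdot\|$ is a \emph{norm on $\R^n$}, so its domain $V^f$ is all of $\R^n$ — and it is positive off $0$ by construction of $V^z$ as the vanishing locus. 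Thus $(W,\|\cdot\|_W)$ is a finite-dimensional normed vector space, hence all norms on it are equivalent and the closed unit ball $\{w : \|w\|_W \le 1\}$ is compact.

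Next I would use that $V^z$ is a \emph{rational} subspace. This means $V^z \cap \Z^n$ is a lattice of full rank in $V^z$, so the image $q(\Z^n) \subset W$ is a discrete subgroup (it is the image of $\Z^n$ under the quotient by a rational subspace, which is a finitely generated subgroup of $W$ that is discrete precisely because $V^z$ is spanned by integer vectors; equivalently, $\Z^n + V^z$ is closed in $\R^n$). Concretely, choosing an integer basis of $V^z$ and extending to a basis of $\Q^n$ exhibits $q(\Z^n)$ as a discrete subgroup of $W$ isomorphic to $\Z^{n-\dim V^z}$.

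Now combine the two: let $B_W = \{w \in W : \|w\|_W < C_0\}$ be an open ball small enough that $B_W \cap q(\Z^n) = \{0\}$; such $C_0 > 0$ exists because $q(\Z^n)$ is discrete and $\|\cdot\|_W$ induces the standard topology on the finite-dimensional space $W$. Set $C = C_0$. Then for any $P \in \Z^n$, either $q(P) = 0$, i.e. $P \in V^z$, in which case $\|P\| = \|q(P)\|_W = 0$; or $q(P) \neq 0$, in which case $q(P) \notin B_W$, so $\|P\| = \|q(P)\|_W \ge C_0 = C$. This is exactly the asserted gap.

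The only real subtlety — and the step I would be most careful about — is the discreteness of $q(\Z^n)$ in $W$, which is where rationality of $V^z$ is used in an essential way (without it, $q(\Z^n)$ can be dense and no gap holds, e.g. $\|(x,y)\| = |x - \alpha y|$ for irrational $\alpha$). I would spell this out via a rational change of basis: pick $v_1,\dots,v_k \in \Z^n$ a basis of $V^z$, extend to a basis $v_1,\dots,v_n$ of $\Q^n$, clear denominators so all $v_i \in \Z^n$; then in these coordinates $V^z = \{x_{k+1}=\dots=x_n=0\}$ and $q$ becomes (up to the finite-index issue between $\Z^n$ and $\bigoplus \Z v_i$, which does not affect discreteness) projection onto the last $n-k$ coordinates, whose image of the integer lattice is a lattice. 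Everything else is routine finite-dimensional convexity and compactness.
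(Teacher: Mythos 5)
Your argument is essentially the paper's own proof in quotient language: the paper extends a rational basis of $V^z$ to a rational basis of $\R^n$, clears denominators, and reduces to the complementary rational subspace, where nonzero integer points have norm bounded below; your quotient $W=\R^n/V^z$ with the discrete image $q(\Z^n)$ is the same reduction, and your discreteness argument (integer basis of $V^z$, finite-index comparison with the sublattice it spans) is exactly where rationality enters in the paper as well. The counterexample with an irrational line is the right sanity check.

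The one point to fix is your parenthetical claim that the hypothesis forces the domain $V^f$ to be all of $\R^n$. In this paper, ``norm'' means degenerate norm (Definition \ref{defn:degenerate norm} and the sentence immediately following it): a pseudo-norm on a subspace $V^f$, extended by $+\infty$ outside $V^f$. This is not a technicality here: in the intended application (Theorem \ref{thm: edge scl for non-geometric prime factors}) the norm is built from scl, which takes the value $+\infty$ on chains that are not null-homologous, so $V^f$ is in general a proper subspace of $\R^n$. Consequently $\|\cdot\|_W$ need not be a genuine norm on $W$, and the step ``$(W,\|\cdot\|_W)$ is a finite-dimensional normed space, all norms are equivalent, the unit ball is compact, $\|\cdot\|_W$ induces the standard topology'' fails as stated. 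The repair is immediate and worth recording: any $P\in\Z^n$ with $P\notin V^f$ has $\|P\|=+\infty\ge C$ for free, so you only need the lower bound for nonzero points of $q(\Z^n)\cap(V^f/V^z)$; on the finite-dimensional space $V^f/V^z$ the induced norm is genuine, this intersection is a subgroup of the discrete group $q(\Z^n)$ and hence discrete, and your small-ball argument then goes through verbatim there.
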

\begin{proof}
	Extend a rational basis $e_1,\ldots,e_m$ of $V^z$ to a rational basis $e_1,\ldots,e_n$ of $\R^n$, where $m\le n$. Then there is some integer $N>0$ such that any $P=\sum_i P_i e_i\in \Z^n$ has $NP_i\in\Z$ for all $i$. The restriction of $\|\cdot\|$ to the subspace spanned by $e_{m+1},\ldots,e_n$ has trivial vanishing locus and thus there is a constant $C>0$ such that $\|\sum_{j=m+1}^n Q_j e_j\|\ge NC$ for any integers $Q_{m+1},\ldots,Q_n$ unless they all vanish. Therefore, if $P=\sum_{i=1}^n P_ie_i\in \Z^n\setminus V^z$, then $\|P\|=\|\sum_{j=m+1}^n NP_j e_j\|/N\ge C$.
\end{proof}

Recall from Subsection \ref{subsec:scl basic} that $\scl_G$ is a (degenerate) norm on $C_1^H(G)$. If $G$ is a subgroup of $\tilde{G}$, then $\scl_{\tilde{G}}$ also restricts to a norm on $C_1^H(G)$.

Let us set up some notation. For a graph of groups $G=\mathcal{G}(\Gamma,\{G_v\},\{G_e\})$ with $\Gamma=(V,E)$. For each vertex $v\in V$, let $C_v\defeq\bigoplus_{t(e)=v}C_1^H(G_e)$ be the space parameterizing chains in edge groups adjacent to $v$. Let $C_E\defeq\bigoplus_{\{e,\bar{e}\}}C_1^H(G_e)$ parameterize chains in all edge groups, one for each unoriented edge $\{e,\bar{e}\}$. 

Define $\|(c_{\{e,\bar{e}\}})\|_E\defeq\scl_G(\sum c_{\{e,\bar{e}\}})$ for any $(c_{\{e,\bar{e}\}})\in C_E$. Equivalently, $\|\cdot\|_E$ is the pullback of $\scl_G$ via
$$\bigoplus_{\{e,\bar{e}\}}C_1^H(G_e)
\longrightarrow\bigoplus_{\{e,\bar{e}\}}C_1^H(G)
\stackrel{+}{\longrightarrow}C_1^H(G),$$
where the former map is inclusion on each summand and the latter map takes the summation.

Similarly, for each vertex $v\in V$, we pull back $\scl_{G_v}$ to get a norm $\|\cdot\|_v$ on $C_v=\bigoplus_{t(e)=v}C_1^H(G_e)$ via the composition
$$\bigoplus_{e:\ t(e)=v}C_1^H(G_e)
\stackrel{\oplus t_{e*}}{\longrightarrow}\bigoplus_{e:\ t(e)=v}C_1^H(G_v)
\stackrel{+}{\longrightarrow}C_1^H(G_v),$$

Note that $\bigoplus_{v\in V}C_v$ is naturally isomorphic to $\bigoplus_{\{e,\bar{e}\}}(C_1^H(G_e)\oplus C_1^H(G_{\bar{e}}))$, which has a surjective map $\pi$ to $C_E$ whose restriction on each summand is
$$C_1^H(G_e)\oplus C_1^H(G_{\bar{e}})=C_1^H(G_e)\oplus C_1^H(G_{e})\stackrel{+}{\to}C_1^H(G_e).$$
The kernel of $\pi$ is exactly the collections of chains $c_e$ over which we take infimum in equation (\ref{eqn: vert scl compute}).

Equip $\bigoplus_{v\in V}C_v$ with the $\ell^1$ product norm given by $\|(x_v)\|_1\defeq\sum_{v\in V} \|x_v\|_v$. That is, the $\ell^1$ norm of a vector $(x_v)$ is the sum of norms $\|x_v\|_v$ of individual components $x_v\in C_v$. This induces a norm $\|\cdot \|$ on the quotient $C_E$ via $\|x\|\defeq \inf_{x=\pi(y)} \|y\|_1$.
\begin{cor}\label{cor: scl edge compute}
	With the notation above, the two norms $\|\cdot\|$ and $\|\cdot\|_E$ on $C_E$ agree.
\end{cor}
\begin{proof}
	Choose an orientation $e$ for each unoriented edge $\{e,\bar{e}\}$.
	For any $(c_{\{e,\bar{e}\}})\in C_E$, let $c_v=\sum_{t(e)=v} t_{e*} c_{\{e,\bar{e}\}}\in C_1^H(G_v)$ using the chosen orientation for each $v\in V$. 
	Then by definition we have $\|(c_{\{e,\bar{e}\}})\|_E=\scl_G(\sum_v c_v)$ and $\|(c_{\{e,\bar{e}\}})\|=\inf\sum_v\scl_{G_v}(c_v+\sum c_e)$. 
	Thus the assertion follows immediately from Theorem \ref{thm: vert scl compute}.
\end{proof}

\subsection{Scl in the edge group of amalgamated free products}  \label{subsec:scl in edge gp of Amalgamated Free Products}
Corollary \ref{cor: scl edge compute} is particularly simple in the special case of amalgams $G=A\star_{C}B$. We also give a precise description of the unit norm ball in Theorem \ref{thm:scl edge for amalgam, norm ball}.

\begin{thm}\label{thm:scl edge for amalgam} 
	Let $G=A\star_{C}B$ be the amalgamated free product associated to inclusions $\iota_A:C\to A$ and $\iota_B:C\to B$.
	Then for any chain $c\in C_1^H(C)$, we have 
	\begin{equation}\label{eqn:scledge}
		\scl_G(c)=\inf_{\substack{c_1,c_2\in C_1^H(C)\\ c_1+c_2=c}}\{\scl_A(c_1)+\scl_B(c_2)\}.
	\end{equation}
	In particular, if $C\cong\Z$ with generator $t$, then
	\[
	\scl_G(t) = \min \{ \scl_A(t), \scl_B(t) \}.
	\]
\end{thm}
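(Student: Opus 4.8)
The plan is to derive Theorem~\ref{thm:scl edge for amalgam} from Corollary~\ref{cor: scl edge compute} (equivalently, from Theorem~\ref{thm: vert scl compute}) applied to the graph of groups with two vertices $A$, $B$ and one edge with edge group $C$. Equation~\eqref{eqn:scledge} is literally the specialization of \eqref{eqn: vert scl compute}: taking $c_v = 0$ and the edge chains $c_e = c_1 \in C_1^H(C) \le C_1^H(A)$ at the $A$-side and $c_{\bar e} = c_2 = c - c_1$ on the $B$-side (note $c$ itself sits in the image of $C_1^H(C)$ inside $C_1^H(G)$ via the edge, so one starts from the chain $c$ placed on the edge and distributes it), one gets $\scl_G(c) = \inf_{c_1 + c_2 = c}\{\scl_A(c_1) + \scl_B(c_2)\}$, after rewriting Theorem~\ref{thm: vert scl compute}'s infimum; the only bookkeeping is checking signs so that the edge-chain condition $c_e + c_{\bar e} = 0$ matches $c_1 + c_2 = c$ once the fixed chain $c$ is accounted for.

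For the description of the unit ball I would argue as follows. Write $\|\cdot\|_A$, $\|\cdot\|_B$ for the pullbacks of $\scl_A$, $\scl_B$ to $C_1^H(C)$ via $\iota_A$, $\iota_B$, with unit balls $B_A$, $B_B$ (each possibly unbounded in the directions where the pullback vanishes, hence "degenerate norm" in the sense of Definition~\ref{defn:degenerate norm}). By \eqref{eqn:scledge}, $\scl_G = \|\cdot\|_A \,\square\, \|\cdot\|_B$ is the infimal convolution of the two degenerate norms. It is a standard fact in convex analysis that the infimal convolution of two (degenerate) norms is again a (degenerate) norm whose closed unit ball is the closure of $\conv(B_A \cup B_B)$; the unit ball of the infimal convolution before closure is exactly $\conv(B_A \cup B_B)$, and passing to scl's actual unit ball $\{c : \scl_G(c) \le 1\}$ requires taking the algebraic closure, since an infimum need not be attained. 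I would spell out the two inclusions: $\conv(B_A \cup B_B) \subseteq \{\scl_G \le 1\}$ is immediate from \eqref{eqn:scledge} and convexity of scl; conversely if $\scl_G(c) \le 1$ then for every $\varepsilon > 0$ there is a splitting with $\|c_1\|_A + \|c_2\|_B \le 1 + \varepsilon$, exhibiting $c$ in a slight dilation of $\conv(B_A \cup B_B)$, which is precisely membership in the algebraic closure. The remark after Definition~\ref{defn:degenerate norm} on algebraic versus topological closure can be invoked if one wants the topological statement in finite-dimensional subspaces.

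For the final formula $\scl_G(t) = \min\{\scl_A(t), \scl_B(t)\}$ when $C \cong \Z = \langle t \rangle$: here $C_1^H(C)$ is spanned by (the class of) $t$, so any splitting $c_1 + c_2 = t$ has $c_1 = s\,t$, $c_2 = (1-s)\,t$ for some $s \in \R$, and \eqref{eqn:scledge} becomes $\scl_G(t) = \inf_{s \in \R}\{|s|\,\scl_A(t) + |1-s|\,\scl_B(t)\}$ using homogeneity of scl (Lemma~\ref{lemma: basic prop of scl}(1), extended to real chains by continuity). This is the infimum of a piecewise-linear convex function of $s$ on $\R$; its value is $\min\{\scl_A(t), \scl_B(t)\}$, attained at $s = 1$ or $s = 0$ respectively, provided the function does not tend to $-\infty$ — which it cannot since scl is non-negative. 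One subtlety: if, say, $\scl_A(t) = 0$ then the infimum over $s$ is $0$ and indeed $\min = 0$, consistent; and one should note that $\scl_G(t)$ could a priori be $+\infty$ (if $t$ is not null-homologous in $G$), in which case both $\scl_A(t)$ and $\scl_B(t)$ are $+\infty$ too by a Mayer--Vietoris argument, so the identity still holds with the convention $\min\{+\infty,+\infty\} = +\infty$.

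The main obstacle I anticipate is not any single deep step but the careful handling of the degenerate-norm/infimal-convolution formalism: making sure the "algebraic closure" is exactly the right closure operation (as opposed to topological closure, which only agrees in finite dimensions), correctly tracking which directions the pullback norms vanish in, and ensuring the homogeneity/continuity extension of scl to real chains is legitimately applied when reducing the $C \cong \Z$ case to a one-variable optimization. Everything else is a direct unwinding of Theorem~\ref{thm: vert scl compute}.
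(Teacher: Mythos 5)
Your proposal is correct and follows essentially the same route as the paper: equation \eqref{eqn:scledge} is read off from Corollary \ref{cor: scl edge compute} (i.e.\ Theorem \ref{thm: vert scl compute} specialized to the single-edge graph), the unit-ball description is exactly the paper's $\ell^1$-mixture (infimal convolution) argument, Lemma \ref{lemma: norm ball}, including the same ``$(1-t)c\in\conv(B_A\cup B_B)$ for arbitrarily small $t$'' step for one inclusion, and the $C\cong\Z$ case is the same one-parameter piecewise-linear optimization attained at $\lambda=0$ or $\lambda=1$. The only point to make explicit is the remaining inclusion that the algebraic closure of $\conv(B_A\cup B_B)$ lies inside $\{\scl_G\le 1\}$ (your ``standard fact''), which is precisely what the second half of the paper's Lemma \ref{lemma: norm ball} verifies, using that the approximating direction can be taken in the domain of the norm.
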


\begin{proof}
	Equation (\ref{eqn:scledge}) is an explicit equivalent statement of Corollary \ref{cor: scl edge compute} in our case. 
	When $C\cong \Z=\langle t\rangle$ and $c=t$, we have $c_1=\lambda t$ and $c_2=(1-\lambda) t$ for some $\lambda\in \R$ and $\scl_G(t)=\inf_\lambda \{|\lambda|\scl_A(t)+|1-\lambda|\scl_B(t)\}$, where the optimization is achieved at either $\lambda=0$ or $\lambda=1$.
\end{proof}

The simple formula (\ref{eqn:scledge}) in the case of amalgams allows us to describe the unit norm ball of $\left.(\scl_G)\right|_{C_1^H(C)}$. To accomplish this, we need the following notion from convex analysis. The \emph{algebraic closure} of a set $A$  in a vector space $V$ consists of points $x$ such that there is some $v\in V$ so that for any $\epsilon>0$, there is some $t\in[0,\epsilon]$ with $x+tv\in A$. If $A$ is convex, the algebraic closure of $A$ coincides with $\mathrm{lin}(A)$ defined in \cite[p.\ 9]{Convanal}, which is also convex. If $A$ is in addition finite-dimensional, then its algebraic closure agrees with the topological closure of $A$ \cite[p.\ 59]{Convanal}.

We also introduce the following definition for the discussion below.

\begin{defn}
	For two degenerate norms $\|\cdot\|_1$ and $\|\cdot\|_2$ on a vector space $V$. The \emph{$\ell^1$-mixture norm} $\|\cdot\|_m$ is defined as $$\|v\|_m=\inf_{v_1+v_2=v}(\|v_1\|_1+\|v_2\|_2).$$
\end{defn}

Then the norm $\left.(\scl_G)\right|_{C_1^H(C)}$ is the $\ell^1$-mixture of $\scl_A$ and $\scl_B$ by (\ref{eqn:scledge}) in Theorem \ref{thm:scl edge for amalgam}.

Let $V_i^f$, $V_i^z$ and $B_i$ be the domain, vanishing locus and unit ball of the norm $\|\cdot \|_i$ for $i=1,2$ respectively. Note that the domain $V_m^f$ of $\|\cdot\|_m$ is $V_1^f+V_2^f$, and the vanishing locus $V_m^z$ of $\|\cdot\|_m$ contains $V_1^z+V_2^z$ as a subspace.
\begin{lemma}\label{lemma:vanishing locus}
	The vanishing locus $V_m^z$ of $\|\cdot\|_m$ is $V_1^z+V_2^z$ if $V$ is finite-dimensional.
\end{lemma}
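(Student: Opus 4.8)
The inclusion $V_1^z + V_2^z \subseteq V_m^z$ is immediate from the definition of the mixture norm, so the content is the reverse inclusion. The plan is to argue by contraposition: suppose $v \notin V_1^z + V_2^z$; I will show $\|v\|_m > 0$. First I would reduce to a cleaner setting by passing to the quotient $W := V_m^f / (V_1^z + V_2^z)$, which is finite-dimensional since $V$ is. On $W$ the two norms descend to genuine (possibly still degenerate, but now with \emph{trivial} common vanishing behaviour in the relevant sense) norms $\|\cdot\|_1, \|\cdot\|_2$, and the mixture norm descends compatibly; the images $\bar V_i^z$ of the vanishing loci are subspaces of $W$ with $\bar V_1^z \cap \bar V_2^z = 0$ and $\bar V_1^z + \bar V_2^z \ne W$ as long as $v$ projects to a nonzero element $\bar v \ne 0$.

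The key step is then a compactness argument in $W$. Fix a genuine auxiliary norm $|\cdot|$ on the finite-dimensional space $W$. Suppose for contradiction that $\|\bar v\|_m = 0$; then there are decompositions $\bar v = x_n + y_n$ with $\|x_n\|_1 + \|y_n\|_2 \to 0$. In particular $\|x_n\|_1 \to 0$ and $\|y_n\|_2 \to 0$. The main obstacle is that $x_n, y_n$ need not be bounded in $|\cdot|$ — they could run off to infinity inside the vanishing loci while their sum stays fixed. To control this I would split $x_n = x_n' + x_n''$ with $x_n' \in \bar V_1^z$ and $x_n''$ in a fixed complement $U_1$ of $\bar V_1^z$ in $W$ (chosen once and for all); since $\|\cdot\|_1$ restricted to $U_1$ is a genuine norm on a finite-dimensional space, $\|x_n\|_1 = \|x_n''\|_1 \to 0$ forces $|x_n''| \to 0$. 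Similarly write $y_n = y_n' + y_n''$ with $y_n' \in \bar V_2^z$ and $y_n'' \to 0$ in $|\cdot|$. Then $\bar v = (x_n' + y_n') + (x_n'' + y_n'')$, and $x_n'' + y_n'' \to 0$, so $x_n' + y_n' \to \bar v$ in $|\cdot|$. But $x_n' + y_n' \in \bar V_1^z + \bar V_2^z$, which is a \emph{closed} subspace of the finite-dimensional $W$; hence its limit $\bar v$ lies in $\bar V_1^z + \bar V_2^z$, i.e. $\bar v = 0$, contradiction. Therefore $\|\bar v\|_m > 0$, hence $\|v\|_m > 0$, which proves $V_m^z \subseteq V_1^z + V_2^z$.

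One point worth checking carefully: that the mixture norm on $V$ really does descend to the mixture of the descended norms on $W$ (so that computing $\|\bar v\|_m$ in $W$ is legitimate). This holds because any decomposition $v = v_1 + v_2$ pushes forward to a decomposition of $\bar v$ with no larger cost, and conversely any decomposition $\bar v = \bar v_1 + \bar v_2$ lifts to $v = v_1 + v_2$ after adjusting by elements of $V_1^z + V_2^z$ (adding an element of $V_i^z$ to $v_i$ changes neither $\|v_i\|_i$ nor the sum $v_1 + v_2$ modulo $V_1^z + V_2^z$, and the residual correction needed to make the lift sum exactly to $v$ lies in $V_1^z + V_2^z$ and can be absorbed without cost). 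I expect the finite-dimensionality hypothesis to be essential exactly at the point where I invoke closedness of $\bar V_1^z + \bar V_2^z$ and the equivalence of norms on finite-dimensional spaces; without it the sum of the vanishing loci need not be closed and the conclusion can genuinely fail.
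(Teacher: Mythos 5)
Your second paragraph is, in substance, the paper's own proof: decompose each vector of small $\|\cdot\|_i$-norm as an element of $V_i^z$ plus an element of a fixed complement of $V_i^z$ inside the domain $V_i^f$, use the fact that $\|\cdot\|_i$ restricted to that finite-dimensional complement is a genuine norm and hence comparable to any auxiliary norm, conclude that $v$ is a limit of elements of $V_1^z+V_2^z$, and finish with the closedness of this subspace in finite dimensions. The paper does exactly this (with $E_i$ a complement of $V_i^z$ in $V_i^f$ and constants $r_i$ comparing $\|\cdot\|_i$ to an auxiliary norm), only it works directly in $V$ rather than in a quotient.

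The quotient reduction in your first paragraph, however, does not work as written and should simply be dropped. Since $V_1^z,V_2^z\subseteq V_1^z+V_2^z$, the images of the vanishing loci in $W=V_m^f/(V_1^z+V_2^z)$ are both zero, so the statements ``$\bar{V}_1^z\cap \bar{V}_2^z=0$'', ``$\bar{V}_1^z+\bar{V}_2^z\neq W$'', and the choice of a complement $U_1$ of $\bar{V}_1^z$ in $W$ do not mean what you intend. Moreover the individual pseudo-norms do not descend to $W$ in the naive sense: adding an element of $V_2^z$ to a vector can change its $\|\cdot\|_1$-value or take it out of $V_1^f$ altogether, so the descended object would have to be defined as an infimum over the coset, and checking that this quotient pseudo-norm is a genuine norm (and that the mixture construction commutes with the quotient) is essentially the content of the lemma itself. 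None of this is needed: run your compactness argument directly in $V$, taking $x_n'\in V_1^z$ and $x_n''$ in a fixed complement of $V_1^z$ inside $V_1^f$ (not inside all of $V$ or $W$, so that $\|\cdot\|_1$ is finite and a genuine norm there), and likewise for $y_n$; then $x_n''+y_n''\to 0$, so $x_n'+y_n'\to v$, and closedness of $V_1^z+V_2^z$ in the finite-dimensional space $V$ gives $v\in V_1^z+V_2^z$, exactly as in the paper.
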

\begin{proof}
	Fix an arbitrary genuine norm $\|\cdot\|$ on $V$. Let $E_1$ be a subspace of $V_1^f$ such that $V_1^f$ is the direct sum of $E_1$ and $V_1^z$. Then $\|\cdot\|_1$ is a genuine norm on $E_1$, a finite-dimensional space, and thus there exists $r_1>0$ such that any $u\in E_1$ with $\|u\|_1\le 1$ has $\|u\|\le r_1$. It follows that every vector $v$ in $B_1$ can be written as $v_0+u$ where $v_0\in V_1^z$ and $\|u\|\le r_1$. A similar result holds for $\|\cdot\|_2$ with some constant $r_2>0$. Then for any $v\in V_m^z$, for any $\epsilon>0$, we have $v=v_1+v_2+u_1+u_2$ for some $v_i\in V_i^z$ and $u_i$ satisfying $\|u_i\|\le \epsilon r_i$, $i=1,2$. Hence $V_m^z$ is contained in the closure of $V_1^z+V_2^z$. But $V_1^z+V_2^z$ is already closed since $V$ is finite-dimensional.	
\end{proof}

The unit norm ball $B_m$ of an $\ell^1$-mixture norm $\|\cdot\|_m$ has a simple description.
\begin{lemma}\label{lemma: norm ball}
	The unit norm ball $B_m$ is the algebraic closure of $\conv(B_1\cup B_2)$, where $\conv(\cdot)$ takes the convex hull. If the underlying space is finite-dimensional, then we can take the topological closure of $\conv(B_1\cup B_2)$ instead.
\end{lemma}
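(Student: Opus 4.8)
The plan is to prove the two inclusions "$B_m \subseteq \overline{\conv}(B_1 \cup B_2)$" and "$\overline{\conv}(B_1 \cup B_2) \subseteq B_m$" separately, where $\overline{\conv}$ denotes the algebraic closure of the convex hull. For the second, easier inclusion: since $\|\cdot\|_m$ is a degenerate norm (hence its unit ball $B_m$ is convex and algebraically closed), it suffices to check $B_1 \cup B_2 \subseteq B_m$; but for $v \in B_1$ we may write $v = v + 0$ with $\|v\|_1 \le 1$ and $\|0\|_2 = 0$, so $\|v\|_m \le 1$, and symmetrically for $B_2$. Convexity and algebraic closedness of $B_m$ (which follow from $\|\cdot\|_m$ being a degenerate norm — a fact I would record as a short preliminary observation, using that the infimum defining $\|\cdot\|_m$ is over a linear constraint) then give $\overline{\conv}(B_1 \cup B_2) \subseteq B_m$.

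For the first inclusion, take $v$ with $\|v\|_m \le 1$. By definition of the infimum, for every $\varepsilon > 0$ there are $v_1, v_2$ with $v_1 + v_2 = v$ and $\|v_1\|_1 + \|v_2\|_2 \le 1 + \varepsilon$. Write $\alpha = \|v_1\|_1$, $\beta = \|v_2\|_2$, so $\alpha + \beta \le 1 + \varepsilon$. If $\alpha, \beta > 0$ then $v_1/\alpha \in B_1$, $v_2/\beta \in B_2$, and
\[
\frac{v}{\alpha+\beta} = \frac{\alpha}{\alpha+\beta}\cdot\frac{v_1}{\alpha} + \frac{\beta}{\alpha+\beta}\cdot\frac{v_2}{\beta} \in \conv(B_1 \cup B_2),
\]
so $v \in (\alpha+\beta)\,\conv(B_1\cup B_2) \subseteq (1+\varepsilon)\,\conv(B_1\cup B_2)$; the degenerate-norm cases $\alpha = 0$ or $\beta = 0$ (where $v_1$ or $v_2$ lies in a vanishing locus) need a small separate argument, absorbing the vanishing part into a vector of arbitrarily small $\|\cdot\|_1$- or $\|\cdot\|_2$-value — this is where I expect to lean on the structural fact, already used in the proof of Lemma \ref{lemma:vanishing locus}, that a unit-ball element decomposes as (vanishing part) $+$ (bounded part). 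Letting $\varepsilon \to 0$, one sees $v$ lies in the algebraic closure of $\conv(B_1 \cup B_2)$: indeed, picking any fixed $w \in \conv(B_1\cup B_2)$ one checks $v + t(w - v) \in \conv(B_1\cup B_2)$ for all small $t>0$ by combining the $\varepsilon$-decompositions above, which is exactly the defining property of the algebraic closure.

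The finite-dimensional addendum then follows immediately: by the remark recalled just before this lemma (citing \cite[p.~59]{Convanal}), the algebraic closure of a convex set in a finite-dimensional space coincides with its topological closure, so $B_m = \overline{\conv}(B_1 \cup B_2)$ in the topological sense. The main obstacle I anticipate is purely bookkeeping rather than conceptual: handling the degenerate cases cleanly — when one or both of $\|v_1\|_1, \|v_2\|_2$ vanish, or when $v \notin V_1^f + V_2^f$ (where both sides are the whole space minus nothing relevant, i.e.\ $\|v\|_m = \infty$ and $v$ is excluded from both) — and making sure the "algebraic closure" manipulation with the auxiliary point $w$ is spelled out correctly. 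Everything else is a direct unwinding of the definition of the $\ell^1$-mixture norm.
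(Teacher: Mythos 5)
Your proposal is correct, and for the inclusion $B_m\subseteq$ (algebraic closure of $\conv(B_1\cup B_2)$) it is essentially the paper's own argument: decompose $v=v_1+v_2$ with $\|v_1\|_1+\|v_2\|_2<1+\epsilon$, rescale the two pieces into $B_1$ and $B_2$, use $0\in\conv(B_1\cup B_2)$ to absorb the slack, and conclude that $(1-t)v\in\conv(B_1\cup B_2)$ for some arbitrarily small $t\ge 0$. Two small corrections to your last step: the definition of algebraic closure only requires \emph{some} $t\in[0,\epsilon]$, not all small $t$, and the direction should be taken toward $0$ (i.e.\ $w=0$, which lies in $\conv(B_1\cup B_2)$); for a general fixed $w\in\conv(B_1\cup B_2)$ the claim that $v+t(w-v)\in\conv(B_1\cup B_2)$ for all small $t>0$ does not follow from your $\epsilon$-decompositions, since $v$ itself need not lie in the hull. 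Your handling of the degenerate case $\|v_i\|_i=0$ (rescale the vanishing summand so it sits in $B_i$ and give it arbitrarily small weight) is the right fix and is in fact more careful than the formula in the paper.

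For the reverse inclusion you genuinely diverge from the paper: you deduce it abstractly from ``$B_1\cup B_2\subseteq B_m$, and $B_m$ is convex and algebraically closed,'' together with monotonicity of the algebraic closure, whereas the paper argues directly, writing a point $v$ of the algebraic closure as $v=\lambda v_1+(1-\lambda)v_2-tu$ with witness direction $u=u_1+u_2$, $u_i\in V_i^f$, and estimating $\|v\|_m\le 1+\epsilon(\|u_1\|_1+\|u_2\|_2)$. Your route is cleaner and isolates a reusable general fact, but the burden shifts to the preliminary observation, which must actually be proved: (i) $\|\cdot\|_m$ is a degenerate norm (subadditivity of the infimal convolution), and (ii) the unit ball of a degenerate norm is algebraically closed. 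Point (ii) needs exactly the care the paper builds into its choice of $u$: the triangle-inequality estimate $\|x\|_m\le\|x+tv\|_m+t\|v\|_m\le 1+\epsilon\|v\|_m$ is only available when the witness direction satisfies $\|v\|_m<\infty$, so you must first observe that if $x+tv\in B_m\subseteq V_1^f+V_2^f$ for two distinct positive values of $t$ then $v\in V_1^f+V_2^f$ and $x\in V_1^f+V_2^f$, and that otherwise only $t=0$ can occur, forcing $x\in B_m$ outright; this parallels the paper's remark that linear subspaces are algebraically closed. With that spelled out your argument is complete, and the finite-dimensional addendum follows, as you say, from the cited fact that algebraic and topological closures of convex sets coincide in finite dimensions.
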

\begin{proof}
	Fix any $v\in B_m$. For any $\epsilon>0$, there exist $v_1,v_2$ with $v=v_1+v_2$ and $\|v_1\|_1+\|v_2\|_2<1+\epsilon$. Let $u_i\in B_i$ be $v_i/\|v_i\|_i$ if $\|v_i\|_i\neq 0$, and $0$ otherwise. With $d=\max(1,\|v_1\|_1+\|v_2\|_2)$, we have
	$$\frac{v}{d}=\frac{\|v_1\|_1}{d}\cdot u_1+\frac{\|v_2\|_2}{d}\cdot u_2+(1-\frac{\|v_1\|_1+\|v_2\|_2}{d})\cdot 0\in \conv(B_1\cup B_2).$$
	It follows that for any $\epsilon>0$, there is some $0\le t<\epsilon$ such that $(1-t)v\in \conv(B_1\cup B_2)$. Thus $v$ is in the algebraic closure of $\conv(B_1\cup B_2)$.
	
	Conversely, fix any $v$ in the algebraic closure of $\conv(B_1\cup B_2)$. Note that $\conv(B_1\cup B_2)$ is a subset of $V_1^f+V_2^f$ and that any linear subspace is algebraically closed, so the algebraic closure of $\conv(B_1\cup B_2)$ is a subset of $V_1^f+V_2^f$. Then by definition, there is some $u=u_1+u_2$ with $u_i\in V_i^f$ such that for any $\epsilon>0$, we have $v+tu\in \conv(B_1\cup B_2)$ for some $0\le t\le \epsilon$. Thus $v=\lambda v_1+(1-\lambda)v_2-tu=[\lambda v_1-tu_1]+[(1-\lambda) v_2-tu_2]$ for some $\lambda\in[0,1]$ and $v_i\in B_i$. We see 
	\begin{eqnarray*}
		\|v\|_m&\le& \|\lambda v_1-tu_1\|_1+\|(1-\lambda) v_2-tu_2\|_2\\
		&\le&\lambda\|v_1\|_1+(1-\lambda)\|v_2\|_2+t(\|u_1\|_1+\|u_2\|_2)\\
		&\le&1+\epsilon(\|u_1\|_1+\|u_2\|_2).
	\end{eqnarray*}
	Since $\epsilon$ is arbitrary and $\|u_1\|_1+\|u_2\|_2$ is finite, we get $v\in B_m$.
\end{proof}
\begin{rmk}
	It is necessary to take the algebraic closure. On $\R^2=\{(x,y)\}$, let $\|(x,y)\|_1=\infty$ if $y\neq0$ and $\|(x,0)\|_1=|x|$, and let $\|(x,y)\|_2=\infty$ if $x\neq 0$ and $\|(0,y)\|_2=0$. Then their $\ell^1$-mixture has formula $\|(x,y)\|_m=|x|$. Thus the unit balls $B_1=[-1,1]\times\{0\}$, $B_2=\{0\}\times \R$ and $B_m=[-1,1]\times \R$. Thus $\conv(B_1\cup B_2)=(-1,1)\times\R\cup \{(\pm 1,0)\}$ does not agree with $B_m$ but its algebraic closure does.
\end{rmk}

Lemma \ref{lemma: norm ball} in combination with Theorem \ref{thm:scl edge for amalgam} allows us to describe the unit norm ball of $\scl_G$ on the edge group $C$ for $G=A\star_C B$.
\begin{thm}\label{thm:scl edge for amalgam, norm ball}
	Let $G=A\star_C B$ be the amalgamated free product associated to inclusions $\iota_A: C\to A$ and $\iota_B:C\to B$. Then unit ball of $\scl_G$ on $C$ equals the algebraic closure of $\conv(B_A \cup B_B)$, where $\conv(\cdot)$ denotes the convex hull, $B_A$ and $B_B$ are the unit norm balls of the pullbacks of $\scl_A$ and $\scl_B$ via $\iota_A$ and $\iota_B$ on $C_1^H(C)$ respectively. 
\end{thm}
\begin{proof}
	By Theorem \ref{thm:scl edge for amalgam}, the norm $\scl_G$ restricted to $C_1^H(C)$ is the $\ell^1$-mixture of (the pullback of) $\scl_A$ and $\scl_B$. Thus the result follows from Lemma \ref{lemma: norm ball}.
\end{proof}

This enables us to look at explicit examples showing how scl behaves under surgeries.

\begin{exmp}\label{exmp: surgery example}
	Let $\Sigma$ be a torus with one boundary $\gamma=\partial \Sigma$. Then $X_A\defeq S^1\times \Sigma$ is a compact $3$-manifolds with torus boundary $T_A$. Let $\gamma_A$ be a chosen section of $\gamma$ in $T_A$ and let $\tau_A$ be a simple closed curve on $T_A$ representing the $S^1$ factor. Let $C\defeq\pi_1(T_A)=\langle\gamma_A, \tau_A \rangle\cong \Z^2$ be the peripheral subgroup of $A\defeq\pi_1(X_A)$, where we abuse the notation and use $\gamma_A,\tau_A$ to denote their corresponding elements in $\pi_1(T_A)$.
	
	Let $X_B$ be another copy of $X_A$, where $T_B$, $\gamma_B$ and $\tau_B$ correspond to $T_A$, $\gamma_A$ and $\tau_A$ respectively. For any coprime integers $p,q$, there is an orientable closed $3$-manifold $M_{p,q}$ obtained by gluing $T_A$ and $T_B$ via a map $\phi:\pi_1(T_B)\to \pi_1(T_A)=C$ taking $\gamma_B$ to $p\gamma_A+q\tau_A$. Then $\pi_1(M_{p,q})$ is an amalgam $A\star_{C} B$ where $B\defeq\pi_1(X_B)$ and the inclusion $C\to B$ is given by $C\overset{\phi^{-1}}{\longrightarrow}\pi_1(T_B)\inj B$.
	
	Identify $H_1(C;\R)$ with $\R^2$ with $(1,0)$ representing $[\gamma_A]$ and $(0,1)$ representing $[\tau_A]$. According to Remark \ref{rmk: adjust finite dimensional}, $\scl_A$ and $\scl_B$ induce norms on $H_1(C;\R)\cong C_1^H(C)/B_1^H(C)$. Then the norm $\scl_A$ on $H_1(C;\R)$ has a one-dimensional unit norm ball, which is the segment connecting $(-2,0)$ and $(2,0)$ since $\scl_A(\gamma_A)=\scl_\Sigma(\gamma)=1/2$. Similarly the unit norm ball of $\scl_B$ on $H_1(C;\R)$ is the segment connecting $(2p,2q)$ and $(-2p,-2q)$. By Theorem \ref{thm:scl edge for amalgam, norm ball}, the unit norm ball of $\scl_{M_{p,q}}$ on $H_1(C;\R)$ is the convex hull of $\{(\pm 2,0), \pm (2p,2q)\}$ (which is already closed), which intersects the positive $y$-axis at $(0,\frac{2q}{p+1})$ when $p,q\ge0$. In this case, we have $\scl_{M_{p,q}}(\tau_A)=\frac{p+1}{2q}$.
\end{exmp}

\section{Spectral gaps of graph products} \label{sec:spectral gap graph products}
In this section we apply Theorem \ref{thm: $n$-RTF gap, strong version} to obtain sharp gaps of scl in graph products, which are groups obtained from given collections of groups generalizing both free products and direct products. 
\begin{defn}
	Let $\Gamma$ be a \emph{simple} graph (not necessarily connected or finite) and let $\{G_v\}$ be a collection of groups each associated to a vertex of $\Gamma$. The \emph{graph products} $G_\Gamma$ is the quotient of the free product $\star G_v$ by the set of relations $\{[g_u,g_v]=1\ |\ g_u\in G_u, g_v\in G_v, u,v\text{ are adjacent}\}$.
\end{defn}

\begin{exmp} \label{exmp:graph products}
	Here are some well known examples.
	\begin{enumerate}
		\item If $\Gamma$ has no edges at all, then $G_\Gamma$ is the free product $\star_v G_v$;
		\item If $\Gamma$ is a complete graph, then $G_\Gamma$ is the direct product $\oplus_v G_v$;
		\item If each $G_v\cong \Z$, then $G_\Gamma$ is called the \emph{right-angled Artin group} (RAAG for short) associated to $\Gamma$;
		\item If each $G_v\cong \Z/2\Z$, then $G_\Gamma$ is called the \emph{right-angled Coxeter group} associated to $\Gamma$.
	\end{enumerate}
\end{exmp}

We first introduce some terminology necessary for the statements and proofs. Denote the vertex set of $\Gamma$ by $V(\Gamma)$. For any $V'\subset V(\Gamma)$,  the \emph{full subgraph} on $V'$ is the subgraph of $\Gamma$ whose vertex set is $V'$ and edge set consists of all edges of $\Gamma$ connecting vertices in $V'$. Any full subgraph $\Lambda$ gives a graph product denoted $G_\Lambda$ which is naturally a subgroup of $G_\Gamma$. It is actually a retract of $G_\Gamma$, by trivializing $G_v$ for all $v\notin \Lambda$. Denote by $lk(v)$ the \emph{link} of a vertex $v$, which is the full subgraph of $\{w\ |\ w\text{ is adjacent to }v\}$. The \emph{star} $st(v)$ is the full subgraph of $\{v\}\cup\{w\ |\ w\text{ is adjacent to }v\}$.

Finally, each element $g\in G_\Gamma$ can be written as a product $g_1\cdots g_m$ with $g_i\in G_{v_i}$. Such a product is \emph{reduced} if 
\begin{enumerate}
	\item $g_i\neq id$ for all $i$, and
	\item $v_i\neq v_j$ whenever we have $i\le k<j$ such that $[g_i,g_t]=id$ for all $i\le t\le k$ and $[g_t,g_j]=id$ for all $k+1\le t\le j$.
\end{enumerate}

It is known that every nontrivial element of $G_\Gamma$ can be written in a reduced form, which is unique up to certain operations (syllable shuffling) \cite[Theorem 3.9]{graphproductthesis}. In particular, any $g$ expressed in the reduced form above is nontrivial in $G_\Gamma$.

The following standard splitting of graph products is the key to applying our techniques.
\begin{lemma}\label{lemma: graph products splitting}
	For a graph product $G_\Gamma$ and any vertex $v$ of $\Gamma$, the group $G_\Gamma$ is an amalgam $A\star_{C} B$ with $A=G_{st(v)}$, $C=G_{lk(v)}$ and $B=G_{\Lambda}$, where $\Lambda$ is the full subgraph of the complement of $v$ in $\Gamma$.
\end{lemma}

\begin{lemma}\label{lemma: graph products RTF}
	Let $G_\Gamma$ be a graph product. Suppose $g=g_1\cdots g_m\in G_\Gamma$ ($m\ge 1$) is in reduced form such that for some $n\ge 3$ each $g_i\in G_{v_i}$ has order at least $n$, $1\le i\le m$. Then $g$ has order $\ge n$ rel $G_\Lambda$ for a full subgraph $\Lambda\subset \Gamma$ unless $v_i\in \Lambda$ for all $1\le i\le m$.
\end{lemma}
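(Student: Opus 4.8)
The plan is to argue by induction on the number of vertices of $\Gamma$. When $\Gamma$ has a single vertex $v$, either $\Lambda=\Gamma$ (and there is nothing to prove, since the hypothesis of the lemma fails) or $\Lambda=\emptyset$; in the latter case $g=g_1\in G_v$ has order at least $n$, and any relative $k$-torsion for $(G_v,\{1\})$ reads $g^k=1$, forcing $k\ge n$. For the inductive step, fix a vertex $v=v_i\notin V(\Lambda)$ occurring in the reduced expression of $g$; such a $v$ exists precisely because we are not in the excluded case. Write $\Gamma\setminus v$ for the full subgraph of $\Gamma$ on the remaining vertices. Since $v\notin\Lambda$, the full subgraph $\Lambda$ is contained in $\Gamma\setminus v$, so $G_\Lambda\le G_{\Gamma\setminus v}$; and by uniqueness of reduced forms in graph products the occurrence of $v$ in $g$ shows $g\notin G_{\Gamma\setminus v}$. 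Hence, by Lemma \ref{lemma: subgroup inherits RTF}(\ref{item: subgroup inherits RTF (1.5)}), it suffices to prove that $g$ is $n$-RTF in $(G_\Gamma,G_{\Gamma\setminus v})$.

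To this end we use the standard splitting of a graph product as an amalgam, $G_\Gamma=G_{st(v)}\star_{G_{lk(v)}}G_{\Gamma\setminus v}$, in which $G_{st(v)}=G_v\times G_{lk(v)}$ and $G_{lk(v)}$ sits inside $G_{\Gamma\setminus v}$ as a retract. Grouping the $v$-syllables of $g$, write $g=P_0\,g_{i_1}\,P_1\,g_{i_2}\cdots g_{i_r}\,P_r$, where $g_{i_1},\dots,g_{i_r}\in G_v$ are exactly the $v$-syllables (each of order at least $n$) and each $P_t$ is the intermediate block of syllables, so $P_t\in G_{\Gamma\setminus v}$ and $P_t$ is reduced. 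The key normal-form observation is that reducedness of $g$ forces $P_t\notin G_{lk(v)}$ for every $1\le t\le r-1$: if all syllables of such a $P_t$ had their vertices in $lk(v)$, they would commute with $g_{i_t}$ and $g_{i_{t+1}}$, and the syllable-shuffling criterion for reduced forms would then merge these two $v$-syllables, contradicting reducedness. Consequently each middle block $P_t$ ($1\le t\le r-1$) is a reduced element of the smaller graph product $G_{\Gamma\setminus v}$, all of whose syllables have order at least $n$ and whose reduced form involves a vertex outside $lk(v)$; the inductive hypothesis, applied to $G_{\Gamma\setminus v}$ with the full subgraph $lk(v)$, therefore gives that $P_t$ is $n$-RTF in $(G_{\Gamma\setminus v},G_{lk(v)})$. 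We also use the elementary fact that any $a\in G_{st(v)}=G_v\times G_{lk(v)}$ whose $G_v$-coordinate has order at least $n$ is $n$-RTF in $(G_{st(v)},G_{lk(v)})$, since projecting a relative $k$-torsion to $G_v$ kills all the $G_{lk(v)}$-terms and leaves the $k$-th power of the $G_v$-coordinate equal to $1$.

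After absorbing any $G_{lk(v)}$-part of $P_0$ or $P_r$ into $g_{i_1}$ or $g_{i_r}$, the expression above is an amalgam normal form for $g$, and we feed it into Lemma \ref{lemma: RTF in graphs of groups, tech} via the loop in the two-vertex underlying graph based at the vertex carrying $G_{\Gamma\setminus v}$. If this loop has odd length the lemma already yields $\infty$-RTF; if it has even length $2s$, the middle group element is either one of the $G_v$-syllables or one of the middle blocks $P_t$ with $1\le t\le r-1$, and in both cases it is $n$-RTF in its vertex group relative to the edge group $G_{lk(v)}$ by the previous paragraph, so the hypothesis of Lemma \ref{lemma: RTF in graphs of groups, tech} holds. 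The one subtlety is the behaviour at the ends: when $g$ begins or ends with a $v$-syllable the naive loop is based at the $G_{st(v)}$-vertex rather than the $G_{\Gamma\setminus v}$-vertex. This is fixed by first conjugating $g$ by $P_0$ or by $P_1$, both of which lie in $G_{\Gamma\setminus v}$; such a conjugation changes neither the orders of the $v$-syllables nor the conjugacy of the middle blocks, and it does not affect whether $g$ is $n$-RTF in $(G_\Gamma,G_{\Gamma\setminus v})$, because being $n$-RTF relative to a subgroup $H$ is invariant under conjugation by elements of $H$: conjugating a relative $k$-torsion $gh_1\cdots gh_k=1$ by $x\in H$ yields $(xgx^{-1})(xh_1x^{-1})\cdots(xgx^{-1})(xh_kx^{-1})=1$, again a relative $k$-torsion since each $xh_ix^{-1}\in H$. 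After such a conjugation $g$ is represented by a loop-type reduced word based at the $G_{\Gamma\setminus v}$-vertex, so Lemma \ref{lemma: RTF in graphs of groups, tech} gives that $g$ is $n$-RTF in $(G_\Gamma,G_{\Gamma\setminus v})$, and then Lemma \ref{lemma: subgroup inherits RTF}(\ref{item: subgroup inherits RTF (1.5)}) yields that $g$ is $n$-RTF in $(G_\Gamma,G_\Lambda)$, completing the induction.

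The step I expect to require the most care is precisely this end-effect bookkeeping: verifying that in every configuration of $P_0$, $P_r$ and every parity of $r$ one can, after a conjugation inside $G_{\Gamma\setminus v}$, realize $g$ by a loop-type reduced word based at the $G_{\Gamma\setminus v}$-vertex, so that the conclusion of Lemma \ref{lemma: RTF in graphs of groups, tech} lands relative to $G_{\Gamma\setminus v}\supseteq G_\Lambda$ rather than relative to $G_{st(v)}$. The other point needing precise (but routine) justification is the normal-form claim that reducedness of $g$ forces the intermediate blocks $P_t$ out of $G_{lk(v)}$; this is exactly where the hypothesis that $g$ is in reduced form is used, and it is what makes the inductive hypothesis applicable to those blocks.
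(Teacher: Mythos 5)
Your argument is correct in substance and shares its skeleton with the paper's proof: both reduce, via Lemma~\ref{lemma: subgroup inherits RTF}, to showing that $g$ is $n$-RTF relative to $G_{\Gamma\setminus v}$ for a vertex $v=v_i\notin\Lambda$, both use the splitting $G_\Gamma=G_{st(v)}\star_{G_{lk(v)}}G_{\Gamma\setminus v}$, and both conclude with Lemma~\ref{lemma: RTF in graphs of groups, tech}. The genuine differences are in the bookkeeping. The paper inducts on the syllable length $m$, groups $g$ into \emph{maximal} $G_{st(v)}$-blocks alternating with $G_{\Gamma\setminus v}$-blocks, and applies the inductive hypothesis to \emph{both} kinds of blocks; this forces a separate exceptional case when $g$ itself lies in $G_{st(v)}$ (there the IH is unavailable and the direct-product structure is invoked). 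You instead induct on the number of vertices, isolate the single $v$-syllables as the $A$-entries, apply the IH (for the smaller graph $\Gamma\setminus v$, with full subgraph $lk(v)$) only to the interior $B$-blocks $P_1,\dots,P_{r-1}$, and dispose of the $A$-entries by the elementary projection $G_{st(v)}=G_v\times G_{lk(v)}\to G_v$. Your variant absorbs the paper's exceptional case uniformly (if all $v_i\in st(v)$ there are no interior blocks and the middle entry is the unique $v$-syllable), and your key normal-form observation that interior blocks cannot lie in $G_{lk(v)}$ is exactly the reducedness input the paper uses; both decompositions satisfy the hypotheses of Lemma~\ref{lemma: RTF in graphs of groups, tech}, whose conclusion only requires the \emph{middle} entry to be $n$-RTF relative to the edge group.

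Two points need tidying. First, induction on the number of vertices is only well-founded when $\Gamma$ is finite, whereas the lemma (and its later applications) explicitly allows infinite graphs. This is easily patched: any relative $k$-torsion equation $gh_1\cdots gh_k=1$ with $h_i\in G_\Lambda$ involves only finitely many syllables, hence lives in $G_{\Gamma'}$ for a finite full subgraph $\Gamma'\supseteq\{v_1,\dots,v_m\}$ containing the vertices of the $h_i$, with $h_i\in G_{\Lambda\cap\Gamma'}$ and $g$ still reduced with some $v_i\notin\Lambda\cap\Gamma'$; the finite case then applies. (The paper avoids this issue altogether by inducting on $m$.) Second, the end-effect machinery is unnecessary: in Lemma~\ref{lemma: RTF in graphs of groups, tech} the reducedness condition constrains only the interior entries $g_1,\dots,g_{j-1}$, so you may simply take $P_0$ and/or $P_r$ to be trivial (or leave them in $G_{lk(v)}$) and the word $P_0\,g_{i_1}\,P_1\cdots g_{i_r}\,P_r$ is already a reduced loop-type word of even length $2r$ based at the $G_{\Gamma\setminus v}$-vertex; moreover, in the two-vertex one-edge graph every loop has even length, so the odd case you mention never occurs. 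Your conjugation-invariance argument for $n$-RTF under conjugation by elements of $H$ is correct, just not needed here.
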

\begin{proof}
	We proceed by induction on $m$. The base case $m=1$ is obvious using the retract from $G_\Gamma$ to $G_{v_1}$. For the inductive step, we show $g$ has order $\ge n$ rel $G_\Lambda$ if $v_1\notin \Lambda$, and the proof is similar if $v_i\notin \Lambda$ for some $i>1$. It suffices to prove that $g\in G_\Gamma$ has order $\ge n$ rel $G_{\Lambda_1}$ where $\Lambda_1$ is the full subgraph of the complement of $v_1$ in $\Gamma$ since $G_\Lambda\le G_{\Lambda_1}$ and using Lemma \ref{lemma: subgroup inherits RTF} (\ref{item: subgroup inherits RTF (1.5)}).
	Consider $G_\Gamma$ as an amalgam $A\star_{C} B$ with $A=G_{st(v_1)}$, $C=G_{lk(v_1)}$ and $B=G_{\Lambda_1}$ as in Lemma \ref{lemma: graph products splitting}. Then there is a unique decomposition of $g$ into $g=a_1b_1\cdots a_\ell b_\ell$ with $a_i\in A$ and $b_i\in B$, where each $a_i$ is a maximal subword of $g_1\cdots g_m$ that stays in $A-C$. To be precise, there is some $\ell\ge 1$ and indices $0=\beta_0< \alpha_1<\beta_1<\cdots<\alpha_\ell\le \beta_\ell\le m$, such that $g=a_1b_1\cdots a_\ell b_\ell$, where $a_i=g_{\beta_{i-1}+1}\cdots g_{\alpha_i}\in A$ and $b_i=g_{\alpha_{i}+1}\cdots g_{\beta_i}\in B$ for $1\le i\le \ell$, and such that
	\begin{enumerate}
		\item $b_\ell=id$ if $\alpha_\ell=m$;
		\item For each $1\le i\le \ell$, we have $v_j\in st(v_1)$ for all $\beta_{i-1}+1\le j\le \alpha_i$, and $v_j=v_1$ for some $\beta_{i-1}+1\le j\le \alpha_i$; and
		\item For each $1\le i\le \ell$ (or $i<\ell$ if $\alpha_\ell=m$), we have $v_j\neq v_1$ for all $\alpha_{i}+1\le j\le \beta_i$, and $v_{\alpha_{i}+1}, v_{\beta_i}\notin st(v_1)$.
	\end{enumerate}
	Since $g$ is reduced, so are each $a_i$ and $b_i$. Thus each $a_i\in A$ (resp. $b_i\in B$, except the case $b_\ell=id$) has order $\ge n$ rel $C$ by the induction hypothesis, and thus $g\in G_\Gamma$ has order $\ge n$ rel $B$ by Lemma \ref{lemma: RTF in graphs of groups, tech}, unless $\ell=1$ and $b_\ell=id$. In the exceptional case we have $v_i\in st(v_1)$ for all $1\le i\le m$, and the assertion is obvious using the direct product structure of $G_{st(v_1)}$ and the fact that $g=g_1\cdots g_m$ is reduced.
\end{proof}

Now we can deduce lower bounds of scl in graph products using Theorem \ref{thm: $n$-RTF gap, strong version}.
\begin{thm}\label{thm: gap for graph products, element-wise statement}
	Let $G_\Gamma$ be a graph product. Suppose $g=g_1\cdots g_m\in G_\Gamma$ ($m\ge 1$) is in cyclically reduced form such that for some $n\ge 3$ each $g_i\in G_{v_i}$ has order at least $n$, $1\le i\le m$. Then either
	$$\scl_{G_\Gamma}(g)\ge \frac{1}{2}-\frac{1}{n},$$
	or the full subgraph $\Lambda$ on $\{v_1,\ldots, v_m\}$ in $\Gamma$ is a complete graph. In the latter case, we have
	$$\scl_{G_\Gamma}(g)=\scl_{G_\Lambda}(g)=\max \scl_{G_i}(g_i).$$
\end{thm}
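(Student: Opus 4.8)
The plan is to split into two cases according to whether the full subgraph $\Lambda$ on the support $\{v_1,\dots,v_m\}$ is complete, handling the ``latter case'' of the statement first. If $\Lambda$ is complete, then $G_\Lambda=\bigoplus_{v\in V(\Lambda)}G_v$ is a direct product, and since $g=g_1\cdots g_m$ is reduced and its support spans a complete graph, the graph-product normal form forces the $v_i$ to be pairwise distinct with $g=\prod_i g_i$, one syllable per factor. Since $G_\Lambda$ is a retract of $G_\Gamma$ (trivialise every $G_v$ with $v\notin\Lambda$), the Retract property of Lemma~\ref{lemma: basic prop of scl} gives $\scl_{G_\Gamma}(g)=\scl_{G_\Lambda}(g)$, and iterating the Direct product formula of the same lemma gives $\scl_{G_\Lambda}(g)=\max_i\scl_{G_{v_i}}(g_i)$, as claimed.

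Now suppose $\Lambda$ is not complete; I want to prove $\scl_{G_\Gamma}(g)\ge\frac12-\frac1n$ by realising $g$ as a hyperbolic element of an amalgam splitting of $G_\Gamma$ whose winding numbers are all $n$-RTF, and then invoking the strong gap theorem, Theorem~\ref{thm: $n$-RTF gap, strong version}. Since $\Lambda$ is not complete its vertex set contains two non-adjacent vertices $v_a,v_b$; after conjugating $g$ by a cyclic permutation of its syllables I may assume $v_1=v_a$, so that the support of $g$ is not contained in the star $st(v_1)$. Split $G_\Gamma=A\star_C B$ with $A=G_{st(v_1)}=G_{v_1}\times G_{lk(v_1)}$, $C=G_{lk(v_1)}$, and $B=G_{\Lambda_1}$ where $\Lambda_1$ is the full subgraph on $V(\Gamma)\setminus\{v_1\}$, and decompose $g=a_1b_1\cdots a_\ell b_\ell$ into maximal blocks alternating between $A\setminus C$ and $B\setminus C$ exactly as in the proof of Lemma~\ref{lemma: graph products RTF}. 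Because the support is not contained in $st(v_1)$ we avoid the exceptional case $\ell=1$, $b_\ell=\mathrm{id}$ of that proof, so this is a cyclically reduced amalgam normal form of length $2\ell\ge2$ and $g$ is hyperbolic.

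It remains to verify the hypothesis of Theorem~\ref{thm: $n$-RTF gap, strong version} for a tight loop $\gamma$ representing $g$: in the two-vertex, one-edge graph of groups underlying an amalgam every arc of $\gamma$ enters and exits its vertex space through the unique edge space, so one needs every winding number to be $n$-RTF in its vertex group relative to the appropriate copy of $C$. For an $A$-block $a_i$: since $A=G_{v_1}\times G_{lk(v_1)}$ and two distinct $G_{v_1}$-syllables of $g$ inside a common $A$-block would be forced to merge by reducedness, $a_i$ contains a single $G_{v_1}$-syllable, so its image in $A/C\cong G_{v_1}$ is one of the $g_k$, of order $\ge n$; as $C$ is normal in $A$, this makes $a_i$ an $n$-RTF element of $(A,C)$. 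For a $B$-block $b_i$: it is a reduced word of the graph product $B=G_{\Lambda_1}$ (a subword of a reduced word, hence itself reduced) all of whose syllables have order $\ge n$, and by construction its first and last syllables lie in vertex groups outside $st(v_1)$, hence outside $C=G_{lk(v_1)}$, so its support is not contained in $lk(v_1)$; Lemma~\ref{lemma: graph products RTF} applied inside $B$ then gives that $b_i$ is $n$-RTF in $(B,C)$. Thus Theorem~\ref{thm: $n$-RTF gap, strong version} yields $\scl_{(G_\Gamma,\{A,B\})}(g)\ge\frac12-\frac1n$, and hence $\scl_{G_\Gamma}(g)\ge\scl_{(G_\Gamma,\{A,B\})}(g)\ge\frac12-\frac1n$ by monotonicity of relative scl (Lemma~\ref{lemma: basic prop of rel scl}).

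The step I expect to be the main obstacle is this last, non-complete, case: one must carefully match the graph-of-groups data of Theorem~\ref{thm: $n$-RTF gap, strong version} (the tight-loop arcs $a_i$, the edges $e_i$, the winding numbers $w(a_i)$ and the two embeddings $o_{e_i}(G_{e_i})$ of $C$) to the amalgam blocks $a_i,b_i$ and to the two copies of $C$, make airtight the reductions of the $n$-RTF conditions to the syllable level (exactly one $G_{v_1}$-syllable per $A$-block; Lemma~\ref{lemma: graph products RTF} applied inside $B$), and extract hyperbolicity of $g$ cleanly from the normal-form bookkeeping already present in the proof of Lemma~\ref{lemma: graph products RTF}. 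I would also remark that the argument is uniform in $n$, including $n=\infty$ (with ``order $\ge\infty$'' read as infinite order and $\tfrac12-\tfrac1\infty=\tfrac12$), and that one cannot replace Theorem~\ref{thm: $n$-RTF gap, strong version} by the weaker Theorem~\ref{thm: $n$-RTF gap, weak version} without assuming a uniform torsion bound on all of $G_{v_1}$, which is the reason the element-specific version is needed.
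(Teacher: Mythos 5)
Your proposal is correct and follows essentially the same route as the paper: split $G_\Gamma$ as $A\star_C B$ over the star/link of a support vertex not adjacent to the whole support, decompose $g$ into alternating blocks as in the proof of Lemma \ref{lemma: graph products RTF}, verify the blocks are $n$-RTF, and invoke Theorem \ref{thm: $n$-RTF gap, strong version}, with the complete case handled by the retract and direct-product properties of Lemma \ref{lemma: basic prop of scl}. The only (harmless) deviation is that you certify the $A$-blocks via the normality of $C=G_{lk(v_1)}$ in $A=G_{v_1}\times G_{lk(v_1)}$ and the single-$G_{v_1}$-syllable observation, where the paper simply applies Lemma \ref{lemma: graph products RTF} to both types of blocks.
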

\begin{proof}
	Fix any $v_k$, similar to the proof of Lemma \ref{lemma: graph products RTF}, we express $G_\Gamma$ as an amalgam $A\star_{C} B$, where $A$, $C$ and $B$ are the graph products associated to $st(v_k)$, $lk(v_k)$ and the full subgraph on $V(\Gamma)-\{v_k\}$ respectively. If there is some $v_i\notin st(v_k)$, then up to a cyclic conjugation, $g=a_1b_1\cdots a_\ell b_\ell$ where $\ell\ge 1$, each $a_i$ and $b_i$ is a product of consecutive $g_j$'s such that $b_i\in B-C$ and each $a_i\in A-C$ is of maximal length. Since $g$ is cyclically reduced, each $a_i\in A$ (resp. $b_i\in B$) has order $\ge n$ rel $C$ by Lemma \ref{lemma: graph products RTF}. It follows from Theorem \ref{thm: $n$-RTF gap, strong version} that $\scl_{G_\Gamma}(g)\ge 1/2-1/n$.
	
	Therefore, the argument above implies $\scl_{G_\Gamma}(g)\ge 1/2-1/n$ unless $v_i\in st(v_k)$ for all $i$, which holds for all $k$ only when the full subgraph $\Lambda$ on $\{v_1,\ldots,v_m\}$ in $\Gamma$ is complete. In this case, $G_\Gamma$ retracts to $G_\Lambda=\oplus G_{v_i}$. Then $v_i\neq v_j$ whenever $i\neq j$ since $g$ is reduced, thus the conclusion follows from Lemma \ref{lemma: basic prop of scl} (\ref{item: basic prop direct prod}).
\end{proof}
\begin{rmk}\label{rmk: graph product gap sharp}
	The estimate is sharp in the following sense. For any $g_v\in G_v$ of order $n\ge2$ and any $g_u\in G_u$ of order $m\ge 2$ with $u$ not equal or adjacent to $v$, then the retract from $G_\Gamma$ to $G_u\star G_v$ gives
	$$\scl_{G_\Gamma}([g_u,g_v])=\scl_{G_u\star G_v}([g_u,g_v])=\frac{1}{2}-\frac{1}{\min(m,n)}$$
	by \cite[Proposition 5.6]{Chen:sclfp}.
\end{rmk}

\begin{thm}\label{thm: gap for graph products, weak version}
	Let $G_\Gamma$ be the graph product of $\{G_v\}$. Suppose for some $n\ge 3$ and $C>0$, each $G_v$ has no $k$-torsion for all $2\le k\le n$ and has a strong gap $C$. Then $G_\Gamma$ has a strong gap $\min\{C,1/2-1/n\}$.
\end{thm}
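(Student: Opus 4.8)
The strategy is to apply the element-wise statement, Theorem~\ref{thm: gap for graph products, element-wise statement}, after reducing to a cyclically reduced form, and then to handle the "complete subgraph" exceptional case using the uniform strong gap hypothesis on the vertex groups. Fix $g \in G_\Gamma$ nontrivial and not torsion (if $g$ is torsion there is nothing to prove, since strong gap only constrains non-torsion elements). Up to conjugation we may assume $g = g_1 \cdots g_m$ is in cyclically reduced form with each $g_i \in G_{v_i} \setminus \{1\}$; conjugation does not change $\scl_{G_\Gamma}$. The first case split is on whether the letters $g_i$ all have order at least $n$. If they do, Theorem~\ref{thm: gap for graph products, element-wise statement} applies directly: either $\scl_{G_\Gamma}(g) \ge \frac12 - \frac1n$, which is at least $\min\{C, \frac12-\frac1n\}$ and we are done, or the full subgraph $\Lambda$ on $\{v_1,\ldots,v_m\}$ is complete, in which case $\scl_{G_\Gamma}(g) = \max_i \scl_{G_{v_i}}(g_i)$; since each $G_{v_i}$ has strong gap $C$ and each $g_i$ is non-torsion (having order at least $n \ge 3$), we get $\max_i \scl_{G_{v_i}}(g_i) \ge C \ge \min\{C, \frac12-\frac1n\}$.

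The remaining case is when some letter $g_i$ has order $k$ with $2 \le k < n$. Here is where the torsion hypothesis on the vertex groups enters: by assumption each $G_v$ has \emph{no} $k$-torsion for $2 \le k \le n$, so in fact \emph{no} letter can have finite order in $\{2,\ldots,n\}$ at all. Thus every $g_i$ either has order $\ge n$ or has infinite order; in either case $g_i$ has order at least $n$ in the sense used by Theorem~\ref{thm: gap for graph products, element-wise statement} (order $\ge n$ covers both infinite order and finite order $\ge n$). So actually the hypothesis of the element-wise theorem is automatically satisfied for every cyclically reduced $g$, and the case analysis collapses: we are always in the situation of the previous paragraph. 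One should state this cleanly — perhaps as the observation that under the no-small-torsion hypothesis, \emph{every} nontrivial element written in cyclically reduced form has all syllables of order at least $n$.

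The only genuinely delicate point is the bookkeeping around torsion: one must check that "strong gap $C$" for $G_\Gamma$ really means $\scl_{G_\Gamma}(g) \ge C$ for every non-torsion $g$, and that an element $g \in G_\Gamma$ is torsion if and only if (a cyclic conjugate of) its cyclically reduced form is a single syllable $g_1$ with $g_1$ torsion in $G_{v_1}$ — this follows from the normal form theory for graph products \cite{graphproductthesis}, since a product of two or more syllables in cyclically reduced form has infinite order (its powers stay reduced of growing length). Given this, the non-torsion elements are exactly those whose cyclically reduced form either has $m \ge 2$ syllables, or has $m=1$ with $g_1$ of infinite order; in the first case Theorem~\ref{thm: gap for graph products, element-wise statement} applies as above, and in the $m=1$ case $\scl_{G_\Gamma}(g_1) = \scl_{G_{v_1}}(g_1) \ge C$ using that $G_{v_1}$ is a retract of $G_\Gamma$ (Lemma~\ref{lemma: basic prop of scl}(3)) and has strong gap $C$. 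Assembling these cases gives $\scl_{G_\Gamma}(g) \ge \min\{C, \frac12 - \frac1n\}$ for every non-torsion $g$, and the gap is strong because we have shown the only elements with possibly vanishing $\scl$ are the torsion ones. I expect no real obstacle here; the main care is simply in correctly matching the torsion conditions across the vertex groups and the graph product, and in invoking the normal form to identify which elements are torsion.
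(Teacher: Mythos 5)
Your overall route is the same as the paper's: under the no-small-torsion hypothesis every syllable of a cyclically reduced word automatically has order at least $n$, so Theorem \ref{thm: gap for graph products, element-wise statement} applies, and the complete-subgraph case is handled via the strong gap of the vertex groups. But your torsion bookkeeping contains a genuine error. The hypothesis only excludes $k$-torsion for $2\le k\le n$, so the vertex groups may contain torsion of order $n+1,n+2,\dots$. Hence ``order at least $n$'' does \emph{not} imply non-torsion, and your parenthetical justification that each $g_i$ is non-torsion is false. Worse, your claim that a cyclically reduced word with $m\ge 2$ syllables has infinite order (``its powers stay reduced of growing length'') fails precisely when the syllables commute: if $u,v$ are adjacent, $g_u\in G_u$ has order $n+1$ and $g_v\in G_v$ has order $n+2$, then $g=g_ug_v$ is cyclically reduced with two syllables, yet $(g_ug_v)^k=g_u^kg_v^k$, so $g$ is torsion. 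Thus your characterization ``$g$ is torsion iff its cyclically reduced form is a single torsion syllable'' is wrong, and as written your argument would ``prove'' $\scl_{G_\Gamma}(g)\ge C$ for such a torsion element $g$, which is false since $\scl_{G_\Gamma}(g)=0$.

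The repair is short and is what the paper's (terse) proof implicitly uses: in the complete-subgraph case $G_\Lambda$ is a direct product of the $G_{v_i}$, so $g$ is torsion if and only if every syllable $g_i$ is. Since you have already reduced to non-torsion $g$, at least one $g_i$ is non-torsion, and the strong gap of that vertex group gives $\max_i\scl_{G_{v_i}}(g_i)\ge C$, which is all you need; if instead all $g_i$ are torsion (necessarily of order exceeding $n$), then $g$ itself is torsion and $\scl_{G_\Gamma}(g)=0$, which is exactly what a strong gap permits. Substituting this observation for your false torsion lemma (both in the complete-subgraph case and in the final ``assembling'' step, where non-torsion elements are those whose cyclically reduced form has at least one non-torsion syllable or non-commuting syllables, not simply $m\ge 2$), the proof becomes correct and coincides with the paper's argument.
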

\begin{proof}
	For any nontrivial $g\in G_\Gamma$ written in reduced form, by Theorem \ref{thm: gap for graph products, element-wise statement}, we either have $\scl_{G_\Gamma}(g)\ge 1/2-1/n$ or $\scl_{G_\Gamma}(g)=\max \scl_{G_i}(g_i)\ge C$.
\end{proof}
\begin{cor}
	For $n\ge 3$, any graph product of abelian groups without $k$-torsion for all $2\le k\le n$ have strong gap $1/2-1/n$. In particular, all right-angled Artin groups have strong gap $1/2$, originally proved by the second author \cite{Heuer}.
\end{cor}

Unfortunately, our result does not say much about the interesting case of right-angled Coxeter groups due to the presence of $2$-torsion.

\begin{quest}
	Is there a spectral gap for every right-angled Coxeter group? If so, is there a uniform gap?
\end{quest}

The first part of the question has now been answered affirmatively by the authors in a subsequent paper \cite[Corollary 6.18]{CH:RAAG_chain}, but the gap obtained is not uniform over all RACGs, relying on the defining graph. However, we do have a uniform gap $1/60$ restricting to hyperbolic RACGs \cite[Corollary 6.19]{CH:RAAG_chain}. It is still not clear if there is a uniform gap for all RACGs.

\section{Spectral gap of $3$-manifold groups} \label{sec: spectral gap 3 mnfd}
In this section, we show that any closed connected orientable $3$-manifold has a scl spectral gap. All $3$-manifolds in this section are assumed to be \emph{orientable}, \emph{connected} and \emph{closed} unless stated otherwise. Throughout this section, we will use $\scl_M$ to denote $\scl_{\pi_1(M)}$ and use $\scl_{(M,\partial M)}$ to denote scl in $\pi_1(M)$ relative to the peripheral subgroups when $M$ potentially has boundary.

\subsection{Decompositions of $3$-manifolds} \label{subsec: decomp of 3 mfd}
We first recall some important decompositions of $3$-manifolds and the geometrization theorem. Every $3$-manifold has a unique \emph{prime decomposition} as a connected sum of finitely many prime $3$-manifolds. 
We call a prime $3$-manifold \emph{non-geometric} if it does not admit one of the eight model geometries.

By the geometrization theorem, conjectured by Thurston \cite{Thurston} and proved by Perelman \cite{Perelman1,Perelman2,Perelman3}, the \emph{JSJ decomposition} \cite{JSJ1,JSJ2} cuts each non-geometric prime $3$-manifold along a \emph{nonempty} 
minimal finite collection of embedded disjoint incompressible $2$-tori into compact manifolds (with incompressible tori boundary) that each either admits one of the eight model geometries in the interior of \emph{finite volume} or is the twisted $I$-bundle $K$ over the Klein bottle defined below. Here $K$ is $(T^2\times[0,1])/\sigma$, where the involution $\sigma$ is defined as $\sigma(x,t)=(\tau(x),1-t)$ and $\tau$ is the unique nontrivial deck transformation for the $2$-fold covering of the Klein bottle by $T^2$. The compact manifold (with torus boundary) $K$ is also recognized as the compact regular neighborhood of any embedded one-sided Klein bottle in any $3$-manifold. 

We will refer to such a decomposition of prime manifolds as the \emph{geometric decomposition}. The only place that it differs from applying the JSJ decomposition to all prime factors is that some geometric prime $3$-manifolds (e.g. of $Sol$ geometry) admit nontrivial JSJ decomposition, but we do not cut them into smaller pieces in the geometric decomposition.

Furthermore, five ($\mathbb{S}^3, \mathbb{E}^3, \mathbb{S}^2\times\mathbb{E}$, $Nil$, $Sol$) out of the eight geometries have no non-cocompact lattice \cite[Theorem 4.7.10]{Thurstonbook}. It follows that pieces obtained by a nontrivial geometric decomposition are either $K$ or have one of the other three geometries in the interior: $\Hbb^3$, $\Hbb^2\times\Ebb$ or $\PSLtwotilde\R$.

If a compact manifold $M$ with (possibly empty) tori boundary has $\Hbb^2\times\Ebb$ or $\PSLtwotilde\R$ geometry in the interior with finite volume, then it is \emph{Seifert fibered} over a $2$-dimensional compact \emph{orbifold} $B$ whose orbifold Euler characteristic $\chi_o(B)<0$; See \cite[Theorems 4.13 and 4.15]{Scott} and \cite[Corollary 4.7.3]{Thurstonbook}. We will introduce basic properties of Seifert fibered spaces and orbifolds in Subsection \ref{subsec: SFS}.

We summarize the discussion above as the following theorem.

\begin{thm}\label{thm: 3-mfd decomp}
	There is a unique geometric decomposition of every prime $3$-manifold $M$, which is trivial if $M$ already admits one of the eight geometries and is the JSJ decomposition if $M$ is non-geometric. Moreover, if $M$ is non-geometric, then each piece $N$ in its geometric decomposition has incompressible tori boundary, and exactly one of the following three cases occurs:
	\begin{enumerate}
		\item $N$ has hyperbolic geometry of finite volume in the interior,
		\item $N$ is Seifert fibered over a compact orbifold $B$ with $\chi_o(B)<0$, or
		\item $N$ is homeomorphic to the twisted $I$-bundle $K$ over the Klein bottle.
	\end{enumerate}
	The minimality of the JSJ decomposition implies that the fiber directions disagree when two boundary components from Seifert fibered pieces are glued. $M$ being non-geometric implies that no two pieces homeomorphic to $K$ are glued together.
\end{thm}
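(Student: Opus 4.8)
The plan is to assemble this statement from the classical results already cited in the discussion above, the only genuinely new content being the two minimality assertions at the end. First I would recall the prime decomposition theorem (Kneser--Milnor), which gives existence and uniqueness of the connected sum decomposition of $M$ into primes; for $M$ itself prime this is vacuous, and when $M$ is geometric the asserted geometric decomposition is trivial by our convention, so nothing further is needed in that case. So assume from now on that $M$ is prime and non-geometric.

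Next I would invoke the JSJ decomposition \cite{JSJ1,JSJ2}: there is a collection of disjoint embedded incompressible tori, canonical and minimal up to isotopy, cutting $M$ into compact pieces with incompressible torus boundary, each of which is either atoroidal or Seifert fibered. This collection is nonempty, since an empty collection would make $M$ itself atoroidal or Seifert fibered, hence geometric by geometrization, contrary to assumption. By the geometrization theorem \cite{Thurston,Perelman1,Perelman2,Perelman3}, each atoroidal piece with incompressible torus boundary carries a complete finite-volume hyperbolic structure on its interior, which is case (1). For a Seifert fibered piece $N$, which is compact with nonempty incompressible torus boundary, one analyzes its base $2$-orbifold $B$: either $N$ is the twisted $I$-bundle $K$ over the Klein bottle (Seifert fibered over a Möbius band, or over a disk with two cone points of order $2$, both with $\chi_o(B)=0$), which I would record as case (3); or the interior of $N$ carries a finite-volume geometric structure, hence a non-cocompact lattice, so by \cite[Theorem 4.7.10]{Thurstonbook} the geometry is neither $\mathbb{S}^3$, $\mathbb{E}^3$, $\mathbb{S}^2\times\mathbb{E}$, $Nil$ nor $Sol$, leaving $\geomHE$ or $\geomPSL$, which by \cite[Theorems 4.13 and 4.15]{Scott} and \cite[Corollary 4.7.3]{Thurstonbook} is equivalent to $N$ being Seifert fibered over $B$ with $\chi_o(B)<0$, i.e.\ case (2). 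Mutual exclusivity of the three cases is then immediate: a hyperbolic piece contains no essential torus and admits no Seifert fibering, and $K$ is distinguished from the case-(2) pieces by $\chi_o(B)=0$ versus $\chi_o(B)<0$.

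Finally I would address the two minimality claims. If two Seifert fibered pieces were glued along a JSJ torus $T$ so that their fiber slopes on $T$ coincided, the two fibrations would match up to a Seifert fibration across $T$, so $T$ could be deleted from the JSJ family without violating its defining properties, contradicting minimality; hence the fiber directions disagree. Likewise, if two copies of $K$ were glued along their boundary tori, the resulting closed manifold admits one of the geometries $\mathbb{E}^3$, $Nil$ or $Sol$ (depending on the gluing), hence is geometric, contradicting that $M$ is non-geometric. I do not expect a genuine obstacle here; the only point needing care is the bookkeeping between ``JSJ of a prime manifold'' and ``geometric decomposition'' — they differ precisely for the geometric prime manifolds (for instance $Sol$ manifolds) that nonetheless admit a nontrivial JSJ — so the two minimality statements must be phrased for the JSJ collection itself, together with the definitional fact that for a \emph{non-geometric} prime manifold the geometric decomposition coincides with the JSJ decomposition.
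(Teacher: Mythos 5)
Your proposal is correct and follows essentially the same route as the paper, which states this theorem as a summary of the preceding discussion rather than giving a separate proof: prime decomposition, the JSJ decomposition \cite{JSJ1,JSJ2}, geometrization, the absence of non-cocompact lattices in five of the eight geometries \cite[Theorem 4.7.10]{Thurstonbook}, and the Seifert fibered description of finite-volume $\geomHE$ and $\geomPSL$ pieces \cite[Theorems 4.13 and 4.15]{Scott}. Your short arguments for the two minimality assertions (matching fiber slopes would let a JSJ torus be deleted; gluing two copies of $K$ yields a closed geometric manifold of type $\mathbb{E}^3$, $Nil$ or $Sol$) are the standard ones the paper implicitly relies on.
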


Since the tori in the geometric decomposition are incompressible, the fundamental group $\pi_1(M)$ of any non-geometric prime manifold $M$ has the structure of a graph of groups. We refer to the corresponding tree that $\pi_1(M)$ acts on as the \emph{JSJ-tree}. By basic hyperbolic geometry and the minimality of JSJ decomposition, it is noticed by Wilton--Zalesskii \cite[Lemma 2.4]{Acyl} that the action on the JSJ-tree has a nice \emph{acylindricity} property.

\begin{defn}
	For any $K\ge1$, an action of $G$ on a tree is \emph{$K$-acylindrical} if the fixed point set of any $g\neq id\in G$ has diameter at most $K$.
\end{defn}

\begin{lemma}[Wilton--Zalesskii]\label{lemma: acyl JSJ}
	The action of any non-geometric prime $3$-manifold group on its JSJ-tree is $4$-acylindrical.
\end{lemma}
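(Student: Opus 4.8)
The plan is to follow Wilton--Zalesskii and bound the length of a segment of the JSJ-tree $T$ fixed pointwise by a non-trivial element, using the local structure of the three kinds of vertex group. Recall that $\mathrm{Fix}(g)$ is a subtree, so ``diameter $\le 4$'' is equivalent to: no non-trivial $g\in\pi_1(M)$ fixes a geodesic segment of combinatorial length $\ge 5$. So suppose, toward a contradiction, that $g\neq 1$ fixes the segment $\sigma = v_0\,e_1\,v_1\,e_2\,\cdots\,e_L\,v_L$ with $L\ge 5$. For each interior vertex $v_i$ (that is $0<i<L$), the two incident edges $e_i,e_{i+1}$ of $\sigma$ are distinct, so $g\in G_{e_i}\cap G_{e_{i+1}}$, an intersection of two distinct peripheral $\mathbb{Z}^2$-subgroups of the vertex group $G_{v_i}$. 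Since each piece falls into one of the three types in the geometric decomposition, I would record the following local facts and then run a counting argument.

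First, the three local inputs. \emph{Hyperbolic vertices:} if $G_{v_i}=\pi_1(N)$ with $N$ finite-volume cusped hyperbolic, two distinct maximal parabolic (peripheral) subgroups fix distinct points of $\partial\mathbb{H}^3$, so their intersection is trivial; hence no interior vertex of $\sigma$ is hyperbolic. \emph{Seifert vertices:} if $G_{v_i}=\pi_1(N)$ with $N$ Seifert fibered over a $2$-orbifold $B$ with $\chi^{\mathrm{orb}}(B)<0$, the fiber subgroup $\langle t_{v_i}\rangle\cong\mathbb{Z}$ is the kernel of $\pi_1(N)\to\pi_1^{\mathrm{orb}}(B)$, it lies in every peripheral subgroup, and distinct peripheral subgroups of $\pi_1^{\mathrm{orb}}(B)$ intersect trivially (they are malnormal in this virtually free group); hence $G_{e_i}\cap G_{e_{i+1}}=\langle t_{v_i}\rangle$, so $g$ is a non-trivial power of the fiber, and $t_{v_i}$ lies in both incident edge groups. \emph{Klein-bottle vertices:} if $G_{v_i}=\pi_1(K)$ is the twisted $I$-bundle group over the Klein bottle, its boundary-torus subgroup $P\cong\mathbb{Z}^2$ is normal of index $2$; consequently a $K$-vertex has valence exactly $2$ in $T$, both incident edge groups equal $P$, and $g\in P$. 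Any element $x\in\pi_1(K)\setminus P$ swaps the two incident edges, and conjugation by $x$ restricts to a reflection $R$ of $P\cong\mathbb{Z}^2$ (eigenvalues $\pm1$) whose two eigen-axes are exactly the fibers of the two Seifert fibrations of $K$ (over $D^2(2,2)$ and over the M\"obius band, both Euclidean base orbifolds). I would also use minimality of the JSJ decomposition in three forms: the fibers of two adjacent Seifert pieces are non-parallel on the glued torus; no $K$ is glued to a $K$; and the fiber of a Seifert piece adjacent to a $K$-piece is parallel to neither eigen-axis of $R$ (otherwise the union would be Seifert fibered over a $\chi^{\mathrm{orb}}<0$ orbifold, so the glued torus would be removable from the JSJ collection).

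Now the contradiction. By the hyperbolic case, every interior vertex of $\sigma$ is Seifert or $K$. Suppose some interior $v_i$ with $2\le i\le L-2$ is a $K$-vertex. Its two neighbours $v_{i-1},v_{i+1}$ are interior and are lifts of the single piece glued to $K$ along its boundary torus; they cannot be hyperbolic, so they are Seifert. The element of $\pi_1(M)$ fixing $v_i$ and carrying $v_{i-1}$ to $v_{i+1}$ lies in $\pi_1(K)\setminus P$, and conjugation by it sends $t_{v_{i-1}}$ to $t_{v_{i+1}}$ while restricting to $R$ on $P=G_{e_i}$; thus $t_{v_{i+1}}=R(t_{v_{i-1}})$ inside $\mathbb{Z}^2$, and by minimality this is not parallel to $t_{v_{i-1}}$, so $\langle t_{v_{i-1}}\rangle\cap\langle t_{v_{i+1}}\rangle=1$. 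Since $g$ lies in both, $g=1$, a contradiction. Hence any interior $K$-vertex is $v_1$ or $v_{L-1}$, so $v_2,\dots,v_{L-2}$ are all Seifert. But if $v_j,v_{j+1}$ are both interior Seifert vertices, then $g\in\langle t_{v_j}\rangle\cap\langle t_{v_{j+1}}\rangle$ inside $G_{e_{j+1}}\cong\mathbb{Z}^2$, and minimality makes $t_{v_j},t_{v_{j+1}}$ non-parallel, so again $g=1$. As $L\ge 5$ the list $v_2,\dots,v_{L-2}$ contains two consecutive vertices, giving the contradiction; therefore $L\le 4$.

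The main obstacle is the $K$-vertices. For hyperbolic and Seifert vertices, passing through the vertex immediately forces $g$ into a trivial or infinite cyclic subgroup; for a $K$-vertex this fails, since both incident edge groups coincide with the normal $\mathbb{Z}^2$-subgroup $P$, so one only learns $g\in P$. Extracting a contradiction then requires combining the action of the ``flip'' on $P$ with the identification of $R$'s eigen-axes as the two Seifert fibers of $K$ and with the minimality of the JSJ decomposition for the adjacent Seifert piece; this is precisely what confines $K$-vertices to the endpoints of a fixed segment and makes $4$, rather than $3$, the sharp acylindricity constant. Verifying the structural facts about $\pi_1(K)$ and the minimality statement is routine $3$-manifold topology (see \cite{Scott}), and alternatively one can simply cite \cite{Acyl}.
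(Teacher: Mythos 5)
Your argument is correct, and it follows essentially the route behind the cited result: the paper itself gives no proof of this lemma beyond the reference to Wilton--Zalesskii \cite{Acyl}, and your reconstruction (hyperbolic interior vertices force $g=1$, Seifert interior vertices force $g$ into the fiber subgroup, and $K$-vertices are confined to the ends of the fixed segment via the index-two flip on $P$ combined with JSJ minimality) is exactly the expected argument. The facts you defer to (malnormality/self-normalization of peripheral subgroups, the fiber subgroup being characteristic, and the fiber-mismatch consequence of minimality at a $K$-piece) are indeed routine, and your analysis also correctly accounts for why $4$, rather than $3$, is the sharp acylindricity constant.
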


\subsection{$2$-dimensional orbifolds and Seifert fibered $3$-manifolds}\label{subsec: SFS}
For our purpose, a compact $2$-dimensional (cone-type) \emph{orbifold} $B$ is a compact possibly nonorientable surface with finitely many so-called \emph{cone points} in its interior. Each cone point has an order $n\ge 2$, meaning that locally it is modeled on the quotient of a round disk by a rotation of angle $2\pi/n$. The orbifold Euler characteristic $\chi_o(B)$ is the Euler characteristic of the surface minus $\sum (1-1/n_i)$, where the sum is taken over all cone points and $n_i$ is the corresponding order. 

Only those orbifolds $B$ with negative $\chi_o(B)$ will appear in our discussion, all of which have hyperbolic structures of finite volume realizing each boundary as a cusp \cite{davis1984}. In this case, $B$ can be thought of as $\mathbb{H}^2/\Gamma$ , where $\Gamma$ is a discrete subgroup of $\mathrm{Isom}(\mathbb{H}^2)$ isomorphic to the \emph{orbifold fundamental group} $\pi_1(B)$. It follows that each element in $\pi_1(B)$ of finite order acts by an elliptic isometry such that its order divides the order of some cone point, an element acts by parabolic isometry if and only if its conjugacy class represents a loop on the boundary, and all other elements act by hyperbolic isometries (possibly composed with a reflection about the axis of translation). It also follows that a finite orbifold cover of $B$ is a hyperbolic surface, and thus $\pi_1(B)$ is word-hyperbolic.

For any compact $2$-dimensional orbifold $B$ with $\chi_o(B)<0$, it is known that $H^k(\pi_1(B);\R)=0$ for all $k>2$, 
and that $H^2(\pi_1(B);\R)\neq 0$ if and only if $B$ is closed and orientable, in which case a generator is given by the Euler class $\eurm(B)$ associated to the $\pi_1(B)$ action on the circle at infinity of the hyperbolic plane (for any fixed hyperbolic structure on $B$).


A compact $3$-manifold $M$ with (possibly empty) tori boundary is \emph{Seifert fibered} over an orbifold $B$ if there is a projection $p:M\to B$ such that 
\begin{enumerate}
	\item each fiber $p^{-1}(b)$ is $S^1$,
	\item it is an $S^1$ bundle away from the preimage of cone points of $B$, and
	\item for each cone point $b$ of order $n$, a neighborhood of $p^{-1}(b)$ is obtained by gluing the bottom of a solid cylinder to its top by a rotation of angle $2m\pi/n$ for some $m$ coprime to $n$.
\end{enumerate}

A fiber $p^{-1}(b)$ is \emph{regular} if $b$ is not a cone point. If $b$ is a cone point of order $n$, we say the fiber $p^{-1}(b)$ has \emph{multiplicity} $n$. 
When $\chi_o(B)<0$, $M$ is aspherical and thus $\pi_1(M)$ is torsion-free \cite[Lemma 3.1]{Scott}, and moreover $\pi_1(M)$ fits into a short exact sequence
\begin{equation}\label{eqn:exact sequence}
1 \to \Z \to \pi_1(M) \overset{p}{\to} \pi_1(B) \to 1,
\end{equation}
where the normal $\Z$ subgroup is generated by \emph{any} regular fiber \cite[Lemma 3.2]{Scott}. This is a central extension if the bundle is orientable, or equivalently $B$ is orientable since $M$ is orientable. Note that $\partial M$ is exactly the preimage of $\partial B$, and thus the $\Z$ subgroup lies in every peripheral $\Z^2$ subgroup of $\pi_1(M)$ if $\partial M\neq \emptyset$.

We will frequently use the map $p$ and monotonicity of scl in our estimates. In many cases, $p$ actually preserves scl.
\begin{lemma}[Calegari]\label{lemma: p preserves scl}
	Let $M$ be a $3$-manifold Seifert fibered over an orbifold $B$ with $\chi_o(B)<0$. Let $p:\pi_1(M)\to\pi_1(B)$ be induced by the projection and let $z\in\pi_1(M)$ represent a regular fiber. Then $\scl_M(g)=\scl_B(p(g))$ for all null-homologous $g\in\pi_1(M)$ if $H^2(\pi_1(B);\R)=0$.
\end{lemma}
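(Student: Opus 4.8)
The plan is to combine monotonicity of scl (Lemma \ref{lemma: basic prop of scl}) with the Bavard duality description of scl (Theorem \ref{thm:Bavards duality}) to get both inequalities. One direction is immediate: the projection $p\colon \pi_1(M)\to\pi_1(B)$ is a homomorphism, so monotonicity gives $\scl_M(g)\ge \scl_B(p(g))$ for every $g$, null-homologous or not. The real content is the reverse inequality $\scl_M(g)\le\scl_B(p(g))$ when $g$ is null-homologous in $\pi_1(M)$. For this I would pull back quasimorphisms: given a homogeneous quasimorphism $\bar\psi$ on $\pi_1(B)$, the composition $\bar\psi\circ p$ is a homogeneous quasimorphism on $\pi_1(M)$ with $D(\bar\psi\circ p)\le D(\bar\psi)$ and $(\bar\psi\circ p)(g)=\bar\psi(p(g))$. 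Bavard duality on $\pi_1(B)$ would then finish the argument \emph{provided} every homogeneous quasimorphism on $\pi_1(M)$ that could witness a larger value of $\scl_M(g)$ is, up to bounded error, pulled back from $\pi_1(B)$ — equivalently, provided the map $p^*\colon \mathcal{Q}(\pi_1(B))\to \mathcal{Q}(\pi_1(M))$ is surjective modulo $H^1$.

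The key step is therefore to analyze the exact sequence $1\to\Z\to\pi_1(M)\overset{p}{\to}\pi_1(B)\to 1$ from \eqref{eqn:exact sequence} at the level of bounded cohomology. Let $z$ generate the central (or normal) $\Z$. Any homogeneous quasimorphism $\bar\phi$ on $\pi_1(M)$ restricts to a homomorphism on the amenable subgroup $\langle z\rangle$, and since $g$ is null-homologous we may as well only care about $\bar\phi$ up to $H^1(\pi_1(M);\R)$. The standard tool is the exact sequence in (bounded) cohomology associated to an extension with amenable kernel: because $\Z$ is amenable, $H^n_b(\pi_1(M);\R)\cong H^n_b(\pi_1(B);\R)$ for all $n$ (Gromov). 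Using the comparison maps and the five-term / Hochschild--Serre type sequences, I would argue that $EH^2_b(\pi_1(M);\R)$ — the kernel of the comparison map $H^2_b\to H^2$, which is naturally $\mathcal{Q}(\pi_1(M))/H^1(\pi_1(M);\R)$ — is identified with $EH^2_b(\pi_1(B);\R)=\mathcal{Q}(\pi_1(B))/H^1(\pi_1(B);\R)$ via $p^*$. This is where the hypothesis $H^2(\pi_1(B);\R)=0$ enters: it forces $H^2_b(\pi_1(B);\R)=EH^2_b(\pi_1(B);\R)$, and more importantly it rules out the obstruction that a bounded class on $\pi_1(M)$ pulled back from $\pi_1(B)$ becomes a genuine (unbounded) cohomology class whose preimage upstairs is no longer in the image of $p^*$ on quasimorphisms. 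Concretely, the worry is the Euler class of the bundle: a quasimorphism on $\pi_1(M)$ which is a homomorphism on $\langle z\rangle$ with $\bar\phi(z)\ne 0$ corresponds, roughly, to a bounded primitive of (a multiple of) the Euler class of the central extension in $H^2(\pi_1(B);\R)$, and $H^2(\pi_1(B);\R)=0$ kills exactly this possibility. So under this hypothesis every extremal $\bar\phi$ for $g$ can be taken to vanish on $z$, hence to descend to $\pi_1(B)$, giving $\scl_M(g)\le \scl_B(p(g))$ by Bavard.

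In writing this up I would first record the trivial inequality via monotonicity, then state and prove the quasimorphism-descent lemma: if $H^2(\pi_1(B);\R)=0$ then $p^*\colon \mathcal{Q}(\pi_1(B))\to \mathcal{Q}(\pi_1(M))$ is surjective. For that lemma I would run the Hochschild--Serre spectral sequence (or the explicit five-term exact sequence) for ordinary cohomology with $\R$ coefficients: $0\to H^1(\pi_1(B))\to H^1(\pi_1(M))\to H^1(\Z)^{\pi_1(B)}\to H^2(\pi_1(B))$, where the last term vanishes by hypothesis, so $H^1(\pi_1(M);\R)\twoheadrightarrow H^1(\Z;\R)^{\pi_1(B)}=\R$; this says some quasimorphism (indeed homomorphism) on $\pi_1(M)$ already detects $z$ nontrivially. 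Combined with $H^2_b(\pi_1(M))\cong H^2_b(\pi_1(B))$ (amenable kernel) and the commuting square of comparison maps, a diagram chase shows $\mathcal{Q}(\pi_1(M))/H^1(\pi_1(M))=p^*(\mathcal{Q}(\pi_1(B)))/H^1(\pi_1(M))$. Finally I would apply Bavard's duality (Theorem \ref{thm:Bavards duality}) on both groups, using that $p(g)$ is null-homologous in $\pi_1(B)$ (image of a null-homologous class) so that $\scl_B(p(g))$ is computed by honest quasimorphisms. I expect the \textbf{main obstacle} to be the bounded-cohomology bookkeeping — making the identification of $\mathcal{Q}$-spaces precise rather than just identifying $H^*_b$, i.e.\ keeping careful track of the comparison maps and the $H^1$'s so that the descent of quasimorphisms (and not merely of bounded classes) is rigorous; citing the relevant statements from \cite{Frigerio} (Gromov's mapping theorem for amenable kernels, the exact sequences relating $\mathcal{Q}$, $H^1$, $H^2_b$, $H^2$) should make this routine.
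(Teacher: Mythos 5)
Your proposal is correct and is essentially the paper's own argument: the paper proves this lemma by citing \cite[Proposition 4.30]{Cal:sclbook} applied to the exact sequence (\ref{eqn:exact sequence}), and that proposition's proof is exactly your monotonicity-plus-descent scheme, using the amenable kernel $\Z$ (Gromov's mapping theorem for $H^2_b$) together with $H^2(\pi_1(B);\R)=0$ to show $p^*$ surjects onto $\mathcal{Q}(\pi_1(M))$ modulo $H^1$, and then Bavard duality. One tangential slip: $H^1(\Z;\R)^{\pi_1(B)}=\R$ fails when $B$ is nonorientable (the monodromy can act by $-1$), but that remark is not needed for the diagram chase, which goes through as you describe.
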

\begin{proof}
	This is the proof of \cite[Proposition 4.30]{Cal:sclbook} applied to the exact sequence (\ref{eqn:exact sequence}).
\end{proof}

When the base space $B$ is closed and orientable, we need the following two lemmas as additional tools.
\begin{lemma}\label{lemma: distinguish two geometries}
	Let $M$ be a closed Seifert fibered $3$-manifold with orientable base orbifold $B$ so that $\chi_o(B)<0$. Let $z\in\pi_1M$ represent a regular fiber. 
	Then $M$ has $\PSLtwotilde\R$ or $\Hbb^2\times\Ebb$ geometry, and the former case arises iff $[z]=0\in H_1(M;\R)$.
\end{lemma}
\begin{proof}
	We know $M$ admits $\PSLtwotilde\R$ or $\Hbb^2\times\Ebb$ geometry by our discussion in Section \ref{subsec: decomp of 3 mfd}; See also \cite{Scott}. 
	We give a dumb but direct way to check the second assertion: $M$ has $\PSLtwotilde\R$ geometry if and only if the Euler number of the Seifert fibration is nonzero \cite{Scott}, 
	and it is straightforward from the presentation of $\pi_1 M$ (see \cite[Section 5.3]{Orlik:Seifert}) that this is also equivalent to $[z]=0\in H_1(M;\R)$, 
	using the formula in \cite[Page 437]{Scott} relating the Euler number to the Seifert invariants.
\end{proof}

\begin{lemma}\label{lemma: additional rot}
	Let $M$ be a closed $3$-manifold admitting $\PSLtwotilde\R$ geometry.
	Suppose the Seifert fibration of $M$ is over an orientable orbifold $B$. 
	Let $p:\pi_1(M)\to\pi_1(B)$ be induced by the projection and let $z\in\pi_1(M)$ represent a regular fiber, which is rationally null-homologous. 
	Pulling back the standard $\PSLtwotilde\R$ action on $\R$ (covering the $\mathrm{PSL}_2\R$ action on $S^1=\partial \Hbb^2$) by the representation $\pi_1 M\to \PSLtwotilde\R$, 
	we obtain an action $\rho:\pi_1 M\to \Homeo^+_\Z(\R)$ covering the $\pi_1(B)$ action on $S^1=\partial \Hbb^2$.
	Then the homogeneous quasimorphism $\phi\defeq \rho^*\rot \in \mathcal{Q}(\pi_1(M))$ has the following properties:
	\begin{enumerate}
		\item the defect $D(\phi)\le 1$, 
		\item $\phi(z)\in\Z\setminus\{ 0\}$,
		\item $\phi(g)\in \Z$ whenever $p(g)$ is of infinite order, and
		\item $\mathcal{Q}(\pi_1(M))/H^1(\pi_1(M))$ is spanned by $\phi$ and the image of $\mathcal{Q}(\pi_1(B))/H^1(\pi_1(B))$ under $p^*$.
	\end{enumerate}
\end{lemma}
\begin{proof}
	Let $Z=\langle z\rangle$. We have the following commutative diagram, where $i:\pi_1(M)\to\PSLtwotilde\R$ is the representation endowing $M$ with $\PSLtwotilde\R$ geometry,
	and both rows are central extensions with the first one identical to (\ref{eqn:exact sequence}).
	$$
	\begin{CD}
		1 	@>>> 	Z			@>>>	\pi_1 M	@>p>>	\pi_1 B		@>>>		1\\
		@.			@VVV		@ViVV			@V\bar{i}VV @.\\
		1 	@>>> 	\Z			@>>>	\PSLtwotilde\R	@>\pi>>	\PSLtwo\R		@>>>		1,\\
	\end{CD}
	$$
	Let $\rho_0: \PSLtwotilde\R\to\Homeo^+_\Z(\R)$ be the action on $\R$ lifting the standard action $\bar{\rho}_0:\PSLtwo\R\to \Homeo^+(S^1)$,
	where we identify $S^1$ with $\partial\Hbb^2$.
	Then the action $\rho$ is by definition $\rho=\rho_0\circ i$, which is a lift of $\bar{\rho}_0 \bar{i}p=\bar{\rho}_0 \pi i: \pi_1 M\to\Homeo^+(S^1)$.
	The first desired property follows from the fact that $D(\rot)=1$ (Theorem \ref{thm:rot homogeneous qm classical}) and $D(\phi)=D(\rho^*\rot)\le D(\rot)$.
	
	Let $z_0$ be the generator of the central $\Z$ subgroup of $\PSLtwotilde\R$ that acts on $\R$ by translation $+1$.
	We have $i(z)=nz_0$ for some $n\neq0\in\Z$ since $i$ is faithful, so $\phi(z)=\rot(\rho(z))=n\rot(\rho_0(z_0))=n\in\Z\setminus\{0\}$, verifying the second property.
	
	For any $g\in\pi_1(M)$, the projection $p(g)$ is either hyperbolic (and of infinite order) or elliptic (and of finite order), as $M$ (and thus $B$) is closed.
	Since any hyperbolic isometry has fixed points on $\partial\Hbb^2$, we know $\phi(g)=\rot(\rho(g))\in\Z$, proving the third property.
	
	Finally, we prove the last property using a diagram chase originated from the proof of \cite[Proposition 4.30]{Cal:sclbook}. 
	We have the following diagram with exact rows and columns (Section \ref{subsec:circle actions and euler class}), where all (bounded) cohomology groups are in $\R$ coefficients: 
	\begin{center}
		\begin{tikzcd}
		&																						& 0 \arrow[d] 													& \\
		0 \arrow[r]	&\mathcal{Q}(\pi_1 B)/H^1(\pi_1 B) \arrow[r, "\delta"] \arrow[d, "p^*"] 	& H^2_b(\pi_1 B) \arrow[d, "p^*", "\cong" swap] \arrow[r, "c"]	& H^2(\pi_1 B) \arrow[d, "p^*"]\\
		0 \arrow[r]	&\mathcal{Q}(\pi_1 M)/H^1(\pi_1 M) \arrow[r, "\delta"] 						& H^2_b(\pi_1 M) \arrow[d] \arrow[r]							& H^2(M)\\
		&																						& H^2_b(\Z)=0
		\end{tikzcd}
	\end{center}
	Let $\eurm(B)\defeq \eurm(\bar{\rho}_0\bar{i})$ (resp. $\eurm_b(B)\defeq \eurm_b(\bar{\rho}_0\bar{i})$) be the (resp. bounded) Euler class associated to the $\pi_1(B)$ action on $\partial \mathbb{H}^2$. 
	Then $\delta\phi=\delta \rho^*\rot= p^* \eurm_b(B)$ by equation (\ref{eqn: rot and eu}).
	
	For any $f \in \mathcal{Q}(\pi_1 M)/H^1(\pi_1 M)$, there is some $\sigma\in H^2_b(\pi_1 B)$ such that $p^*\sigma=\delta f$. 
	Since $\eurm(B)$ generates $H^2(\pi_1 B)$ and $c(\eurm_b(B))=\eurm(B)$, we can write $\sigma=\delta\psi+t\eurm_b(B)$ for some $t\in\R$ and $\psi\in \mathcal{Q}(\pi_1 B)/H^1(\pi_1 B)$. Thus
	$$\delta(f-p^*\psi-t\phi)=p^*(\delta\psi+t\eurm_b(B))-\delta p^*\psi-tp^*\eurm_b(B)=0.$$
	This shows that $f=p^*\psi-t\phi$ and completes the proof.
\end{proof}

See \cite{Scott} for a more detailed introduction to orbifolds, Seifert fibered spaces, and their relation to the eight geometries.

\subsection{Gaps from hyperbolicity}
We will need a few tools for our estimates. The first is the following spectral gap theorem for word-hyperbolic groups, which is a corollary of \cite[Theorem A$'$]{CF:sclhypgrp}.
\begin{thm}[Calegari--Fujiwara]\label{thm: scl rel gap for hyp grp}
	Let $G$ be a $\delta$-hyperbolic group with a finite generating set $S$. Fix finitely many group elements $\{a_i\}$. Then there is a constant $C=C(\delta,|S|,\{a_i\})$ such that, for any $g\in G$ satisfying
	\begin{enumerate}
		\item there is no $n\ge1$ such that $g^n$ is conjugate to $g^{-n}$ in $G$; and
		\item there is no $m,n\neq 0$ and index $i$ such that $g^n$ is conjugate to $a_i^m$,
	\end{enumerate} 
	we have $\scl_{(G,\mathcal{A})}(g)\ge C$, where $\mathcal{A}$ is the collection of cyclic groups $\langle a_i \rangle$.
\end{thm}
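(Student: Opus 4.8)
The plan is to derive Theorem~\ref{thm: scl rel gap for hyp grp} from the Calegari--Fujiwara result \cite[Theorem A$'$]{CF:sclhypgrp}, which we have already restated in the relative language as Theorem~\ref{thm: relative gap restate}(\ref{item: rel gap restate 3}) (equivalently, via Lemma~\ref{lemma: Bavard duality for rel scl}). The one gap between the hypotheses there and the hypotheses we wish to assume here is that \cite[Theorem A$'$]{CF:sclhypgrp} requires the distinguished element $a$ to satisfy $a^n\neq ba^{-n}b^{-1}$ for \emph{all} $n\neq 0$ and all $b\in G$ (i.e. $a$ is not conjugate to its own inverse in a stable sense), together with the requirement that $a^n$ not conjugate into any $\langle a_i\rangle$; whereas here I want to impose this non-conjugacy condition on the variable element $g$ rather than assume it for a fixed element, and I also want to handle the possibility that the $a_i$ themselves are ``bad''. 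So the strategy is: take the finitely many conjugacy classes appearing among $\{a_i\}$ and among their stable inverses, and run the Calegari--Fujiwara bound ``uniformly'' over the element $g$.

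First I would recall the precise content of \cite[Theorem A$'$]{CF:sclhypgrp}: for a $\delta$-hyperbolic group $G$ with finite generating set $S$, there is a function $C(\delta,|S|,T)>0$ such that any element $g$ whose stable commutator length is computed relative to a collection of cyclic subgroups generated by elements of translation length $\le T$, and such that $g$ is not (stably) conjugate to its inverse and no power of $g$ conjugates into one of those cyclic subgroups, satisfies $\scl_{(G,\mathcal{A})}(g)\ge C$. The key observation is that in a hyperbolic group the translation lengths of the finitely many fixed elements $a_i$ are bounded by some $T=T(\delta,|S|,\{a_i\})$, so the ``$T$'' in their theorem can be absorbed into the dependence on $\{a_i\}$, giving a constant $C=C(\delta,|S|,\{a_i\})$. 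Then conditions (1) and (2) in the present statement are exactly the two exclusions in their theorem (condition (1) is ``$g$ not stably conjugate to $g^{-1}$'', which in a hyperbolic group is equivalent to the stated ``$g^n$ conjugate to $g^{-n}$ for some $n\ge 1$'' failing; condition (2) is ``no power of $g$ conjugates into any $\langle a_i\rangle$''). Hence the statement follows directly.

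Concretely, the steps I would carry out are: (i) fix a finite generating set $S$ and the hyperbolicity constant $\delta$; (ii) set $T:=\max_i \|a_i\|_\infty$ where $\|\cdot\|_\infty$ denotes stable translation length in the Cayley graph, noting $T<\infty$ and $T$ depends only on $\delta$, $|S|$, and $\{a_i\}$; (iii) let $\mathcal{A}=\{\langle a_i\rangle\}$ and invoke \cite[Theorem A$'$]{CF:sclhypgrp} (in the relative form of Theorem~\ref{thm: relative gap restate}(\ref{item: rel gap restate 3})) to obtain $C=C(\delta,|S|,T)=C(\delta,|S|,\{a_i\})>0$; (iv) observe that an element $g$ satisfying hypotheses (1) and (2) meets precisely the hypotheses needed to apply that theorem, so $\scl_{(G,\mathcal{A})}(g)\ge C$. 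I should also remark that if some $a_i$ has finite order then $\langle a_i\rangle$ is finite, contributes nothing to relative scl, and may simply be discarded from $\mathcal{A}$ without changing anything, and that if $a_i$ is conjugate to a power of $a_j$ the two cyclic subgroups may be merged; these bookkeeping reductions are harmless.

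The main obstacle — really the only subtle point — is matching conventions: one must check that the ``relative scl with respect to a collection of cyclic subgroups'' used in \cite{CF:sclhypgrp} coincides with $\scl_{(G,\mathcal{A})}$ as defined in Definition~\ref{defn:relative scl} here (this is exactly the translation carried out in the proof of Theorem~\ref{thm: relative gap restate}(\ref{item: rel gap restate 3}) using Lemma~\ref{lemma: Bavard duality for rel scl}), and that the ``counting quasimorphisms'' construction of Calegari--Fujiwara, whose defect is controlled purely in terms of $\delta$, $|S|$ and the translation lengths involved, indeed produces a lower bound on $\scl_{(G,\mathcal{A})}(g)$ and not merely on $\scl_G(g)$. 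Since their quasimorphisms are constructed so as to vanish on the prescribed cyclic subgroups (they are built from the element $g$ and are insensitive to the excluded families), this is already implicit in their argument; I would simply cite it rather than reprove it. Everything else is a direct quotation of the cited theorem, so no genuinely new calculation is required.
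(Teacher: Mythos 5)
Your proposal is correct and follows essentially the same route as the paper: the paper's proof likewise observes that the finitely many $a_i$ give a uniform translation-length bound $T$ and then quotes \cite[Theorem A$'$]{CF:sclhypgrp} in its relative form (Theorem \ref{thm: relative gap restate}(\ref{item: rel gap restate 3}), whose translation to $\scl_{(G,\mathcal{A})}$ rests on Lemma \ref{lemma: Bavard duality for rel scl}), exactly as you do. Your extra bookkeeping remarks (discarding torsion $a_i$, merging commensurable cyclic subgroups) are harmless but not needed.
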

\begin{proof}
	Since $\{a_i\}$ is a finite collection, we automatically have a uniform bound $T$ on their translation lengths. Thus this follows from Theorem \ref{thm: relative gap restate} (\ref{item: rel gap restate 1}).
\end{proof}

Their technique also has applications to groups acting on hyperbolic spaces. The special case of action on trees is carried out carefully by Clay--Forester--Louwsma \cite{CFL16} to make the estimate explicit. We will use the following theorem from \cite[Theorem 6.11]{CFL16}. Note the potential confusion that a $K$-acylindrical action by our definition is $(K+1)$-acylindrical in \cite{CFL16}, and the statement below has been tailored for our use.

\begin{thm}[Clay--Forester--Louwsma]\label{thm: acyl action gap}
	Suppose $G$ acts $K$-acylindrically on a tree and let $N$ be the smallest integer greater than or equal to $(K+3)/2$. If $g\in G$ is hyperbolic, then either $\scl_G(g)\ge 1/12N$ or $\scl_G(g)=0$. Moreover, the latter case occurs if and only if $g$ is conjugate to $g^{-1}$.
\end{thm}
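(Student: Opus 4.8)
The plan is to move to the quasimorphism side of Bavard's Duality (Theorem~\ref{thm:Bavards duality}) and, for every hyperbolic $g\in G$ not conjugate to $g^{-1}$, produce an explicit homogeneous quasimorphism of Brooks type built from the $G$--action on the tree, whose defect is controlled linearly by the acylindricity constant.

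First I would dispose of the easy half of the dichotomy: $g\sim g^{-1}\Rightarrow\scl_G(g)=0$. Indeed, if $hgh^{-1}=g^{-1}$ then $[h,g^{n}]=hg^{n}h^{-1}g^{-n}=g^{-2n}$, so $\cl_G(g^{2n})=1$ for all $n$ and therefore $\scl_G(g)=\lim_m \cl_G(g^{2m})/(2m)=0$. For the converse, assume $g$ hyperbolic with $g\not\sim g^{-1}$. Fix the tree $T$, the axis $A_g\subset T$, the translation length $\ell\ge 1$, and a vertex $x_0\in A_g$; let $w$ be the directed edge path of length $\ell$ along $A_g$ from $x_0$ to $gx_0$ and let $\bar w$ be its reverse. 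Using a fixed fundamental domain for the $G$--action, define for $x\in G$ the counting number $c_w(x)$ to be the largest number of pairwise disjoint in-order occurrences of a translate of the labelled segment $w$ along the geodesic $[x_0,x\cdot x_0]$ in $T$, and set $h\defeq c_w-c_{\bar w}$. The Bestvina--Fujiwara/Calegari--Fujiwara argument shows $h$ is a quasimorphism on $G$; the hypothesis $g\not\sim g^{-1}$ guarantees that $w$ and $\bar w$ do not occur interchangeably along $A_g$, so the homogenization $\bar h$ satisfies $\bar h(g)\ge\ell>0$, and Bavard duality already gives $\scl_G(g)>0$.

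To extract the explicit constant $1/12N$ one needs a quantitative bound of the form $D(\bar h)\le c\cdot N\cdot\ell$ for a universal $c$. The defect of such a counting quasimorphism is bounded by a universal multiple of the maximal number of ``essentially distinct'' overlaps two $G$--translates of $w$ can exhibit, and this is exactly where $K$--acylindricity enters: a common subsegment of $A_g$--type shared by two translates $uw$ and $vw$ of length exceeding a constant comparable to $K$ forces, via triviality of the pointwise stabilizer of a long segment, the two translates to coincide or to be disjoint up to that bounded amount, so that no more than $N=\lceil(K+3)/2\rceil$ ``staircase'' overlaps can occur along the axis; combining this with $\bar h(g)\ge\ell$ yields $\scl_G(g)\ge\bar h(g)/(2D(\bar h))\ge 1/12N$. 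This overlap bookkeeping, and the precise matching of $N$ with $\lceil(K+3)/2\rceil$, is the delicate point and the step I expect to be the main obstacle; it is carried out in the tree setting with explicit constants in \cite[\S 6]{CFL16}, so in the paper we simply invoke \cite[Theorem 6.11]{CFL16}, observing only that an action which is $K$--acylindrical in our sense is $(K+1)$--acylindrical in the convention of \cite{CFL16}. (A slightly lossier alternative would be to extract from the acylindricity a free subproduct structure visible to a power of $g$ and feed it into Lemma~\ref{lemma: freeprod preserves gap}.)
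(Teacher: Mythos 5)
Your proposal is correct and, in its operative step, identical to the paper: the paper gives no independent proof of this statement but simply quotes \cite[Theorem 6.11]{CFL16}, noting only that a $K$-acylindrical action in this paper's convention is $(K+1)$-acylindrical in the convention of \cite{CFL16}, which is exactly what you do after your sketch. Your preliminary outline (the easy direction via $hgh^{-1}=g^{-1}\Rightarrow \cl_G(g^{2n})\le 1$, and the Brooks-type counting quasimorphism whose defect is controlled by acylindricity) is a fair heuristic for what \cite{CFL16} actually carries out, and you correctly flag the constant bookkeeping as the part you are importing rather than proving.
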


We also need a strengthened version of Calegari's gap theorem for hyperbolic manifolds (not necessarily closed) \cite[Theorem C]{Cal:sclhypmfd}. 
The only difference is that we deal with scl \emph{relative} to the peripheral subgroups.

\begin{thm}[Calegari]\label{thm: hyp rel gap}
	Let $M$ be a complete hyperbolic manifold of dimension $m$. Then for any $\kappa>0$, there is a constant $\delta(\kappa,m)>0$ such that for any hyperbolic element $g\in \pi_1(M)$ with $\scl_{(M,\partial M)}(g)<\delta$, the unique geodesic loop $\gamma$ representing $g$ has hyperbolic length no more than $\kappa$.
\end{thm}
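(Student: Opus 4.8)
\textbf{Proof plan for Theorem \ref{thm: hyp rel gap} (Calegari's relative gap for hyperbolic manifolds).}

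The plan is to follow the strategy of Calegari's original argument for \cite[Theorem C]{Cal:sclhypmfd}, replacing the absolute scl estimate by the relative one, and making explicit that the only geometric input needed is control of the geodesic length of $\alpha$. Suppose for contradiction that the conclusion fails: there is $\kappa>0$ and a sequence of hyperbolic elements $\alpha_n\in\pi_1(M)$ whose geodesic representatives $\gamma_n$ have length $\ell(\gamma_n)>\kappa$, yet $\scl_{(M,\partial M)}(\alpha_n)\to 0$. By Lemma \ref{lemma: rel admissible}, for each $n$ we may choose a relative admissible surface $f_n\colon S_n\to M$ of degree $k_n$ for $\alpha_n$, where $\partial_0 S_n$ wraps around $\gamma_n$ and all other boundary components map into cusps (peripheral tori), with $-\chi^-(S_n)/(2k_n)\to 0$. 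First I would pass to a \emph{pleated} (or simplicial hyperbolic) representative of $S_n$ rel its boundary: homotope $f_n$ so that $S_n$ carries a hyperbolic metric of finite area $2\pi|\chi^-(S_n)|$ in which $\partial_0 S_n$ is a geodesic of total length $k_n\ell(\gamma_n)$ and the cuspidal boundary components are cusps. This is the step where one uses that the boundary components other than $\partial_0$ are peripheral — they can be realized as cusps rather than as long geodesics, which is precisely what makes the \emph{relative} statement go through with no extra length cost.

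The core of the argument is then a thick-thin / isoperimetric estimate on $S_n$. In the hyperbolic surface $S_n$ the geodesic boundary $\partial_0 S_n$ has length $k_n\ell(\gamma_n)$, while the total area is $2\pi(-\chi^-(S_n))$. The key step is to produce, from each boundary arc of $\partial_0$ of length $\geq\kappa>0$, a definite amount of area of $S_n$ that can be charged to it injectively; concretely one uses a collar/Margulis-type lemma together with the fact that a geodesic segment of length $\geq\kappa$ in a hyperbolic surface with only cusps and one geodesic boundary component cannot be "too efficiently" bounded — there is a function $\epsilon(\kappa,m)>0$ (the thin part of a collar of definite width, or the area swept by a minimal-surface/pleated disk spanning a loop of length $\geq\kappa$ in $\mathbb H^m$) with
$$
2\pi\big(-\chi^-(S_n)\big) \;=\; \area(S_n)\;\geq\; \epsilon(\kappa,m)\, \len(\partial_0 S_n)\;=\;\epsilon(\kappa,m)\,k_n\,\ell(\gamma_n)\;\geq\;\epsilon(\kappa,m)\,\kappa\, k_n.
$$
Dividing by $2\pi k_n$ gives $-\chi^-(S_n)/(2k_n)\geq \epsilon(\kappa,m)\kappa/(4\pi)$, a positive constant independent of $n$, contradicting $\scl_{(M,\partial M)}(\alpha_n)\to 0$. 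Setting $\delta(\kappa,m)=\epsilon(\kappa,m)\kappa/(4\pi)$ (up to a harmless constant) then yields the theorem: any hyperbolic $\alpha$ with $\scl_{(M,\partial M)}(\alpha)<\delta$ must have $\ell(\gamma)\leq\kappa$.

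The main obstacle is making the area-charging step rigorous and uniform: one must ensure that the area charged to distinct subarcs of $\partial_0$ does not overlap, and that the lower bound $\epsilon(\kappa,m)$ really depends only on $\kappa$ and the dimension $m$, not on $M$ or on the surface. In Calegari's treatment this is handled by passing to a least-area (minimal surface) representative in $M$ and invoking monotonicity of area for minimal surfaces in a Hadamard manifold of pinched negative curvature, which gives a universal lower bound on the area of a minimal disk bounding a loop of length $\geq\kappa$ in $\mathbb H^m$; alternatively one works intrinsically on the pleated surface $S_n$ and uses that a hyperbolic surface with a geodesic boundary arc of length $\geq\kappa$ and no other boundary except cusps contains an embedded half-collar of definite area. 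Either route requires care with components of $S_n$ that are thin or that carry no boundary arc, but these contribute nonnegatively to $-\chi^-$ and so can be discarded. I would cite \cite[Theorem C]{Cal:sclhypmfd} for the fact that the argument there is insensitive to whether the auxiliary boundary is capped off or sent to cusps, and note that this is exactly the promised "same proof" remark following Theorem \ref{thm: relative gap restate}(\ref{item: rel gap restate 4}).
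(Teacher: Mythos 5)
There is a genuine gap at the central step of your plan: the inequality $\area(S_n)\ge \epsilon(\kappa,m)\,\len(\partial_0 S_n)$ is not justified, and as an intrinsic statement about hyperbolic surfaces with geodesic boundary it is false. A pleated/hyperbolic surface of bounded $-\chi$ can have arbitrarily long geodesic boundary: long stretches of $\partial_0 S_n$ pair up into thin rectangles (nearly parallel, anti-aligned boundary segments a distance $<\epsilon$ apart), which carry essentially no area, and the collar width of a boundary geodesic of length $L$ decays like $e^{-L/2}$, so there is no half-collar of definite width to charge area to. In fact an estimate of the form ``area at least $\epsilon(\kappa)$ times degree for every loop of geodesic length $\ge\kappa$'' is equivalent to the theorem itself, so this step is where the entire content lies, and your two fallback suggestions do not supply it: monotonicity for a least-area representative only yields disjoint balls centered along the image geodesic $\gamma_n$ in $M$, of which there are only about $\ell(\gamma_n)/r$, giving $\area\gtrsim \ell(\gamma_n)$ rather than $\gtrsim k_n\ell(\gamma_n)$ and hence no contradiction as $k_n\to\infty$; and the intrinsic collar argument fails for the reason above. (A secondary issue: your contradiction framing with a sequence $\alpha_n\in\pi_1(M)$ for a fixed $M$ would at best produce $\delta(\kappa,M)$, whereas the statement requires $\delta$ depending only on $\kappa$ and the dimension $m$; the quantitative argument is what gives this uniformity.)

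The paper's proof (following Calegari's original) deals with exactly this difficulty by a different mechanism. After passing to a pleated representative, one takes the thin-thick decomposition of the \emph{double} $DS$: the thin part meeting $\partial S$ consists of at most $r\le -3\chi(S)$ thin rectangles, and the embedded $\epsilon/2$-collar of $\partial S$ inside the thick part bounds $\len(\partial S\cap S_{\mathrm{thick}})\le -4\pi\chi(S)/\epsilon$. Hence if $n\,\len(\gamma)$ is large compared with $-\chi(S)$, \emph{some} boundary segment $\sigma$ in a thin rectangle must be long, of length at least $\frac{n\,\len(\gamma)}{-6\chi(S)}-\frac{2\pi}{3\epsilon}$. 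The crucial counterpoint is an upper bound $\len(\sigma)\le 2\len(\gamma)+4\epsilon$, which is \emph{not} intrinsic to $S$: it uses that the pleated map preserves lengths, so $\sigma$ and its partner give two long anti-aligned geodesic segments on $\gamma$ in $M$, and the Margulis lemma together with discreteness of $\pi_1(M)$ and the absence of $2$-torsion rules this out. Comparing the two bounds gives $\bigl(\frac{n}{-6\chi^-(S)}-2\bigr)\len(\gamma)\le 4\epsilon+\frac{2\pi}{3\epsilon}$ with $\epsilon=\epsilon(m)$, which is the quantitative statement: small $\scl_{(M,\partial M)}$ forces $\len(\gamma)\le\kappa$. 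Your outline is missing this rectangle-counting plus anti-alignment/Margulis argument, and without it the area-charging step cannot be made to work.
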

\begin{proof}
	The proof of the original theorem \cite[Theorem C]{Cal:sclhypmfd} works without much change. We briefly go through it for completeness.
	
	Let $S$ be a relative admissible surface for $g$ of degree $n$. By simplifying $S$, we may assume $S$ to be a \emph{pleated surface} and $-\chi(S)/n=-\chi^-(S)/n$. That is, $S$ is a hyperbolic surface of finite volume with geodesic boundary, the map $f:S\to M$ takes cusps into cusps and preserves the lengths of all rectifiable curves, and each point $p\in S$ is in the interior of a straight line segment which is mapped by $f$ to a straight line segment. The nice properties that we will use are $\area(S)=-2\pi\chi(S)$ and that $f$ preserves lengths, in particular $\len(\partial S)=n\cdot \len(\gamma)$.
	
	Choose $\epsilon$ small compared to the $2$-dimensional Margulis constant and $\len(\gamma)$, and take the thin-thick decomposition of $DS$, where $DS$ is the double of $S$ along its geodesic boundaries. Let $D\Sthick$ (resp. $D\Sthin$) be the part with injectivity radius $\ge 2\epsilon$ (resp. $<2\epsilon$), and let $\Sthick$ (resp. $\Sthin$) be $S\cap D\Sthick$ (resp. $S\cap D\Sthin$). Then $\Sthin$ is a union of cusp neighborhoods, open embedded annuli around short simple geodesic loops, and open embedded rectangles between pairs of geodesic segments of $\partial S$ which are distance $<\epsilon$ apart at every point. Each such a rectangle doubles to an open annulus in $D\Sthin$. Let $r$ be the number of rectangles and $s$ be the number of annuli in $\Sthin$, then there are $2s+r$ annuli in $D\Sthin$, which are disjoint and non-isotopic. Hence $2s+r$ is no more than the maximal number of disjoint non-isotopic simple closed curves on $DS$, and thus
	$$r\le 2s+r\le -\frac{3}{2}\chi(DS)=-3\chi(S).$$
	
	For convenience, add the cusp neighborhoods and open annuli of $\Sthin$ back to $\Sthick$ so that $\Sthin$ consists of $r$ thin rectangles. By definition, the $\epsilon/2$-neighborhood of the geodesic boundary in $\Sthick$ is \emph{embedded}, and thus 
	$$-2\pi\chi(S)=\area(S)\ge \area(\Sthick)\ge \frac{\epsilon}{2}\len(\partial S\cap \Sthick).$$
	Since each component of $\Sthin$ intersects $\partial S$ in two components, there are at most $-6\chi(S)$ components of $\partial S\cap \Sthin$, while their total length 
	$$\len(\partial S\cap \Sthin)=\len(\partial S)-\len(\partial S\cap\Sthick)\ge n\cdot\len(\gamma)-\frac{-4\pi\chi(S)}{\epsilon}.$$
	Therefore, at least one component $\sigma$ of $\partial S\cap \Sthin$ satisfies 
	$$\len(\sigma)\ge \frac{n\cdot \len(\gamma)-\frac{-4\pi\chi(S)}{\epsilon}}{-6\chi(S)}=\frac{n\cdot\len(\gamma)}{-6\chi(S)}-\frac{2\pi}{3\epsilon}.$$
	
	On the other hand, $\len(\sigma)$ cannot be much longer than $\len(\gamma)$, otherwise we will have two long anti-aligned geodesic segments on $\gamma$, which according to the Margulis lemma would violate either the discreteness of $\pi_1(M)$ or the fact that there are no order-$2$ elements in $\pi_1(M)$. More precisely, choosing $\epsilon$ small in the beginning so that $4\epsilon$ is less than the $m$-dimensional Margulis constant, it is shown in the original proof of \cite[Theorem C]{Cal:sclhypmfd} that 
	$$\len(\sigma)\le 2\cdot\len(\gamma)+4\epsilon.$$
	Combining this inequality with our earlier estimates, we have
	$$2\cdot\len(\gamma)+4\epsilon\ge \frac{n\cdot\len(\gamma)}{-6\chi(S)}-\frac{2\pi}{3\epsilon},$$
	or equivalently
	$$\left(\frac{n}{-6\chi^-(S)}-2\right)\len(\gamma)\le 4\epsilon+\frac{2\pi}{3\epsilon}.$$
	The result follows since $\epsilon$ is a constant depending only on the dimension $m$.
\end{proof}

From the proof above, we deduce the following analog of \cite[Theorem D]{Cal:sclhypmfd} (and the claims made in \cite[Remark 5.6]{Cal:sclhypmfd}).
\begin{thm}\label{thm: accumulation}
	Let $M$ be a complete hyperbolic manifold of dimension $m$. Then for any $0<\delta<1/24$, there is a constant $\kappa(\delta,m)>0$ 
	such that for any hyperbolic element $g\in \pi_1(M)$ with $\scl_{(M,\partial M)}(g)<\delta$, 
	the unique geodesic loop $\gamma$ representing $g$ has hyperbolic length no more than $\kappa$.
\end{thm}
\begin{proof}
	Following the same proof as above, for a small constant $\epsilon$ depending on the dimension $m$,
	if $\scl_{(M,\partial M)}(g)<\delta$, then there is a relative admissible surface $S$ of some degree $n$ that is pleated,
	such that $-\chi^-(S)/2n < \delta$ and	
	$$\left(\frac{n}{-6\chi^-(S)}-2\right)\len(\gamma)\le 4\epsilon+\frac{2\pi}{3\epsilon}.$$
	It follows that
	$$\left(\frac{1}{12\delta}-2\right)\len(\gamma)\le 4\epsilon+\frac{2\pi}{3\epsilon}.$$
	Since $\frac{1}{12\delta}>2$ by assumption, and the right-hand side of the equation above only depends on $m$,
	we obtain an upper bound $\kappa$ of $\len(\gamma)$ relying only on $\delta$ and $m$ as desired.
\end{proof}

\subsection{Estimates of scl in $3$-manifolds}

The main theorem of this section is the following gap theorem.
\begin{thm}\label{thm:3mfdgap}
	For any $3$-manifold $M$, there is a constant $C=C(M)>0$ such that for any $g\in\pi_1(M)$, we have either $\scl_M(g)\ge C$ or $\scl_M(g)=0$.
\end{thm}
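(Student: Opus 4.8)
\textbf{Proof plan for Theorem~\ref{thm:3mfdgap}.}
The plan is to reduce to the case of prime manifolds via free products, then to reduce non-geometric prime manifolds to their geometric pieces via the JSJ graph-of-groups structure, and finally to produce a gap in each vertex group. First I would invoke the prime decomposition to write $\pi_1(M)=\star_\lambda \pi_1(M_\lambda)$; by Lemma~\ref{lemma: freeprod preserves gap} it suffices to produce a uniform (over the finitely many $M_\lambda$) gap for each prime factor, taking the minimum with $1/12$. So I may assume $M$ is prime. If $M$ is geometric, one checks directly (going through the eight geometries): amenable fundamental group (five of the geometries) gives gap $C$ for any $C$; hyperbolic $M$ has a gap by \cite{CF:sclhypgrp} (Theorem~\ref{thm: scl rel gap for hyp grp}); the Seifert-fibered geometries $\geomHE,\geomPSL$ are handled by pushing to the base orbifold group $\pi_1(B)$, which is word-hyperbolic, using Lemma~\ref{lemma: p preserves scl} when $H^2(\pi_1(B);\R)=0$ and Lemma~\ref{lemma: additional rot} together with the hyperbolic gap otherwise; $Sol$ is handled by its (virtual) structure.

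The substantive case is $M$ non-geometric prime. Here $\pi_1(M)$ acts on the JSJ-tree, $4$-acylindrically by Lemma~\ref{lemma: acyl JSJ}, and is a graph of groups whose edge groups are $\Z^2$ (peripheral tori) and whose vertex groups are fundamental groups of the geometric pieces (hyperbolic with cusps, Seifert fibered over $B$ with $\chi_o(B)<0$, or the twisted $I$-bundle $K$ over the Klein bottle). For a hyperbolic element $g$ (one not conjugate into a vertex group), I would apply Theorem~\ref{thm: acyl action gap}: since the action is $4$-acylindrical, either $\scl_{\pi_1(M)}(g)\ge 1/12N$ with $N=\lceil 7/2\rceil=4$, i.e.\ $\scl\ge 1/48$, or $g$ is conjugate to $g^{-1}$ and $\scl_{\pi_1(M)}(g)=0$. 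So hyperbolic elements have a uniform gap $1/48$ (which, as the paper notes, can be improved to $1/6$ by verifying $3$-RTF via Lemma~\ref{lemma:boundary inclusion is 3-RTF} and applying Theorem~\ref{theorem: gaps graph of groups hyperbolic}, but any fixed positive constant suffices here).

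For an elliptic element $g$ — one conjugate into a vertex group $\pi_1(N)$ — I would use monotonicity-type estimates rather than a direct retract (which need not exist). The clean bound is Lemma~\ref{lemma: simple estimate of scl in vertex groups}: $\scl_{\pi_1(M)}(g)\ge \scl_{(\pi_1(N),\{\pi_1(T)\})}(g)$, where the $\pi_1(T)$ range over the peripheral tori of $N$. It then remains to show each pair $(\pi_1(N),\{\pi_1(\partial N)\})$ has a \emph{strong relative spectral gap} $C(N)>0$ — this is essentially Theorem~\ref{thm: vertex scl for non-geometric prime factors}. For hyperbolic $N$ this is Theorem~\ref{thm: hyp rel gap}/Theorem~\ref{thm: relative gap restate}(\ref{item: rel gap restate 4}): if $\scl_{(N,\partial N)}(g)$ were too small the representing geodesic would be too short, and there are only finitely many geodesics below any length bound, none with relative $\scl=0$ except those conjugate into the boundary (which correspond exactly to the $\scl_{\pi_1(M)}=0$ case). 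For Seifert-fibered $N$ one pushes to $\pi_1(B)$ via the exact sequence~(\ref{eqn:exact sequence}); the $\Z$-fiber lies in every peripheral subgroup, so relative $\scl$ in $\pi_1(N)$ rel $\{\pi_1(\partial N)\}$ is controlled by $\scl$ in $\pi_1(B)$ rel boundary/cone loops, and $\pi_1(B)$ is word-hyperbolic so Theorem~\ref{thm: scl rel gap for hyp grp} gives a uniform gap there. For $N=K$ the group $\pi_1(K)$ is (virtually $\Z^2$, hence) amenable, so relative $\scl$ vanishes identically and the gap is vacuous. Finally one assembles: taking $C(M)$ to be the minimum of $1/12$, $1/48$, and the finitely many vertex-group constants $C(N)$, every $g\in\pi_1(M)$ satisfies $\scl_{\pi_1(M)}(g)\ge C(M)$ or $\scl_{\pi_1(M)}(g)=0$, and one checks the zero case is genuinely attained only as described (giving the classification of Theorem~\ref{thm: small scl in 3mfds}).

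\textbf{Main obstacle.} The delicate point is the elliptic case: one must control $\scl_{\pi_1(M)}(g)$ for $g$ inside a vertex group \emph{without} a retraction $\pi_1(M)\to\pi_1(N)$, and Example~\ref{exmp: vertex scl in amalgam, surface examples} shows $\scl$ can genuinely drop when passing from a vertex group to the ambient graph of groups. Lemma~\ref{lemma: simple estimate of scl in vertex groups} is exactly what saves this — it reduces the problem to a \emph{relative} gap in the vertex group, and the peripheral tori are precisely where relative $\scl$ is allowed to vanish. So the real work is establishing the relative spectral gap for each type of geometric piece (Theorem~\ref{thm: vertex scl for non-geometric prime factors}), with the hyperbolic-with-cusps case (via the pleated-surface length estimate of Theorem~\ref{thm: hyp rel gap}) and the orientable-closed-base Seifert case (via the Euler-class/rotation-number analysis of Lemmas~\ref{lemma: Seifert H^2}--\ref{lemma: additional rot}) being the two genuinely nontrivial inputs; the rest is bookkeeping over the finite list of pieces and the geometrization trichotomy.
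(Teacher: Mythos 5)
Your reduction to prime factors, your treatment of hyperbolic (tree-)elements via $4$-acylindricity and Theorem~\ref{thm: acyl action gap}, and your use of Lemma~\ref{lemma: simple estimate of scl in vertex groups} for elliptic elements all match the paper's argument. But there is a genuine gap in the elliptic case: you assert that the elements with vanishing \emph{relative} scl in their vertex group (those conjugate into a peripheral torus, powers of a regular fiber, etc.) ``correspond exactly to the $\scl_{\pi_1(M)}=0$ case,'' and for the twisted $I$-bundle you declare the problem ``vacuous'' because $\pi_1(K)$ is amenable. This is false, and it is precisely where the theorem is hardest. Example~\ref{exmp: small scl in 3mfd, SFS surgery} (built from Example~\ref{exmp: surgery example}) exhibits a loop $\tau_A$ lying in a JSJ torus --- so its relative scl in each adjacent Seifert piece is zero --- whose scl in $\pi_1(M_{1,q})$ equals $2/q$, which is positive but tends to $0$ as $q\to\infty$. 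So the inequality $\scl_M(g)\ge\scl_{(N,\partial N)}(g)$ tells you nothing for such elements, and yet you must still produce a manifold-dependent positive lower bound for those of them with $\scl_M(g)>0$; this is exactly why $C(M)$ cannot be uniform.

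The paper closes this gap with machinery your proposal never invokes: Theorem~\ref{thm: vert scl compute} and Corollary~\ref{cor: scl edge compute} express $\scl_M$ restricted to chains in the JSJ tori as the quotient of the $\ell^1$-product of the relative vertex norms, Corollaries~\ref{cor:edge group for hyp} and~\ref{cor:edge group for Seifert} (plus the virtually abelian $\pi_1(K)$ case) show the vanishing locus of each vertex norm is a rational subspace, and Lemma~\ref{lemma: auto gap of norm} then yields a gap $C_M>0$ for integral chains supported in the edge groups (Theorem~\ref{thm: edge scl for non-geometric prime factors}). This edge-group constant is then fed back into the vertex-group analysis: in Theorem~\ref{thm: vertex scl for non-geometric prime factors} the Seifert case with $\scl_{(N,\partial N)}(g)=0$ reduces via $\scl_M(g)=\tfrac{|m|}{2}\scl_M(z)$ or via $g^n$ peripheral to the bound $C_M/O_N$ or $C_M/2$, and the twisted $I$-bundle case uses that $g^2$ lies in the edge group, giving $C_M/2$. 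Without this step your assembled constant only covers hyperbolic elements and elliptic elements with positive relative scl, and the remaining elements (boundary-parallel loops, fibers, elements of $\pi_1(K)$) are exactly the ones that can carry arbitrarily small positive scl. Your sketch of the other ingredients (Theorem~\ref{thm: hyp rel gap}, Lemmas~\ref{lemma: Seifert H^2}--\ref{lemma: additional rot}, Theorem~\ref{thm: closed Seifert fibered prime factors} for closed geometric manifolds) is consistent with the paper, but the proof is incomplete until the edge-group gap is supplied.
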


The size of $C$ does depend on $M$, but we will describe elements with scl less than $1/48$ and classify those with vanishing scl in Theorem \ref{thm: small scl in 3mfds}. 

As for concrete examples of elements with small scl, one can perform Dehn fillings on a knot complement to produce a sequence of closed hyperbolic $3$-manifolds and loops with hyperbolic lengths and stable commutator lengths both converging to $0$; See \cite[Example 2.4]{CF:sclhypgrp}. Here we give a different example among the so-called graph manifolds.

\begin{exmp}\label{exmp: small scl in 3mfd, SFS surgery}
	Consider the manifolds $M_{p,q}$ in Example \ref{exmp: surgery example} with $p=1$ and $q\in\Z_+$. The image of the loop $\tau_A$ has positive scl $1/q$ converging to $0$ as $q$ goes to infinity. In this example, the geometric decomposition is obtained by cutting along the image of the torus $T_A$, where the resulting manifolds $X_A$ and $X_B$ are both trivially Seifert fibered and admit $\mathbb{H}^2\times \mathbb{E}$ geometry of finite volume in the interior.
\end{exmp}

To show Theorem \ref{thm:3mfdgap}, by Lemma \ref{lemma: freeprod preserves gap}, it suffices to estimate scl of loops in each prime factor. 
We first focus on geometric prime manifolds and then deal with non-geometric ones.

\subsubsection{The geometric cases}
For geometric prime manifolds, we first focus on the harder case of those with $\Hbb^2\times\Ebb$ or $\PSLtwotilde\R$ geometry.
\begin{thm}\label{thm: closed Seifert fibered prime factors}
	Let $M$ be a prime $3$-manifold with $\Hbb^2\times\Ebb$ or $\PSLtwotilde\R$ geometry. So $M$ is Seifert fibered over some closed orbifold $B$ with $\chi_o(B)<0$. Then there is a constant $C=C(M)>0$ such that either $\scl_M(g)\ge C$ or $\scl_M(g)=0$ for any $g\in \pi_1(M)$. Moreover, $\scl_M(g)=0$ if and only if
	\begin{enumerate}
		\item either $B$ is nonorientable and $g^n h g^n h^{-1}$ represents a multiple of the regular fiber for some $h\in \pi_1(M)$ and $n\in \Z_+$,
		\item or $g^n h g^n h^{-1}=id$ for some $h\in \pi_1(M)$ and $n\in \Z_+$.
	\end{enumerate}
\end{thm}
\begin{proof}
	Let $z\in\pi_1(M)$ represent a regular fiber, and let $p$ be the projection in the short exact sequence (\ref{eqn:exact sequence}). By Theorem \ref{thm: relative gap restate} and the fact that $\pi_1(B)$ is word-hyperbolic, there is some $C(B)>0$ such that either $\scl_B(\bar{g})\ge C(B)$ or $\scl_B(\bar{g})=0$ for any $\bar{g}\in\pi_1(B)$. Hence by monotonicity, $\scl_M(g)\ge\scl_B(p(g))\ge C(B)$ unless $\scl_B(p(g))=0$. Moreover, the latter case occurs if and only if there is some $n\in\Z_+, m\in\Z$ and $h\in\pi_1(M)$ such that $g^n hg^n h^{-1}=z^m$. Thus it remains to analyze $\scl_M(g)$ for such exceptional $g$ satisfying $g^n hg^n h^{-1}=z^m$.
	
	If $B$ is nonorientable, then $H^2(\pi_1(B);\R)=0$ and $p$ preserves scl by Lemma \ref{lemma: p preserves scl}. Moreover, the bundle must be nonorientable and $z$ is conjugate to $z^{-1}$. 
	Thus our equation implies that $[g]=m[z]/2n=0\in H_1(M;\R)$. So we have $\scl_M(g)=\scl_B(p(g))=0$ for all such exceptional $g$.
	
	Now suppose $B$ is orientable. Consider two cases:
	\begin{enumerate}
		\item Suppose $[z]\neq 0\in H_1(M;\R)$, i.e. $M$ admits $\Hbb^2\times\Ebb$ geometry by Lemma \ref{lemma: distinguish two geometries}. Then $g^n hg^n h^{-1}=z^m$ implies $2n[g]=m[z]\in H_1(M;\R)$. If $m=0$, then $g^n$ is conjugate to $g^{-n}$ and thus $\scl_M(g)=0$. If $m\neq 0$, then $[g]\neq 0\in H_1(M;\R)$ and thus $\scl_M(g)=\infty$.
		\item Suppose $[z]=0\in H_1(M;\R)$, i.e. $M$ admits $\PSLtwotilde\R$ geometry. Let $\phi=\rho^*\rot\in\mathcal{Q}(\pi_1 M)$ be the homogeneous quasimorphism from Lemma \ref{lemma: additional rot}, where $\rho$ is the $\pi_1 M$ action on $\R$ covering the $\pi_1 B$ action on $S^1=\partial \Hbb^2$. 
		For any exceptional $g$ satisfying $g^n hg^n h^{-1}=z^m$, or equivalently $g^n=hg^{-n}h^{-1}z^m$, since $z$ is central, using Proposition \ref{prop: split hqm on comm elem} we have 
		$$n\phi(g)=\phi(g^n)=\phi(hg^{-n}h^{-1})+\phi(z^m)=-n\phi(g)+m\phi(z).$$
		As $\phi(z)\neq0$, we have $\phi(g)=0$ if and only if $m=0$, i.e. $g^n hg^n h^{-1}=id$, in which case $\scl_M(g)=0$ as shown in the previous case. 
		
		Now suppose $\phi(g)\neq0$, i.e. $m\neq 0$.
		\begin{enumerate}
			\item If $p(g)$ is hyperbolic, it acts on $\partial\Hbb^2$ with fixed points, 
			so $\phi(g)\in\Z$ by Lemma \ref{lemma: additional rot}. Thus $\phi(g)\in\Z\setminus\{0\}$. As $D(\phi)\le1$ by Lemma \ref{lemma: additional rot}, Bavard's duality implies
			$$\scl_M(g)\ge \frac{|\phi(g)|}{2D(\phi)}\ge \frac{1}{2}.$$
			\item If $p(g)$ is elliptic, then it has finite order $n\le N$, where $N$ is the largest order of cone points on $B$.
			Thus $g^n=z^m$ for some $m\neq0\in\Z$ (as $\pi_1 M$ is torsion-free). 
			Applying $\phi$, we obtain $\phi(g)=m\phi(z)/n$ and by Bavard's duality
			$$\scl_M(g)\ge \frac{|m||\phi(z)|}{2nD(\phi)}\ge \frac{|m|}{2N}\ge \frac{1}{2N}$$
			as $\phi(z)\in\Z\setminus\{0\}$ by Lemma \ref{lemma: additional rot}.
		\end{enumerate}
	\end{enumerate}
	In summary, for
	\[
	C(M)\defeq\left\{ 
	\begin{array}{ll}
		\min\left\{C(B),\frac{1}{2N}\right\} 	& \text{if }B\text{ is orientable and }[z]=0\in H_1(M;\R),\\
		C(B)						& \text{otherwise,}
	\end{array}
	\right.
	\]
	we have either $\scl_M(g)\ge C(M)$ or $\scl_M(g)=0$ for all $g\in\pi_1(M)$, and the latter case occurs exactly in the asserted scenarios.
\end{proof}

We summarize our result for the geometric cases in the theorem below.
\begin{thm}\label{thm: Gap for geometric manifolds}
	Let $M$ be a closed prime $3$-manifold that admits one of the eight geometries. Then there is $C(M)>0$ such that for all $g\in\pi_1(M)$, either $\scl_M(g)\ge C(M)$ or $\scl_M(g)=0$.
\end{thm}
\begin{proof}
	If $M$ is hyperbolic, then the result follows from Theorem \ref{thm: hyp rel gap} by choosing $\kappa$ less than the injectivity radius of $M$ and $C(M)=\delta(\kappa,3)$. In this case, $\scl_M(g)=0$ only holds for $g=id$ as $\pi_1M$ is torsion-free hyperbolic or by Theorem \ref{thm: hyp rel gap}.
	
	If $M$ has $\Hbb^2\times\Ebb$ or $\PSLtwotilde\R$ geometry, this is shown by Theorem \ref{thm: closed Seifert fibered prime factors} above. 
	For the other five geometries, $\pi_1 M$ is amenable and $\scl_M\equiv 0$, so the result holds trivially for any $C(M)>0$.
\end{proof}

\subsubsection{The non-geometric cases}
Now we consider prime manifolds with nontrivial geometric decomposition.
We will treat the fundamental group as a graph of groups with $\Z^2$ edge groups according to the geometric decomposition. 
We first establish scl gaps of vertex groups relative to adjacent edge groups, which would help us estimate $\scl_M$ in the vertex groups using Lemma \ref{lemma: basic prop of rel scl}.

\begin{lemma}\label{lemma:vertex group hyp case}
	Let $M$ be a compact $3$-manifold with tori boundary (possibly empty). Suppose the interior of $M$ is hyperbolic with finite volume. Then $\pi_1(M)$ has a strong spectral gap relative to the peripheral subgroups. Moreover, $\scl_{(M,\partial M)}(g)=0$ occurs if and only if $g$ is conjugate into some peripheral subgroup.
\end{lemma}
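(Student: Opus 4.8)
The statement is exactly Theorem~\ref{thm: relative gap restate}~(\ref{item: rel gap restate 4}) (equivalently Theorem~\ref{thm: hyp rel gap}) applied to the interior of $M$, which is a complete finite-volume hyperbolic $3$-manifold, so the plan is to deduce it from the strengthened Calegari gap theorem together with the acylindricity-type estimate for hyperbolic groups, and to identify the zero locus by a geometric argument. First I would reduce to the relative spectral gap statement: by Lemma~\ref{lemma: Bavard duality for rel scl} and Theorem~\ref{thm: hyp rel gap}, there is $\delta=\delta(M)>0$ so that any hyperbolic element $g\in\pi_1(M)$ (i.e.\ one not conjugate into a peripheral subgroup) with $\scl_{(M,\partial M)}(g)<\delta$ is represented by a closed geodesic of hyperbolic length at most some chosen constant $\kappa$. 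Since $\pi_1(M)$ is discrete and torsion-free, there are only finitely many conjugacy classes of primitive hyperbolic elements whose geodesic representative has length $\le\kappa$; enlarging the list to include finitely many representatives $\{a_i\}$ of these classes, every hyperbolic $g$ with small relative scl has a power conjugate to a power of some $a_i$.

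The second step is to feed this into Theorem~\ref{thm: scl rel gap for hyp grp} (Calegari--Fujiwara). Since $\pi_1(M)$ of a finite-volume hyperbolic $3$-manifold is word-hyperbolic when $M$ is closed, and is hyperbolic relative to its cusps in general — here I would instead argue directly: the group is $\delta_0$-hyperbolic (closed case) or one passes to the relevant relatively hyperbolic framework as in \cite{CF:sclhypgrp} — we obtain a constant $C_0=C_0(M)>0$ such that any $g$ with $\scl_{(G,\mathcal{A})}(g)<C_0$ (where $\mathcal{A}=\{\langle a_i\rangle\}$) either has a power conjugate to a power of some $a_i$, or a power conjugate to its own inverse. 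Combining the two steps: if $g$ is hyperbolic and $\scl_{(M,\partial M)}(g)<\min(\delta,C_0)$, then either $g^n\sim a_i^m$ for some $i$ and $m,n\neq 0$, or $g^n\sim g^{-n}$. In the first case $\scl_{(M,\partial M)}(a_i)$ and $\scl_{(M,\partial M)}(g)$ are proportional, so running the same dichotomy finitely many times (or observing that the $a_i$ are finite in number) forces $\scl_{(M,\partial M)}(g)$ to be bounded below by a positive constant depending only on $M$, unless $\scl_{(M,\partial M)}(g)=0$; and $\scl_{(M,\partial M)}(g)=0$ forces $g^n\sim g^{-n}$.

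The third step is to show $g^n$ conjugate to $g^{-n}$ cannot happen for hyperbolic $g$, so that the zero locus is exactly the peripheral elements. If $g^n$ is conjugate to $g^{-n}$ in $\pi_1(M)=\Gamma\le\mathrm{Isom}^+(\mathbb{H}^3)$, then $g$ acts on $\mathbb{H}^3$ as a loxodromic (or parabolic) isometry with a well-defined axis/fixed point set, and some element $h\in\Gamma$ would reverse the orientation of the translation axis of $g^n$; then $h^2$ centralizes $g^n$ along its axis while $h$ itself inverts it, which forces $h$ to be an orientation-reversing isometry of the axis, i.e.\ $h$ has order $2$ or produces $2$-torsion in the stabilizer of the axis — contradicting that $\Gamma$ is torsion-free (and discreteness rules out the parabolic inversion as well). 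Hence for hyperbolic $g$ we get $\scl_{(M,\partial M)}(g)\ge \min(\delta,C_0)>0$, a genuine strong relative spectral gap, and the zero set consists precisely of elements conjugate into a peripheral subgroup (for which relative scl vanishes by Lemma~\ref{lemma: basic prop of rel scl}).

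\textbf{Main obstacle.} The routine hyperbolic geometry in steps~1 and~3 is standard; the delicate point is the bookkeeping in step~2 when $M$ has cusps, since $\pi_1(M)$ is then only relatively hyperbolic and one must invoke \cite{CF:sclhypgrp} in its relative form (the $\{a_i\}$ together with the peripheral subgroups), and one must check that absorbing the finitely many short-geodesic classes $\{a_i\}$ into the collection of ``controlled'' subgroups is compatible with the relative scl we care about — i.e.\ that $\scl_{(M,\partial M)}$ dominates $\scl_{(G,\mathcal{A}\cup\partial M)}$ and that the only way to escape the uniform lower bound is the finitely many classes $a_i$, each of which individually has either positive or zero relative scl. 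Tracking that the resulting constant depends only on $M$ (through $\delta$, the Margulis constant, the number and lengths of short geodesics, and the hyperbolicity constants) is the part that needs care, but it follows the same scheme as the proof of \cite[Theorem~C]{Cal:sclhypmfd}.
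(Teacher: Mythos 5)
Your step 1 already contains everything that is needed, and the detour through Calegari--Fujiwara in step 2 is where the argument breaks down. The paper's proof is a two-line consequence of Theorem \ref{thm: hyp rel gap}: in a complete finite-volume hyperbolic $3$-manifold the lengths of geodesic loops are bounded below by a positive constant $\kappa(M)$ (the systole), every non-peripheral $g$ is loxodromic and hence represented by a closed geodesic of length at least $\kappa(M)$, so choosing $\kappa<\kappa(M)$ in Theorem \ref{thm: hyp rel gap} gives a $\delta>0$ with $\scl_{(M,\partial M)}(g)\ge\delta$ for \emph{every} non-peripheral $g$ -- there is nothing left over to handle, because no element can be represented by a geodesic shorter than the systole. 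By fixing an arbitrary $\kappa$ instead of one below the systole, you created a residual finite family of short classes $\{a_i\}$ and then had to invoke extra machinery to deal with them.

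That extra machinery has two genuine gaps. First, Theorem \ref{thm: scl rel gap for hyp grp} requires $G$ to be word-hyperbolic, which fails whenever $M$ has cusps ($\pi_1(M)$ then contains $\Z^2$); the ``relatively hyperbolic framework'' you appeal to is not stated in the paper and you give no argument that the constant survives, nor that the peripheral $\Z^2$ subgroups may be added to the collection $\mathcal{A}$ of cyclic groups allowed there. Second, and more seriously, even granting such a dichotomy it says nothing about the classes $a_i$ themselves: trivially $\scl_{(G,\mathcal{A})}(a_i)=0$, so the Calegari--Fujiwara alternative excludes exactly these elements, and your argument never establishes $\scl_{(M,\partial M)}(a_i)>0$. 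Since the ``moreover'' clause of the lemma is precisely the assertion that non-peripheral elements have positive relative scl, disposing of the case $g^n\sim a_i^m$ by ``proportionality'' is circular unless positivity for each $a_i$ is proved separately, which you do not do; your step 3 (a loxodromic element of a torsion-free discrete subgroup of $\mathrm{Isom}^+(\mathbb{H}^3)$ is never conjugate to its inverse) is correct in substance but only rules out the other horn of the dichotomy and does not fill this hole. Replacing the choice of $\kappa$ by any $\kappa<\kappa(M)$ removes both problems and reduces the proof to the paper's.
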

\begin{proof}
	If $g$ is conjugate into some peripheral subgroup, then obviously $\scl_{(M,\partial M)}(g)=0$. Assume that this is not the case, then $g$ is a hyperbolic element. 
	In the thin-thick decomposition, there are only finitely many neighborhoods of short loops in the thin part as the interior of $M$ has finite volume.
	So we may choose a constant $\kappa=\kappa(M)>0$ so that any geodesic loop in $M$ has length at least $\kappa$. 
	Then there is a constant $\delta>0$ by Theorem \ref{thm: hyp rel gap} such that $\scl_{(M,\partial M)}(g)\ge \delta$ for all such $g$.
\end{proof}

A similar result holds for Seifert fibered manifolds. Here we focus on those with nonempty boundary. See Theorem \ref{thm: closed Seifert fibered prime factors} for the case of closed Seifert fibered manifolds.

\begin{lemma}\label{lemma:vertex group Seifert case}
	Let $M$ be a compact $3$-manifold with \emph{nonempty} tori boundary, Seifert fibered over an orbifold $B$ with $\chi_o(B)<0$. 
	Then $\pi_1 M$ has a spectral gap relative to the peripheral $\Z^2$-subgroups. 
	Moreover, $\scl_{(M,\partial M)}(g)=0$ if and only if one of the following cases occurs, where $z\in\pi_1 M$ represents a regular fiber:
	\begin{enumerate}
		\item $g$ is conjugate into some peripheral subgroup;
		\item $g^n=z^m$ for some $n>0$ no greater than the maximal order of cone points on $B$ and for some $m\in \Z\setminus\{0\}$; or
		\item There is $h\in \pi_1 M$ and $m\in\Z$ such that $ghgh^{-1}=z^m$ (so $p(g)$ is conjugate to $p(g^{-1})$ in $\pi_1(B)$), and $\scl_M(g^2-z^m)=0$, where $p:\pi_1(M)\to\pi_1(B)$ is the projection map.
	\end{enumerate}
\end{lemma}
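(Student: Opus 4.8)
The plan is to use the Seifert fibered structure and the exact sequence \eqref{eqn:exact sequence} to push the problem down to the hyperbolic base orbifold $B$, where we may invoke the relative gap theorem for word-hyperbolic groups (Theorem \ref{thm: scl rel gap for hyp grp}), and then analyze carefully the discrepancy between $\scl_M$ and $\scl_B$ caused by the central $\Z = \langle z \rangle$. First I would set up notation: since $\partial M \neq \emptyset$, the central fiber subgroup $\langle z \rangle$ lies inside every peripheral $\Z^2$ of $\pi_1(M)$, and $p$ maps the peripheral subgroups of $\pi_1(M)$ onto the (finitely many) peripheral cyclic subgroups of $\pi_1(B)$, i.e.\ the cusps of the finite-volume hyperbolic orbifold $B$. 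Also, since $B$ has nonempty boundary, $H^2(\pi_1(B);\R) = 0$, so by Lemma \ref{lemma: p preserves scl} we have $\scl_M(h) = \scl_B(p(h))$ for every null-homologous $h \in \pi_1(M)$; more precisely I would want the relative statement $\scl_{(M,\partial M)}(g) = \scl_{(B,\partial B)}(p(g))$, which should follow from the same argument applied to the relative version of Bavard duality (Lemma \ref{lemma: Bavard duality for rel scl}), using that $\mathcal{Q}(\pi_1 M) / H^1 \cong p^*(\mathcal{Q}(\pi_1 B)/H^1)$ here because $H^2_b(\Z) = 0$ and $B$ is not closed.

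Granting this reduction, the proof splits according to the behavior of $p(g)$ in the hyperbolic group $\pi_1(B)$. Let $\{a_i\}$ be a finite generating set together with the finitely many cusp generators of $\pi_1(B)$; apply Theorem \ref{thm: scl rel gap for hyp grp} to obtain a constant $C_0 > 0$. If $p(g)$ has infinite order, is not conjugate to its inverse, and no power of $p(g)$ is conjugate into a cusp subgroup, then $\scl_{(B,\partial B)}(p(g)) \ge C_0$, hence $\scl_{(M,\partial M)}(g) \ge C_0 > 0$, and we are not in any of the exceptional cases (one checks that in each listed exceptional case $p(g)$ \emph{does} satisfy one of the failure conditions). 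It remains to analyze the three remaining possibilities for $p(g)$: (i) $p(g)$ has finite order; (ii) some nonzero power of $p(g)$ is conjugate into a cusp subgroup of $\pi_1(B)$; (iii) $p(g)$ is conjugate to $p(g)^{-1}$. In case (i), since $\pi_1(B)$ acts on $\mathbb{H}^2$ with finite-order elements elliptic of order dividing some cone point order, $p(g)^n = 1$ for some $n$ bounded by the maximal cone point order, so $g^n \in \ker p = \langle z \rangle$, giving $g^n = z^m$; this is case (2) of the statement, and conversely such $g$ clearly has $\scl_{(M,\partial M)}(g) = 0$ since $g^n = z^m$ is central and $z$ lies in a peripheral subgroup so $\scl_{(M,\partial M)}(z^m) = 0$, or one argues directly that a central element is conjugate to its inverse. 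In case (ii), $p(g)^n$ conjugates into a cusp subgroup $\pi_1(\partial_j B)$, and since $p^{-1}(\pi_1(\partial_j B)) = \pi_1(\partial_j M)$ is a peripheral $\Z^2$, we get that $g^n$ conjugates into a peripheral subgroup, so $g$ itself does (as peripheral subgroups are root-closed, or by replacing $n$ suitably), landing us in case (1). In case (iii), $p(g)$ is conjugate to $p(g)^{-1}$, so there is $h$ with $h\, p(g)\, h^{-1} = p(g)^{-1}$ in $\pi_1(B)$; lifting $h$ to $\tilde h \in \pi_1(M)$, we get $\tilde h g \tilde h^{-1} g = z^m$ for some $m \in \Z$ (the obstruction lives in $\ker p = \langle z\rangle$), and hence $\scl_M(g^2 - z^m) = \scl_M(g^2 - \tilde h g \tilde h^{-1} g) = \scl_M\big((g + \tilde h g \tilde h^{-1}) \cdot (\text{difference of conjugates})\big)$; more cleanly, $g \cdot \tilde h g \tilde h^{-1} = z^m$ exhibits $g^2 - z^m$ as homologous to $g^2 - g\cdot{}^{\tilde h}g = g - {}^{\tilde h}g$ in $C_1^H$, which has $\scl = 0$; this gives case (3), and conversely if $p(g)$ is conjugate to $p(g)^{-1}$ and $\scl_M(g^2 - z^m) = 0$ then since $\scl_M(z) = 0$ (as $[z]$ need not be zero, one must be slightly careful — here one uses that $z$ is peripheral so $\scl_{(M,\partial M)}(z) = 0$, and $\scl_{(M,\partial M)}(g^2) \le \scl_{(M,\partial M)}(g^2 - z^m) + \scl_{(M,\partial M)}(z^m) = 0$) we conclude $\scl_{(M,\partial M)}(g) = \tfrac12 \scl_{(M,\partial M)}(g^2) = 0$.

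The uniform positive lower bound $K_M$ in the cases where $\scl$ does not vanish is then the constant $C_0$ from Theorem \ref{thm: scl rel gap for hyp grp} (the finitely many bounded-order exceptions from case (i) with $\scl_{(M,\partial M)}(g) > 0$ contribute nothing since for those $g$ actually $\scl_{(M,\partial M)}(g) = 0$ whenever $g^n = z^m$, by the computation above — one must double check there is no $g$ with $p(g)$ finite order and $g^n = z^m$ yet $\scl > 0$, and indeed there isn't). The main obstacle I expect is the bookkeeping in the relative setting: passing correctly between $\scl_{(M,\partial M)}$ and $\scl_{(B,\partial B)}$ through $p$, and correctly handling the $z^m$ ambiguity in case (iii) — in particular being careful that "$\scl_M(g^2 - z^m) = 0$" is the right normalization (rather than something involving the relative scl or a different coefficient on $z$), and verifying that the converse directions of (1)--(3) genuinely force $\scl_{(M,\partial M)}(g) = 0$ rather than merely small. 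A secondary subtlety is that $B$ is only a hyperbolic \emph{orbifold}, not a manifold, so one should cite that $\pi_1(B)$ is word-hyperbolic (stated in Subsection \ref{subsec: SFS}) and that finite-order elements are controlled by cone point orders, which is also recalled there.
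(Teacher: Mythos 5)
Your overall route is the same as the paper's: use the Seifert fibration and the sequence \eqref{eqn:exact sequence} to push $g$ down to the hyperbolic base, apply Theorem \ref{thm: scl rel gap for hyp grp} to handle hyperbolic $p(g)$ not conjugate to its inverse, and treat the elliptic, parabolic, and conjugate-to-inverse cases separately. (The paper does not need, and does not prove, your claimed equality $\scl_{(M,\partial M)}(g)=\scl_{(B,\partial B)}(p(g))$; it only uses the monotonicity inequality $\scl_{(M,\partial M)}(g)\ge\scl_{(B,\partial B)}(p(g))$ from Lemma \ref{lemma: basic prop of rel scl}, which is all your argument actually requires, so you can drop that delicate reduction.) However, one step of yours is genuinely wrong: in case (iii) you claim that $g\cdot\tilde h g\tilde h^{-1}=z^m$ ``exhibits $g^2-z^m$ as homologous to $g^2-g\cdot{}^{\tilde h}g=g-{}^{\tilde h}g$ in $C_1^H$.'' In $C_1^H$ only powers and conjugates are identified ($g^n=ng$, $hgh^{-1}=g$); a \emph{product} element $g\cdot{}^{\tilde h}g$ is not equal to the chain $g+{}^{\tilde h}g$, and scl does not factor through homology, so this identity fails. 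The correct argument (and the paper's) is the tool you already set up: the relation gives $2[g]=\pm m[z]$ in $H_1(M;\R)$, so $g^2-z^m$ (equivalently $g-\tfrac m2 z$) is a null-homologous chain, and Lemma \ref{lemma: p preserves scl} applied to this \emph{chain} gives $\scl_M(g^2-z^m)=\scl_B(p(g)^2)=2\scl_B(p(g))=0$, since $p(g)$ conjugate to $p(g)^{-1}$ forces $2p(g)=0$ in $C_1^H(\pi_1(B))$. Your converse direction of (3), via the triangle inequality for relative scl and $\scl_{(M,\partial M)}(z)=0$, is fine.

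Two smaller issues in the gap case. First, you feed ``a finite generating set together with the cusp generators'' into Theorem \ref{thm: scl rel gap for hyp grp} as the family $\{a_i\}$; the generating set $S$ is a separate input used only for the constant, and if the $a_i$ include hyperbolic generators then hypothesis (2) of that theorem (no power of $p(g)$ conjugate to a power of any $a_i$) is an extra condition your trichotomy never verifies, so elements such as $p(g)$ conjugate to a power of a generator would fall outside your case analysis. Take $\{a_i\}$ to be the cusp generators only, as the paper does; then hyperbolicity of $p(g)$ rules out any power being conjugate to a power of a parabolic $a_i$. Second, hypothesis (1) of the theorem concerns powers: one must rule out $hp(g^n)h^{-1}=p(g^{-n})$ for \emph{some} $n\ge 1$, not just $n=1$. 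For a hyperbolic isometry this reduces to the $n=1$ case because such an $h$ must swap the endpoints of the axis of $p(g)$, hence $hp(g)h^{-1}=p(g^{-1})$; this short geometric observation (which the paper makes explicitly) is missing from your write-up and is needed for the trichotomy to be exhaustive.
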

\begin{proof}
	Since $B$ has nonempty boundary, Lemma \ref{lemma: p preserves scl} implies that $H^2(\pi_1(B);\R)=0$ and $p$ preserves scl. That is, for any $g\in\pi_1(M)$, either $\scl_M(g)=\infty$ or $\scl_M(g)=\scl_B(p(g))$.
	
	If $p(g)$ is of finite order $n$, then by the exact sequence (\ref{eqn:exact sequence}), we have $z^m=g^n$ for some integer $m$, where $n$ divides the order of some cone points on $B$ by the structure of $\pi_1(B)$. We have $\scl_{(M,\partial M)}(g)=0$ since $z$ is peripheral. Moreover $m\neq0$ since $\pi_1 M$ is torsion-free.
	
	Suppose $p(g)$ is of infinite order in the sequel. Recall that $\pi_1(B)$ is a lattice in $\PSLtwo\R$ and Gromov-hyperbolic. 
	Then $p(g)$ either acts as a parabolic isometry or acts as a (possibly orientation-reversing) hyperbolic isometry. 
	\begin{enumerate}
		\item If $p(g)$ is parabolic, then it is conjugate into a peripheral $\Z$-subgroup of $\pi_1(B)$, which implies that $g$ is conjugate into a peripheral $\Z^2$ subgroup of $\pi_1(M)$ and $\scl_{(M,\partial M)}(g)=0$.
		
		\item If $p(g)$ is hyperbolic and $\bar{h}p(g^n)\bar{h}^{-1}\neq p(g^{-n})$ for any $n\neq0$ and $\bar{h}\in\pi_1(B)$. Fix a generator $g_i$ for each peripheral subgroup of $\pi_1(B)$. Since $p(g)$ acts as a hyperbolic isometry, $hp(g^n)h^{-1}\neq g_i^m$ for any $m,n\neq0$ and any $h\in\pi_1(B)$. Then Theorem \ref{thm: scl rel gap for hyp grp} implies $\scl_{(B,\partial B)}(p(g))\ge C$ for a constant $C=C(B)$, where $\scl_{(B,\partial B)}$ denotes scl of $\pi_1(B)$ relative to its peripheral $\Z$-subgroups. This implies $\scl_{(M,\partial M)}(g)\ge\scl_{(B,\partial B)}(p(g))\ge C$ by monotonicity of relative scl. 
		
		\item If $p(g)$ is hyperbolic and $\bar{h}p(g^n)\bar{h}^{-1}= p(g^{-n})$ for some $n\ge1$ and $\bar{h}\in\pi_1(B)$. 
		Then $\bar{h}$ must interchange the two endpoints of the axis of $p(g)$, and thus $\bar{h}p(g)\bar{h}^{-1}=p(g^{-1})$. 
		This implies $hgh^{-1} g=z^m$ for some $h\in\pi_1(M)$ and $m\in\Z$. 
		Hence the chain $g-\frac{m}{2}z$ is null-homologous. Since $p$ preserves scl and $p(g)$ is conjugate to its inverse, we have $\scl_{M}(g-\frac{m}{2}z)=\scl_B(p(g))=0$. Since $z$ is peripheral, we conclude that $\scl_{(M,\partial M)}(g)=0$ in this case.
	\end{enumerate}
	
	Combining the cases above, any $g\in \pi_1(M)$ either has $\scl_{(M,\partial M)}(g)\ge C$ or $\scl_{(M,\partial M)}(g)=0$. Moreover, if $\scl_{(M,\partial M)}(g)=0$, then
	\begin{enumerate}
		\item either $p(g)$ is of finite order and $g^n= z^m$ for some $m\neq0$ and $n>0$ no greater than the maximal order of cone points on $B$;
		\item or $p(g)$ is of infinite order and 
		\begin{enumerate}
			\item $p(g)$ is parabolic, which implies that $g$ is conjugate into some peripheral subgroup; or
			\item $p(g)$ is hyperbolic, $ghgh^{-1}=z^m$ for some $h\in\pi_1 M$ and $m\in\Z$, and $\scl_M(g^2-z^m)=0$.
		\end{enumerate}
	\end{enumerate}
\end{proof}
\begin{rmk}\label{rmk: explicit gap size}
	In the proof above, the gap $C$ comes from the spectral gap of scl in $2$-orbifolds relative to nonempty boundary, which can be made uniform and explicit with $C=1/24$; See the old version of this paper \cite[Theorem 9.4]{CH:sclgapgog_old}, which may appear in a separate note. 
\end{rmk}

Next we control scl of integral chains in the edge spaces in the geometric decomposition of a prime manifold (Theorem \ref{thm: edge scl for non-geometric prime factors}). We need the following two lemmas and their corollaries.
\begin{lemma}\label{lemma:edge group for hyp}
	Let $M$ be a compact $3$-manifold with boundary consisting of tori $T$ and $T_i$, $i\in I$ ($I$ could be empty). Suppose the interior of $M$ is hyperbolic with finite volume. Then there exists $C>0$ such that $\scl_{(M,\{T_i\})}(g)\ge C$ for any $g \in\pi_1(T)\setminus\{id\}$.
\end{lemma}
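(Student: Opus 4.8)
\textbf{Proof strategy for Lemma \ref{lemma:edge group for hyp}.}
The plan is to reduce the statement about the cusp subgroup $\pi_1(T)\cong\Z^2$ to the relative spectral gap theorem for hyperbolic manifolds, Theorem \ref{thm: hyp rel gap} (i.e. the strengthened Calegari gap, Theorem \ref{thm: relative gap restate}(\ref{item: rel gap restate 4})). The key observation is that $\scl_{(M,\{T_i\})}$ is \emph{larger} than $\scl_{(M,\partial M)}=\scl_{(M,\{T\}\cup\{T_i\})}$ only in that we have removed $\pi_1(T)$ from the list of subgroups we are allowed to adjust by. So the danger is exactly that some element $g\in\pi_1(T)$ has $\scl_{(M,\partial M)}(g)=0$ simply because $g$ is peripheral in the wrong cusp. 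To rule this out we must show that an element of $\pi_1(T)$ cannot be killed (in the relative-to-$\{T_i\}$ sense) by chains supported in the \emph{other} cusps.

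First I would invoke Theorem \ref{thm: hyp rel gap} with the collection of peripheral subgroups being exactly $\{\pi_1(T_i)\}_{i\in I}$: this gives a constant $\delta=\delta(M)>0$ and a length bound $\kappa$ such that any hyperbolic element $g\in\pi_1(M)$ with $\scl_{(M,\{T_i\})}(g)<\delta$ is represented by a geodesic loop of length $\le \kappa$. Since $g\in\pi_1(T)$ is nontrivial and corresponds to a loop on the cusp torus $T$, it is either parabolic (hence conjugate into $\pi_1(T)$, which is one of the peripheral subgroups we are \emph{not} quotienting by, so no cancellation is available) or — but parabolicity is exactly the case here since $g\in\pi_1(T)$ and $T$ is a cusp cross-section. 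For a parabolic element the geodesic representative does not exist / has length $0$, so Theorem \ref{thm: hyp rel gap} as stated concerns hyperbolic elements. Thus the cleaner route is: observe that no nontrivial $g\in\pi_1(T)$ conjugates into any $\pi_1(T_i)$ for $i\in I$ (distinct cusps give non-conjugate peripheral subgroups in a finite-volume hyperbolic manifold), so by Lemma \ref{lemma: basic prop of rel scl}(2) we cannot have $\scl_{(M,\{T_i\})}(g)=0$ for the trivial reason; and then by the classification in Lemma \ref{lemma:vertex group hyp case} (whose proof is precisely Theorem \ref{thm: hyp rel gap} applied to \emph{all} cusps), $\scl_{(M,\partial M)}(g)\ge\delta$ for some uniform $\delta>0$, because $g$ is parabolic but not in the discarded family. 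Since removing the torus $T$ from the list of peripheral subgroups only increases relative scl, $\scl_{(M,\{T_i\})}(g)\ge\scl_{(M,\partial M)}(g)\ge\delta$.

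Concretely, the argument is: set $C=\delta(M)$ from Theorem \ref{thm: hyp rel gap} applied with the full peripheral collection $\partial M$. For $g\neq id\in\pi_1(T)$, $g$ does not conjugate into any $\pi_1(T_i)$, so Lemma \ref{lemma:vertex group hyp case} gives $\scl_{(M,\partial M)}(g)\ge C$; and monotonicity of relative scl under forgetting a peripheral subgroup gives $\scl_{(M,\{T_i\})}(g)\ge\scl_{(M,\partial M)}(g)\ge C$. The uniform constant $C$ depends only on $M$ (through its injectivity radius / shortest geodesic and dimension $3$), as required.

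\textbf{Main obstacle.} The delicate point is purely bookkeeping about which peripheral subgroups are in play: one must be sure that a loop $g$ on the cusp torus $T$ genuinely cannot be ``absorbed'' by chains in the other cusps — i.e. that the only way $\scl_{(M,\partial M)}(g)$ can vanish is $g$ conjugating into \emph{some} cusp, including $T$ itself, and that dropping $T$ from the list therefore reinstates a positive lower bound. This is exactly what the ``if and only if'' in Lemma \ref{lemma:vertex group hyp case} provides, so once that lemma is in hand the present lemma is a one-line consequence; the real content was already front-loaded into Theorem \ref{thm: hyp rel gap}.
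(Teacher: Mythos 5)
There is a genuine gap, and it is exactly at the step you flagged as "bookkeeping." You claim that for $g\neq id\in\pi_1(T)$ the classification in Lemma \ref{lemma:vertex group hyp case} gives $\scl_{(M,\partial M)}(g)\ge\delta$ "because $g$ is parabolic but not in the discarded family." That is a misreading of the lemma: its vanishing criterion is that $g$ conjugates into \emph{some} peripheral subgroup of $\partial M$, and $T$ itself is one of the peripheral subgroups. Since $g\in\pi_1(T)$, we have $\scl_{(M,\partial M)}(g)=0$ for \emph{every} nontrivial $g\in\pi_1(T)$. Your monotonicity inequality $\scl_{(M,\{T_i\})}(g)\ge\scl_{(M,\partial M)}(g)$ has the right direction but its right-hand side is zero, so it yields nothing. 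More fundamentally, no result in the paper stated relative to the full boundary (Theorem \ref{thm: hyp rel gap}, Lemma \ref{lemma:vertex group hyp case}) can bound scl of a \emph{parabolic} element relative to only the \emph{other} cusps: such an element has no geodesic representative, so the pleated-surface argument does not apply to it directly, and that is precisely the difficulty this lemma is meant to overcome.

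The paper closes this gap with an idea that is absent from your proposal: hyperbolic Dehn filling. One fixes two different Dehn fillings of the cusp $T$ (Thurston's hyperbolic Dehn filling theorem guarantees that all but finitely many slopes yield hyperbolic manifolds $M_1,M_2$ whose cusps are exactly the $T_i$). Since the two filling slopes generate cyclic subgroups of $\pi_1(T)\cong\Z^2$ meeting only in the identity, any nontrivial $g\in\pi_1(T)$ maps to a nontrivial element of at least one $\pi_1(M_j)$, and there its image is conjugate into the subgroup generated by the core geodesic, hence hyperbolic and not conjugate into any peripheral subgroup of $M_j$. Lemma \ref{lemma:vertex group hyp case} applied to $M_j$ then gives $\scl_{(M_j,\partial M_j)}(\bar g)\ge C_j>0$, and monotonicity of relative scl (Lemma \ref{lemma: basic prop of rel scl}(4)) under the filling homomorphism $\pi_1(M)\to\pi_1(M_j)$, which carries each $\pi_1(T_i)$ into a peripheral subgroup, gives $\scl_{(M,\{T_i\})}(g)\ge C_j$. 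Taking $C=\min(C_1,C_2)$ finishes the proof. If you want to repair your write-up, this filling step (or some substitute argument producing a quasimorphism on $\pi_1(M)$ that vanishes on all $\pi_1(T_i)$ but is uniformly nonzero on $\pi_1(T)\setminus\{id\}$) is the missing ingredient; the rest of your reductions are fine.
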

\begin{proof}
	By the hyperbolic Dehn filling theorem \cite[Theorem 5.8.2]{Thurstonnotes}, we can fix two different Dehn fillings of the end $T$ such that the resulting manifolds $M_1$ and $M_2$ are both hyperbolic with cusp ends $T_i$, $i\in I$. For any $g\neq id \in\pi_1(T)$, it is a nontrivial hyperbolic element in at least one of $\pi_1(M_1)$ and $\pi_1(M_2)$. Thus the result follows from Lemma \ref{lemma:vertex group hyp case} and monotonicity.
\end{proof}
\begin{cor}\label{cor:edge group for hyp}
	Let $M$ be a compact $3$-manifold with boundary tori $T_i$ ($i\in I$). Suppose the interior of $M$ is hyperbolic with finite volume. Then for any chain $c$ of the form $\sum_{i\in I} t_ig_i$ with $t_i\in\R$ and $g_i \in\pi_1(T_i)\setminus\{id\}$, we have $\scl_M(c)>0$ unless $t_i=0$ for all $i\in I$.
\end{cor}
\begin{proof}
	Suppose $t_j\neq 0$ for some $j$, then Lemma \ref{lemma:edge group for hyp} provides a constant $C_j>0$ such that
	$$\scl_M(\sum t_ig_i)\ge|t_j|\cdot\scl_{(M,\{T_i\}_{i\neq j})}(g_j)\ge |t_j|C_j>0.$$
\end{proof}

\begin{lemma}\label{lemma:edge group for Seifert}
	Let $M$ be a compact $3$-manifold with boundary consisting of tori $T$ and $T_i$, $i\in I$ ($I$ could be empty). Suppose $M$ is Seifert fibered with bundle projection $p:M\to B$ where $B$ is an orbifold with $\chi_o(B)<0$. Then there exists $C>0$ such that $\scl_{(M,\{T_i\})}(g)\ge C$ for any $g\in\pi_1(T)\backslash\ker p$.
\end{lemma}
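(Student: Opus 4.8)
The strategy mirrors the hyperbolic case (Lemma \ref{lemma:edge group for hyp} and Corollary \ref{cor:edge group for hyp}), but uses the projection $p \col \pi_1(M) \to \pi_1(B)$ instead of Dehn filling, together with the fact that $p$ preserves scl when $B$ has nonempty boundary (Lemma \ref{lemma: p preserves scl}). Since $M$ has a boundary torus $T$, the base orbifold $B$ has nonempty boundary, so $H^2(\pi_1(B);\R)=0$ and Lemma \ref{lemma: p preserves scl} applies. The plan is as follows. First I would take any $g \in \pi_1(T) \setminus \ker p$. Because $g$ lies in a peripheral $\Z^2$ subgroup but is not a power of the regular fiber $z$ (the fiber generates $\ker p$ restricted to $\pi_1(T)$), its image $p(g)$ is a nontrivial element of a peripheral $\Z$ subgroup of $\pi_1(B)$; in particular $p(g)$ acts on $\mathbb{H}^2$ as a nontrivial parabolic isometry, hence is of infinite order and is \emph{not} conjugate to its inverse in $\pi_1(B)$ (parabolics fixing the same point on $\partial\mathbb{H}^2$ form a group with a well-defined sign).

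Next I would invoke the relative spectral gap for word-hyperbolic groups, Theorem \ref{thm: scl rel gap for hyp grp}, applied to $\pi_1(B)$ with the finite collection $\{a_i\}$ consisting of generators of the peripheral $\Z$ subgroups of $\pi_1(B)$ corresponding to the tori $T_i$, $i \in I$ (but \emph{not} the one corresponding to $T$). I must check the two hypotheses of that theorem for $h \defeq p(g)$: condition (1), that $h^n$ is never conjugate to $h^{-n}$, holds since $h$ is parabolic; condition (2), that $h^n$ is never conjugate to a power of some $a_i$, holds because $h$ is parabolic fixing the cusp corresponding to $T$, while the $a_i$ are parabolics fixing the \emph{other} cusps, and distinct cusps give non-conjugate maximal parabolic subgroups. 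This yields a constant $C = C(B) > 0$ with $\scl_{(\pi_1(B),\mathcal{A}_B)}(p(g)) \ge C$, where $\mathcal{A}_B = \{\langle a_i\rangle\}_{i\in I}$.

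Finally I would transfer this lower bound back to $\pi_1(M)$. Since $p(\pi_1(T_i)) = \langle a_i \rangle$ for each $i \in I$, monotonicity of relative scl (Lemma \ref{lemma: basic prop of rel scl}(4)) gives $\scl_{(\pi_1(M),\{\pi_1(T_i)\})}(g) \ge \scl_{(\pi_1(B),\mathcal{A}_B)}(p(g)) \ge C$. (Here one should note that $p(\pi_1(T_i))$ is exactly the peripheral $\Z$ in $\pi_1(B)$ because $\ker p = \langle z \rangle$ is contained in $\pi_1(T_i)$, so the quotient $\pi_1(T_i)/\langle z\rangle$ maps isomorphically onto $\langle a_i\rangle$.) This completes the proof with the constant $C$ depending only on $M$.

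\textbf{Main obstacle.} The routine part is the scl bookkeeping; the point requiring care is the verification that $h = p(g)$ satisfies the non-conjugacy hypotheses of Theorem \ref{thm: scl rel gap for hyp grp}, i.e.\ that the parabolic fixing the cusp of $T$ is genuinely distinguished from the $a_i$ inside $\pi_1(B)$ and from its own inverse. This is standard hyperbolic $2$-orbifold geometry — distinct cusps of a finite-volume hyperbolic $2$-orbifold have non-conjugate peripheral subgroups, and a parabolic is never conjugate to its inverse since conjugation preserves the translation direction at the fixed point — but it is the one place where geometry, rather than formal scl manipulation, actually enters. A minor subtlety to flag: one must also ensure $p(g) \neq \mathrm{id}$, which is precisely the hypothesis $g \notin \ker p$, so there is nothing further to do there.
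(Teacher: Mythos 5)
Your proposal is correct and is essentially the paper's own argument: the image $p(g)$ is a nontrivial parabolic in the word-hyperbolic group $\pi_1(B)$, never conjugate to its inverse and never conjugate into the peripheral subgroups $p(\pi_1(T_i))$ of the other cusps, so Theorem \ref{thm: scl rel gap for hyp grp} gives a gap $C(B)$ which pulls back to $\scl_{(M,\{T_i\})}(g)$ by monotonicity of relative scl. The only point to tighten is your justification that $p(g)^n$ is not conjugate to $p(g)^{-n}$: since $B$ may be nonorientable, $\pi_1(B)$ can contain orientation-reversing isometries, so "conjugation preserves the translation direction" is not automatic; as in the paper, one should observe that an isometry of $\mathbb{H}^2$ conjugating a parabolic to its inverse must be a reflection (glide reflections do not do this), and reflections cannot occur in the fundamental group of a cone-type orbifold.
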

\begin{proof}
	We have the short exact sequence (\ref{eqn:exact sequence}). Consider any $g\in\pi_1(T)\backslash\ker p$. Fix any hyperbolic structure of $B$ realizing boundaries as cusps. Then $p(g)$ is a parabolic element. We know $hp(g^n)h^{-1}\neq p(g^{-n})$ for any $h\in\pi_1(B)$ and any $n\neq0$ since otherwise $h$ must be a hyperbolic reflection, which cannot appear in $\pi_1(B)$. Since different boundary components of $B$ cannot be homotopic and $\pi_1(B)$ is $\delta$-hyperbolic, Theorem \ref{thm: scl rel gap for hyp grp} implies the existence of $C>0$ such that $$\scl_{(M,\{T_i\})}(g)\ge\scl_{(B,\{p(T_i)\})}(p(g))\ge C$$ for all $g\in\pi_1(T)\backslash\ker p$.
\end{proof}
Again in the lemma above, we can take $C=1/24$ just as commented in Remark \ref{rmk: explicit gap size}.

\begin{cor}\label{cor:edge group for Seifert}
	Let $M$ be a compact $3$-manifold with nonempty boundary tori $T_i$ ($i\in I$). Suppose $M$ is Seifert fibered with bundle projection $p:M\to B$ where $B$ is an orbifold with $\chi_o(B)<0$. Then for any chain $c$ of the form $\sum_{i\in I} t_ig_i$ with $t_i\in\R$ not all zero and $g_i\neq id\in\pi_1(T_i)$, we have $\scl_M(c)=0$ if and only if $g_i\in\ker p$ for all $i\in I$ and $[c]=0\in H_1(M,\R)$.
\end{cor}
\begin{proof}
	If $g_j\notin\ker p$ for some $j$, then Lemma \ref{lemma:edge group for Seifert} provides a constant $C_j>0$ such that
	$$\scl_M(\sum t_ig_i)\ge|t_j|\cdot\scl_{(M,\{T_i\}_{i\neq j})}(g_j)\ge |t_j|C_j>0.$$
	
	Suppose $g_i\in\ker p$ for all $i\in I$. Note that $H^2(\pi_1(B);\R)=0$ and $p$ preserves scl by Lemma \ref{lemma: p preserves scl} since $B$ has boundary. Hence if $[c]=0\in H_1(M,\R)$, then $\scl_M(c)=\scl_B(p(c))=\scl_B(0)=0$.
\end{proof}

We are now ready to prove a gap result for $\scl_M(c)$, where $c$ is an integral chain supported in the edge groups.
In order to characterize those elements $c$ in the edge group with $\scl_M(g)=0$, it is convenient to introduce the following notion.
\begin{defn}\label{def: vanishing pair}
	We say $(g,N)$ is a \emph{vanishing pair} in a prime $3$-manifold $M$ if $N$ is a piece in the (nontrivial) geometric decomposition of $M$ and $g\in\pi_1 N$ is represented by a loop on a torus boundary of $N$ such that
	\begin{enumerate}
		\item $g=id$,
		\item $N=K$ is the twisted $I$ bundle over the Klein bottle and $g$ is null-homologous,\label{item: kill by one side K}
		\item or $N$ is Seifert fibered over base $B$ with $\chi_o(B)<0$, $g=z^m$ for some $m\in\Z$ and $z$ represents the regular fiber, such that (a) $B$ is nonorientable 
		or (b) at least one boundary component of $N$ is glued to a copy of the twisted $I$-bundle $K$ such that the pair $(z',K)$ satisfies case (\ref{item: kill by one side K}) above for the image $z'$ of $z$ in $\pi_1 K$.
	\end{enumerate}
\end{defn}

\begin{thm}\label{thm: edge scl for non-geometric prime factors}
	Let $M$ be a non-geometric prime $3$-manifold. Let $\mathcal{T}$ be the collection of tori in the JSJ decomposition of $M$. Then there is a constant $C_M>0$ such that, for any integral chain $c=\sum_{T\in\mathcal{T}} g_T$ with $g_T\in \pi_1(T)\cong \Z^2$, we have either $\scl_M(c)=0$ or $\scl_M(c)\ge C_M$. Moreover, if $c$ is a single loop supported in a JSJ torus $T$, then $\scl_M(c)=0$ if and only if $T$ identifies boundary components $\partial_1\subset N_1$ and $\partial_2\subset N_2$ of (possibly the same) pieces $N_1,N_2$, and $c=ab\in\pi_1(T)$, such that under the identification of $T$ with $\partial_1$ and $\partial_2$ respectively, the images $a',b'$ of $a,b$ in $\pi_1 N_1$ and $\pi_1 N_2$, respectively, satisfy:
	\begin{enumerate}
		\item either $(a',N_1)$ and $(b',N_2)$ are both vanishing pairs,
		\item or $N_1=N_2$ is Seifert fibered with regular fiber represented by $z$ so that $a'=z^m$ and $b'=z^{-m}$ for some $m\in\Z$.
	\end{enumerate}
\end{thm}
\begin{proof}
	As we mentioned earlier, the geometric decomposition endows $\pi_1(M)$ with the structure of a graph of groups, where the vertex groups are the fundamental groups of geometric pieces $\mathcal{N}$ and the edge groups are the fundamental groups of those tori $\mathcal{T}$ we cut along.
	
	For each $N\in \mathcal{N}$, let $V_N\defeq\bigoplus_{T\subset\partial N}H_1(T;\R)$ and equip it with the degenerate norm $\|(h_T)_{T\in \partial N}\|_N \defeq \scl_N(\sum_{T\subset \partial N} h_T)$ where $h_T\in H_1(T)$. Then $\bigoplus_{N\in\mathcal{N}} V_N$ is naturally equipped with $\|\cdot \|_1$, the $\ell^1$-product norm of all $\|\cdot \|_N$. As we observed in Section \ref{sec: scl vertex and edge group}, $\bigoplus_{N\in\mathcal{N}} V_N=\bigoplus_{T\in\mathcal{T}} [H_1(T;\R)\oplus H_1(T;\R)]$. Let $V_\mathcal{T}\defeq\bigoplus_{T\in\mathcal{T}} H_1(T;\R)$. 
	Piecing together the addition maps $H_1(T;\R)\oplus H_1(T;\R)\overset{+}{\to} H_1(T;\R)$ over all $T\in\mathcal{T}$, we get a projection $\pi:\bigoplus_{N\in\mathcal{N}} V_N\to V_\mathcal{T}$.
	Equips $V_\mathcal{T}$ with the quotient norm $\|\cdot\|$ of $\|\cdot \|_1$. 
	By Corollary \ref{cor: scl edge compute} and Remark \ref{rmk: adjust finite dimensional}, we have $\|(h_T)_{T\in\mathcal{T}}\|=\scl_M(\sum h_T)$ for any $h_T\in H_1(T)$, $T\in\mathcal{T}$. If $N$ is the twisted $I$-bundle over the Klein bottle, then a loop on its boundary has vanishing scl if and only if it is null-homologous since the fundamental group of the Klein bottle is virtually abelian. Combining this with Corollary \ref{cor:edge group for hyp} and Corollary \ref{cor:edge group for Seifert}, we note that the vanishing locus of $\|\cdot \|_N$ on $V_N$ is rational for each $N\in\mathcal{N}$. Thus the vanishing locus of $\|\cdot\|_1$ is also rational since it is the direct sum over all $N$ of the vanishing locus of $\|\cdot \|_N$ on $V_N$. Then its image under the projection $\pi$ is rational, which is exactly the vanishing locus of $\|\cdot\|$. Hence by Lemma \ref{lemma: auto gap of norm}, the desired constant $C_M$ exists.
	
	Suppose $c$ is a single loop in some edge space with $\scl_M(c)=0$. Then there is some $(v_N)\in \bigoplus_{N\in\mathcal{N}} V_N$ such that $\pi(v_N)=c$ and $\|v_N\|_N=0$ for all $N$. By Corollary \ref{cor:edge group for hyp}, we have $v_N=0$ for all hyperbolic pieces $N$. By Corollary \ref{cor:edge group for Seifert}, each $v_N$ is a sum of fibers on the boundary components of $N$ with $[v_N]=0\in H_1(N;\R)$ for each Seifert fibered piece $N$. Finally, $v_N$ is a null-homologous loop in $N$ if $N$ is the twisted $I$-bundle over the Klein bottle. Combining these facts, to annihilate the scl of a nontrivial loop, hyperbolic pieces make no contribution, and Seifert fibered pieces away from $T$ cannot contribute either by the minimality of JSJ decomposition. Also note that no two twisted $I$ bundles over the Klein bottle can be glued together as $M$ is non-geometric (see Theorem \ref{thm: 3-mfd decomp}). We obtain the desired classification of such loops $c$ via a case-by-case study.
\end{proof}

The size of $C_M$ is not explicit in Theorem \ref{thm: edge scl for non-geometric prime factors}. We notice from Example \ref{exmp: small scl in 3mfd, SFS surgery} that $C_M$ could be very small and depends on how the geometric pieces are glued together. We record the following special case for later use.

\begin{cor}\label{cor: fiber vanishing}
	Let $M$ be a non-geometric prime $3$-manifold. Suppose $N$ is a Seifert fibered geometric piece in $M$, and $z\in\pi_1 N\le\pi_1 M$ represents a regular fiber. If $\scl_M(z)=0$, then $(z,N)$ is a vanishing pair.
\end{cor}
\begin{proof}
	Represent $z$ by a loop supported on a JSJ torus $T$. We have a decomposition $z=ab$ as in Theorem \ref{thm: edge scl for non-geometric prime factors}.
	Without loss of generality $N=N_1$, then $a$ must be a power of $z$, and so is $b$.
	We cannot have $N_1=N_2=N$ since $z$ is already in the fiber direction and the JSJ decomposition is minimal.
	Minimality also implies that $N_2\neq N$ cannot be Seifert fibered unless $b=id$.
	So either $a=z$ or $N_2$ is the twisted $I$ bundle over a Klein bottle. In either case, $(z,N)$ must be a vanishing pair.
\end{proof}

Now we estimate $\scl_M(g)$ for elements $g$ in vertex groups.

\begin{thm}\label{thm: vertex scl for non-geometric prime factors}
	Let $M$ be a non-geometric prime $3$-manifold. Then for each geometric piece $N$ in the JSJ decomposition of $M$, there is a constant $C_N>0$ such that for any $g$ representing a loop in $N$, we have either $\scl_M(g)=0$ or $\scl_M(g)\ge C_N$.
\end{thm}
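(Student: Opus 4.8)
The plan is to use the simple estimate of Lemma~\ref{lemma: simple estimate of scl in vertex groups} together with the relative spectral gaps established for the three types of geometric pieces (Lemmata~\ref{lemma:vertex group hyp case} and~\ref{lemma:vertex group Seifert case}, plus the easy virtually abelian case of $K$), and then to use the acylindricity of the JSJ action (Lemma~\ref{lemma: acyl JSJ}) together with the edge-group control of Theorem~\ref{thm: edge scl for non-geometric prime factors} to handle the finitely many ``bad'' elements whose relative scl vanishes. More precisely, the splitting of $\pi_1(M)$ as a graph of groups along the JSJ tori gives, for a loop $g$ supported in a vertex group $G_N=\pi_1(N)$, the inequality
$$
\scl_M(g)\ \ge\ \scl_{(G_N,\{G_e\}_{t(e)=N})}(g),
$$
where the $G_e\cong\Z^2$ are the peripheral subgroups of $N$ coming from adjacent JSJ tori. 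So it suffices to bound the right-hand side below, except when it vanishes.

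First I would dispose of the non-degenerate part. If $N$ is hyperbolic, Lemma~\ref{lemma:vertex group hyp case} gives a strong relative spectral gap, so either $\scl_{(G_N,\partial N)}(g)\ge\delta(N)$ or $g$ conjugates (inside $G_N$) into a peripheral $\Z^2$. If $N$ is Seifert fibered over $B$ with $\chi_o(B)<0$ and $\partial N\ne\emptyset$, Lemma~\ref{lemma:vertex group Seifert case} gives a relative spectral gap $C(N)$, with the vanishing cases (1)--(3) listed there. If $N\cong K$ is the twisted $I$-bundle over the Klein bottle, then $\pi_1(K)$ is virtually $\Z^2$, hence amenable, so $\scl_M$ and all relative scl vanish on it; here $g$ always falls in the ``bad'' set. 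In all three cases, set $C_N^{(1)}$ to be the relative gap constant (with $C_N^{(1)}=1$ say when $N\cong K$, vacuously). So the remaining task is: bound $\scl_M(g)$ below for those $g$ supported in $N$ with $\scl_{(G_N,\{G_e\})}(g)=0$.

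For such a $g$, the vanishing cases of Lemmata~\ref{lemma:vertex group hyp case} and~\ref{lemma:vertex group Seifert case} (and triviality of scl on $K$) force $g$ to be, up to conjugacy in $G_N$ and up to finite powers, either (a) an element conjugating into a peripheral $\Z^2$ of $N$, or (b) (in the Seifert case) a power $z^m$ of a regular fiber, or (c) (in the Seifert case) an element with $p(g)$ conjugate to its inverse and $g^2-z^m$ null-homologous in $G_N$, or (d) (in the $K$ case) anything. In case (a), $g$ conjugates in $\pi_1(M)$ into an edge group $G_e\cong\Z^2$ of the JSJ splitting, and then $\scl_M(g)$ is controlled from below by Theorem~\ref{thm: edge scl for non-geometric prime factors}: either $\scl_M(g)=0$ or $\scl_M(g)\ge C_M$. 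Case (b) is a sub-case of (a) since $z^m$ lies in a peripheral torus (or if $N$ has no boundary then $N$ is all of $M$ and $M$ is geometric, excluded). The key remaining point is case (c), and more generally showing that an element of $G_N$ which is ``conjugate to its inverse modulo the fiber'' still has a definite positive $\scl_M$ unless it is genuinely killable; here one should use that $g^2-z^m$ is a null-homologous chain and run the same graph-of-groups estimate on this chain, noting $z$ is peripheral, to reduce to case (a). Finally, for those $g$ with $\scl_M(g)>0$ in cases (a)--(c), the lower bound is $\min\{C_M, \text{the explicit edge constant}\}>0$; for $g$ with $\scl_M(g)=0$ there is nothing to prove. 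Setting $C_N\defeq\min\{C_N^{(1)}, C_M, \ldots\}$ over this finite list of constants finishes it.

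The main obstacle I anticipate is case (c) of the Seifert fibered pieces (and its analogue when $N\cong K$ is involved): one must argue that ``$p(g)$ conjugate to $p(g^{-1})$ in $\pi_1(B)$'' does not force $\scl_M(g)=0$ unless the precise homological condition $\scl_M(g^2-z^m)=0$ holds, and then that this residual condition is exactly captured by the edge-group analysis of Theorem~\ref{thm: edge scl for non-geometric prime factors}. The clean way to handle this is to observe that whenever $\scl_M(g)>0$ but $\scl_{(G_N,\{G_e\})}(g)=0$, the vanishing of the \emph{relative} invariant must come from the loop being homologically absorbable into the boundary tori, so $g$ (or a bounded power, adjusted by a multiple of a peripheral fiber class $z$) represents a non-null-homologous peripheral chain, and Theorem~\ref{thm: edge scl for non-geometric prime factors} provides the uniform lower bound $C_M$ for exactly those chains. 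I would also need to be slightly careful that ``finite power'' does not degrade the bound: by stability $\scl_M(g)=\scl_M(g^k)/k$ with $k$ bounded by the maximal cone-point order of the (finitely many) Seifert base orbifolds appearing in $M$, so dividing by this bounded $k$ only changes the constant by a bounded factor. Assembling these, $C_N$ is a minimum of finitely many positive constants depending only on $M$ and $N$.
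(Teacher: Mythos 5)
Your proposal is correct and follows essentially the same route as the paper's own proof: the simple estimate of Lemma \ref{lemma: simple estimate of scl in vertex groups} combined with the relative gaps of Lemmata \ref{lemma:vertex group hyp case} and \ref{lemma:vertex group Seifert case}, with the degenerate cases (peripheral elements, fiber powers, the case $\scl_N(g^2-z^m)=0$, and the index-two reduction for the twisted $I$-bundle) pushed into the edge groups and handled by Theorem \ref{thm: edge scl for non-geometric prime factors} together with stability of scl under bounded powers. The only step you leave implicit, which the paper spells out, is the computation in your case (c): from $\scl_M(g^2-z^m)=0$ one gets $\scl_M(g)=\tfrac{|m|}{2}\scl_M(z)$, so the edge constant $C_M$ (divided by $2$, respectively by the maximal cone-point order $O_N$) gives the final $C_N$.
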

\begin{proof}
	Endow $\pi_1(M)$ with the structure of a graph of groups from the geometric decomposition, then $g$ is conjugate into the vertex group $\pi_1(N)$. The boundary of $N$ consists of a nonempty collection of tori. Let $C_M$ be the bound for integral chains supported in the edge groups from Theorem \ref{thm: edge scl for non-geometric prime factors}. There are three cases:
	\begin{enumerate}
		\item The interior of $N$ is hyperbolic with finite volume. If $g$ is conjugate into a peripheral subgroup of $\pi_1(N)$, then our control on edge groups shows that either $\scl_M(g)=0$ or $\scl_M(g)\ge C_M$. If $g$ is not conjugate into any peripheral subgroup of $\pi_1(N)$, then by Lemmas \ref{lemma: simple estimate of scl in vertex groups} and \ref{lemma:vertex group hyp case}, there exists $C>0$ such that $\scl_M(g)\ge\scl_{(N,\partial N)}(g)\ge C$ for all such $g$. Thus the conclusion holds with $C_N\defeq\min(C,C_M)$ in this case.
		
		\item $N$ is Seifert fibered over an orbifold $B$ such that $\chi_o(B)<0$. By Lemma \ref{lemma:vertex group Seifert case}, there is a constant $C=C(N)$ such that either $\scl_{(N,\partial N)}(g)\ge C$ or $\scl_{(N,\partial N)}(g)=0$. Moreover, $\scl_{(N,\partial N)}(g)=0$ only occurs in two cases:
		\begin{enumerate}
			\item either $g^n$ is conjugate into some peripheral subgroup of $\pi_1(N)$ for some $n>0$ not exceeding the maximal order $O_N$ of cone points on $B$, then by our control of $\scl_M$ on the edge groups, either $\scl_M(g)=0$ or $\scl_M(g)\ge C_M/O_N$;
			\item or $\scl_N(g^{2}-z^m)=0$ for some integer $m$, where $z$ is a generator of $\ker p\cong \Z$ and $p:\pi_1(N)\to \pi_1(B)$ is the projection map. In this case, by monotonicity, we have $\scl_M(g^{2}-z^m)=0$ and $\scl_M(g)=\frac{|m|}{2}\scl_M(z)$. If $m=0$ or $\scl_M(z)=0$, then $\scl_M(g)=0$; otherwise, $\scl_M(g)\ge \scl_M(z)/2\ge C_M/2$. 
		\end{enumerate} 
		In summary, we may choose $C_N\defeq \min(C,C_M/O_N,C_M/2)$ in this case.
		
		\item $N$ is homeomorphic to the regular neighborhood $K$ of a one-sided Klein bottle. Then $g^2$ is conjugate to an edge group element, and thus either $\scl_M(g)=0$ or $\scl_M(g)\ge C_N\defeq C_M/2$.
	\end{enumerate}
\end{proof}

Finally, we control $\scl_M(g)$ for hyperbolic elements $g$ using Theorem \ref{thm: acyl action gap}.
\begin{lemma}\label{lemma: scl gap of hyp elem in 3mfd}
	For any non-geometric prime $3$-manifold $M$ and any $g\in \pi_1(M)$ that is hyperbolic for the action on the JSJ-tree, either $\scl_M(g)\ge 1/48$ or $g$ is conjugate to its inverse, in which case $\scl_M(g)=0$.
\end{lemma}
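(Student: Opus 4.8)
The plan is to combine the acylindricity of the JSJ action (Lemma~\ref{lemma: acyl JSJ}) with the gap theorem of Clay--Forester--Louwsma for acylindrical actions on trees (Theorem~\ref{thm: acyl action gap}). First I would recall that by Lemma~\ref{lemma: acyl JSJ} the action of $\pi_1(M)$ on its JSJ-tree is $4$-acylindrical. Applying Theorem~\ref{thm: acyl action gap} with $K=4$, the relevant integer $N$ is the smallest integer $\ge (K+3)/2 = 7/2$, so $N=4$. Hence for any element $g$ that is hyperbolic for the JSJ action, either $\scl_M(g)\ge 1/(12N) = 1/48$ or $\scl_M(g)=0$, and the latter occurs precisely when $g$ is conjugate to $g^{-1}$.

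The only remaining point is to confirm that an element that is hyperbolic for the JSJ-tree action in the sense of Bass--Serre theory is also ``hyperbolic'' in the sense required by Theorem~\ref{thm: acyl action gap}, i.e.\ that it acts on the tree with positive translation length. This is immediate: an element of a group acting on a tree without inversions is either elliptic (fixes a point) or hyperbolic (has a translation axis on which it acts by a nontrivial translation), and the JSJ action is without inversions since the edge and vertex groups come from an honest graph-of-groups decomposition. So ``hyperbolic for the action on the JSJ-tree'' is exactly the hypothesis of Theorem~\ref{thm: acyl action gap}.

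Putting these together: let $g\in\pi_1(M)$ be hyperbolic for the JSJ-tree action. By Theorem~\ref{thm: acyl action gap} applied to the $4$-acylindrical action, either $\scl_M(g)\ge 1/48$, or $\scl_M(g)=0$ and $g$ is conjugate to $g^{-1}$. Conversely, if $g$ is conjugate to $g^{-1}$ then the same theorem forces $\scl_M(g)=0$; this is also clear directly, since $\scl_G$ is a conjugation-invariant homogeneous function satisfying $\scl_G(g^{-1})=\scl_G(g)$, and any homogeneous quasimorphism vanishes on an element conjugate to its inverse, so Bavard duality gives $\scl_M(g)=0$. This establishes the dichotomy stated in the lemma.

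I do not anticipate a serious obstacle here — the lemma is essentially a direct invocation of two already-quoted results, with the only subtlety being bookkeeping of the acylindricity constant convention (the paper flags that a $K$-acylindrical action in its convention is $(K+1)$-acylindrical in \cite{CFL16}, and that Theorem~\ref{thm: acyl action gap} has already been restated to match the paper's convention, so no off-by-one correction is needed). The main thing to be careful about is simply tracking that $K=4$ yields $N=4$ and hence the constant $1/48$, and that the ``$\scl=0$ iff conjugate to inverse'' clause is carried along verbatim from Theorem~\ref{thm: acyl action gap}.
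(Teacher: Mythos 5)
Your proposal is correct and is essentially identical to the paper's own proof: the paper likewise invokes the $4$-acylindricity of the JSJ action (Lemma~\ref{lemma: acyl JSJ}) and applies Theorem~\ref{thm: acyl action gap} with $K=4$, $N=4$ to obtain the $1/48$ gap and the ``$\scl=0$ iff conjugate to inverse'' dichotomy. Your extra remarks on the Bass--Serre notion of hyperbolicity and the acylindricity convention are fine but not needed beyond what the paper already records.
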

\begin{proof}
	By Lemma \ref{lemma: acyl JSJ}, the action of $\pi_1(M)$ on the JSJ-tree is $4$-acylindrical, thus the result follows from Theorem \ref{thm: acyl action gap} with $K=4$ and $N=4$.
\end{proof}

The bound can be improved by Theorem \ref{thm: $n$-RTF gap, weak version} if certain pieces do not appear in the geometric decomposition. This is done by verifying the $3$-RTF condition using geometry.
\begin{lemma}\label{lemma:boundary inclusion is 3-RTF}
	Let $M$ be a compact $3$-manifold with tori boundary and let $T$ be a boundary component. Suppose either the interior of $M$ is hyperbolic with finite volume, or $M$ is Seifert fibered over an orbifold $B$ such that $\chi_o(B)<0$ and $B$ has no cone points of even order. Then $\pi_1(T)$ is $3$-RTF in $\pi_1(M)$.
\end{lemma}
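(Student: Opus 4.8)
The plan is to unwind Definition~\ref{defn: n-rft}: since $3$-RTF permits no relative $k$-torsion for $2\le k<3$, I must show there is no $g\in\pi_1(M)\setminus\pi_1(T)$ and $h_1,h_2\in\pi_1(T)$ with $gh_1gh_2=1$. The engine in both cases is an elementary remark about parabolics, which I would isolate first. Suppose a group $\Gamma$ acts by isometries on $\mathbb{H}^d$ ($d=2$ or $3$), $\xi\in\partial\mathbb{H}^d$, and $P:=\mathrm{Stab}_\Gamma(\xi)$ consists only of orientation-preserving parabolics together with the identity. If $\gamma p_1\gamma p_2=1$ with $p_1,p_2\in P$, then $\gamma p_1\gamma=p_2^{-1}\in P$ and $\gamma p_2\gamma=p_1^{-1}\in P$, so applying $\gamma^{-1}$ to $(\gamma p_i\gamma)(\xi)=\xi$ gives $p_1(\gamma\xi)=\gamma^{-1}\xi=p_2(\gamma\xi)$; hence $p_2^{-1}p_1\in P$ fixes both $\xi$ and $\gamma\xi$. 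Since every nontrivial element of $P$ is parabolic and fixes exactly $\xi$, we conclude that either $\gamma\xi=\xi$ (so $\gamma\in P$), or $\gamma\xi\ne\xi$, which forces $p_2^{-1}p_1=1$, hence $p_1=p_2$ and $(\gamma p_1)^2=1$.

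For the \textbf{hyperbolic case}, I would realize $\pi_1(M)$ as a torsion-free lattice in $\mathrm{Isom}(\geomH)$ for which $\pi_1(T)$ is the cusp subgroup $\mathrm{Stab}_{\pi_1(M)}(\xi)\cong\Z^2$ attached to $T$, consisting of parabolics and the identity. Apply the remark with $\gamma=g$: since $g\notin\pi_1(T)=\mathrm{Stab}(\xi)$, $g$ does not fix $\xi$, so $h_1=h_2$ and $(gh_1)^2=1$; moreover $gh_1\ne1$, as otherwise $g=h_1^{-1}\in\pi_1(T)$. Thus $gh_1$ would be an element of order $2$, contradicting torsion-freeness of $\pi_1(M)$.

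For the \textbf{Seifert case}, I would push the relation through $p\colon\pi_1(M)\to\pi_1(B)$ from (\ref{eqn:exact sequence}), where $\ker p=\langle z\rangle\le\pi_1(T)$, and use that $\pi_1(B)$ is discrete in $\mathrm{Isom}(\geomHtwo)$ with finite-area quotient $B$ (Subsection~\ref{subsec: SFS}); the cusp of $B$ corresponding to $T$ has stabilizer $p(\pi_1(T))$, which consists of parabolics and the identity. Applying the remark with $\gamma=p(g)$ gives two options. If $p(g)$ fixes that cusp point, then $p(g)\in p(\pi_1(T))$, whence $g\in\ker p\cdot\pi_1(T)=\pi_1(T)$, a contradiction. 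Otherwise $p(h_1)=p(h_2)$ and $p(gh_1)^2=1$ in $\pi_1(B)$; since every finite-order element of $\pi_1(B)$ is conjugate into the cyclic stabilizer of a cone point (Subsection~\ref{subsec: SFS}), the hypothesis on the cone points of $B$ excludes order-$2$ elements of $\pi_1(B)$, forcing $p(gh_1)=1$, i.e.\ $gh_1\in\langle z\rangle\le\pi_1(T)$, and again $g\in\pi_1(T)$, a contradiction.

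The main obstacle I anticipate is twofold: making the parabolic remark fully airtight when $\pi_1(B)$ contains orientation-reversing isometries (one must check that $h_1,h_2$ and their images stay orientation-preserving parabolics, so that products of them remain parabolic, and that a cusp stabilizer in a cone-type $2$-orbifold group still contains no nontrivial non-parabolic element), and pinning down precisely which condition on the cone points of $B$ is needed to exclude $2$-torsion in $\pi_1(B)$. The remainder is bookkeeping with the cusp subgroups, the exact sequence (\ref{eqn:exact sequence}), and torsion-freeness of $\pi_1(M)$.
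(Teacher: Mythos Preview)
Your proof is correct and takes a genuinely different route from the paper's. The paper works in the ambient isometry group: given $gh_1gh_2=1$ in $\pi_1(B)$ (or $\pi_1(M)$), it takes the square root $h^*=\sqrt{h_1h_2}$ inside the one-parameter parabolic subgroup $P\le\mathrm{PGL}_2(\R)$ (resp.\ $\mathrm{PSL}_2(\C)$), sets $g^*:=gh^*$, verifies that $g^*$ is conjugate to $(g^*)^{-1}$ by $(h^*)^{-1}h_1\in P$, and then runs a trichotomy on whether $g^*$ is elliptic, parabolic, or hyperbolic. You stay entirely inside the discrete group: from $gh_1g=h_2^{-1}$ and $gh_2g=h_1^{-1}$ you read off that $h_2^{-1}h_1$ fixes both $\xi$ and $g\xi$, hence $h_1=h_2$ and $(gh_1)^2=1$ whenever $g\notin\mathrm{Stab}(\xi)$, and then you invoke the absence of $2$-torsion. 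Your argument is shorter and avoids the square-root excursion into the Lie group; the paper's argument, by contrast, makes explicit contact with the isometry classification and would adapt more readily to checking $n$-RTF for $n>3$.

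Your anticipated obstacle about orientation-reversing elements in $\pi_1(B)$ is not actually an issue: for a discrete subgroup of $\mathrm{Isom}(\mathbb{H}^d)$ containing a nontrivial parabolic fixing $\xi$, discreteness forces every element of $\mathrm{Stab}(\xi)$ to be parabolic (any loxodromic, glide reflection, or rotation with an axis through $\xi$ would conjugate the parabolic to one of arbitrarily small translation length), so the hypothesis of your ``parabolic remark'' is automatic for the cusp stabilizers of $\pi_1(M)$ and $\pi_1(B)$. The condition you need on $B$ is exactly the one stated: Subsection~\ref{subsec: SFS} already records that every finite-order element of $\pi_1(B)$ has order dividing the order of some cone point, so ``no cone points of order $2$'' is precisely what excludes $2$-torsion.
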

\begin{proof}
	We focus on the case where $M$ is Seifert fibered. We will use the exact sequence (\ref{eqn:exact sequence}) again. Note that $\pi_1(B)$ embeds in $\mathrm{PGL}_2\R\cong\Isom(\Hbb^2)$ as a lattice and, up to a conjugation, $H\defeq p(\pi_1(T))$ is a subgroup of 
	$$P\defeq \left\{\begin{bmatrix} 1 &x\\ 0&1 \end{bmatrix}:x\in\R\right\}\cong (\R,+)$$ 
	in $\mathrm{PGL}_2\R$.
	As a result, each $h\in H$ has a unique square root $\sqrt{h}\in P$, i.e. one that satisfies $(\sqrt{h})^2=h$. Also note that $P\cap\pi_1(B)=H$.
	
	To show that $\pi_1(T)$ is $3$-RTF in $\pi_1(M)$, it suffices to show that $H$ is $3$-RTF in $\pi_1(B)$. Suppose $g\in\pi_1(B)$ satisfies $gh_1gh_2=id$ for some $h_1,h_2\in H$. We need to show $g\in H$. Let $h^*\defeq\sqrt{h_1h_2}\in P$ and $g^*\defeq gh^*\in \textrm{Isom}(\mathbb{H}^2)$. Since $P$ is abelian, we note that $(h^*)^{-1}h_2(h^*)^{-1}h_1=id$, i.e. $(h^*)^{-1}h_2=[(h^*)^{-1}h_1]^{-1}$. Thus we have $$(g^*)^{-1}=(h^*)^{-1}g^{-1}=(h^*)^{-1}h_1 g h_2=(h^*)^{-1}h_1 g^* (h^*)^{-1} h_2=((h^*)^{-1}h_1)g^*((h^*)^{-1}h_1)^{-1}.$$
	We have three cases:
	\begin{enumerate}
		\item $g^*$ fixes some point in $\mathbb{H}^2$. Then the fixed point set is a geodesic subspace $X$ in $\Hbb^2$, which must be preserved by $(h^*)^{-1}h_1$. Since $(h^*)^{-1}h_1$ lies in $P$, this is impossible unless
		\begin{enumerate}
			\item[(1a)] $(h^*)^{-1}h_1=id$; or
			\item[(1b)] $X=\Hbb^2$.
		\end{enumerate}
		In the first subcase, we get $h^*=h_1=h_2$ and $g^*=(g^*)^{-1}$, but now $g^*=gh^*=gh_1$ is an element of $\pi_1(B)$, which contains no $2$-torsion since $B$ has no cone points of even order. So $g^*=id$ and $g=h_1^{-1}\in H$. In the second subcase, we have $g^*=id$, i.e. $g=(h^*)^{-1}$ which lies in $P\cap \pi_1(B)=H$.
		
		\item $g^*$ is parabolic. Then $g^*$ fixes a unique point on $\partial\Hbb^2$, which is also fixed by $(h^*)^{-1}h_1$. So either
		\begin{enumerate}
			\item[(2a)] $(h^*)^{-1}h_1=id$; or
			\item[(2b)] $g^*$ fixes the unique fixed point of $P$.
		\end{enumerate}
		The first subcase (2a) is similar to (1a). In the second subcase, we have $g^*\in P$, and thus $g=g^*(h^*)^{-1}\in P\cap\pi_1(B)=H$.
		
		\item $g^*$ is hyperbolic, possibly further composed with a reflection across the axis of translation. Then $(h^*)^{-1}h_1$ must switch the two unique points on $\partial\Hbb^2$ fixed by $g^*$, which is impossible since $(h^*)^{-1}h_1$ is parabolic.
	\end{enumerate}

	For the case where the interior of $M$ is hyperbolic with finite volume, a similar argument works by replacing $\textrm{Isom}(\Hbb^2)$ by $\textrm{Isom}^+(\mathbb{H}^3)$. This is even easier since $M$ is orientable and $\pi_1(M)$ is torsion-free, and thus we omit the proof.
\end{proof}

We do not know if the result above is optimal in the hyperbolic case.
\begin{quest}
	Is there some $n>3$ such that for any hyperbolic $3$-manifold $M$ of finite volume with tori cusps, every peripheral subgroup of $\pi_1(M)$ is $n$-RTF?
\end{quest}

It is essential in Lemma \ref{lemma:boundary inclusion is 3-RTF} to assume that the orbifold $B$ has no cone points of even order: If $z_0$ represents the singular fiber over a cone point of order $2m$, then the square of $z_0^m$ represents a regular fiber, which lies in peripheral subgroups. The analogous statement of Lemma \ref{lemma:boundary inclusion is 3-RTF} does not hold for the twisted $I$-bundle $K$ over a Klein bottle with respect to its torus boundary, since the peripheral $\Z^2$ subgroup has index two in $\pi_1(K)$ which is the fundamental group of the core Klein bottle.

Excluding these pieces in a non-geometric prime $3$-manifold, the bound in Lemma \ref{lemma: scl gap of hyp elem in 3mfd} can be improved to $1/6$.

\begin{lemma}\label{lemma: 1/6 bound for 3-mfds}
	Suppose $M$ is a non-geometric prime $3$-manifold where none of the pieces in its geometric decomposition is the twisted $I$-bundle over a Klein bottle or Seifert fibered with a fiber of even multiplicity. Then any $g\in \pi_1(M)$ acting hyperbolically on the JSJ-tree has $\scl_M(g)\ge 1/6$.
\end{lemma}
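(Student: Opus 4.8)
The plan is to deduce this directly from the relative gap theorem. The manifold $M$ is a non-geometric prime $3$-manifold, so $\pi_1(M)$ carries the graph of groups structure coming from the JSJ (equivalently here, geometric) decomposition: the vertex groups are the fundamental groups of the geometric pieces, and each edge group is the $\Z^2$ fundamental group of a JSJ torus, included as a peripheral subgroup of the vertex group of each adjacent piece. An element $g\in\pi_1(M)$ acting hyperbolically on the JSJ-tree is exactly a hyperbolic element of this graph of groups. So by Theorem~\ref{thm: $n$-RTF gap, weak version} applied with $n=3$, it suffices to check that the inclusion of every edge group into every adjacent vertex group is $3$-RTF; this then gives $\scl_M(g)\ge\scl_{(\pi_1(M),\{G_v\})}(g)\ge\tfrac12-\tfrac13=\tfrac16$ for every such $g$, with no exceptions.

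Verifying the $3$-RTF condition is where the hypotheses on $M$ enter. Each geometric piece $N$ is of one of the three types recalled in Subsection~\ref{subsec: SFS}: (i) interior hyperbolic of finite volume; (ii) Seifert fibered over a compact orbifold $B$ with $\chi_o(B)<0$; or (iii) the twisted $I$-bundle over the Klein bottle. The assumption on $M$ discards type (iii) outright, and for type (ii) it discards the pieces with a fiber of multiplicity two, which by the definition of a Seifert fibering over an orbifold is precisely the statement that $B$ has no cone point of order $2$. Hence every vertex group is either $\pi_1$ of a finite-volume hyperbolic piece or $\pi_1$ of a Seifert fibered piece over an orbifold $B$ with $\chi_o(B)<0$ and no order-$2$ cone points. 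In either case Lemma~\ref{lemma:boundary inclusion is 3-RTF} asserts that the peripheral $\Z^2$ corresponding to any boundary torus of the piece is $3$-RTF in its fundamental group. Since each edge of the graph of groups meets each of its endpoints along exactly one boundary torus, this is precisely the $3$-RTF hypothesis needed for Theorem~\ref{thm: $n$-RTF gap, weak version}, and the lemma follows.

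I do not expect a genuine obstacle here: the geometric content is already carried out in Lemma~\ref{lemma:boundary inclusion is 3-RTF}, and the proof is an assembly of previously established facts. The only points requiring a word of care are (a) the identification ``fiber of multiplicity two'' $\Leftrightarrow$ ``cone point of order $2$ on the base orbifold'', which is immediate from the definitions in Subsection~\ref{subsec: SFS}, and (b) the observation (already noted after Lemma~\ref{lemma:boundary inclusion is 3-RTF}) that the two excluded families are genuinely exceptional — an element representing a multiplicity-two fiber, or a core curve of the Klein bottle in the twisted $I$-bundle, squares into a peripheral subgroup, so the edge inclusion fails to be even $3$-RTF — which is why the $1/6$ bound cannot be claimed without these hypotheses and why the weaker $1/48$ bound of Lemma~\ref{lemma: scl gap of hyp elem in 3mfd} is what one gets in general.
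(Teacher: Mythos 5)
Your proof is correct and is essentially the paper's own argument: the paper proves this lemma exactly by combining Lemma \ref{lemma:boundary inclusion is 3-RTF} with Theorem \ref{thm: $n$-RTF gap, weak version} for $n=3$, and your verification that the hypotheses rule out precisely the pieces where the $3$-RTF condition fails matches the intended reasoning.
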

\begin{proof}
	This is simply a combination of Lemma \ref{lemma:boundary inclusion is 3-RTF} and Theorem \ref{thm: $n$-RTF gap, weak version}.
\end{proof}

Now we are in a place to prove a gap theorem for non-geometric prime $3$-manifolds. 
\begin{thm}\label{thm:non-geometric prime factors}
	Let $M$ be a non-geometric prime $3$-manifold. Then there is a constant $C=C(M)>0$ such that either $\scl_M(g)\ge C$ or $\scl_M(g)=0$ for any $g\in \pi_1(M)$. Moreover, if $\scl_G(g)<1/48$, then either $g$ is conjugate to its inverse or it is represented by a loop supported in a single piece of the geometric decomposition of $M$.
\end{thm}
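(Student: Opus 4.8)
The plan is to reduce the statement to three ingredients already assembled in the section. First I would fix the graph-of-groups structure on $\pi_1(M)$ coming from the geometric decomposition and recall that $\pi_1(M)$ acts on the JSJ-tree. Given $g\in\pi_1(M)$, there is a dichotomy: either $g$ acts \emph{elliptically} on the JSJ-tree (equivalently, $g$ conjugates into some vertex group $\pi_1(N)$ for a geometric piece $N$), or $g$ acts \emph{hyperbolically}. These two cases are handled by two separate results proved earlier in the excerpt.

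For the hyperbolic case I would invoke Lemma \ref{lemma: scl gap of hyp elem in 3mfd}: if $g$ is hyperbolic on the JSJ-tree, then either $\scl_M(g)\ge 1/48$ or $g$ is conjugate to its inverse (and then $\scl_M(g)=0$). This already gives both halves of the desired conclusion in the hyperbolic case, since $\scl_M(g)<1/48$ forces $g$ to be conjugate to $g^{-1}$. For the elliptic case, $g$ conjugates into $\pi_1(N)$ for one of the finitely many pieces $N$ in the geometric decomposition, and Theorem \ref{thm: vertex scl for non-geometric prime factors} supplies a constant $C_N>0$ so that $\scl_M(g)=0$ or $\scl_M(g)\ge C_N$; moreover in this case $g$ is by definition represented by a loop supported in the single piece $N$, which is the alternative in the ``moreover'' clause. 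Setting $C\defeq\min\{1/48,\ \min_N C_N\}$ — a finite minimum since there are only finitely many JSJ pieces — then yields the uniform gap: for any $g$, either $\scl_M(g)=0$ or $\scl_M(g)\ge C$. And if $\scl_M(g)<1/48$, then $g$ cannot be hyperbolic-with-$\scl\ge 1/48$, so either it is conjugate to its inverse (hyperbolic subcase) or it is elliptic and hence supported in a single geometric piece.

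The only genuine subtlety, and the step I would be most careful about, is making sure the ``supported in a single piece'' assertion is stated correctly for elliptic elements that lie in an edge group: such a $g$ conjugates into some $\pi_1(T)\le\pi_1(N_1)\cap\pi_1(N_2)$ and so is supported in (at least) one, hence in a single, geometric piece — the phrasing ``a single piece'' should be read as ``some one piece,'' not ``a unique piece,'' and I would phrase the proof accordingly. Everything else is bookkeeping: the finiteness of the JSJ decomposition guarantees the minimum defining $C$ is a positive real number, and the two cited results are exactly the inputs needed, so no further estimates are required here.

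\begin{proof}
Endow $\pi_1(M)$ with the graph-of-groups structure coming from its geometric (JSJ) decomposition, and consider the action of $\pi_1(M)$ on the associated JSJ-tree. Since $M$ is non-geometric, the decomposition is nontrivial and has finitely many pieces $N_1,\dots,N_r$. Let $C_{N_i}>0$ be the constants provided by Theorem \ref{thm: vertex scl for non-geometric prime factors}, and set
$$
C\defeq\min\Big\{\tfrac{1}{48},\ C_{N_1},\ \dots,\ C_{N_r}\Big\}>0.
$$

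Let $g\in\pi_1(M)$. If $g$ acts hyperbolically on the JSJ-tree, then Lemma \ref{lemma: scl gap of hyp elem in 3mfd} gives that either $\scl_M(g)\ge 1/48\ge C$, or $g$ is conjugate to $g^{-1}$, in which case $\scl_M(g)=0$. If instead $g$ acts elliptically, it conjugates into some vertex group $\pi_1(N_i)$, that is, $g$ is represented by a loop supported in the single piece $N_i$; by Theorem \ref{thm: vertex scl for non-geometric prime factors} we then have either $\scl_M(g)=0$ or $\scl_M(g)\ge C_{N_i}\ge C$. In all cases, either $\scl_M(g)=0$ or $\scl_M(g)\ge C$, proving the first assertion.

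For the ``moreover'' statement, suppose $\scl_M(g)<1/48$. If $g$ acts hyperbolically on the JSJ-tree, then by Lemma \ref{lemma: scl gap of hyp elem in 3mfd} the inequality $\scl_M(g)<1/48$ rules out $\scl_M(g)\ge 1/48$, so $g$ must be conjugate to its inverse. Otherwise $g$ acts elliptically, hence conjugates into some $\pi_1(N_i)$ and is therefore represented by a loop supported in the single piece $N_i$ of the geometric decomposition of $M$.
\end{proof}
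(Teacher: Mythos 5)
Your proposal is correct and matches the paper's proof essentially step for step: both split into the elliptic/hyperbolic dichotomy for the action on the JSJ-tree, invoke Lemma \ref{lemma: scl gap of hyp elem in 3mfd} for hyperbolic elements and Theorem \ref{thm: vertex scl for non-geometric prime factors} for elements conjugating into a vertex group, and take $C=\min\{1/48,\min_N C_N\}$ over the finitely many geometric pieces. Your extra remark about reading ``a single piece'' as ``some one piece'' is a reasonable clarification but not a point where the paper's argument differs.
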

\begin{proof}
	Endow $\pi_1(M)$ with the structure of a graph of groups according to the geometric decomposition, where the vertex groups are the fundamental groups of geometric pieces $\mathcal{N}$ and the edge groups are the fundamental groups of those tori $\mathcal{T}$ we cut along. By Lemma \ref{lemma: scl gap of hyp elem in 3mfd}, if $g$ is hyperbolic, then either $g$ is conjugate to its inverse and $\scl_G(g)=0$, or $\scl_G(g)\ge1/48$.
	
	On the other hand, by Theorem \ref{thm: vertex scl for non-geometric prime factors}, there is a constant $C_N>0$ for each geometric piece $N$ such that, if $g$ is conjugate into $\pi_1(N)$ then either $\scl_M(g)\ge C_N$ or $\scl_M(g)=0$.
	
	Combining the two parts above, for any $g\in\pi_1(M)$, we have either $\scl_M(g)=0$ or $\scl_M(g)\ge C$, where $C:=\min\{1/48,C_N\}>0$ with $N$ ranging over all geometric pieces of $M$.
\end{proof}

\subsubsection{Proofs of main results of this section}
We can now prove Theorem \ref{thm:3mfdgap}.
\begin{proof}[Proof of Theorem \ref{thm:3mfdgap}]
	The prime decomposition splits $\pi_1(M)$ as a free product. By Lemma \ref{lemma: freeprod preserves gap}, we have either $\scl_M(g)\ge 1/12$ or $\scl_M(g)=0$, unless $g$ is represented by a loop supported in a single prime factor. Thus it suffices to prove the result for any prime manifold. Depending on whether the prime manifold $M$ admits one of the eight geometries, the result follows from either Theorem \ref{thm: Gap for geometric manifolds} or Theorem \ref{thm:non-geometric prime factors}.
\end{proof}

Moreover, following the proof above, we can classify elements whose scl vanishes and list the sources of elements with scl less than $1/48$.
\begin{thm}\label{thm: small scl in 3mfds}
	For any $3$-manifold $M$, if a null-homologous element $g\in\pi_1(M)$ represented by a loop $\gamma$ has $\scl_M(g)<1/48$, then one of the following cases occurs:
	\begin{enumerate}
		\item there is some $n\in \Z_+$ and $h\in \pi_1(M)$ such that $g^n h g^n h^{-1}=id$;\label{item: mirror case}
		\item $\gamma$ up to homotopy is supported in a prime factor of $M$ that admits $\mathbb{S}^3, \mathbb{E}^3, \mathbb{S}^2\times\mathbb{E}$, $Nil$, or $Sol$ geometry of finite volume;\label{item: 5 geom case}
		\item $\gamma$ up to homotopy is supported in a piece with $\Hbb^3$ geometry of finite volume in the (possibly trivial) geometric decomposition of a prime factor of $M$, such that the geodesic length of $\gamma$ is less than a universal constant $C$;\label{item: short curve in hyp case}
		\item $\gamma$ up to homotopy is supported in a piece $N$ with $\Hbb^2\times\Ebb$ or $\PSLtwotilde\R$ geometry of finite volume in the (possibly trivial) geometric decomposition of a prime factor of $M$, such that $\scl_{(B,\partial B)}(p(g))<1/48$, where $p:\pi_1(N)\to \pi_1(B)$ is the projection induced by the Seifert fibration of $N$ over an orbifold $B$;\label{item: scl small in orbifold Seifert base case}
		\item $g^2$ is represented by a loop in a torus of the JSJ decomposition of a prime factor of $M$;\label{item: edge group case}
	\end{enumerate}
	Moreover, $\scl_M(g)=0$ if and only if we have cases (\ref{item: mirror case}), (\ref{item: 5 geom case}), or the following two special cases of (\ref{item: scl small in orbifold Seifert base case}) and (\ref{item: edge group case}):
	\begin{enumerate}
		\item[(\ref{item: scl small in orbifold Seifert base case}*)] $g^nh g^n h^{-1}=z^\ell$ for some $n>0$, $\ell\in\Z$ and $h\in \pi_1(N)$, where $z$ represents a regular fiber, so that $(z,N)$ is a vanishing pair (see Definition \ref{def: vanishing pair});

		\item[(\ref{item: edge group case}*)] $g^2$ is represented by a loop in a JSJ torus $T$ that identifies boundary components $\partial_1\subset N_1$ and $\partial_2\subset N_2$ of (possibly the same) pieces $N_1,N_2$, and $g^2=ab\in\pi_1(T)$, such that under the identification of $T$ with $\partial_1$ and $\partial_2$ respectively, the images $a',b'$ of $a,b$ in $\pi_1 N_1$ and $\pi_1 N_2$ satisfy: (a) either $(a',N_1)$ and $(b',N_2)$ are both vanishing pairs, (b) or $N_1=N_2$ is Seifert fibered with regular fiber represented by $z$ so that $a'=z^m$ and $b'=z^{-m}$ for some $m\in\Z$.
		
	\end{enumerate}
	
\end{thm}
\begin{proof}
	Suppose $\scl_M(g)<1/48$. If $g^n h g^n h^{-1}= id$ for some $h\in \pi_1(M)$ and $n\in \Z_+$, then $\scl_M(g)=0$. \emph{Assume this is not the case in the sequel}. Then $\gamma$ up to homotopy is supported in a prime factor $M_0$ by Lemma \ref{lemma: freeprod preserves gap}. Now we focus on $M_0$ and $\scl_{M_0}(g)=\scl_M(g)<1/48$. 
	
	If $M_0$ has one of the five geometries in case (\ref{item: 5 geom case}), then $\scl_{M_0}(g)=0$ since $\pi_1(M_0)$ is amenable.
	
	If $M_0$ has hyperbolic geometry, take $\delta=1/48$, $m=3$ and $C=\kappa(\delta,m)$ in Theorem \ref{thm: accumulation},
	then $\scl_{M_0}(g)<1/48$ implies that $\gamma$ has geodesic length no more than $C$. Moreover, $\scl_{M_0}(g)>0$ unless $g=id$ by Theorem \ref{thm: hyp rel gap}.
	
	If $M_0$ has $\Hbb^2\times\Ebb$ or $\PSLtwotilde\R$ geometry, then $M_0$ is Seifert fibered over a closed orbifold $B$ with $\chi_o(B)<0$. The monotonicity of scl implies that $\scl_B(p(g))<1/48$ where $p:\pi_1(M_0)\to \pi_1(B)$ is the projection induced by the Seifert fibration. Moreover, by Theorem \ref{thm: closed Seifert fibered prime factors}, if $\scl_{M_0}(g)=0$, then we are in case (\ref{item: scl small in orbifold Seifert base case}*) since we assumed above that case (\ref{item: mirror case}) does not occur.
	
	Now suppose $M_0$ has a nontrivial geometric decomposition. By Theorem \ref{thm:non-geometric prime factors} and the assumption above, $\gamma$ up to homotopy is supported in some piece $N$ of the JSJ decomposition of $M_0$. Suppose $\gamma$ is homotopic to a loop in a JSJ torus $T$, then by Theorem \ref{thm: edge scl for non-geometric prime factors}, $\scl_{M_0}(g)=0$ only occurs if we are in 
	case (\ref{item: edge group case}*).
	
	Suppose $\gamma$ is not homotopic to a loop in a JSJ torus. Then depending on the type of $N$ we have three cases:
	\begin{enumerate}[(a)]
		\item If $N$ has hyperbolic geometry in the interior, then by Theorem \ref{thm: accumulation}, $\gamma$ has geodesic length less than the universal constant $C$ mentioned above in the closed hyperbolic case, and $\scl_{M_0}(g)>0$ by Lemma \ref{lemma:vertex group hyp case} since $\gamma$ is not boundary parallel.
		
		\item If $N$ is Seifert fibered over an orbifold $B$ with $\chi_o(B)<0$, then $p: \pi_1(N)\to \pi_1(B)$ preserves scl since $B$ has boundary, and we have $\scl_{(B,\partial B)}(p(g))=\scl_{(N,\partial N)}(g)\le \scl_{M_0}(g)<1/48$ by Lemma~\ref{lemma: simple estimate of scl in vertex groups}. Moreover, if $\scl_{M_0}(g)=0$, then we have $\scl_{(N,\partial N)}(g)=0$.
		Since $g$ is not conjugate into any peripheral subgroup, the classification in Lemma \ref{lemma:vertex group Seifert case} 
		implies that, either $g^n=z^m$ for some $n>0$ and $m\in\Z\setminus\{0\}$, or there is $h\in \pi_1 N$ and $m\in\Z$ such that $ghgh^{-1}=z^m$ and $\scl_N(g^2-z^m)=0$.
		In the former case, $g^nhg^n h^{-1}=z^{2m}$ with $h=id$, and $\scl_{M_0}(z)=n\scl_{M_0}(g)/|m|=0$, so we are in case (\ref{item: scl small in orbifold Seifert base case}*) by Corollary \ref{cor: fiber vanishing}.
		For the other case, we must have $m\neq 0$ by our assumption at the beginning of the proof. 
		Now $\scl_N(g^2-z^m)=0$ implies $\scl_{M_0}(g^2-z^m)=0$ by monotonicity. Thus $\scl_{M_0}(z^m)=\scl_{M_0}(g^2)=0$, from which we get $\scl_{M_0}(z)=0$ as $m\neq 0$. So we again have case (\ref{item: scl small in orbifold Seifert base case}*).

		
		\item If $N$ is the twisted $I$-bundle over the Klein bottle, then $g^2$ is supported in $\partial N$. Moreover, $\scl_{M_0}(g)=0$ implies $\scl_{M_0}(g^2)=0$, which happens only if we have case (\ref{item: edge group case}*).
	\end{enumerate}
\end{proof}

Note that there are few types of conjugacy classes as above with scl strictly between $0$ and $1/48$, so one may expect many $3$-manifolds to have only finitely many such conjugacy classes. For example, Michael Hull suggested the following statement in personal communications.

\begin{cor}\label{cor: finitely many conj class with small scl}
	Let $M$ be a prime $3$-manifold with only hyperbolic pieces in its (possibly trivial) geometric decomposition, then $\scl_M(g)>0$ for all $g\neq id$ and there are only finitely many conjugacy classes $g$ with $\scl_M(g)<1/48$.
\end{cor}
\begin{proof}
	Let $g\neq id$. If $g$ is not conjugate into any vertex group, then $\scl_M(g)\ge 1/6$ by Lemma \ref{lemma: 1/6 bound for 3-mfds}. If $g$ lies in a hyperbolic piece $N$ and is not conjugate into any peripheral subgroup, then $\scl_M(g)\ge \scl_{(N,\partial N)}(g)>0$ by Lemma \ref{lemma:vertex group hyp case}. If $g$ is conjugate into some JSJ torus, then the proof of Theorem \ref{thm: edge scl for non-geometric prime factors} shows that $\scl_M$ restricted to the edge groups can be computed by a degenerate norm $\|\cdot \|$. Having only hyperbolic pieces implies that $\|\cdot \|$ has trivial vanishing locus by Corollary \ref{cor:edge group for hyp}, and thus $\scl_M(g)>0$ for all such $g$, and there are only finitely many integer points with norm less than $1/48$.
	
	Suppose $\scl_M(g)<1/48$. Then by our assumption, $g$ must fall into cases (\ref{item: short curve in hyp case}) or (\ref{item: edge group case}) in Theorem \ref{thm: small scl in 3mfds}. We have discussed the case where $g$ is conjugate into an edge group above. As for the other case, there are only finitely many conjugacy classes supported in a hyperbolic piece of $M$ with bounded geodesic length since each piece has finite volume; See for example \cite{Canary-Leininger}.
\end{proof}

One should not expect a similar result in general if we allow Seifert fibered pieces, for the norm $\|\cdot \|$ may have nontrivial vanishing locus. For example, let $M_1$ be a hyperbolic $3$-manifold with one cusp so that a loop $\gamma$ on the boundary has small positive scl. Let $M_2$ be a Seifert fibered $3$-manifold with a nonorientable base space and one torus boundary. Glue $M_1$ and $M_2$ along their boundary to obtain $M$ so that $\gamma$ is not identified with the fiber direction of $M_2$. Then all elements of the form $gz^n$ have the same small positive scl value in $M$, where $g$ and $z$ represent the image of $\gamma$ and the fiber direction of $M_2$ respectively.

\bibliographystyle{alpha}
\bibliography{sclgaps}

\end{document}